\documentclass[11pt]{amsart}
\usepackage{amsthm,amsmath,amssymb}

\usepackage{geometry}
 \geometry{
letterpaper,
 total={170mm,230mm},
 left=22 mm,
 top=35mm,
 }
\usepackage{graphicx}

\numberwithin{equation}{section}
\numberwithin{figure}{section}
\theoremstyle{plain}
\newtheorem{thm}{Theorem}[section]
\newtheorem{lem}[thm]{Lemma}

\theoremstyle{remark}

\newcommand{\M}{\operatorname{M}}
\newcommand{\Hf}{\operatorname{H}}

\newcommand{\Su}{\operatorname{S}}
\newcommand{\od}{\operatorname{\textbf{o}}}
\newcommand{\e}{\operatorname{\textbf{e}}}
\newcommand{\s}{\operatorname{\textbf{s}}}
\newcommand{\T}{\operatorname{T}}
\newcommand{\V}{\operatorname{V}}
\newcommand{\Pn}{\operatorname{P}}
\newcommand{\Q}{\operatorname{Q}}
\newcommand{\K}{\operatorname{K}}
\begin{document}

\title{Tiling enumeration of doubly--intruded halved hexagons}

\author{Tri Lai}
\address{Department of Mathematics, University of Nebraska -- Lincoln, Lincoln, NE 68588}
\email{tlai3@unl.edu}
\thanks{T.L. was supported in part  by Simons Foundation Collaboration Grant (\# 585923).}

\subjclass[2010]{05A15,  05B45}

\keywords{perfect matching, plane partition, lozenge tiling, dual graph,  graphical condensation.}

\date{\today}

\dedicatory{}

\begin{abstract}
Inspired by Propp's intruded Aztec diamond regions, we consider halved hexagons in which two aligned arrays of triangular holes
 have been removed from their boundaries. Unlike the intruded Aztec diamonds (whose numbers of
  domino tilings contain some large prime factors in their factorizations), the numbers of lozenge tilings of our doubly-intruded halved hexagons are given
by simple product formulas in which all factors are linear in the parameters. In this paper, we present an extensive list of exact tiling enumerations of sixteen different types of doubly-intruded halved hexagons.
We also prove that the lozenge tilings of a symmetric hexagon with three arrays of triangles removed are always enumerated by a closed-form product formula.
Our results generalize several previous work, including Proctor's enumeration of the transposed--complementary plane partitions, related work of Ciucu,  and recent generalizations of
Rohatgi and of the author.
\end{abstract}

\maketitle
\section{Introduction}\label{sec:Intro}
MacMahon's classical theorem on plane partitions fitting in a given box is equivalent to the fact that the number of lozenge tilings of a centrally symmetric
hexagon with side-lengths $a,b,c,a,b,c$ (in a cyclic order) on the triangular lattice\footnote{ We consider the triangular lattice drawn so that one family of lattice lines is horizontal.} is given by the beautiful product formula
\begin{equation}
\prod_{i=1}^{a}\prod_{j=1}^{b}\prod_{k=1}^{c}\frac{i+j+k-1}{i+j+k-2}.
\end{equation}
Here a \emph{lozenge} is a shape made by any two unit equilateral triangles sharing an edge, and a \emph{lozenge tiling} of
a region is a covering of the region by lozenges, such that there are no gaps or overlaps.

 The beauty of MacMahon's formula motivates ones to consider more classes of boxed plane partitions. As an important topic in enumerative combinatorics,
 the study of symmetric plane partitions has got a lot of attention in the last few decades (see e.g. \cite{Stanley}, \cite{Kup},
\cite{Stem}, \cite{Andrews}, \cite{KKZ}). Each of the 10 symmetry classes of plane partitions is equivalent to a certain type of symmetric lozenge tilings of a hexagon, and is
 enumerated by  a simple product formula.
 In this paper, we focus on one of the symmetry classes, the \emph{transposed-complementary plane partitions}, that is equivalent to the lozenge tilings of a hexagon which are invariant
 under the refection over a vertical symmetry axis. The latter in turn are in bijection with the lozenge tilings of a \emph{halved hexagon},
 the region
 obtained by dividing a symmetric hexagon by a zigzag lattice path along its vertical symmetry axis.  This symmetry class was first
 enumerated by Proctor \cite{Proc}.
Proctor actually enumerated a certain class of staircase plane partitions that are in bijection with the
 lozenge tilings of a hexagon with a maximal staircase cut off (see Figure \ref{halfhex6}(a)). We refer the reader to \cite{Cutoff} for
 a number of related tiling enumerations.

Our regions are inspired by Propp's `\emph{intruded Aztec diamonds}' described in Problem 11 of his well-known survey paper \cite{Propp}.
In particular, we consider \emph{sixteen} different variations of halved hexagons in which ``intrusions''
 are made by two `\emph{ferns}' lined up along a common horizontal lattice line (see Figures \ref{fig:halvedhex1}, \ref{fig:halvedhex2}, \ref{fig:halvedhex3}, and \ref{fig:halvedhex4} for examples). Here a \emph{fern} is an array triangles with alternating orientations  (up-pointing and down-pointing).
The resulting regions turn out to have the numbers of lozenge tilings given by simple product formulas. In particular, the factor in the prime
factorizations of the tiling numbers are all linear in the parameters
(this is not the case for the intruded Aztec diamonds).

Our results generalize Proctor's enumeration and its weighted version due to Ciucu \cite{Ciucu1}. Our results also have Rohatgi's
work \cite{Ranjan}  and the author's previous work \cite{Halfhex1, Halfhex2} as special cases.


In this paper, we consider in addition symmetric hexagons with three ferns removed on the same horizontal lattice line as follows.

Let $x,y,z$ be three non-negative integers, and let $\textbf{a}=\{a_i\}_{i=1}^n$ and
$\textbf{b}=\{b_j\}_{j=1}^{m}$ be two sequences of non-negative integers.  Set
\begin{align}
\e_a:=\sum_{i\ even} a_i,  \ \ \ &\od_a=\sum_{i \ odd} a_i,\\
 \e_b:=\sum_{j\ even} b_j,  \ \ \ & \od_b=\sum_{j \ odd} b_j.
\end{align}
We consider a hexagon of side-lengths\footnote{From now on, we always list the side lengths of a hexagon in the clockwise order, starting from the north side.} $x+2\e_b+2\e_a,\ y+z+2\od_b+2\od_a-a_1,\ y+z+2\od_b+2\od_a-a_1, x+2\od_b+2\od_a-a_1,\ y+z+2\od_b+2\od_a-a_1,\ y+z+2\od_b+2\od_a-a_1$.

 We remove  two equal ferns whose  triangles are of side lengths $b_1,b_2,\dots,b_n$ at level $z$ above the west vertex of the hexagon,
 one array branches from the northeast side of the hexagon to  the right and the other branches in the opposite direction, from the northwest side to the left. In the middle of the two array,
 we remove a symmetric array of triangles with an $a_1$-triangle in the middle, then two $a_2$-triangles on both sides, and so on
  (see Figure \ref{fig:threearray}).  If the $a_1$-triangle is up-pointing, we denote the region by $S^{(1)}_{x,y,z}(a_1,a_2,\dotsc, a_m; b_1,b_2,\dotsc,b_n)$, otherwise the region is denoted by
 $S^{(2)}_{x,y,z}(a_1,a_2,\dotsc, a_m; b_1,b_2,\dotsc,b_n)$. We prove that the tilings of these two regions are both enumerated by simple product formulas.

\begin{figure}\centering
\setlength{\unitlength}{3947sp}%
\begingroup\makeatletter\ifx\SetFigFont\undefined%
\gdef\SetFigFont#1#2#3#4#5{%
  \reset@font\fontsize{#1}{#2pt}%
  \fontfamily{#3}\fontseries{#4}\fontshape{#5}%
  \selectfont}%
\fi\endgroup%
\resizebox{15cm}{!}{
\begin{picture}(0,0)
\includegraphics{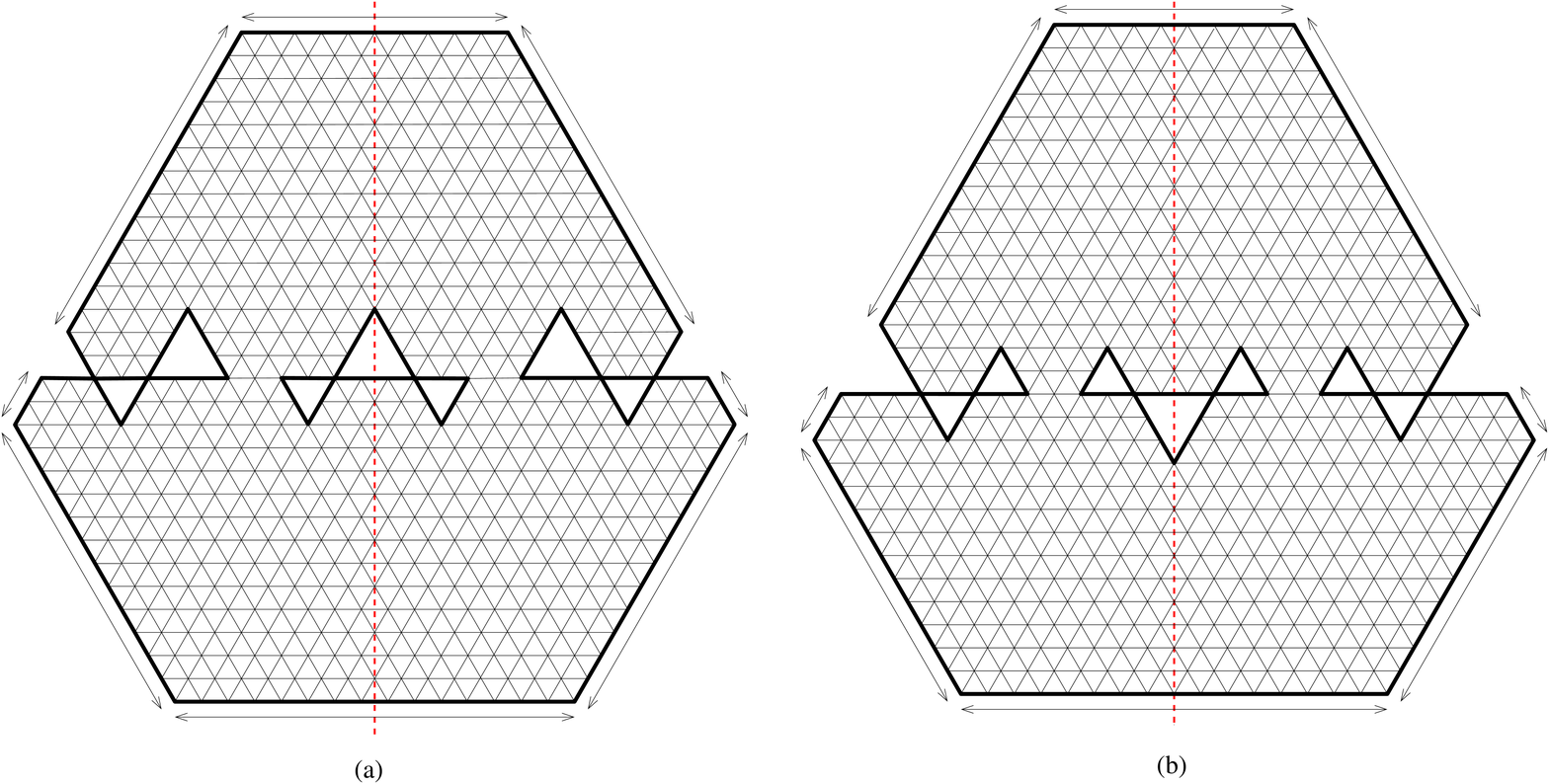}%
\end{picture}%
%
%

\begin{picture}(48039,24812)(8126,-25372)
\put(43381,-1161){\makebox(0,0)[lb]{\smash{{\SetFigFont{42}{40.8}{\rmdefault}{\mddefault}{\itdefault}{$x+a_1+2b_2$}%
}}}}
\put(50701,-4041){\rotatebox{300.0}{\makebox(0,0)[lb]{\smash{{\SetFigFont{32}{40.8}{\rmdefault}{\mddefault}{\itdefault}{$y+2a_2+b_1+2b_3$}%
}}}}}
\put(53121,-21441){\rotatebox{60.0}{\makebox(0,0)[lb]{\smash{{\SetFigFont{34}{40.8}{\rmdefault}{\mddefault}{\itdefault}{$y+z+a_1+2b_2$}%
}}}}}
\put(42101,-23861){\makebox(0,0)[lb]{\smash{{\SetFigFont{34}{40.8}{\rmdefault}{\mddefault}{\itdefault}{$x+2a_2+2b_1+2b_3$}%
}}}}
\put(33621,-17221){\rotatebox{300.0}{\makebox(0,0)[lb]{\smash{{\SetFigFont{34}{40.8}{\rmdefault}{\mddefault}{\itdefault}{$y+z+a_1+2b_2$}%
}}}}}
\put(36081,-8481){\rotatebox{60.0}{\makebox(0,0)[lb]{\smash{{\SetFigFont{34}{40.8}{\rmdefault}{\mddefault}{\itdefault}{$y+2a_2+b_1+2b_3$}%
}}}}}
\put(19801,-12421){\makebox(0,0)[lb]{\smash{{\SetFigFont{34}{40.8}{\rmdefault}{\mddefault}{\itdefault}{$a_1$}%
}}}}
\put(21801,-13481){\makebox(0,0)[lb]{\smash{{\SetFigFont{34}{40.8}{\rmdefault}{\mddefault}{\itdefault}{$a_2$}%
}}}}
\put(17721,-13441){\makebox(0,0)[lb]{\smash{{\SetFigFont{34}{40.8}{\rmdefault}{\mddefault}{\itdefault}{$a_2$}%
}}}}
\put(29181,-12581){\makebox(0,0)[lb]{\smash{{\SetFigFont{34}{40.8}{\rmdefault}{\mddefault}{\itdefault}{$b_1$}%
}}}}
\put(27521,-13521){\makebox(0,0)[lb]{\smash{{\SetFigFont{34}{40.8}{\rmdefault}{\mddefault}{\itdefault}{$b_2$}%
}}}}
\put(25341,-12381){\makebox(0,0)[lb]{\smash{{\SetFigFont{34}{40.8}{\rmdefault}{\mddefault}{\itdefault}{$b_3$}%
}}}}
\put(10434,-12507){\makebox(0,0)[lb]{\smash{{\SetFigFont{34}{40.8}{\rmdefault}{\mddefault}{\itdefault}{$b_1$}%
}}}}
\put(12071,-13452){\makebox(0,0)[lb]{\smash{{\SetFigFont{34}{40.8}{\rmdefault}{\mddefault}{\itdefault}{$b_2$}%
}}}}
\put(14117,-12271){\makebox(0,0)[lb]{\smash{{\SetFigFont{34}{40.8}{\rmdefault}{\mddefault}{\itdefault}{$b_3$}%
}}}}
\put(53151,-12991){\makebox(0,0)[lb]{\smash{{\SetFigFont{34}{40.8}{\rmdefault}{\mddefault}{\itdefault}{$b_1$}%
}}}}
\put(35091,-12991){\makebox(0,0)[lb]{\smash{{\SetFigFont{34}{40.8}{\rmdefault}{\mddefault}{\itdefault}{$b_1$}%
}}}}
\put(37291,-13991){\makebox(0,0)[lb]{\smash{{\SetFigFont{34}{40.8}{\rmdefault}{\mddefault}{\itdefault}{$b_2$}%
}}}}
\put(51271,-14031){\makebox(0,0)[lb]{\smash{{\SetFigFont{34}{40.8}{\rmdefault}{\mddefault}{\itdefault}{$b_2$}%
}}}}
\put(38831,-13131){\makebox(0,0)[lb]{\smash{{\SetFigFont{34}{40.8}{\rmdefault}{\mddefault}{\itdefault}{$b_3$}%
}}}}
\put(49491,-13171){\makebox(0,0)[lb]{\smash{{\SetFigFont{34}{40.8}{\rmdefault}{\mddefault}{\itdefault}{$b_3$}%
}}}}
\put(44200,-14251){\makebox(0,0)[lb]{\smash{{\SetFigFont{34}{40.8}{\rmdefault}{\mddefault}{\itdefault}{$a_1$}%
}}}}
\put(46221,-13091){\makebox(0,0)[lb]{\smash{{\SetFigFont{34}{40.8}{\rmdefault}{\mddefault}{\itdefault}{$a_2$}%
}}}}
\put(42201,-13111){\makebox(0,0)[lb]{\smash{{\SetFigFont{34}{40.8}{\rmdefault}{\mddefault}{\itdefault}{$a_2$}%
}}}}
\put(31241,-13161){\makebox(0,0)[lb]{\smash{{\SetFigFont{34}{40.8}{\rmdefault}{\mddefault}{\itdefault}{$z$}%
}}}}
\put(55761,-13651){\makebox(0,0)[lb]{\smash{{\SetFigFont{34}{40.8}{\rmdefault}{\mddefault}{\itdefault}{$z$}%
}}}}
\put(8141,-13251){\makebox(0,0)[lb]{\smash{{\SetFigFont{34}{40.8}{\rmdefault}{\mddefault}{\itdefault}{$z$}%
}}}}
\put(25621,-3001){\rotatebox{300.0}{\makebox(0,0)[lb]{\smash{{\SetFigFont{34}{40.8}{\rmdefault}{\mddefault}{\itdefault}{$y+a_1+b_1+2b_3$}%
}}}}}
\put(10981,-8881){\rotatebox{60.0}{\makebox(0,0)[lb]{\smash{{\SetFigFont{34}{40.8}{\rmdefault}{\mddefault}{\itdefault}{$y+a_1+b_1+2b_3$}%
}}}}}
\put(18521,-1421){\makebox(0,0)[lb]{\smash{{\SetFigFont{34}{40.8}{\rmdefault}{\mddefault}{\itdefault}{$x+2a_2+2b_2$}%
}}}}
\put(18221,-23961){\makebox(0,0)[lb]{\smash{{\SetFigFont{34}{40.8}{\rmdefault}{\mddefault}{\itdefault}{$x+a_1+2b_1+2b_3$}%
}}}}
\put(28081,-21461){\rotatebox{60.0}{\makebox(0,0)[lb]{\smash{{\SetFigFont{34}{40.8}{\rmdefault}{\mddefault}{\itdefault}{$y+z+2a_2+2b_2$}%
}}}}}
\put(9093,-17231){\rotatebox{300.0}{\makebox(0,0)[lb]{\smash{{\SetFigFont{34}{40.8}{\rmdefault}{\mddefault}{\itdefault}{$y+z+2a_2+2b_2$}%
}}}}}
\put(33081,-13731){\makebox(0,0)[lb]{\smash{{\SetFigFont{50}{50}{\rmdefault}{\mddefault}{\itdefault}{$z$}%
}}}}
\end{picture}}
\caption{The symmetric hexagons with three ferns removed. (a)
The region $S^{(1)}_{2,2,2}(3,2;\ 2,2,3)$.  (b) The region $S^{(2)}_{2,2,2}(3,2;\ 3,2,2)$}\label{fig:threearray}
\end{figure}

It is worth noticing that by the same motivation from Propp's  intruded Aztec diamonds, Ciucu and the author \cite{CL} have considered full hexagons
with two ferns removed from the boundary.  We refer the reader to \cite{Ciucu2}, \cite{Halfhex1}, and \cite{Halfhex2} for more recent discussions on
 the fern structure.



The rest of this paper is organized as follows. Due to a large number of doubly-intruded halved hexagons needed to define, we leave the precise statements of our main results to Section \ref{sec:Statement}.
 In Section \ref{sec:Prelim}, we present several fundamental results in the enumeration of tilings. For ease of reference, we also quote the particular version of Kuo condensation \cite{Kuo} and
 Ciucu's factorization theorem \cite{Ciucu3} that will be employed in our proofs. Section \ref{sec:Proof} is devoted to the proof of the main theorem. Finally, we conclude the paper by posing several open questions in Section \ref{sec:Question}.

\section{Statement of the main result}\label{sec:Statement}

Consider a hexagon of side-lengths $a,b,c,a,b,c$ on the triangular lattice. Assume that $a\leq b$ and that a maximal staircase has
been cut off from the west corner of the hexagon.
 Let $\mathcal{P}_{a,b,c}$ denote the resulting region (see Figure \ref{halfhex6}(a)).
\begin{thm}[Proctor \cite{Proc}]\label{Proctiling}For any non-negative integers $a,$ $b$, and $c$ with $a\leq b$, we have
\begin{equation}
\M(\mathcal{P}_{a,b,c})=\prod_{i=1}^{a}\left[\prod_{j=1}^{b-a+1}\frac{c+i+j-1}{i+j-1}\prod_{j=b-a+2}^{b-a+i}\frac{2c+i+j-1}{i+j-1}\right],
\end{equation}
where empty products are taken to be 1. Here we use the notation $\M(\mathcal{R})$ for the number of tilings\footnote{We only consider regions on the triangular lattice in this paper. Therefore,  from now only, we use the words  ``\emph{region(s)}" and ``\emph{tiling(s)}" to mean ``\emph{ region(s) on the triangular lattice}" and ``\emph{lozenge tiling(s)}", respectively.} of the region $\mathcal{R}$.
\end{thm}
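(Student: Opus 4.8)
The plan is to treat Theorem~\ref{Proctiling} as the base case of the general fern enumeration developed later in the paper and to establish it by induction via Kuo's graphical condensation. First I would pass to the planar dual graph $G_{a,b,c}$ of the region $\mathcal{P}_{a,b,c}$, so that lozenge tilings correspond to perfect matchings and $\M(\mathcal{P}_{a,b,c})$ becomes the number of perfect matchings of $G_{a,b,c}$. This reduces the geometric problem to a matching-enumeration problem to which the version of Kuo's condensation theorem recalled in the preliminary section applies directly.

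Next I would apply Kuo condensation with four boundary vertices selected along the staircase cut and at the northeast/southeast corners, chosen precisely so that each of the four regions appearing on the right-hand side of the condensation identity is again a maximal-staircase halved hexagon of the same type, but with $(a,b,c)$ shifted by $\pm 1$. This produces a linear recurrence for $\M(\mathcal{P}_{a,b,c})$ in the three parameters. I would then verify that the proposed closed form satisfies this recurrence: substituting the product into the identity reduces the claim to a single rational-function identity in $a,b,c$, which can be checked by cancelling the matching factors on both sides. The induction is anchored by the base cases $a=0$ (where the empty product equals $1$ and the region admits a unique tiling) together with degenerate small values of $b$ and $c$.

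The main obstacle will be organizing the condensation so that every subregion genuinely stays inside the family $\{\mathcal{P}_{a,b,c}\}$ — the staircase is maximal and abuts the west boundary, so a careless choice of the four vertices produces out-of-family or degenerate pieces — and, once the recurrence is in hand, keeping correct track of the two product regimes, namely the transition at $j=b-a+1$ between the single numerator factor $c+i+j-1$ and the doubled factor $2c+i+j-1$. Matching this regime split against the parameter shifts $(a,b,c)\mapsto(a\pm1,b\pm1,c\pm1)$ is exactly where the algebraic verification becomes delicate, even though each individual step is routine. As an independent sanity check, one may note that $\M(\mathcal{P}_{a,b,c})$ counts the vertically symmetric tilings of the full hexagon with sides $a,b,c,a,b,c$; combining MacMahon's formula with Ciucu's factorization theorem then recovers the same product and confirms the normalization of the two regimes.
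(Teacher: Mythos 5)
You should note at the outset that the paper does not prove Theorem \ref{Proctiling}: it is quoted as a known result of Proctor \cite{Proc} and used only as background, so your argument has to stand as a complete independent proof. As such, it has a genuine gap at its central step. Your plan requires choosing the four vertices $u,v,w,s$ so that all six graphs in Kuo's identity (\ref{Kuoeq}) — not just the four on the right, but also $G-\{u,v,w,s\}$ on the left — correspond, after removing forced lozenges, to regions $\mathcal{P}_{a',b',c'}$ of the same three-parameter family. You correctly flag this as the main obstacle, but you neither exhibit such a choice nor give any reason one exists, and the evidence points the other way: the family $\{\mathcal{P}_{a,b,c}\}$ is rigid (the staircase is maximal and its length is tied to all three side parameters), and deleting unit triangles at corners abutting the staircase triggers forced cascades that leave a hexagon with a non-maximal staircase or a dent, i.e., a region outside the family. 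This is exactly the phenomenon visible in the paper's own condensation arguments: even in the far more flexible fern family, the Kuo recurrence (\ref{recurrence1}) does not preserve the region type on the nose but replaces the fern $\textbf{b}$ by the modified fern $\textbf{b}^{+1}$, and this need for extra boundary structure is precisely why Rohatgi \cite{Ranjan} and the author \cite{Halfhex1} enlarge the family (first a boundary triangle, then a fern) before any induction can close. So to carry out your plan you must either display the four vertices together with the forced-lozenge analysis proving in-family closure (which I do not believe can be done), or first prove a statement for a larger, condensation-closed family and recover Theorem \ref{Proctiling} by specialization — a materially different and longer proof than the one sketched.

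Your proposed sanity check is also unsound. For $a<b$ the region $\mathcal{P}_{a,b,c}$ is a halved hexagon \emph{with a defect}: it is not half of any symmetric hexagon, and its tilings are not in bijection with vertically symmetric tilings of the hexagon with sides $a,b,c,a,b,c$ (that hexagon admits a vertical mirror symmetry only when $b=c$; even for $a=b$, the symmetric hexagon of which $\mathcal{P}_{a,b,c}$ is a half is a different, larger hexagon). Moreover, combining MacMahon's theorem with Ciucu's factorization theorem (Lemma \ref{ciucufactor}) cannot by itself recover Proctor's product: the factorization expresses the \emph{total} number of tilings of a symmetric hexagon as $2^{k}$ times the product of the tiling numbers of its two differently weighted halves, so it determines $\M(\mathcal{P}_{a,b,c})\cdot\M(\mathcal{P}'_{a,b,c})$ rather than $\M(\mathcal{P}_{a,b,c})$; separating the factors needs the weighted formula of Theorem \ref{Ciuculem} as additional input. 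In particular, this check is blind to exactly the error you want it to catch, namely a misassignment of the two product regimes between the weighted and unweighted halves.
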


We note that when $a=b$, the region $\mathcal{P}_{a,b,c}$ becomes a ``\emph{halved hexagon}", and
 that Proctor's theorem yields a closed-form product formula for the number of transposed-complimentary plane partitions.
In this paper, we also call $\mathcal{P}_{a,b,c}$ a  halved hexagon (with a defect).

\begin{figure}
  \centering
  \includegraphics[width=10cm]{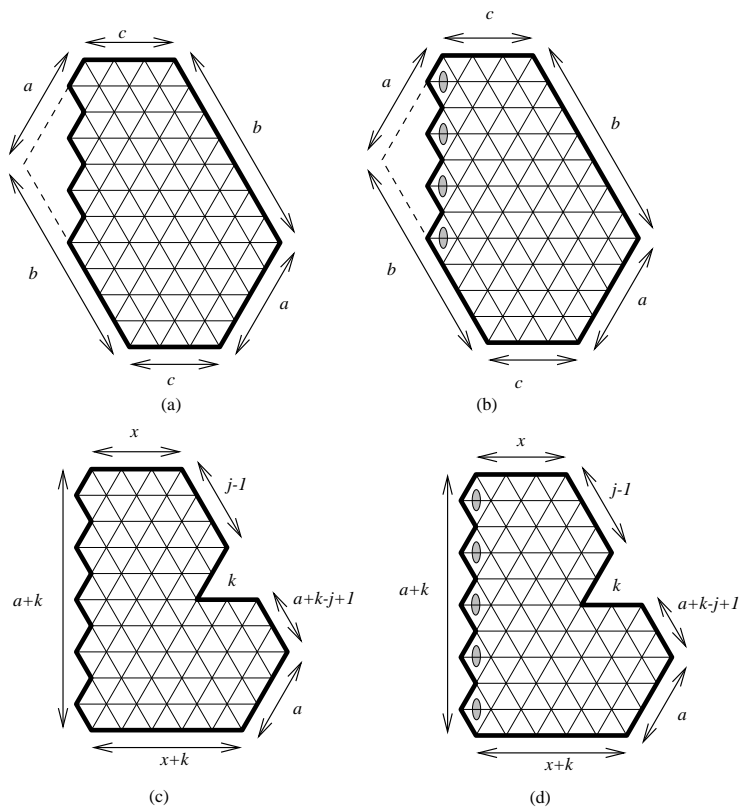}
  \caption{(a) Halved hexagon (with defect) $\mathcal{P}_{4,7,3}$. (b) The weighted halved hexagon (with defect) $\mathcal{P}'_{4,7,3}$. (c)--(d) The regions in Rohatgi's paper \cite{Ranjan}.}\label{halfhex6}
\end{figure}



Lozenges in a region can carry `weights'. In this case, we use the notation $\M(\mathcal{R})$ for the sum of weights of all tilings of $\mathcal{R}$,
where the \emph{weight} of a tiling is the weight product of its constituent lozenges. We are also interested in the weighted counterpart
$\mathcal{P}'_{a,b,c}$  of $\mathcal{P}_{a,b,c}$  where the vertical lozenges along the west side of the region are all weighted by $\frac{1}{2}$ (see the lozenges with shaded `cores' in Figure \ref{halfhex6}(b)).
 In this weight assignment, a tiling has the weight $\left(\frac{1}{2}\right)^n$, where $n$ is the number of vertical lozenges running along the west side.
 M. Ciucu \cite{Ciucu1} proved the following weighted counterpart of Theorem \ref{Proctiling}.

\begin{thm}\label{Ciuculem} For any non-negative integers $a,$ $b$, and $c$ with $a\leq b$
\begin{equation}
\M(\mathcal{P}'_{a,b,c})=2^{-a}\prod_{i=1}\frac{2c+b-a+i}{c+b-a+i}\prod_{i=1}^{a}\left[\prod_{j=1}^{b-a+1}\frac{c+i+j-1}{i+j-1}\prod_{j=b-a+2}^{b-a+i}\frac{2c+i+j-1}{i+j-1}\right].
\end{equation}
\end{thm}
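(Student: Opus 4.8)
The plan is to derive Theorem~\ref{Ciuculem} from Theorem~\ref{Proctiling} by means of Ciucu's factorization theorem \cite{Ciucu3} for regions with a vertical symmetry axis, using MacMahon's box formula from the introduction to evaluate an auxiliary symmetric hexagon. Comparing the two product expressions shows that the assertion is equivalent to the single multiplicative identity
\[
\M(\mathcal{P}'_{a,b,c}) = 2^{-a}\left(\prod_{i=1}^{a}\frac{2c+b-a+i}{c+b-a+i}\right)\M(\mathcal{P}_{a,b,c}),
\]
so once this identity is established, Theorem~\ref{Proctiling} finishes the job. I would therefore concentrate on proving this identity.

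The key idea is that $\mathcal{P}'_{a,b,c}$ is exactly the weighted half that arises when a symmetric hexagon $H$ is split along its vertical symmetry axis. Concretely, the maximal staircase along the west boundary of the halved hexagon is a vertical zigzag; reflecting $\mathcal{P}_{a,b,c}$ across the line carrying this zigzag produces a genuine (unpunctured) symmetric hexagon $H$, and the vertical lozenges running along the west boundary of $\mathcal{P}'_{a,b,c}$ are precisely the lozenges bisected by the axis, which the factorization theorem weights by $\tfrac12$. Ciucu's factorization theorem then yields
\[
\M(H) = 2^{k}\,\M(\mathcal{P}'_{a,b,c})^{2},
\]
where the two factors coincide because the left and right halves of $H$ are mirror images, and $k$ is determined by the number of lozenges the axis bisects. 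Since $H$ is an ordinary hexagon, $\M(H)$ is given outright by MacMahon's formula, and extracting the square root expresses $\M(\mathcal{P}'_{a,b,c})$ as an explicit product of linear factors.

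The delicate part, and the step I expect to be the main obstacle, is the bookkeeping that converts $\left(2^{-k}\M(H)\right)^{1/2}$ into the stated form. Three things must be tracked with care: first, the exact side lengths of $H$ and the precise value of $k$, which depend on how the staircase meets the axis near the west corner and must be verified uniformly for all $a\le b$; second, the fact that $2^{-k}\M(H)$ is a perfect square of a product of linear factors, which the factorization theorem guarantees in principle but which must be exhibited explicitly once MacMahon's formula is expanded; and third, the final reconciliation with $2^{-a}\bigl(\prod_{i=1}^{a}\frac{2c+b-a+i}{c+b-a+i}\bigr)\M(\mathcal{P}_{a,b,c})$, which amounts to a finite rearrangement of Pochhammer-type products using Theorem~\ref{Proctiling}. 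Once the geometry of $H$ and the exponent $k$ are correctly pinned down, the remaining manipulations are routine.
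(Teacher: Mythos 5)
A preliminary remark: the paper never proves Theorem \ref{Ciuculem} at all; it is quoted as Ciucu's result \cite{Ciucu1}, so there is no internal proof to compare yours against, and your argument must stand on its own. It does not, because of the step you pass over as automatic: the assertion that Lemma \ref{ciucufactor} applied to the doubled region $H$ yields $\M(H)=2^{k}\M(\mathcal{P}'_{a,b,c})^{2}$ ``because the left and right halves of $H$ are mirror images.'' Ciucu's factorization theorem never splits a symmetric graph into two copies of the same graph: after the edge weights on the axis are halved, the edges deleted on the left of $\ell$ (those at a black $a_i$ or a white $b_j$) and the edges deleted on the right (those at a white $a_i$ or a black $b_j$) are \emph{complementary}, so $G^{+}$ and $G^{-}$ are non-isomorphic even though $G$ itself is mirror-symmetric. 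One factor becomes (after removing forced lozenges) a weighted half with $\tfrac12$'s along the cut; the other becomes an unweighted half of a slightly different shape. This paper's own use of the lemma shows the pattern: in Theorem \ref{mainthm1}, the symmetric region $S^{(1)}$ factors into $\M(H^{(2)}_{\lfloor x/2\rfloor,\lceil y/2\rceil,z})\cdot\M(W^{(1)}_{\lceil x/2\rceil,\lfloor y/2\rfloor,z})$ --- two different regions, not a square. Concretely, your identity fails already for the vertically symmetric hexagon with sides $2,1,1,2,1,1$ (the double of the halved unit hexagon): MacMahon gives $\M(H)=3$, the symmetry axis bisects exactly two unit triangles so $k=1$, and your equation would force $\M(\mathcal{P}')^{2}=3/2$, which is impossible because a matching count with edge weights $1$ and $\tfrac12$ is rational (the true factorization is $3=2\cdot\tfrac32\cdot1$, the unweighted half being frozen). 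So the square root you intend to extract does not exist; the failure is not in the deferred bookkeeping but in the identity that the bookkeeping was meant to verify.

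The strategy is repairable, and its corrected form is essentially Ciucu's actual argument: Lemma \ref{ciucufactor} gives $\M(H)=2^{k}\,\M(\mathcal{P}'_{a,b,c})\,\M(R)$ with $R$ an \emph{unweighted} region of $\mathcal{P}$-type whose parameters you must determine exactly (as the small example shows, $R$ is not $\mathcal{P}_{a,b,c}$ itself); then $\M(H)$ is evaluated by MacMahon, $\M(R)$ by Theorem \ref{Proctiling}, and one solves for $\M(\mathcal{P}'_{a,b,c})$. Alternatively, the equivalent identity $\M(\mathcal{P}'_{a,b,c})=2^{-a}\prod_{i=1}^{a}\frac{2c+b-a+i}{c+b-a+i}\,\M(\mathcal{P}_{a,b,c})$, which you isolate correctly at the outset, can be proved by Kuo condensation, the engine behind this paper's main theorems. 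One more gap to close even in the corrected route: doubling $\mathcal{P}_{a,b,c}$ across its west boundary produces an honest hexagon, to which MacMahon applies, only when that boundary is a single vertical zigzag, i.e.\ in the halved-hexagon case $a=b$; for $a<b$ (what the paper calls the region with a defect) the reflected region is not a hexagon, so the general case $a\le b$ requires an additional reduction before the factorization argument can even begin.
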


\begin{figure}\centering
\setlength{\unitlength}{3947sp}%
\begingroup\makeatletter\ifx\SetFigFont\undefined%
\gdef\SetFigFont#1#2#3#4#5{%
  \reset@font\fontsize{#1}{#2pt}%
  \fontfamily{#3}\fontseries{#4}\fontshape{#5}%
  \selectfont}%
\fi\endgroup%
\resizebox{12cm}{!}{
\begin{picture}(0,0)%
\includegraphics{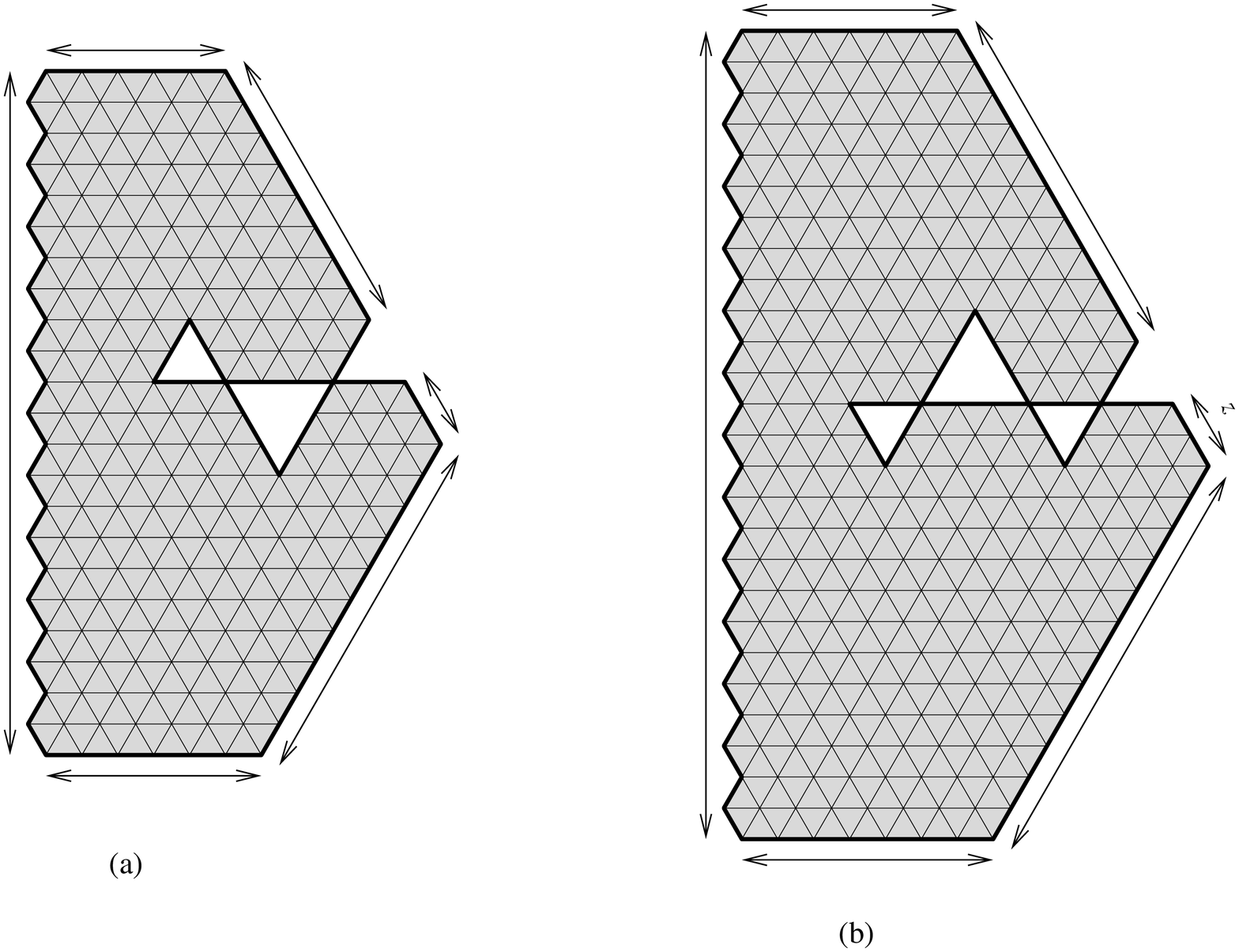}%
\end{picture}%

\begin{picture}(13959,10649)(1761,-9926)
\put(3256, 74){\makebox(0,0)[lb]{\smash{{\SetFigFont{20}{16.8}{\rmdefault}{\mddefault}{\itdefault}{$x+a_2$}%
}}}}
\put(5371,-931){\rotatebox{300.0}{\makebox(0,0)[lb]{\smash{{\SetFigFont{20}{16.8}{\rmdefault}{\mddefault}{\itdefault}{$y+a_1+2a_3$}%
}}}}}
\put(6091,-3541){\makebox(0,0)[lb]{\smash{{\SetFigFont{20}{16.8}{\rmdefault}{\mddefault}{\itdefault}{$a_1$}%
}}}}
\put(5100,-4126){\makebox(0,0)[lb]{\smash{{\SetFigFont{20}{16.8}{\rmdefault}{\mddefault}{\itdefault}{$a_2$}%
}}}}
\put(4000,-3601){\makebox(0,0)[lb]{\smash{{\SetFigFont{20}{16.8}{\rmdefault}{\mddefault}{\itdefault}{$a_3$}%
}}}}
\put(7111,-3751){\rotatebox{300.0}{\makebox(0,0)[lb]{\smash{{\SetFigFont{20}{16.8}{\rmdefault}{\mddefault}{\itdefault}{$z$}%
}}}}}
\put(6001,-7111){\rotatebox{60.0}{\makebox(0,0)[lb]{\smash{{\SetFigFont{20}{16.8}{\rmdefault}{\mddefault}{\itdefault}{$y+z+2a_2$}%
}}}}}
\put(3061,-8416){\makebox(0,0)[lb]{\smash{{\SetFigFont{20}{16.8}{\rmdefault}{\mddefault}{\itdefault}{$x+a_1+a_3$}%
}}}}
\put(1996,-5506){\rotatebox{90.0}{\makebox(0,0)[lb]{\smash{{\SetFigFont{20}{16.8}{\rmdefault}{\mddefault}{\itdefault}{$y+z+a_1+a_2+a_3$}%
}}}}}
\put(9601,-6286){\rotatebox{90.0}{\makebox(0,0)[lb]{\smash{{\SetFigFont{20}{16.8}{\rmdefault}{\mddefault}{\itdefault}{$y+z+a_1+a_2+a_3+a_4$}%
}}}}}
\put(10666,-9271){\makebox(0,0)[lb]{\smash{{\SetFigFont{20}{16.8}{\rmdefault}{\mddefault}{\itdefault}{$x+a_1+a_3$}%
}}}}
\put(13831,-8176){\rotatebox{60.0}{\makebox(0,0)[lb]{\smash{{\SetFigFont{20}{16.8}{\rmdefault}{\mddefault}{\itdefault}{$y+z+2a_2+2a_4$}%
}}}}}
\put(13533,-661){\rotatebox{300.0}{\makebox(0,0)[lb]{\smash{{\SetFigFont{20}{16.8}{\rmdefault}{\mddefault}{\itdefault}{$y+a_1+2a_3$}%
}}}}}
\put(14416,-3766){\makebox(0,0)[lb]{\smash{{\SetFigFont{20}{16.8}{\rmdefault}{\mddefault}{\itdefault}{$a_1$}%
}}}}
\put(13561,-4201){\makebox(0,0)[lb]{\smash{{\SetFigFont{20}{16.8}{\rmdefault}{\mddefault}{\itdefault}{$a_2$}%
}}}}
\put(12600,-3826){\makebox(0,0)[lb]{\smash{{\SetFigFont{20}{16.8}{\rmdefault}{\mddefault}{\itdefault}{$a_3$}%
}}}}
\put(11536,-4186){\makebox(0,0)[lb]{\smash{{\SetFigFont{20}{16.8}{\rmdefault}{\mddefault}{\itdefault}{$a_4$}%
}}}}
\put(10591,434){\makebox(0,0)[lb]{\smash{{\SetFigFont{20}{16.8}{\rmdefault}{\mddefault}{\itdefault}{$x+a_2+a_4$}%
}}}}
\end{picture}}
\caption{Halved hexagons with an array of triangles removed from their northeast sides. The regions were enumerated in \cite{Halfhex1}.} \label{halfhex13}
\end{figure}

\begin{figure}\centering
\setlength{\unitlength}{3947sp}%
\begingroup\makeatletter\ifx\SetFigFont\undefined%
\gdef\SetFigFont#1#2#3#4#5{%
  \reset@font\fontsize{#1}{#2pt}%
  \fontfamily{#3}\fontseries{#4}\fontshape{#5}%
  \selectfont}%
\fi\endgroup%
\resizebox{13cm}{!}{
\begin{picture}(0,0)%
\includegraphics{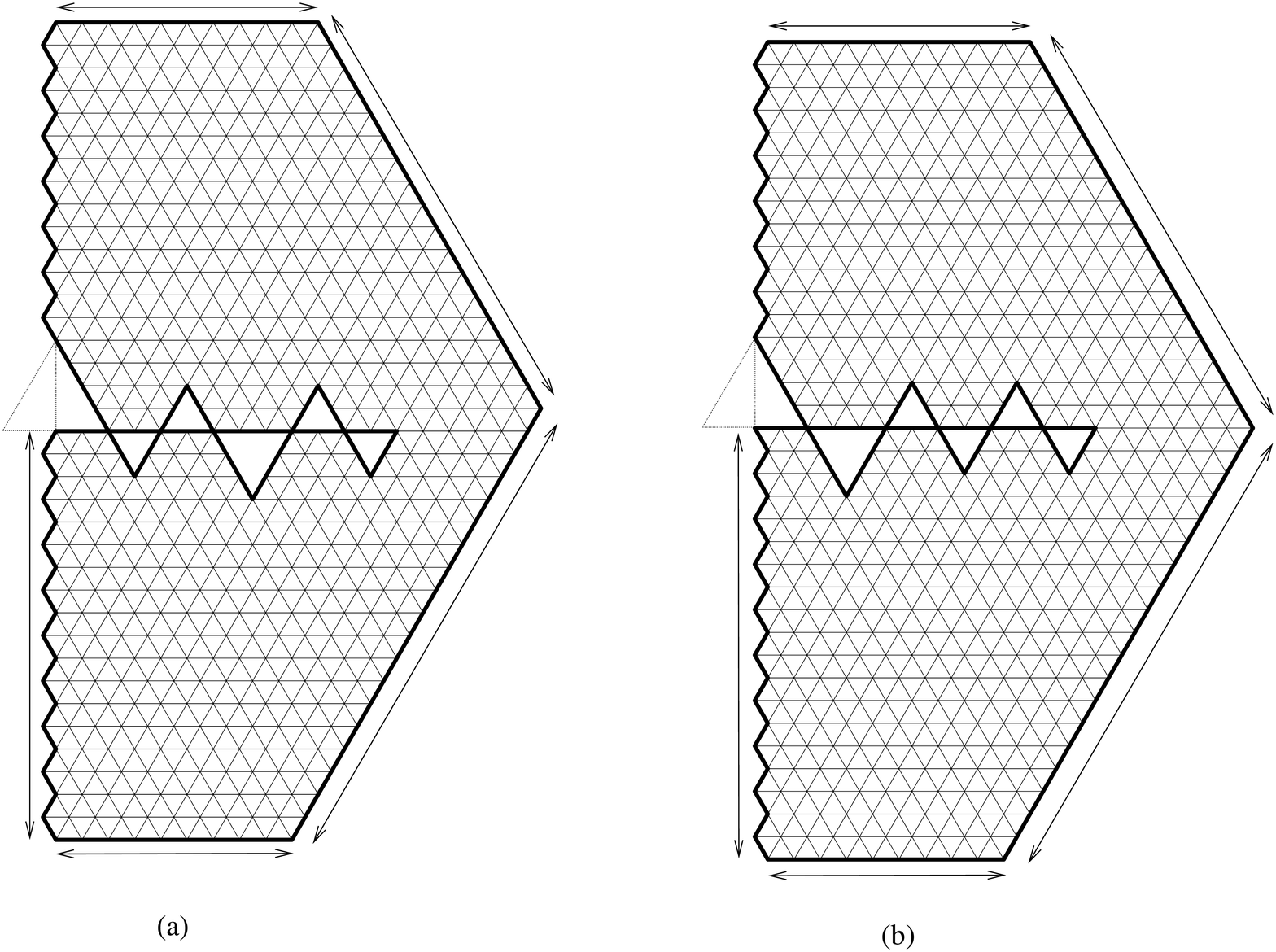}%
\end{picture}%
%
%

\begin{picture}(19122,14547)(2375,-13913)
\put(4363,-6421){\makebox(0,0)[lb]{\smash{{\SetFigFont{20}{24.0}{\rmdefault}{\mddefault}{\updefault}{$a_2$}%
}}}}
\put(5113,-6046){\makebox(0,0)[lb]{\smash{{\SetFigFont{20}{24.0}{\rmdefault}{\mddefault}{\updefault}{$a_3$}%
}}}}
\put(6013,-6519){\makebox(0,0)[lb]{\smash{{\SetFigFont{20}{24.0}{\rmdefault}{\mddefault}{\updefault}{$a_4$}%
}}}}
\put(7078,-6046){\makebox(0,0)[lb]{\smash{{\SetFigFont{20}{24.0}{\rmdefault}{\mddefault}{\updefault}{$a_5$}%
}}}}
\put(7858,-6421){\makebox(0,0)[lb]{\smash{{\SetFigFont{20}{24.0}{\rmdefault}{\mddefault}{\updefault}{$a_6$}%
}}}}
\put(3999,269){\makebox(0,0)[lb]{\smash{{\SetFigFont{20}{24.0}{\rmdefault}{\mddefault}{\updefault}{$x+a_2+a_4+a_6$}%
}}}}
\put(8484,-1336){\rotatebox{300.0}{\makebox(0,0)[lb]{\smash{{\SetFigFont{20}{24.0}{\rmdefault}{\mddefault}{\updefault}{$y+z+2a_1+2a_3+2a_5$}%
}}}}}
\put(8431,-10914){\rotatebox{60.0}{\makebox(0,0)[lb]{\smash{{\SetFigFont{20}{24.0}{\rmdefault}{\mddefault}{\updefault}{$y+z+2a_2+2a_4+2a_6$}%
}}}}}
\put(3819,-12856){\makebox(0,0)[lb]{\smash{{\SetFigFont{20}{24.0}{\rmdefault}{\mddefault}{\updefault}{$x+a_1+a_3+a_5$}%
}}}}
\put(2694,-5709){\rotatebox{60.0}{\makebox(0,0)[lb]{\smash{{\SetFigFont{20}{24.0}{\rmdefault}{\mddefault}{\updefault}{$2a_1$}%
}}}}}
\put(2821,-10891){\rotatebox{90.0}{\makebox(0,0)[lb]{\smash{{\SetFigFont{20}{24.0}{\rmdefault}{\mddefault}{\updefault}{$2z+2a_2+2a_4+2a_6$}%
}}}}}
\put(3406,-6061){\makebox(0,0)[lb]{\smash{{\SetFigFont{20}{24.0}{\rmdefault}{\mddefault}{\updefault}{$a_1$}%
}}}}
\put(13076,-5679){\rotatebox{60.0}{\makebox(0,0)[lb]{\smash{{\SetFigFont{20}{24.0}{\rmdefault}{\mddefault}{${0,0,0}2a_1$}%
}}}}}
\put(13846,-5979){\makebox(0,0)[lb]{\smash{{\SetFigFont{20}{24.0}{\rmdefault}{\mddefault}{\updefault}{$a_1$}%
}}}}
\put(14896,-6444){\makebox(0,0)[lb]{\smash{{\SetFigFont{20}{24.0}{\rmdefault}{\mddefault}{\updefault}{$a_2$}%
}}}}
\put(15886,-5994){\makebox(0,0)[lb]{\smash{{\SetFigFont{20}{24.0}{\rmdefault}{\mddefault}{\updefault}{$a_3$}%
}}}}
\put(16681,-6384){\makebox(0,0)[lb]{\smash{{\SetFigFont{20}{24.0}{\rmdefault}{\mddefault}{\updefault}{$a_4$}%
}}}}
\put(17476,-5994){\makebox(0,0)[lb]{\smash{{\SetFigFont{20}{24.0}{\rmdefault}{\mddefault}{\updefault}{$a_5$}%
}}}}
\put(18241,-6354){\makebox(0,0)[lb]{\smash{{\SetFigFont{20}{24.0}{\rmdefault}{\mddefault}{\updefault}{$a_6$}%
}}}}
\put(14409,-13179){\makebox(0,0)[lb]{\smash{{\SetFigFont{20}{24.0}{\rmdefault}{\mddefault}{\updefault}{$x+a_1+a_3+a_5$}%
}}}}
\put(14588, -9){\makebox(0,0)[lb]{\smash{{\SetFigFont{20}{24.0}{\rmdefault}{\mddefault}{\updefault}{$x+a_2+a_4+a_6$}%
}}}}
\put(18841,-1066){\rotatebox{300.0}{\makebox(0,0)[lb]{\smash{{\SetFigFont{20}{24.0}{\rmdefault}{\mddefault}{\updefault}{$y+z-1+2a_1+2a_3+2a_5$}%
}}}}}
\put(19006,-11296){\rotatebox{60.0}{\makebox(0,0)[lb]{\smash{{\SetFigFont{20}{24.0}{\rmdefault}{\mddefault}{\updefault}{$y+z-1+2a_2+2a_4+2a_6$}%
}}}}}
\put(13276,-11101){\rotatebox{90.0}{\makebox(0,0)[lb]{\smash{{\SetFigFont{20}{24.0}{\rmdefault}{\mddefault}{\updefault}{$2z-1+2a_2+2a_4+2a_6$}%
}}}}}
\end{picture}}
\caption{Two halved hexagons with a fern removed on their west sides.  The regions were enumerated in \cite{Halfhex2}.}\label{middlehole1}
\end{figure}

Recently, Rohatgi \cite{Ranjan} generalized the tiling enumeration of a halved hexagon
 to a halved hexagon with a triangle removed along the northeast side (see Figures \ref{halfhex6}(c) and (d)).
 The author \cite{Halfhex1} generalized further Rohatgi's result by extending the single triangular hole in the latter regions to a fern of
  an arbitrary number of triangular holes (see Figure \ref{halfhex13}). Moreover, it has been shown in \cite{Halfhex2} that if
  the fern is removed from the west side instead of the northeast side, we also have a simple product formula for the tiling number (see Figure \ref{middlehole1} for examples).

In this paper, we show that the instantaneous removal of two aligned ferns from two different
 sides of the halved hexagon still gives a simple product formula for the number of tilings.
 Based on the orientations of triangles in the ferns and the weight assignments of lozenges along the west side of the halved hexagons,
  there are \emph{sixteen} families of
  regions to enumerate. The extensive list of tiling enumerations of the halved hexagons will be presented in Theorems \ref{main1}--\ref{mainMR4} below.

Let us define the \emph{hyperfactorial function} by:
\begin{equation}
\Hf(n):=0!\cdot 1! \cdot 2! \cdots (n-1)!,
\end{equation}
 and its `skipping' version by
 \begin{equation}
\Hf_2(n):=\prod_{i=1}^{\lfloor n/2\rfloor} (n-2i)!.
\end{equation}
We define the \emph{Pochhammer symbol} $(x)_n$ by
\begin{equation}
(x)_n:=
\begin{cases}
x(x+1)(x+2)\cdots (x+n-1) & \text{if $n>0$;}\\
\quad\quad\quad\quad\quad\;1 &\text{if $n=0$;}\\
\dfrac{1}{(x-1)(x-2)\cdots(x+n)} &\text{if $n<0$.}
\end{cases}
\end{equation}
We also use its `skipping' variation:
\begin{equation}
[x]_n:=
\begin{cases}
x(x+2)(x+4)\cdots(x+2n-2) & \text{if $n>0$;}\\
\quad\quad\quad\quad\quad\;1 &\text{if $n=0$;}\\
\dfrac{1}{(x-2)(x-4)\cdots(x+2n)} &\text{if $n<0$.}
\end{cases}
\end{equation}
We finally define the two products for nonnegative integers $m,n$
\begin{equation}\label{defineT}
\T(x,n,m):=\prod_{i=0}^{m-1}(x+i)_{n-2i}
\end{equation}
and
\begin{equation}\label{defineV}
\V(x,n,m):=\prod_{i=0}^{m-1}[x+2i]_{n-2i},
\end{equation}
where the empty products are taken to be $1$.

Next, we quote the enumerations of four families of \emph{quartered hexagons} by the author in  \cite{Lai, Lai3}.
These enumerations were side results when we generalized  the work of W. Jockusch and J. Propp's on \emph{quartered Aztec diamonds} \cite{JP}.
 Intuitively, a quartered hexagon is  obtained from a halved hexagon by cutting off several up-pointing triangles along the base
  (see the regions in Figure \ref{fig:halfhex3c}).
It  is worth noticing that the unweighted enumeration of the quartered hexagons in \cite{Lai} is equivalent to the lattice-path
enumeration of the so-called \emph{stars} by C. Krattenthaler, A. J. Guttmann, and X. G. Viennot \cite{KGV}.


\begin{figure}\centering
\setlength{\unitlength}{3947sp}%
\begingroup\makeatletter\ifx\SetFigFont\undefined%
\gdef\SetFigFont#1#2#3#4#5{%
  \reset@font\fontsize{#1}{#2pt}%
  \fontfamily{#3}\fontseries{#4}\fontshape{#5}%
  \selectfont}%
\fi\endgroup%
\resizebox{13cm}{!}{
\begin{picture}(0,0)
\includegraphics{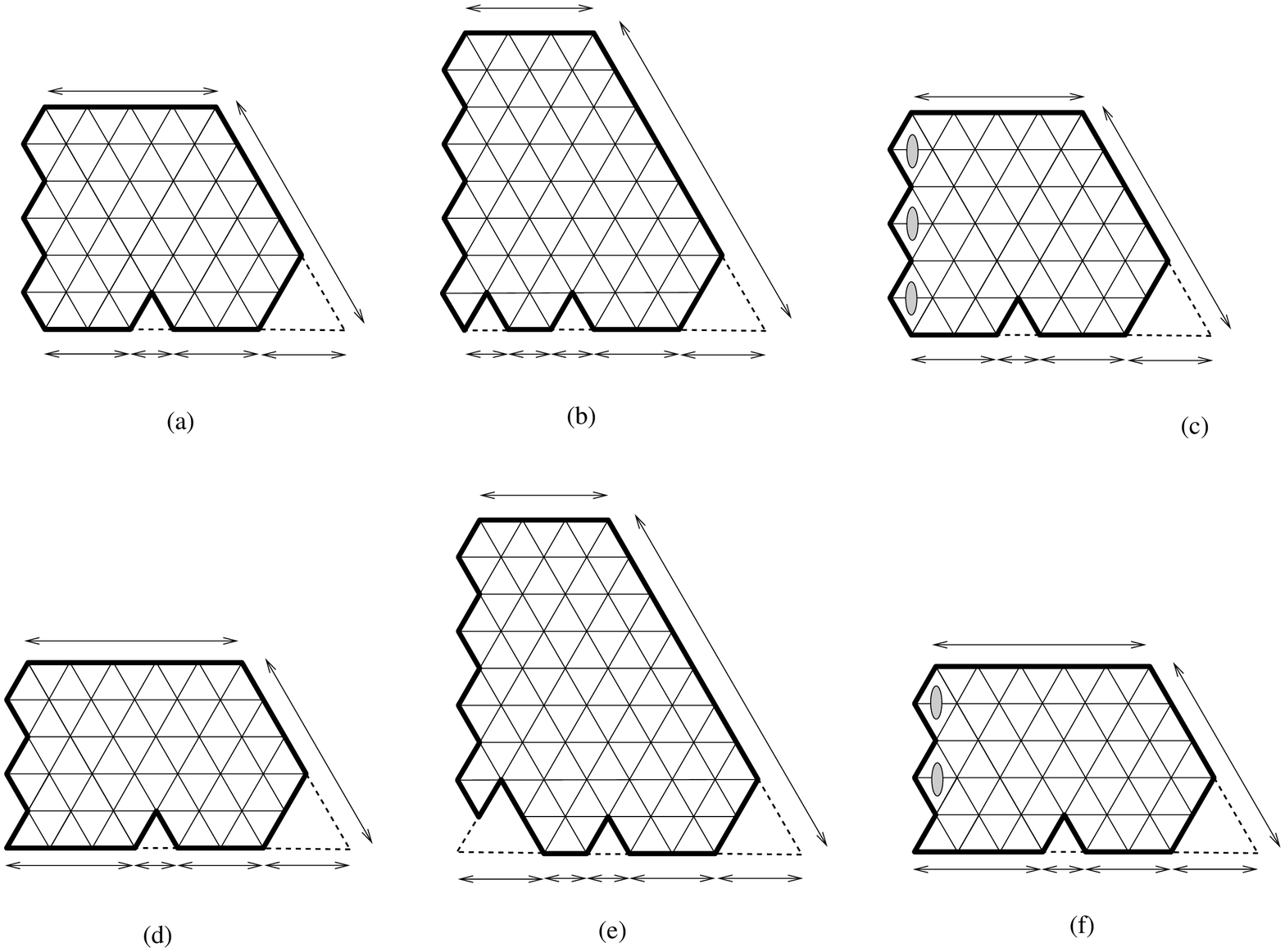}%
\end{picture}
\begin{picture}(11857,8988)(3679,-9536)
\put(4501,-1553){\makebox(0,0)[lb]{\smash{{\SetFigFont{16}{16.8}{\rmdefault}{\mddefault}{\updefault}{$t_1+t_3$}%
}}}}
\put(4148,-4338){\makebox(0,0)[lb]{\smash{{\SetFigFont{16}{16.8}{\rmdefault}{\mddefault}{\updefault}{$t_1$}%
}}}}
\put(4863,-4358){\makebox(0,0)[lb]{\smash{{\SetFigFont{16}{16.8}{\rmdefault}{\mddefault}{\updefault}{$t_2$}%
}}}}
\put(5513,-4368){\makebox(0,0)[lb]{\smash{{\SetFigFont{16}{16.8}{\rmdefault}{\mddefault}{\updefault}{$t_3$}%
}}}}
\put(8020,-4331){\makebox(0,0)[lb]{\smash{{\SetFigFont{16}{16.8}{\rmdefault}{\mddefault}{\updefault}{$t_2$}%
}}}}
\put(8387,-4331){\makebox(0,0)[lb]{\smash{{\SetFigFont{16}{16.8}{\rmdefault}{\mddefault}{\updefault}{$t_3$}%
}}}}
\put(4058,-9001){\makebox(0,0)[lb]{\smash{{\SetFigFont{16}{16.8}{\rmdefault}{\mddefault}{\updefault}{$t_1$}%
}}}}
\put(4938,-9011){\makebox(0,0)[lb]{\smash{{\SetFigFont{16}{16.8}{\rmdefault}{\mddefault}{\updefault}{$t_2$}%
}}}}
\put(5548,-9026){\makebox(0,0)[lb]{\smash{{\SetFigFont{16}{16.8}{\rmdefault}{\mddefault}{\updefault}{$t_3$}%
}}}}
\put(4583,-6566){\makebox(0,0)[lb]{\smash{{\SetFigFont{16}{16.8}{\rmdefault}{\mddefault}{\updefault}{$t_1+t_3$}%
}}}}
\put(6287,-9022){\makebox(0,0)[lb]{\smash{{\SetFigFont{16}{16.8}{\rmdefault}{\mddefault}{\updefault}{$t_4$}%
}}}}
\put(8785,-4331){\makebox(0,0)[lb]{\smash{{\SetFigFont{16}{16.8}{\rmdefault}{\mddefault}{\updefault}{$t_4$}%
}}}}
\put(9295,-4331){\makebox(0,0)[lb]{\smash{{\SetFigFont{16}{16.8}{\rmdefault}{\mddefault}{\updefault}{$t_5$}%
}}}}
\put(10045,-4331){\makebox(0,0)[lb]{\smash{{\SetFigFont{16}{16.8}{\rmdefault}{\mddefault}{\updefault}{$t_6$}%
}}}}
\put(8089,-9145){\makebox(0,0)[lb]{\smash{{\SetFigFont{16}{16.8}{\rmdefault}{\mddefault}{\updefault}{$t_2$}%
}}}}
\put(8681,-9130){\makebox(0,0)[lb]{\smash{{\SetFigFont{16}{16.8}{\rmdefault}{\mddefault}{\updefault}{$t_3$}%
}}}}
\put(9094,-9137){\makebox(0,0)[lb]{\smash{{\SetFigFont{16}{16.8}{\rmdefault}{\mddefault}{\updefault}{$t_4$}%
}}}}
\put(9611,-9130){\makebox(0,0)[lb]{\smash{{\SetFigFont{16}{16.8}{\rmdefault}{\mddefault}{\updefault}{$t_5$}%
}}}}
\put(10406,-9115){\makebox(0,0)[lb]{\smash{{\SetFigFont{16}{16.8}{\rmdefault}{\mddefault}{\updefault}{$t_6$}%
}}}}
\put(6256,-4355){\makebox(0,0)[lb]{\smash{{\SetFigFont{16}{16.8}{\rmdefault}{\mddefault}{\updefault}{$t_4$}%
}}}}
\put(6204,-2123){\rotatebox{300.0}{\makebox(0,0)[lb]{\smash{{\SetFigFont{16}{16.8}{\rmdefault}{\mddefault}{\updefault}{$2t_2+2t_4$}%
}}}}}
\put(6347,-7064){\rotatebox{300.0}{\makebox(0,0)[lb]{\smash{{\SetFigFont{16}{16.8}{\rmdefault}{\mddefault}{\updefault}{$2t_2+2t_4-1$}%
}}}}}
\put(8200,-798){\makebox(0,0)[lb]{\smash{{\SetFigFont{16}{16.8}{\rmdefault}{\mddefault}{\updefault}{$t_3+t_5$}%
}}}}
\put(8228,-5242){\makebox(0,0)[lb]{\smash{{\SetFigFont{16}{16.8}{\rmdefault}{\mddefault}{\updefault}{$t_3+t_5$}%
}}}}
\put(9685,-1481){\rotatebox{300.0}{\makebox(0,0)[lb]{\smash{{\SetFigFont{16}{16.8}{\rmdefault}{\mddefault}{\updefault}{$2t_2+2t_4+2t_6$}%
}}}}}
\put(9881,-6092){\rotatebox{300.0}{\makebox(0,0)[lb]{\smash{{\SetFigFont{16}{16.8}{\rmdefault}{\mddefault}{\updefault}{$2t_2+2t_4+2t_6-1$}%
}}}}}
\put(12392,-1604){\makebox(0,0)[lb]{\smash{{\SetFigFont{16}{16.8}{\rmdefault}{\mddefault}{\updefault}{$t_1+t_3$}%
}}}}
\put(12039,-4389){\makebox(0,0)[lb]{\smash{{\SetFigFont{16}{16.8}{\rmdefault}{\mddefault}{\updefault}{$t_1$}%
}}}}
\put(12754,-4409){\makebox(0,0)[lb]{\smash{{\SetFigFont{16}{16.8}{\rmdefault}{\mddefault}{\updefault}{$t_2$}%
}}}}
\put(13404,-4419){\makebox(0,0)[lb]{\smash{{\SetFigFont{16}{16.8}{\rmdefault}{\mddefault}{\updefault}{$t_3$}%
}}}}
\put(14147,-4406){\makebox(0,0)[lb]{\smash{{\SetFigFont{16}{16.8}{\rmdefault}{\mddefault}{\updefault}{$t_4$}%
}}}}
\put(14095,-2174){\rotatebox{300.0}{\makebox(0,0)[lb]{\smash{{\SetFigFont{16}{16.8}{\rmdefault}{\mddefault}{\updefault}{$2t_2+2t_4$}%
}}}}}
\put(12326,-9036){\makebox(0,0)[lb]{\smash{{\SetFigFont{16}{16.8}{\rmdefault}{\mddefault}{\updefault}{$t_1$}%
}}}}
\put(13206,-9046){\makebox(0,0)[lb]{\smash{{\SetFigFont{16}{16.8}{\rmdefault}{\mddefault}{\updefault}{$t_2$}%
}}}}
\put(13816,-9061){\makebox(0,0)[lb]{\smash{{\SetFigFont{16}{16.8}{\rmdefault}{\mddefault}{\updefault}{$t_3$}%
}}}}
\put(12851,-6601){\makebox(0,0)[lb]{\smash{{\SetFigFont{16}{16.8}{\rmdefault}{\mddefault}{\updefault}{$t_1+t_3$}%
}}}}
\put(14555,-9057){\makebox(0,0)[lb]{\smash{{\SetFigFont{16}{16.8}{\rmdefault}{\mddefault}{\updefault}{$t_4$}%
}}}}
\put(14615,-7099){\rotatebox{300.0}{\makebox(0,0)[lb]{\smash{{\SetFigFont{16}{16.8}{\rmdefault}{\mddefault}{\updefault}{$2t_2+2t_4-1$}%
}}}}}
\end{picture}}
\caption{ (a) The region $\mathcal{Q}(2,1,2,2)$. (b) The region $\mathcal{Q}(0,1,1,1,2,2)$.
  (c) The region $\mathcal{Q}'(2,1,2,2)$. (d) The region $\mathcal{K}(3,1,2,2)$. (e) The region $\mathcal{K}(0,2,1,1,2,2)$.
  (f) The region $\mathcal{K}'(3,1,2,2)$. The lozenges with shaded cores are weighted by $\frac{1}{2}$. The figure first appeared
  in \cite{Halfhex2}.}\label{fig:halfhex3c}
\end{figure}

Assume that $\textbf{t}=(t_1,t_2,\dots,t_{2l})$ is a sequence of non-negative integers. We define the first quartered hexagon as follows.
 Consider a trapezoidal region
whose north, northeast, and south sides have  respectively lengths
$\od_t,$ $ 2\e_t,$ and $\e_t+\od_t$, and whose west side runs along a vertical zigzag lattice path with $\e_t$ steps.
Here, and from now on, we are using the notations $\e_t$ and $\od_t$ for the sum of even terms and the sum of odd terms in the sequence $\textbf{t}$, respectively.
 We remove the triangles of side-lengths $t_{2i}$'s from the base of the latter region,  such that the distances between
 two consecutive triangles are $t_{2i-1}$'s. Denote the resulting region by
  $\mathcal{Q}(\textbf{t})=\mathcal{Q}(t_1,t_2,\dotsc,t_{2l})$ (see the regions in Figure \ref{fig:halfhex3c}(a)
   for the case when $t_{1}>0$ and Figure \ref{fig:halfhex3c}(b) for the  case when $t_{1}=0$).
   We also consider the weighted counterpart  $\mathcal{Q}'(\textbf{t})$ of the latter region,
    where the vertical lozenges on the west side are weighted by $\frac{1}{2}$
     (see Figure \ref{fig:halfhex3c}(c); the vertical lozenges with shaded cores are weighted by $\frac{1}{2}$).

We are also interested in a variation of the $\mathcal{Q}$-type regions as follows.
 Consider the  trapezoidal region whose north, northeast, and south sides have lengths $\od_t, 2\e_t-1, \e_t+\od_t,$
 respectively, and whose west side follows the vertical zigzag lattice path with $\e_t-\frac{1}{2}$ steps (i.e. the west side has
 $\e_t-1$ and a half  `bumps'). Next, we  also remove the triangles of side-lengths $t_{2i}$'s from the base, such that the distances
  between two consecutive ones are $t_{2i-1}$'s. Denote by $\mathcal{K}(\textbf{t})=\mathcal{K}(t_1,t_2,\dotsc,t_{2l})$
  the resulting regions (see the regions in Figure \ref{fig:halfhex3c}(d)  for the case when $t_1>0$ and Figure \ref{fig:halfhex3c}(e)
  for the  case when $t_1=0$). Similar to the case of $\mathcal{Q}'$-type regions, we also define the weighted version
  $\mathcal{K}'(\textbf{t})$ of the $\mathcal{K}(\textbf{t})$  by assigning to each vertical lozenge on its west side a weight
  $\frac{1}{2}$ (see Figure \ref{fig:halfhex3c}(f)).

From now on, we use respectively the notations
$\Pn_{a,b,c}$, $\Pn'_{a,b,c}$, $\Q(\textbf{t})$, $\Q'(\textbf{t})$, $\K(\textbf{t})$, and  $\K'(\textbf{t})$
for the numbers of tilings of the regions $\mathcal{P}_{a,b,c}$, $\mathcal{P}'_{a,b,c}$, $\mathcal{Q}(\textbf{t})$,
$\mathcal{Q}'(\textbf{t})$, $\mathcal{K}(\textbf{t})$, and  $\mathcal{K}'(\textbf{t})$.

It is more convenient for us to use the following form, that was first introduced in \cite{Halfhex1}, of the enumerations of the four quartered
hexagons (instead of using the original form appeared
in \cite{Lai, Lai3}).

\begin{lem}\label{QAR}
For any sequence of non-negative integers $\textbf{t}=(t_1,t_2,\dotsc,t_{2l})$
\begin{align}\label{QARa}
\Q(\textbf{t})&=\dfrac{\prod_{i=1}^{l}\frac{(\s_{2i}(\textbf{t}))!}{(\s_{2i-1}(\textbf{t}))!}}{\Hf_2(2\e_t+1)} \prod_{i=1}^{l}\big(\Hf_2(2\s_{2i}(\textbf{t})+1)\Hf_2(2\s_{2i-1}(\textbf{t})+2)\big)\notag\\
                  &\times \displaystyle {\prod_{\substack{1\leq i< j\leq 2l\\
                  \text{$j-i$ odd}}}}\dfrac{\Hf(\s_j(\textbf{t})-\s_{i}(\textbf{t}))}{\Hf(\s_j(\textbf{t})+\s_{i}(\textbf{t})+1)}\displaystyle {\prod_{\substack{1\leq i<j\leq 2l\\
                  \text{$j-i$ even }}}}\dfrac{\Hf(\s_j(\textbf{t})+\s_{i}(\textbf{t})+1)}{\Hf(\s_j(\textbf{t})-\s_{i}(\textbf{t}))},
\end{align}
\begin{align}\label{QARb}
\Q'(\textbf{t})&=\dfrac{2^{-\e_t}}{\Hf_2(2\e_t+1)}  \prod_{i=1}^{l}\big(\Hf_2(2\s_{2i}(\textbf{t})+1)\Hf_2(2\s_{2i-1}(\textbf{t}))\big)\notag\\
                  & \times \displaystyle {\prod_{\substack{1\leq i< j\leq 2l\\
                  \text{$j-i$ odd}}}}\dfrac{\Hf(\s_j(\textbf{t})-\s_{i}(\textbf{t}))}{\Hf(\s_j(\textbf{t})+\s_{i}(\textbf{t}))}\displaystyle {\prod_{\substack{1\leq i<j\leq 2l\\
                  \text{$j-i$ even }}}}\dfrac{\Hf(\s_j(\textbf{t})+\s_{i}(\textbf{t}))}{\Hf(\s_j(\textbf{t})-\s_{i}(\textbf{t}))},
\end{align}
\begin{align}\label{QARc}
\K(\textbf{t})&=\dfrac{1}{\Hf_2(2\e_t)}   \prod_{i=1}^{l}\big(\Hf_2(2\s_{2i}(\textbf{t}))\Hf_2(2\s_{2i-1}(\textbf{t})+1)\big)\notag\\
                  &\times   \displaystyle {\prod_{\substack{1\leq i< j\leq 2l\\
                  \text{$j-i$ odd}}}}\dfrac{\Hf(\s_j(\textbf{t})-\s_{i}(\textbf{t}))}{\Hf(\s_j(\textbf{t})+\s_{i}(\textbf{t}))}\displaystyle {\prod_{\substack{1\leq i<j\leq 2l\\
                  \text{$j-i$ even }}}}\dfrac{\Hf(\s_j(\textbf{t})+\s_{i}(\textbf{t}))}{\Hf(\s_j(\textbf{t})-\s_{i}(\textbf{t}))},
\end{align}
and
\begin{align}\label{QARd}
\K'(\textbf{t})&=\dfrac{1}{\Hf_2(2\e_t)} \prod_{i=1}^{l}\big(\Hf_2(2\s_{2i}(\textbf{t})-1)\Hf_2(2\s_{2i-1}(\textbf{t}))\big)\notag\\
                  &\times   \displaystyle {\prod_{\substack{1\leq i< j\leq 2l\\
                  \text{$j-i$ odd}}}}\dfrac{\Hf(\s_j(\textbf{t})-\s_{i}(\textbf{t}))}{\Hf(\s_j(\textbf{t})+\s_{i}(\textbf{t})-1)}\displaystyle {\prod_{\substack{1\leq i<j\leq 2l\\
                  \text{$j-i$ even }}}}\dfrac{\Hf(\s_j(\textbf{t})+\s_{i}(\textbf{t})-1)}{\Hf(\s_j(\textbf{t})-\s_{i}(\textbf{t}))}.
\end{align}
where $\s_k(\textbf{t})=t_1+t_2+\dots+t_k$ denotes the $k$-th partial sum of the sequence $\textbf{t}$.
\end{lem}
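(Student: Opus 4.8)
The plan is to treat Lemma~\ref{QAR} as an algebraic reformulation rather than a fresh enumeration: the tiling numbers $\Q(\textbf{t})$, $\Q'(\textbf{t})$, $\K(\textbf{t})$ and $\K'(\textbf{t})$ have already been evaluated in closed product form in \cite{Lai, Lai3} (and, as noted, the equivalence of the unweighted $\Q$-count with the ``stars'' of \cite{KGV} gives an independent non-intersecting-lattice-path derivation). What remains is to verify that the hyperfactorial expressions on the right-hand sides of \eqref{QARa}--\eqref{QARd} coincide with those established formulas. Thus the whole lemma reduces to four product identities, and I would prove each by rewriting both sides in terms of a single set of data and comparing factor by factor.

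First I would fix the bookkeeping. Writing $s_k := \s_k(\textbf{t}) = t_1 + \cdots + t_k$, the nonnegativity of the $t_i$ makes $0 = s_0 \le s_1 \le \cdots \le s_{2l}$ weakly increasing, with $s_{2l} = \e_t + \od_t$; the odd partial sums $s_{2i-1}$ record the left endpoints and the even partial sums $s_{2i}$ the right endpoints of the removed up-triangles along the base, so every length appearing in \cite{Lai, Lai3} is a block sum $s_j - s_i = t_{i+1} + \cdots + t_j$ or a reflected sum $s_j + s_i + c$. I would then expand the original formulas of \cite{Lai, Lai3} (products of factorials indexed by the triangle sizes and gaps) and regroup the factorials into the hyperfactorials $\Hf$ and skipping hyperfactorials $\Hf_2$ via the definitions $\Hf(n) = \prod_{k=0}^{n-1} k!$ and $\Hf_2(n) = \prod_{i=1}^{\lfloor n/2 \rfloor}(n - 2i)!$, which is where the prefactors $\frac{1}{\Hf_2(2\e_t+1)}\prod_i \Hf_2(2s_{2i}+1)\Hf_2(2s_{2i-1}+2)$ and their analogues originate.

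The heart of the argument is matching the double product over pairs $1 \le i < j \le 2l$, split according to the parity of $j - i$. I would show that the Vandermonde-type factors $\prod_{i<j}(s_j - s_i)$ coming from the lattice-path determinant reorganize, after being collected into hyperfactorials of block sums, exactly into $\prod_{j-i \text{ odd}} \Hf(s_j - s_i)^{\pm}\,\prod_{j-i \text{ even}} \Hf(s_j - s_i)^{\mp}$, while the ``reflected'' factors $\prod_{i<j}(s_j + s_i + c)$ produce the companion $\Hf(s_j + s_i + c)$ terms; the parity split of $j-i$ is precisely the sign pattern by which up- and down-triangle endpoints interleave. The only differences among the four families are the reflection shift $c$ and the skip in the $\Hf_2$-factors: $c = +1$ for $\Q$, $c = 0$ for $\Q'$ and $\K$, and $c = -1$ for $\K'$, mirroring the $+1/0/-1$ shifts built into the skipping Pochhammer symbol $[x]_n$; the distinction between $\Q'$ and $\K$ (same $c$) lies in the $\Hf_2$ shifts and in the factor $2^{-\e_t}$, reflecting whether the west boundary ends on a full or a half bump and whether its vertical lozenges carry weight $\tfrac12$. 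Carrying $c$ as a parameter lets me run all four verifications in parallel.

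The main obstacle I anticipate is the combinatorial bookkeeping of this parity split together with the shift $c$: one must check that the exponents $\pm 1$ attached to each $\Hf(s_j \pm s_i + c)$ are assigned correctly for every pair, and that the near-diagonal terms (small $j-i$) and the empty products are handled so that the degenerate configurations --- notably $t_1 = 0$, where the region changes shape (Figure~\ref{fig:halfhex3c}(b),(e)) and the leading block vanishes --- still yield the stated formula. As consistency checks I would invoke Ciucu's factorization theorem: a symmetric hexagon with a central fern splits (up to a power of $2$) into a $\Q$- or $\Q'$-half times a $\K$- or $\K'$-half, so the product of the appropriate pair of right-hand sides must reproduce the known symmetric-hexagon-with-fern count, and reducing $\textbf{t}$ to the base cases with no removed triangles must recover the quartered-hexagon counts underlying Proctor's formula (Theorem~\ref{Proctiling}) and its weighted version (Theorem~\ref{Ciuculem}).
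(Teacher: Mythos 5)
The paper itself gives no proof of Lemma~\ref{QAR}: it is quoted as a known result, with the underlying enumerations established in \cite{Lai, Lai3} and the hyperfactorial form stated here first derived in \cite{Halfhex1}. Your plan --- take the closed product formulas of \cite{Lai, Lai3} as given and verify, by regrouping factorials into $\Hf$ and $\Hf_2$ and matching the parity-split double products with the shift $c\in\{+1,0,-1\}$, that they coincide with \eqref{QARa}--\eqref{QARd} --- is precisely the content behind that citation chain, so it is essentially the same approach as the paper's.
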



\begin{figure}\centering
%
%
\setlength{\unitlength}{3947sp}%
\begingroup\makeatletter\ifx\SetFigFont\undefined%
\gdef\SetFigFont#1#2#3#4#5{%
  \reset@font\fontsize{#1}{#2pt}%
  \fontfamily{#3}\fontseries{#4}\fontshape{#5}%
  \selectfont}%
\fi\endgroup%
\resizebox{13cm}{!}{
\begin{picture}(0,0)%
\includegraphics{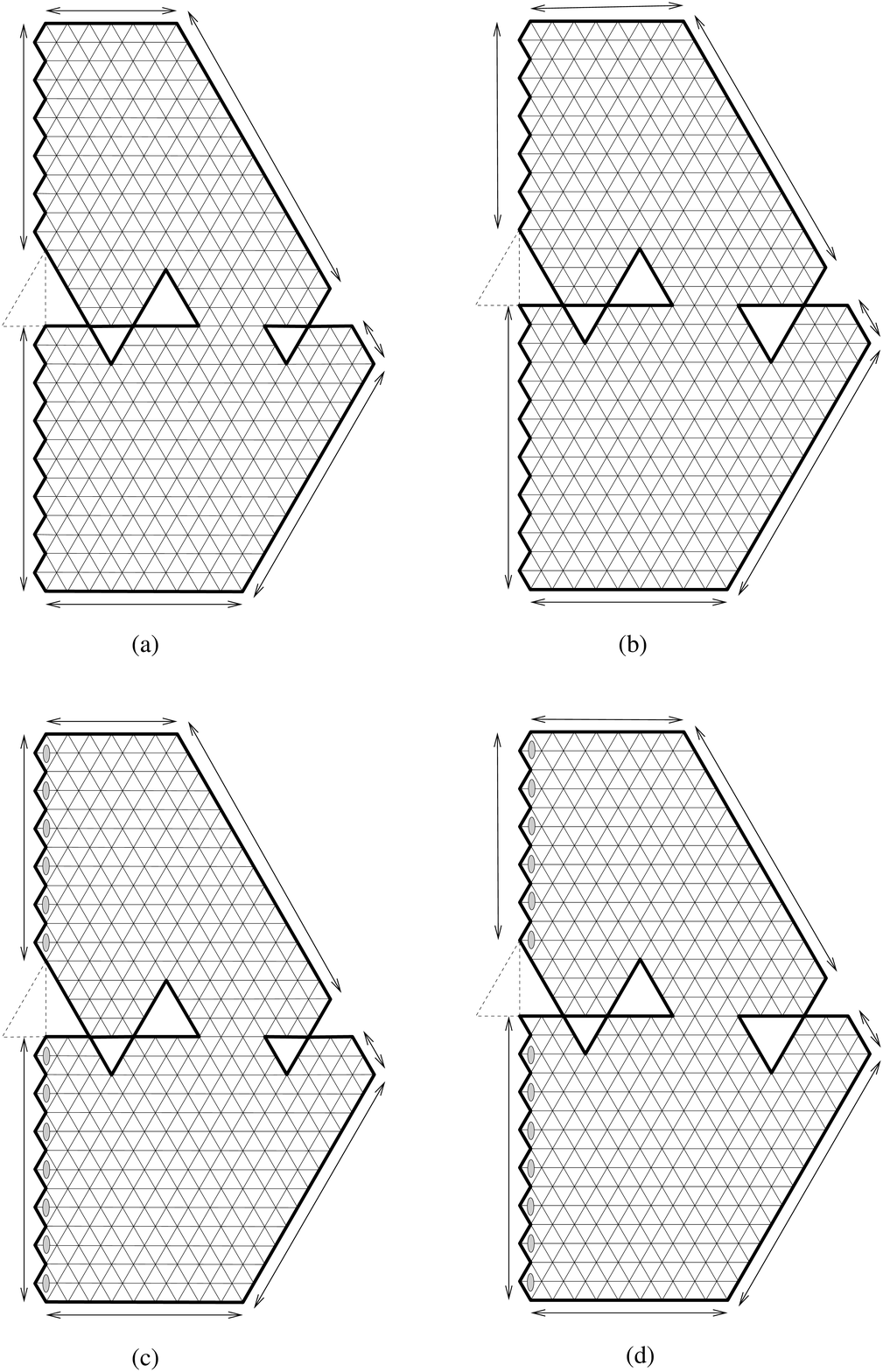}%
\end{picture}%

\begin{picture}(17150,26064)(382,-25475)
\put(14477,-14407){\rotatebox{300.0}{\makebox(0,0)[lb]{\smash{{\SetFigFont{20}{24.0}{\rmdefault}{\mddefault}{\itdefault}{$2y+2a_1+2a_3+b_1-1$}%
}}}}}
\put(17043,-19024){\makebox(0,0)[lb]{\smash{{\SetFigFont{20}{24.0}{\rmdefault}{\mddefault}{\itdefault}{$z$}%
}}}}
\put(15259,-23365){\rotatebox{60.0}{\makebox(0,0)[lb]{\smash{{\SetFigFont{20}{24.0}{\rmdefault}{\mddefault}{\itdefault}{$2y+z+2a_2+2b_2-1$}%
}}}}}
\put(14864,-19324){\makebox(0,0)[lb]{\smash{{\SetFigFont{20}{24.0}{\rmdefault}{\mddefault}{\itdefault}{$b_2$}%
}}}}
\put(15986,-18695){\makebox(0,0)[lb]{\smash{{\SetFigFont{20}{24.0}{\rmdefault}{\mddefault}{\itdefault}{$b_1$}%
}}}}
\put(9832,-22970){\rotatebox{90.0}{\makebox(0,0)[lb]{\smash{{\SetFigFont{20}{24.0}{\rmdefault}{\mddefault}{\itdefault}{$y+z+a_2+b_2-\frac{1}{2}$}%
}}}}}
\put(12371,-18552){\makebox(0,0)[lb]{\smash{{\SetFigFont{20}{24.0}{\rmdefault}{\mddefault}{\itdefault}{$a_3$}%
}}}}
\put(11358,-19159){\makebox(0,0)[lb]{\smash{{\SetFigFont{20}{24.0}{\rmdefault}{\mddefault}{\itdefault}{$a_2$}%
}}}}
\put(9686,-18292){\rotatebox{60.0}{\makebox(0,0)[lb]{\smash{{\SetFigFont{20}{24.0}{\rmdefault}{\mddefault}{\itdefault}{$2a_1$}%
}}}}}
\put(9693,-16403){\rotatebox{90.0}{\makebox(0,0)[lb]{\smash{{\SetFigFont{20}{24.0}{\rmdefault}{\mddefault}{\itdefault}{$y+a_3+b_1-\frac{1}{2}$}%
}}}}}
\put(11325,-13122){\makebox(0,0)[lb]{\smash{{\SetFigFont{20}{24.0}{\rmdefault}{\mddefault}{\itdefault}{$x+a_2+b_2$}%
}}}}
\put(729,-16278){\rotatebox{90.0}{\makebox(0,0)[lb]{\smash{{\SetFigFont{20}{24.0}{\rmdefault}{\mddefault}{\itdefault}{$y+a_3+b_1$}%
}}}}}
\put(797,-22826){\rotatebox{90.0}{\makebox(0,0)[lb]{\smash{{\SetFigFont{20}{24.0}{\rmdefault}{\mddefault}{\itdefault}{$y+z+a_2+b_2$}%
}}}}}
\put(2402,-24806){\makebox(0,0)[lb]{\smash{{\SetFigFont{20}{24.0}{\rmdefault}{\mddefault}{\itdefault}{$x + a_1+a_3+b_1$}%
}}}}
\put(6107,-23636){\rotatebox{60.0}{\makebox(0,0)[lb]{\smash{{\SetFigFont{20}{24.0}{\rmdefault}{\mddefault}{\itdefault}{$2y+z+2a_2+2b_2$}%
}}}}}
\put(5027,-14366){\rotatebox{300.0}{\makebox(0,0)[lb]{\smash{{\SetFigFont{20}{24.0}{\rmdefault}{\mddefault}{\itdefault}{$2y+2a_1+2a_3+b_1$}%
}}}}}
\put(1982,-13196){\makebox(0,0)[lb]{\smash{{\SetFigFont{20}{24.0}{\rmdefault}{\mddefault}{\itdefault}{$x+a_2+b_2$}%
}}}}
\put(7787,-19301){\makebox(0,0)[lb]{\smash{{\SetFigFont{20}{24.0}{\rmdefault}{\mddefault}{\itdefault}{$z$}%
}}}}
\put(5784,-19631){\makebox(0,0)[lb]{\smash{{\SetFigFont{20}{24.0}{\rmdefault}{\mddefault}{\itdefault}{$b_2$}%
}}}}
\put(6669,-19053){\makebox(0,0)[lb]{\smash{{\SetFigFont{20}{24.0}{\rmdefault}{\mddefault}{\itdefault}{$b_1$}%
}}}}
\put(3487,-18949){\makebox(0,0)[lb]{\smash{{\SetFigFont{20}{24.0}{\rmdefault}{\mddefault}{\itdefault}{$a_3$}%
}}}}
\put(2464,-19540){\makebox(0,0)[lb]{\smash{{\SetFigFont{20}{24.0}{\rmdefault}{\mddefault}{\itdefault}{$a_2$}%
}}}}
\put(821,-18831){\rotatebox{60.0}{\makebox(0,0)[lb]{\smash{{\SetFigFont{20}{24.0}{\rmdefault}{\mddefault}{\itdefault}{$2a_1$}%
}}}}}
\put(11276,-11470){\makebox(0,0)[lb]{\smash{{\SetFigFont{20}{24.0}{\rmdefault}{\mddefault}{\itdefault}{$x+a_1+a_3+b_1$}%
}}}}
\put(14469,-1115){\rotatebox{300.0}{\makebox(0,0)[lb]{\smash{{\SetFigFont{20}{24.0}{\rmdefault}{\mddefault}{\itdefault}{$2y+2a_1+2a_3+b_1-1$}%
}}}}}
\put(17035,-5732){\makebox(0,0)[lb]{\smash{{\SetFigFont{20}{24.0}{\rmdefault}{\mddefault}{\itdefault}{$z$}%
}}}}
\put(813,-5539){\rotatebox{60.0}{\makebox(0,0)[lb]{\smash{{\SetFigFont{20}{24.0}{\rmdefault}{\mddefault}{\itdefault}{$2a_1$}%
}}}}}
\put(2456,-6248){\makebox(0,0)[lb]{\smash{{\SetFigFont{20}{24.0}{\rmdefault}{\mddefault}{\itdefault}{$a_2$}%
}}}}
\put(3479,-5657){\makebox(0,0)[lb]{\smash{{\SetFigFont{20}{24.0}{\rmdefault}{\mddefault}{\itdefault}{$a_3$}%
}}}}
\put(6661,-5761){\makebox(0,0)[lb]{\smash{{\SetFigFont{20}{24.0}{\rmdefault}{\mddefault}{\itdefault}{$b_1$}%
}}}}
\put(5776,-6339){\makebox(0,0)[lb]{\smash{{\SetFigFont{20}{24.0}{\rmdefault}{\mddefault}{\itdefault}{$b_2$}%
}}}}
\put(7779,-6009){\makebox(0,0)[lb]{\smash{{\SetFigFont{20}{24.0}{\rmdefault}{\mddefault}{\itdefault}{$z$}%
}}}}
\put(1974, 96){\makebox(0,0)[lb]{\smash{{\SetFigFont{20}{24.0}{\rmdefault}{\mddefault}{\itdefault}{$x+a_2+b_2$}%
}}}}
\put(5019,-1074){\rotatebox{300.0}{\makebox(0,0)[lb]{\smash{{\SetFigFont{20}{24.0}{\rmdefault}{\mddefault}{\itdefault}{$2y+2a_1+2a_3+b_1$}%
}}}}}
\put(6099,-10344){\rotatebox{60.0}{\makebox(0,0)[lb]{\smash{{\SetFigFont{20}{24.0}{\rmdefault}{\mddefault}{\itdefault}{$2y+z+2a_2+2b_2$}%
}}}}}
\put(2394,-11514){\makebox(0,0)[lb]{\smash{{\SetFigFont{20}{24.0}{\rmdefault}{\mddefault}{\itdefault}{$x + a_1+a_3+b_1$}%
}}}}
\put(789,-9534){\rotatebox{90.0}{\makebox(0,0)[lb]{\smash{{\SetFigFont{20}{24.0}{\rmdefault}{\mddefault}{\itdefault}{$y+z+a_2+b_2$}%
}}}}}
\put(721,-2986){\rotatebox{90.0}{\makebox(0,0)[lb]{\smash{{\SetFigFont{20}{24.0}{\rmdefault}{\mddefault}{\itdefault}{$y+a_3+b_1$}%
}}}}}
\put(11317,170){\makebox(0,0)[lb]{\smash{{\SetFigFont{20}{24.0}{\rmdefault}{\mddefault}{\itdefault}{$x+a_2+b_2$}%
}}}}
\put(9685,-3111){\rotatebox{90.0}{\makebox(0,0)[lb]{\smash{{\SetFigFont{20}{24.0}{\rmdefault}{\mddefault}{\itdefault}{$y+a_3+b_1-\frac{1}{2}$}%
}}}}}
\put(9678,-5000){\rotatebox{60.0}{\makebox(0,0)[lb]{\smash{{\SetFigFont{20}{24.0}{\rmdefault}{\mddefault}{\itdefault}{$2a_1$}%
}}}}}
\put(11350,-5867){\makebox(0,0)[lb]{\smash{{\SetFigFont{20}{24.0}{\rmdefault}{\mddefault}{\itdefault}{$a_2$}%
}}}}
\put(12363,-5260){\makebox(0,0)[lb]{\smash{{\SetFigFont{20}{24.0}{\rmdefault}{\mddefault}{\itdefault}{$a_3$}%
}}}}
\put(9824,-9678){\rotatebox{90.0}{\makebox(0,0)[lb]{\smash{{\SetFigFont{20}{24.0}{\rmdefault}{\mddefault}{\itdefault}{$y+z+a_2+b_2-\frac{1}{2}$}%
}}}}}
\put(15978,-5403){\makebox(0,0)[lb]{\smash{{\SetFigFont{20}{24.0}{\rmdefault}{\mddefault}{\itdefault}{$b_1$}%
}}}}
\put(14856,-6032){\makebox(0,0)[lb]{\smash{{\SetFigFont{20}{24.0}{\rmdefault}{\mddefault}{\itdefault}{$b_2$}%
}}}}
\put(15251,-10073){\rotatebox{60.0}{\makebox(0,0)[lb]{\smash{{\SetFigFont{20}{24.0}{\rmdefault}{\mddefault}{\itdefault}{$2y+z+2a_2+2b_2-1$}%
}}}}}
\put(11284,-24762){\makebox(0,0)[lb]{\smash{{\SetFigFont{20}{24.0}{\rmdefault}{\mddefault}{\itdefault}{$x+a_1+a_3+b_1$}%
}}}}
\end{picture}}
\caption{(a) The region $H^{(1)}_{2,1,2}(2,2,3;\ 2,2)$. (b) The region $H^{(2)}_{2,1,2}(2,2,3;\ 2,3)$.
 (c) The weighted region $W^{(1)}_{2,1,2}(2,2,3;\ 2,2)$. (d) The weighted region $W^{(2)}_{2,1,2}(2,2,3;\ 2,3)$.}\label{fig:halvedhex1}
\end{figure}

We are now ready to define our first doubly-intruded halved hexagon.

Assume that $x,y,z$ are three non-negative integers and that $\textbf{a}=(a_1,a_2,\dotsc,a_m)$ and $\textbf{b}=(b_1,b_2,\dotsc,b_n)$ are two sequences of non-negative integers.

 We consider a halved hexagon whose north, northeast, southeast, and south sides have lengths $x+\e_a+\e_b$, $2y+z+2\od_a+
 2\od_b$, $2y+z+2\e_a+2\e_b$, $x+\od_a+\od_b$, respectively,
  and whose west side follows the vertical zigzag lattice path with $2y+z+a+b$ steps. Here, and from now on, we set
  \begin{align}
  a:=\sum_{i}a_i, \ \ \ \ \ \  b:=\sum_{j} b_j.
  \end{align}
Next, we remove two ferns at the level $z$ above
  the leftmost vertices of the halved hexagon as follows.
 The right fern starts from the northeast side with an up-pointing $b_1$-triangle and goes from right to left with the triangles of side-lengths
   $b_1,b_2,\dotsc,b_n$. The left fern starts
  with a half up-pointing triangle of side $2a_1$ on the west side, and goes from left to right with triangles of side-lengths $a_2,a_3,\dotsc, a_m$. Let $H^{(1)}_{x,y,z}(\textbf{a};\textbf{b})$ denote the
  resulting region (see Figure \ref{fig:halvedhex1}(a) for an example).
 The variation $H^{(2)}_{x,y,z}(\textbf{a}; \ \textbf{b})$ of the $H^{(1)}$-type region is obtained similarly from a halved hexagon of side-lengths\footnote{From now on, we always list the side-lengths of a halved hexagon in the clockwise order from the north side.}
   $x+\e_a+\e_b$, $2y+z+2\od_a+2\od_b-1$,
   $2y+z+2\e_a+2\e_b-1$, $x+\od_a+\od_b$, $2y+z+a+b-1$ as shown in Figure \ref{fig:halvedhex1}(b).

\begin{thm}\label{main1} Assume that $x,y,z$ are non-negative integers and that
$\textbf{a}=(a_1,a_2,\dotsc,a_m)$ and $\textbf{b}=(b_1,b_2,\dotsc,b_n)$ are two (possibly empty)
sequences of non-negative integers. Then
\begin{align}\label{main1eq}
\M(H^{(1)}_{x,y,z}(\textbf{a}; \textbf{b}))&=\frac{\M(H^{(1)}_{x+y,0,z}(\textbf{a}; \textbf{b}))
\M(H^{(1)}_{0,2y,z}(\textbf{a};\textbf{b}))}{\M(H^{(1)}_{y,0,z}(\textbf{a}; \textbf{b}))}\notag\\
&\quad\times \frac{\T(x+1,2a+b+2y+z,y)\V(2x+2a+3,b+2y+z-1,y)}{\T(1,2a+b+2y+z,y)\V(2a+3,b+2y+z-1,y)}\notag\\
&=2^{-y}\Q(0,a_1,\dotsc,a_{2\lfloor\frac{m+1}{2}\rfloor-1},a_{2\lfloor\frac{m+1}{2}\rfloor}+x+y+b_{2\lfloor\frac{n+1}{2}\rfloor},b_{2\lfloor\frac{n+1}{2}\rfloor-1},\dotsc,b_1)\notag\\
&\quad\times \Q(a_1,\dotsc,a_{\lceil \frac{m-1}{2}\rceil}, a_{\lceil \frac{m-1}{2}\rceil+1}+x+y+b_{\lceil \frac{n-1}{2}\rceil+1},b_{\lceil \frac{n-1}{2}\rceil},\dotsc, b_1,z) \notag\\
&\quad\times \frac{\Hf_2(2\od_a+2\od_b+1)\Hf_2(2\e_a+2\e_b+2z+1)}{\Hf_2(2\od_a+2\od_b+2y+1)\Hf_2(2\e_a+2\e_b+2y+2z+1)} \notag\\
&\quad\times \frac{\Hf(2a+b+2y+z+1)\Hf(b+y+z)}{\Hf(2a+b+y+z+1)\Hf(b+z)}\notag\\
&\quad\times  \frac{\T(x+1,2a+b+2y+z,y)\V(2x+2a+3,b+2y+z-1,y)}{\T(1,2a+b+2y+z,y)\V(2a+3,b+2y+z-1,y)},
\end{align}
where $a_i=0$ if $i>m$ and $b_j=0$ if $j>n$ by convention\footnote{In the rest of this paper, we always assume this convention.}.
\end{thm}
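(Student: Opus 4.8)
The plan is to establish the final (explicit product) expression by induction and to read off the first equality as the ``base-case reduction'' that this induction naturally produces. The two engines are Kuo's graphical condensation \cite{Kuo}, which supplies a bilinear recurrence lowering the parameter $y$, and Ciucu's factorization theorem \cite{Ciucu3} together with Lemma \ref{QAR}, which evaluates the resulting $y=0$ regions in closed form. I would organize the argument so that all of the $y$-dependent auxiliary factors -- the ratios of $\T$'s and $\V$'s, the $\Hf_2$- and $\Hf$-ratios, and the power $2^{-y}$ -- are produced by the induction, while the genuinely combinatorial input sits in the base case.

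First I would set up the recurrence. Passing to the dual graph of $H^{(1)}_{x,y,z}(\textbf{a};\textbf{b})$ and deleting the lozenges forced by the two ferns and by the zigzag west boundary, I would apply Kuo condensation with the four distinguished cells placed along the northeast and southeast sides, so that each of the five regions in the condensation identity is again a member of the family $H^{(1)}$ with $(x,y,z)$ replaced by a neighboring triple (and, in the degenerate cases, with the extreme entries of $\textbf{a}$ or $\textbf{b}$ adjusted). The products $\T(x,n,m)$ and $\V(x,n,m)$ of \eqref{defineT}--\eqref{defineV} are exactly the telescoping solutions of the ``coefficient'' part of this recurrence in the variable $y$; iterating it therefore expresses $\M(H^{(1)}_{x,y,z})$ through the three values $\M(H^{(1)}_{x+y,0,z})$, $\M(H^{(1)}_{0,2y,z})$, $\M(H^{(1)}_{y,0,z})$ with the displayed ratio of $\T$'s and $\V$'s as the accumulated factor, which is precisely the first equality. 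To verify it rigorously it suffices to check that the claimed product satisfies the condensation relation, which reduces to a routine identity between rational functions of the parameters.

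Next I would evaluate the base case $y=0$, where both the $\T,\V$ factors and both hyperfactorial ratios degenerate to $1$ and the formula should collapse to a product of two $\Q$'s. Here the two aligned ferns, together with the forced lozenges, separate the region along the horizontal line at level $z$; applying Ciucu's factorization theorem across this line writes $\M(H^{(1)}_{x,0,z})$ as a power of $2$ times the tiling numbers of an ``upper'' and a ``lower'' trapezoid, each of which is a quartered hexagon. Reading the left fern $\textbf{a}$ and the reversed right fern $\textbf{b}$ as a single list, with the central entry enlarged by $x$ (and by $y$ after the $y$-iteration is reinserted), identifies these two halves with the two $\Q$'s in the statement; substituting the closed forms of Lemma \ref{QAR} and collecting the Pochhammer and hyperfactorial contributions then produces the $\Hf_2$- and $\Hf$-ratios, while the weighted/unweighted bookkeeping for the quartered hexagons accounts for the remaining powers of $2$ and, combined with the condensation steps, for the global factor $2^{-y}$.

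The principal obstacle is arranging the condensation so that it \emph{closes} within the single family $H^{(1)}$: the four cells must be positioned so that every sub-region obtained by deleting them is again a doubly-intruded halved hexagon of the same type, and one must confirm that the ferns and the zigzag boundary force the expected lozenges in all cases, including when some $a_i$ or $b_j$ vanishes or when $m,n$ have the parities that toggle the ``$0$'' at the front of the $\Q$-arguments (note the $2\lfloor\tfrac{m+1}{2}\rfloor$ and $\lceil\tfrac{m-1}{2}\rceil$ index shifts in the statement). Once the correct recurrence and base case are in hand, the rest is a lengthy but mechanical simplification of $\Hf$, $\Hf_2$, $\T$, $\V$, and of the $\Q$-values from Lemma \ref{QAR}; the only delicate secondary point is tracking weights, since Ciucu's factorization naturally yields weighted quartered hexagons and one must pass between $\Q$ and $\Q'$ to land on the unweighted $\Q$'s of the stated formula.
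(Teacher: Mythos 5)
Your overall architecture (Kuo condensation to get a bilinear recurrence, closed-form evaluation of degenerate cases via quartered hexagons and Lemma \ref{QAR}, then checking that the claimed product satisfies the recurrence) is the right one, but there is a structural gap that would stop the induction. When you apply Kuo condensation to $H^{(1)}_{x,y,z}(\textbf{a};\textbf{b})$, the five companion regions are not obtained by merely shifting $(x,y,z)$ within fixed $z$ and $\textbf{b}$: the resulting recurrence necessarily lowers $z$ as well and replaces $\textbf{b}$ by a modified sequence $\textbf{b}^{+1}$ (last entry enlarged by $1$, or a new entry $1$ appended, depending on the parity of $n$). So there is no pure-$y$ recurrence to iterate, the $\T,\V$ ratio cannot be read off as a telescoping factor with $z$ and $\textbf{b}$ held fixed, and the induction must run on a combined quantity such as $y+z+b+\overline{n}$. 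In particular the induction inevitably reaches $z=0$, and at $z=0$ the region, after removing forced lozenges, is \emph{not} of type $H^{(1)}$ at all: it becomes an $R^{(1)}$-type region (the west fern turns upside-down). This is precisely why the paper proves Theorem \ref{main1} \emph{jointly} with Theorem \ref{mainR1}, the two families handing off to each other at $z=0$. You correctly flag ``closing within the single family $H^{(1)}$'' as the principal obstacle, but then assert it can be arranged; it cannot, and without the companion family your induction has no way to proceed. Relatedly, the first equality in \eqref{main1eq} is not the output of a telescoped recurrence; in the paper it is verified after the fact, by expressing $\M(H^{(1)}_{x+y,0,z})$, $\M(H^{(1)}_{0,2y,z})$, $\M(H^{(1)}_{y,0,z})$ through $\Q$-values and simplifying with Lemmas \ref{QAR} and \ref{QK}.

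Your base case is also handled with the wrong tool. Ciucu's factorization theorem (Lemma \ref{ciucufactor}) applies to a graph with a reflective symmetry axis whose on-axis vertices form a cut set; $H^{(1)}_{x,0,z}(\textbf{a};\textbf{b})$ has no such symmetry, and in the paper that theorem is used only for the symmetric regions $S^{(i)}_{x,y,z}$. What actually splits the $y=0$ and $x=0$ regions is the Region-splitting Lemma \ref{RS}: the two ferns together with the forced lozenges leave a horizontal border along which only one orientation of unit triangle appears, so the tiling number factors \emph{exactly} into the product of two unweighted $\Q$-type regions --- no power of $2$ appears, and no weighted $\Q'$ regions arise, so your ``delicate'' bookkeeping between $\Q$ and $\Q'$ is solving a problem that does not exist (the global $2^{-y}$ is produced by the recurrence check and equals $1$ in the base cases). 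Finally, you would still need the base case of an empty $\textbf{b}$-fern ($b+\overline{n}=0$), which the paper does not reprove but inherits from \cite{Halfhex2}, as well as the preliminary reductions allowing one to assume all $b_j>0$.
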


We note that the first $\mathcal{Q}$-type region in  (\ref{main1eq}), the region $\mathcal{Q}(0,a_1,\dotsc,a_{2\lfloor\frac{m+1}{2}\rfloor-1},a_{2\lfloor\frac{m+1}{2}\rfloor}+x+y+b_{2\lfloor\frac{n+1}{2}\rfloor},b_{2\lfloor\frac{n+1}{2}\rfloor}-1,\dotsc,b_1)$,
is  (1) $\mathcal{Q}(0,a_1,\dotsc,a_{m-1},a_{m}+x+y+b_{n},b_{n-1},\dotsc,b_1)$ if $m$ and $n$ are even,  (2) $\mathcal{Q}(0,a_1,\dotsc,a_{m},x+y+b_{n},b_{n-1},\dotsc,b_1)$ if $m$ is odd and $n$ is even,
(3) $\mathcal{Q}(0,a_1,\dotsc,a_{m-1},a_{m}+x+y,b_{n},\dotsc,b_1)$ if $m$ is even and $n$ is odd,  and
(4) $\mathcal{Q}(0,a_1,\dotsc,a_{m},x+y,b_{n},\dotsc,b_1)$ if $m$ and $n$ are odd. The explicit form of the second $\mathcal{Q}$-type
region can be obtained similarly.  Moreover, the two $\mathcal{Q}$-type regions in   (\ref{main1eq}) are determined by the triangles in the two ferns.

 We also note that the product of
the numbers of tilings of the above two $\mathcal{Q}$-type regions
 is exactly $\M(H^{(1)}_{x+y,0,z}(\textbf{a}; \textbf{b}))$, and this fact can be proved by using  Lemma \ref{RS}
in the next section.

\begin{thm}\label{main2} For non-negative integers $x,y,z$ and sequences of non-negative integers $\textbf{a}=(a_1,a_2,\dotsc,a_m)$ and $\textbf{b}=(b_1,b_2,\dotsc,b_n)$
\begin{align}\label{main2eq}
\M(H^{(2)}_{x,y,z}(\textbf{a};\textbf{b}))&=\frac{\M(H^{(2)}_{x+y,0,z}(\textbf{a};\textbf{b}))
\M(H^{(2)}_{0,2y,z}(\textbf{a};\textbf{b}))}{\M(H^{(2)}_{y,0,z}(\textbf{a};\textbf{b}))}\notag\\
&\quad\times \frac{\T(x+1,2a+b+2y+z-1,y)\V(2x+2a+3,b+2y+z-2,y)}{\T(1,2a+b+2y+z-1,y)\V(2a+3,b+2y+z-2,y)}\notag\\
&=\K(0,a_1,\dotsc,a_{2\lfloor\frac{m+1}{2}\rfloor-1},a_{2\lfloor\frac{m+1}{2}\rfloor}+x+y+b_{2\lfloor\frac{n+1}{2}\rfloor},b_{2\lfloor\frac{n+1}{2}\rfloor-1},\dotsc,b_1)\notag\\
&\quad\times \K(a_1,\dotsc,a_{\lceil \frac{m-1}{2}\rceil}, a_{\lceil \frac{m-1}{2}\rceil+1}+x+y+b_{\lceil \frac{n-1}{2}\rceil+1},b_{\lceil \frac{n-1}{2}\rceil},\dotsc, b_1,z) \notag\\
&\quad\times\frac{(2a-1)!!}{(2a+2y-1)!!} \frac{\Hf_2(2\od_a+2\od_b)\Hf_2(2\e_a+2\e_b+2z)}{\Hf_2(2\od_a+2\od_b+2y)
\Hf_2(2\e_a+2\e_b+2y+2z)} \notag\\
&\quad\times \frac{\Hf(2a+b+2y+z)\Hf(b+y+z)}{\Hf(2a+b+y+z)\Hf(b+z)}\notag\\
&\quad\times  \frac{\T(x+1,2a+b+2y+z-1,y)\V(2x+2a+3,b+2y+z-2,y)}{\T(1,2a+b+2y+z-1,y)\V(2a+3,b+2y+z-2,y)},
\end{align}
where the `double' factorial is defined as $(2n+1)!!=1\cdot 3\cdot 5\cdots(2n+1)$ and $(2n)!!=2\cdot4\cdots2n$.
\end{thm}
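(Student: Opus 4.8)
The plan is to prove Theorem~\ref{main2} by the same two-stage scheme that governs Theorem~\ref{main1}, exploiting the fact that the assertion is a chain of two equalities: the second equality is the genuine closed form, while the first is a reduction formula that collapses the middle parameter $y$ onto the three degenerate regions $H^{(2)}_{x+y,0,z}$, $H^{(2)}_{0,2y,z}$, and $H^{(2)}_{y,0,z}$, corrected by the explicit ratio of $\T$'s and $\V$'s. The organizing observation is that at $y=0$ the first equality is vacuous: since $\T(\,\cdot\,,\,\cdot\,,0)=\V(\,\cdot\,,\,\cdot\,,0)=1$ (empty products) and the factor $\M(H^{(2)}_{x,0,z})$ cancels the denominator $\M(H^{(2)}_{y,0,z})$ at $y=0$, the whole statement there collapses to the second equality, which asserts that $\M(H^{(2)}_{x,0,z}(\textbf{a};\textbf{b}))$ equals the product of the two quartered-hexagon numbers $\K(\cdots)$ displayed in \eqref{main2eq} specialized to $y=0$ (all remaining prefactors, including $\tfrac{(2a-1)!!}{(2a+2y-1)!!}$, equaling $1$ there). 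This is the base case of an induction on $y$.

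First I would settle this base case. When $y=0$ the two ferns, aligned along a common horizontal lattice line at level $z$, together with the shortened boundary of the halved hexagon, produce a horizontal bottleneck that forces a band of lozenges; deleting these forced pieces splits $H^{(2)}_{x,0,z}$ into an upper and a lower quartered hexagon, each of the $\mathcal{K}$-type. This is precisely the decomposition underlying the remark following Theorem~\ref{main1} (stated there for the $\mathcal{Q}$-regions), and it is the left half-triangle of side $2a_1$ on the west side, together with the one-unit-shorter side lengths of the $H^{(2)}$-family, that selects $\mathcal{K}$-regions rather than $\mathcal{Q}$-regions. Substituting the closed forms of Lemma~\ref{QAR} for the two resulting $\K(\cdots)$ then gives the base case explicitly.

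Next I would establish the general case by induction on $y$, using the version of Kuo condensation quoted in Section~\ref{sec:Prelim}. The task is to choose the four distinguished boundary cells so that all five regions in the condensation identity remain within the family $H^{(2)}$, with parameters differing from $(x,y,z)$ in a controlled way; this converts the condensation identity into a recurrence expressing $\M(H^{(2)}_{x,y,z})$ through tiling numbers with strictly smaller $y$ (and possibly shifted $x$), so that induction on $y$ applies. Assuming the closed form at the smaller values, one substitutes and checks that the proposed product satisfies the recurrence — equivalently, that the first equality of \eqref{main2eq} is consistent with it. Here the factors $\T(x+1,2a+b+2y+z-1,y)$ and $\V(2x+2a+3,b+2y+z-2,y)$ are exactly the correction needed for the multiplicative ansatz $\frac{\M(H^{(2)}_{x+y,0,z})\M(H^{(2)}_{0,2y,z})}{\M(H^{(2)}_{y,0,z})}$ to obey the recurrence. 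Finally, inserting the base-case evaluations and simplifying the resulting hyperfactorials with Lemma~\ref{QAR} yields the second equality, the residual skipping-hyperfactorial ratios in $\Hf_2$ and the factor $\tfrac{(2a-1)!!}{(2a+2y-1)!!}$ being the output of that simplification.

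The main obstacle is the Kuo condensation step together with its algebraic aftermath. Placing the four cells so that all five condensation regions stay within the family $H^{(2)}$, rather than degenerating into regions outside the scheme, is delicate, and is the step most sensitive to the $-1$ shifts that distinguish $H^{(2)}$ from $H^{(1)}$. Once the recurrence is in hand, verifying that the proposed product satisfies it is a lengthy but routine manipulation of $\T$, $\V$, $\Hf$, and $\Hf_2$; the one genuinely new feature relative to Theorem~\ref{main1} is the double-factorial prefactor $\tfrac{(2a-1)!!}{(2a+2y-1)!!}$, which I expect to arise from the asymmetric treatment of the half-triangle of side $2a_1$ on the west side and which must be tracked carefully through the induction.
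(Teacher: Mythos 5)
Your high-level architecture does match the paper's: the paper proves Theorem \ref{main2} by exactly the method of the combined proof of Theorems \ref{main1} and \ref{mainR1} (prove the closed-form second expression by induction via Kuo condensation, then identify the first expression with it using Lemma \ref{RS} together with Lemmas \ref{QAR} and \ref{QK}), and your $y=0$ splitting into two $\mathcal{K}$-type regions is the right base-case picture. The genuine gap is your induction scheme. You induct on $y$ alone and assert that the four Kuo cells can be placed so that all five companion regions stay in the $H^{(2)}$-family with strictly smaller $y$; you give no such placement, and the placement the paper uses (Figure \ref{KuoH1}) does not have this property. The recurrence it produces, the $H^{(2)}$-analogue of (\ref{recurrence1}), is
\begin{align*}
\M(H^{(2)}_{x,y,z}(\textbf{a};\textbf{b}))\,\M(H^{(2)}_{x,y-1,z-1}(\textbf{a};\textbf{b}^{+1}))
&=\M(H^{(2)}_{x,y-1,z}(\textbf{a};\textbf{b}^{+1}))\,\M(H^{(2)}_{x,y,z-1}(\textbf{a};\textbf{b}))\\
&\quad+\M(H^{(2)}_{x+1,y-1,z}(\textbf{a};\textbf{b}))\,\M(H^{(2)}_{x-1,y,z-1}(\textbf{a};\textbf{b}^{+1})),
\end{align*}
where $\textbf{b}^{+1}$ is the fern $\textbf{b}$ with its last triangle enlarged by one unit (or a new unit triangle appended). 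Two of the five ``known'' factors, $\M(H^{(2)}_{x,y,z-1}(\textbf{a};\textbf{b}))$ and $\M(H^{(2)}_{x-1,y,z-1}(\textbf{a};\textbf{b}^{+1}))$, have exactly the same $y$ as the unknown: what decreases is $z$, while the fern itself changes. So induction on $y$ alone cannot close. The paper instead inducts on $y+z+b+\overline{n}$ (with $\overline{n}$ the number of positive $b_j$'s), needs the base cases $x=0$, $y=0$, \emph{and} $b+\overline{n}=0$ (the last quoting \cite{Halfhex2}; the $x-1$ term also forces $x=0$ to be a base case), and needs a preliminary reduction eliminating zero terms of $\textbf{b}$ --- none of which appears in your plan.

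A second, related omission: since $z$ drops in the recurrence, the induction step requires $z>0$, and the case $z=0$ cannot be wished away. When $z=0$, removing the forced lozenges from an $H^{(2)}$-type region does not yield a smaller $H^{(2)}$-type region but an $R^{(2)}$-type region (the reversed family of Theorem \ref{mainR2}, whose west half-triangle points down), and conversely. This is precisely why the paper proves Theorem \ref{main2} jointly with Theorem \ref{mainR2}, running a simultaneous induction on both families, exactly as Theorem \ref{main1} is proved jointly with Theorem \ref{mainR1}. Your proposal never leaves the $H^{(2)}$-family, so even after repairing the induction parameter it has no mechanism for handling the $z=0$ configurations that the recurrence inevitably generates. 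The remaining ingredients you describe (the algebraic verification that the product satisfies the recurrence via Lemmas \ref{TV} and \ref{QK}, and the identification of the two displayed expressions) are sound in spirit, but they rest on this broken inductive frame.
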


Similar to the case of the the region $\mathcal{P}_{a,b,c}$, we would like to enumerate tilings of the weighted version $W^{(1)}_{x,y,z}(\textbf{a}; \textbf{b})$ and $W^{(2)}_{x,y,z}(\textbf{a}; \textbf{b})$
 of the above $H^{(1)}$- and $H^{(2)}$-type regions that are obtained by assigning to each vertical lozenge along their west sides a weight $1/2$ (see the lozenges with shaded cores in Figures \ref{fig:halvedhex1}
 (c) and (d), respectively). The weighted numbers of tilings of these two newly defined regions are also given by closed-form products.

\begin{thm}\label{mainW1} For non-negative integers $x,y,z$ and sequences of non-negative integers $\textbf{a}=(a_1,a_2,\dotsc,a_m)$ and $\textbf{b}=(b_1,b_2,\dotsc,b_n)$, we have
\begin{align}\label{mainW1eq}
\M(W^{(1)}_{x,y,z}(\textbf{a}; \textbf{b}))&=\frac{\M(W^{(1)}_{x+y,0,z}(\textbf{a};\textbf{b}))
\M(W^{(1)}_{0,2y,z}(\textbf{a}; \textbf{b}))}{\M(W^{(1)}_{y,0,z}(\textbf{a}; \textbf{b}))}\notag\\
&\quad\times \frac{\T(x+1,2a+b+2y+z-1,y)\V(2x+2a+1,b+2y+z,y)}{\T(1,2a+b+2y+z-1,y)\V(2a+1,b+2y+z,y)}\notag\\
&=2^{-2y+a_1}\Q'(0,a_1,\dotsc,a_{2\lfloor\frac{m+1}{2}\rfloor-1},a_{2\lfloor\frac{m+1}{2}\rfloor}+x+y+b_{2\lfloor\frac{n+1}{2}\rfloor},b_{2\lfloor\frac{n+1}{2}\rfloor-1},\dotsc,b_1)\notag\\
&\quad\times \Q'(a_1,\dotsc,a_{2\lceil \frac{m-1}{2}\rceil}, a_{2\lceil \frac{m-1}{2}\rceil+1}+x+y+b_{2\lceil \frac{n-1}{2}\rceil+1},b_{2\lceil \frac{n-1}{2}\rceil},\dotsc, b_1,z) \notag\\
&\quad\times\frac{(2a+2y-1)!!}{(2a-1)!!}\frac{\Hf_2(2\od_a+2\od_b+1)\Hf_2(2\e_a+2\e_b+2z+1)}{\Hf_2(2\od_a+2\od_b+2y+1)\Hf_2(2\e_a+2\e_b+2y+2z+1)} \notag\\
&\quad\times \frac{\Hf(2a+b+2y+z)\Hf(b+y+z)}{\Hf(2a+b+y+z)\Hf(b+z)}\notag\\
&\quad\times  \frac{\T(x+1,2a+b+2y+z-1,y)\V(2x+2a+1,b+2y+z,y)}{\T(1,2a+b+2y+z-1,y)\V(2a+1,b+2y+z,y)}.
\end{align}
\end{thm}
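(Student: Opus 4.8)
\section*{Proof proposal for Theorem \ref{mainW1}}

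The plan is to prove the two displayed equalities in turn, treating the first as the combinatorial core and the second as a (lengthy but mechanical) simplification. Throughout I would follow the treatment of the unweighted region $H^{(1)}$ in Theorem \ref{main1}, the only genuinely new feature being the bookkeeping of the weight $\tfrac12$ carried by the vertical lozenges along the west side. I would establish the first equality by induction on $y$, the induction step being supplied by the version of Kuo's graphical condensation \cite{Kuo} quoted in Section \ref{sec:Prelim}. Passing to the dual graph $G$ of $\mathcal W^{(1)}_{x,y,z}(\mathbf a;\mathbf b)$ (recording the half-weights as edge weights), I would mark the four distinguished boundary vertices so that the regions occurring in the condensation identity are again members of the $W^{(1)}$-family with $x,y,z$ suitably shifted; the marked vertices must be chosen away from the weighted edges on the west side so that the $\tfrac12$'s pass inertly through the recurrence. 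Equivalently, and this is likely how the bookkeeping is best organized, one shows that the claimed product satisfies the same condensation recurrence and agrees at the base cases, so that the two quantities coincide by strong induction on $x+y+z$. Iterating collapses the two-parameter family onto its slices $y=0$ and $x=0$, which is exactly the content of the first equality.

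The second equality is then obtained by inserting the closed forms of the three regions on the right-hand side of the first equality. The key input is that the $y=0$ region factors into two weighted quartered hexagons: by the weighted analogue of Lemma \ref{RS}, one has $\M(\mathcal W^{(1)}_{x+y,0,z}(\mathbf a;\mathbf b))=\Q'_1\cdot\Q'_2$, where $\Q'_1,\Q'_2$ are precisely the two $\mathcal Q'$-tiling numbers appearing in \eqref{mainW1eq} (this is the weighted counterpart of the identity, recorded after Theorem \ref{main1}, that the product of the two quartered pieces reconstitutes the $y=0$ region). The west-side $\tfrac12$-weights of $W^{(1)}$ are inherited by each quartered piece, which is exactly why the weighted count $\Q'$—rather than $\Q$—appears. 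Evaluating each factor through Lemma \ref{QAR}, and expressing the residual ratio $\M(\mathcal W^{(1)}_{0,2y,z})/\M(\mathcal W^{(1)}_{y,0,z})$ in closed form (the latter being again a $y=0$ region, the former handled at $x=0$ with the help of Ciucu's factorization theorem \cite{Ciucu3}), reduces \eqref{mainW1eq} to an identity among the hyperfactorials $\Hf,\Hf_2$, the double factorials, the Pochhammer symbols $(x)_n,[x]_n$, and the products $\T,\V$. This final identity I would verify directly from their definitions, in complete parallel with the unweighted case.

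The main obstacle is the first equality: choosing a condensation that closes on the $W^{(1)}$-family without introducing new region types, and—more delicately—tracking the $\tfrac12$-weights exactly, both through the condensation recurrence and through the factorization of the base cases, so that the powers of $2$ come out correctly. Concretely, one must reconcile the prefactor $2^{-2y+a_1}$ in \eqref{mainW1eq} (versus $2^{-y}$ in the unweighted Theorem \ref{main1}) against the weights $2^{-\e_t}$ already built into $\Q'$ through \eqref{QARb}; ensuring that all these contributions of $2$ cancel to the stated exponent is the point most likely to hide sign or parity errors. By contrast, once the base cases are expressed through Lemmas \ref{RS} and \ref{QAR}, the passage to the explicit product \eqref{mainW1eq} is a routine, if laborious, manipulation of the special functions defined above.
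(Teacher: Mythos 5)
Your overall template---Kuo condensation (Theorem \ref{kuothm}) for the induction step, region splitting into quartered hexagons for the degenerate slices, and a final algebraic verification via Lemmas \ref{QAR}, \ref{QK} and \ref{TV}---is indeed the paper's template: the proof of this theorem is omitted there precisely because it is "similar" to the combined proof of Theorems \ref{main1} and \ref{mainR1}. But your proposal has a genuine gap at its central point. You assume a choice of the four vertices $u,v,w,s$ for which all regions in the condensation identity are again $W^{(1)}$-type regions "with $x,y,z$ suitably shifted," the ferns $\textbf{a},\textbf{b}$ being untouched, and you then run a standalone induction on $y$ (or on $x+y+z$). Neither feature survives the geometry. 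In the paper's configuration (the analogue of Figure \ref{KuoH1} and recurrence (\ref{recurrence1})), the vertex $v$ sits at the inner end of the $b$-fern, and the forced lozenges created by its removal \emph{lengthen the fern}: the recurrence relates regions with fern $\textbf{b}$ to regions with fern $\textbf{b}^{+1}$ (last part increased by $1$, or a new part $1$ appended). Consequently the induction must track the fern data as well---the paper inducts on $y+z+b+\overline{n}$, needs the preprocessing that eliminates zero parts of $\textbf{b}$ (Figures \ref{fig:halvedhexbase3}(a),(b)), and needs the extra base case $b+\overline{n}=0$, which is not covered by your slices $x=0$, $y=0$ and is instead quoted from \cite{Halfhex2}. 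Note also that the recurrence contains terms with the \emph{same} $y$ and smaller $z$, so induction on $y$ alone cannot close.

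More fundamentally, the theorem cannot be proved standalone by this method: since the recurrence decrements $z$, the induction inevitably reaches $z=0$, and there a $W^{(1)}$-region does not reduce to a smaller region of the same type; after removing forced lozenges along the southeast side it becomes an $RW^{(1)}$-type region (the companion family with the upside-down $a$-fern), and conversely (the analogue of Figures \ref{fig:halvedhexbase3}(c),(d)). This is exactly why the paper proves Theorem \ref{mainW1} \emph{jointly} with Theorem \ref{mainRW1} by a simultaneous induction on both families, just as Theorems \ref{main1} and \ref{mainR1} are combined; your single-family induction has no mechanism to handle the $z=0$ regions it generates. Two smaller corrections: both degenerate slices are handled by the Region-splitting Lemma \ref{RS} alone, cutting along the lattice line carrying the ferns into two $\mathcal{Q}'$-type regions; invoking Ciucu's factorization theorem \cite{Ciucu3} for $\M(W^{(1)}_{0,2y,z}(\textbf{a};\textbf{b}))$ is off-target and in fact circular, since the only symmetric regions in play (Theorem \ref{mainthm1}) are enumerated \emph{using} the present theorem, not the other way around.
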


\begin{thm}\label{mainW2} With the same notations in Theorem \ref{mainW1}, the weighted number of tilings of the $W^{(2)}$-type region is given by
\begin{align}\label{mainW2eq}
\M(W^{(2)}_{x,y,z}(\textbf{a}; \textbf{b}))&=\frac{\M(W^{(2)}_{x+y,0,z}(\textbf{a};\textbf{b}))
\M(W^{(2)}_{0,2y,z}(\textbf{a};\textbf{b}))}{\M(W^{(2)}_{y,0,z}(\textbf{a};\textbf{b}))}\notag\\
&\quad\times \frac{\T(x+1,2a+b+2y+z-2,y)\V(2x+2a+1,b+2y+z-1,y)}{\T(1,2a+b+2y+z-2,y)\V(2a+1,b+2y+z-1,2)}\notag\\
&=2^{-y+a_1-1}\K'(0,a_1,\dotsc,a_{2\lfloor\frac{m+1}{2}\rfloor-1},a_{2\lfloor\frac{m+1}{2}\rfloor}+x+y+b_{2\lfloor\frac{n+1}{2}\rfloor},b_{2\lfloor\frac{n+1}{2}\rfloor-1},\dotsc,b_1)\notag\\
&\quad\times \K'(a_1,\dotsc,a_{2\lceil \frac{m-1}{2}\rceil}, a_{2\lceil \frac{m-1}{2}\rceil+1}+x+y+b_{2\lceil \frac{n-1}{2}\rceil+1},b_{2\lceil \frac{n-1}{2}\rceil},\dotsc, b_1,z) \notag\\
&\quad\times\frac{\Hf_2(2\od_a+2\od_b)\Hf_2(2\e_a+2\e_b+2z)}{\Hf_2(2\od_a+2\od_b+2y)\Hf_2(2\e_a+2\e_b+2y+2z)} \notag\\
&\quad\times \frac{\Hf(2a+b+2y+z-1)\Hf(b+y+z)}{\Hf(2a+b+y+z-1)\Hf(b+z)}\notag\\
&\quad\times  \frac{\T(x+1,2a+b+2y+z-2,y)\V(2x+2a+1,b+2y+z-1,y)}{\T(1,2a+b+2y+z-2,y)\V(2a+1,b+2y+z-1,y)}.
\end{align}
\end{thm}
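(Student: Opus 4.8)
The plan is to follow the same two--step strategy that underlies Theorems \ref{main1}--\ref{mainW1}: first establish the reduction identity (the first equality of (\ref{mainW2eq})), which expresses $\M(W^{(2)}_{x,y,z})$ through three boundary regions in which one of the first two indices vanishes, and then convert it into the explicit product (the second equality) by factoring those boundary regions into weighted quartered hexagons of type $\mathcal{K}'$ and invoking Lemma \ref{QAR}. Throughout, the only new feature relative to Theorem \ref{main2} is the $\tfrac12$--weighting of the vertical lozenges along the west side, so I would work with the weighted tiling number from the outset.

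For the reduction identity I would pass to the planar dual graph of $W^{(2)}_{x,y,z}$, assigning weight $\tfrac12$ to each edge dual to a vertical lozenge along the west side. Since these weights are supported on the boundary, the weighted form of Kuo's graphical condensation \cite{Kuo} applies verbatim. I would place the four distinguished unit triangles so that two sit at the ends of the west zigzag side while the other two track the parameter $y$ along the northeast and southeast sides; the four condensed subregions are then, after the weight bookkeeping, copies of $W^{(2)}$ in which the pair $(x,y)$ has been shifted or in which the first or middle index has dropped. Solving the resulting recurrence (anchored at the $x=0$ slice, where one checks directly that the $\T$-- and $\V$--ratio in (\ref{mainW2eq}) equals $1$ so that the identity degenerates to a tautology) yields the reduction identity, the $\T\V$--ratio being exactly the factor needed for the iteration to telescope. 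The arguments $2a+b+2y+z-2$ and $b+2y+z-1$, each shifted by one relative to the unweighted case (\ref{main2eq}), record the half--weighted vertical lozenges that the condensation creates at each step, and the half--integer west boundary, the ``$2y+z+a+b-1$ steps'' ending in a half--bump, is preserved under every shift, which keeps the argument inside the $W^{(2)}$ family and fixes all the $-1$ shifts in the $\T$, $\V$, and $\Hf$ arguments.

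It remains to evaluate the three boundary regions. For the two with vanishing middle index, $W^{(2)}_{x+y,0,z}$ and $W^{(2)}_{y,0,z}$, the halved hexagon degenerates so that — exactly as in the remark following Theorem \ref{main1}, provable by Lemma \ref{RS} — its weighted tiling number splits (up to a power of $2$) as a product of two $\mathcal{K}'$--type quartered hexagons; the gap data of the two ferns are obtained by reversing and interleaving $\textbf{a}$ and $\textbf{b}$ and merging the two central triangles with a block of size $x+y$ (respectively $y$), precisely as encoded by the floor/ceiling index strings in (\ref{mainW2eq}). The remaining region $W^{(2)}_{0,2y,z}$ is the $x=0$ anchor already supplied while solving the recurrence. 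Feeding the $\mathcal{K}'$ evaluation (\ref{QARd}) into each factor and substituting into the reduction identity, the two $\mathcal{K}'$ factors attached to $W^{(2)}_{x+y,0,z}$ survive, while all remaining quartered--hexagon contributions collapse, through the skipping--product identities for $\Hf_2$, into the displayed ratios of $\Hf$ and $\Hf_2$.

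The main obstacle is bookkeeping rather than any new structural idea, and it concentrates on the powers of $2$. The intermediate factorizations produce the same double--factorial normalizations $(2a-1)!!$ and $(2a+2y-1)!!$ that appear in (\ref{main2eq}) and (\ref{mainW1eq}); because here both the half--integer boundary and the $\tfrac12$--weighting are present, these must cancel against the powers of $2$ coming from the weighted lozenges along the three west sides, leaving only the single overall factor $2^{-y+a_1-1}$ (the $a_1$ recording the half--triangle of side $2a_1$ on the west boundary). Verifying simultaneously that these powers of $2$, the floor/ceiling index shifts controlling which fern triangles merge with the central $x+y$ block, and the one--unit shifts in the $\T$, $\V$, and $\Hf$ arguments all match is the most error--prone step; it is most safely carried out by specializing the claimed identity to the cases where the weighting or the half--bump is switched off and checking consistency against the already established Theorems \ref{main2} and \ref{mainW1}.
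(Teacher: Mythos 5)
Your step 2 (evaluating the $y=0$ regions via Lemma \ref{RS} as products of $\mathcal{K}'$-type quartered hexagons and simplifying with Lemma \ref{QAR}) is indeed how the paper passes between the two expressions in (\ref{mainW2eq}); the gaps are all in step 1, where the real work lies. First, your condensation recurrence is mis-specified. The paper's argument (written out for Theorems \ref{main1}/\ref{mainR1} and \ref{mainM1}/\ref{mainMR1}, and invoked by analogy for Theorem \ref{mainW2}) places the four removed triangles at the top-right corner, at the left end of the $b$-fern, and as a bowtie at the bottom-right corner (Figure \ref{KuoH1}); none of them lies on the west side, precisely because the half-bump, $\tfrac12$-weighted west boundary must survive intact in all five minors for these to stay in the family. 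The recurrence this yields unavoidably moves $z$ and the fern: it relates $W^{(2)}_{x,y,z}(\textbf{a};\textbf{b})$ to regions with parameters $y-1$, $z-1$, $x\pm 1$ and with $\textbf{b}$ replaced by $\textbf{b}^{+1}$ (cf. (\ref{recurrence1})). Your claimed recurrence, in which only the pair $(x,y)$ moves while $z$ and $\textbf{b}$ are untouched, is not what Kuo condensation produces here, and your proposed placement (two points at the ends of the west zigzag) would trigger forced removals of the weighted vertical lozenges and leave minors whose west boundary is no longer of $W^{(2)}$ shape.

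Second, because the true recurrence lowers $z$ and modifies $\textbf{b}$, the induction cannot be closed inside the $W^{(2)}$ family alone: when $z=0$ the forced lozenges convert a $W^{(2)}$-type region into an $RW^{(2)}$-type region and vice versa (the analogue of Figure \ref{fig:halvedhexbase3}(c)--(d)), which is exactly why the paper proves Theorem \ref{mainW2} simultaneously with Theorem \ref{mainRW2}. Your proposal never mentions the companion family, nor the elimination of zero entries of $\textbf{b}$, nor the base case $b+\overline{n}=0$, which is not self-contained but quoted from \cite{Halfhex2}. Finally, your anchor is wrong: at $x=0$ the first equality of (\ref{mainW2eq}) is not a tautology. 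Since the $\T$- and $\V$-ratios there equal $1$, it asserts $\M(W^{(2)}_{0,y,z}(\textbf{a};\textbf{b}))=\M(W^{(2)}_{0,2y,z}(\textbf{a};\textbf{b}))$, a genuinely nontrivial identity between two different regions; in the paper it comes out only as a consequence of both sides being equal to the explicit product, via Lemmas \ref{RS}, \ref{QAR} and \ref{QK}. This reflects a structural inversion in your plan: the paper proves the explicit product formula by induction and then deduces the reduction identity, whereas running the induction on the reduction identity itself founders because the boundary regions appearing in it are not stable under the recurrence.
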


We note that, in the above four halved hexagons, the $a$-fern always starts with an up-pointing half triangle of side-length $2a_1$.
 We are also interested in the halved hexagons
in which this half triangle is \emph{down-pointing}.
The first halved hexagon of this type is defined as follows.

\begin{figure}\centering
\setlength{\unitlength}{3947sp}%
\begingroup\makeatletter\ifx\SetFigFont\undefined%
\gdef\SetFigFont#1#2#3#4#5{%
  \reset@font\fontsize{#1}{#2pt}%
  \fontfamily{#3}\fontseries{#4}\fontshape{#5}%
  \selectfont}%
\fi\endgroup%
\resizebox{13cm}{!}{
\begin{picture}(0,0)%
\includegraphics{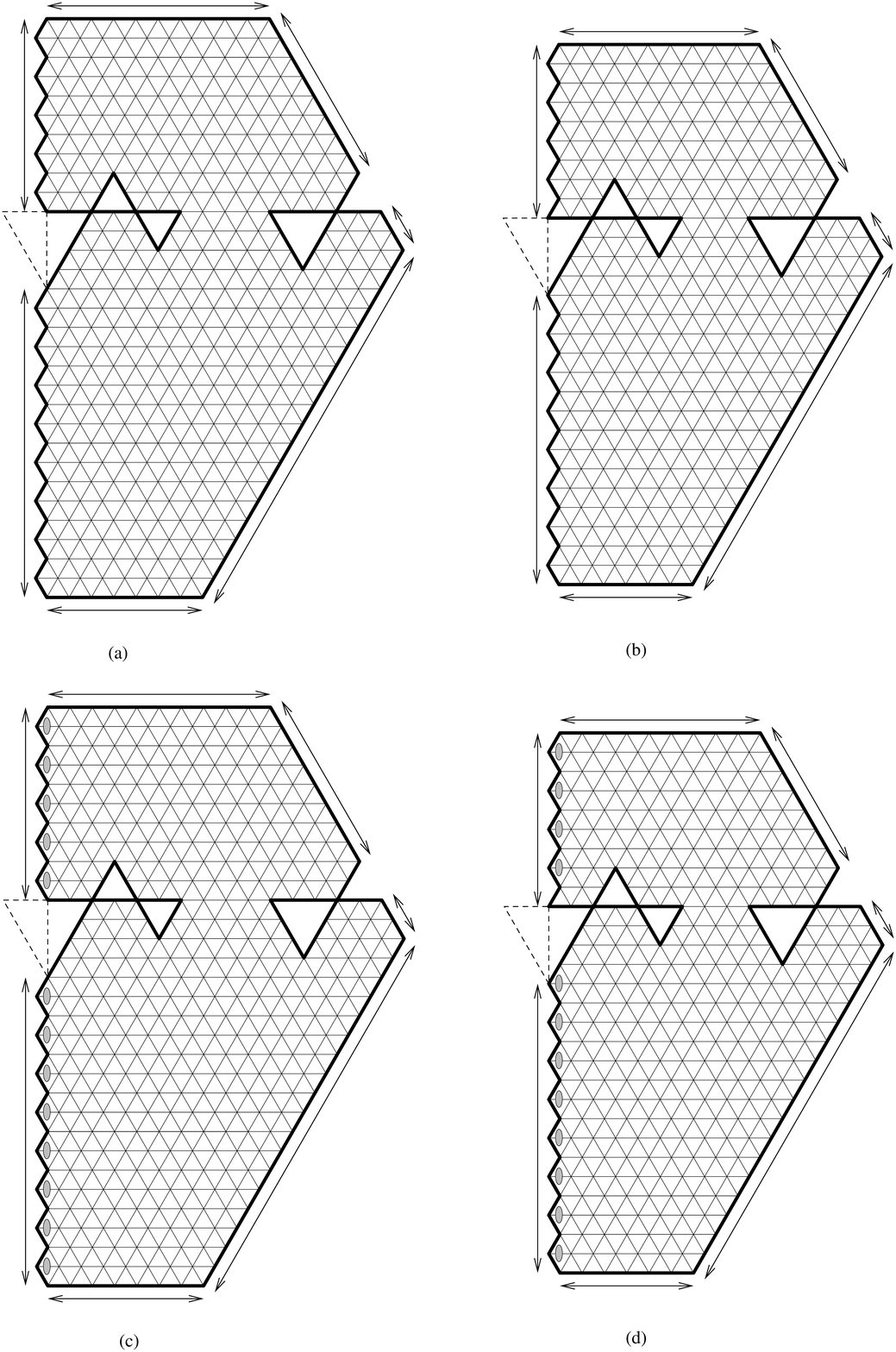}%
\end{picture}%
%
%

\begin{picture}(16644,25197)(343,-26026)
\put(16696,-5373){\makebox(0,0)[lb]{\smash{{\SetFigFont{14}{16.8}{\rmdefault}{\mddefault}{\itdefault}{$z$}%
}}}}
\put(2373,-17594){\makebox(0,0)[lb]{\smash{{\SetFigFont{14}{16.8}{\rmdefault}{\mddefault}{\itdefault}{$a_2$}%
}}}}
\put(3191,-18067){\makebox(0,0)[lb]{\smash{{\SetFigFont{14}{16.8}{\rmdefault}{\mddefault}{\itdefault}{$a_3$}%
}}}}
\put(5953,-18067){\makebox(0,0)[lb]{\smash{{\SetFigFont{14}{16.8}{\rmdefault}{\mddefault}{\itdefault}{$b_2$}%
}}}}
\put(6975,-17476){\makebox(0,0)[lb]{\smash{{\SetFigFont{14}{16.8}{\rmdefault}{\mddefault}{\itdefault}{$b_1$}%
}}}}
\put(2577,-13756){\makebox(0,0)[lb]{\smash{{\SetFigFont{14}{16.8}{\rmdefault}{\mddefault}{\itdefault}{$x+a_1+a_3+b_2$}%
}}}}
\put(6157,-14773){\rotatebox{300.0}{\makebox(0,0)[lb]{\smash{{\SetFigFont{14}{16.8}{\rmdefault}{\mddefault}{\itdefault}{$2y+2a_2+b_1$}%
}}}}}
\put(8101,-17890){\makebox(0,0)[lb]{\smash{{\SetFigFont{14}{16.8}{\rmdefault}{\mddefault}{\itdefault}{$z$}%
}}}}
\put(5543,-23264){\rotatebox{60.0}{\makebox(0,0)[lb]{\smash{{\SetFigFont{14}{16.8}{\rmdefault}{\mddefault}{\itdefault}{$2y+z+2a_1+2a_3+2b_2$}%
}}}}}
\put(2066,-25331){\makebox(0,0)[lb]{\smash{{\SetFigFont{14}{16.8}{\rmdefault}{\mddefault}{\itdefault}{$x+a_2+b_1$}%
}}}}
\put(634,-22319){\rotatebox{90.0}{\makebox(0,0)[lb]{\smash{{\SetFigFont{14}{16.8}{\rmdefault}{\mddefault}{\itdefault}{$y+z+a_3+b_2$}%
}}}}}
\put(590,-16649){\rotatebox{90.0}{\makebox(0,0)[lb]{\smash{{\SetFigFont{14}{16.8}{\rmdefault}{\mddefault}{\itdefault}{$y+a_2+b_1$}%
}}}}}
\put(11378,-14182){\makebox(0,0)[lb]{\smash{{\SetFigFont{14}{16.8}{\rmdefault}{\mddefault}{\itdefault}{$x+a_1+a_3+b_2$}%
}}}}
\put(15030,-14916){\rotatebox{300.0}{\makebox(0,0)[lb]{\smash{{\SetFigFont{14}{16.8}{\rmdefault}{\mddefault}{\itdefault}{$2y+2a_2+b_1-1$}%
}}}}}
\put(15730,-17666){\makebox(0,0)[lb]{\smash{{\SetFigFont{14}{16.8}{\rmdefault}{\mddefault}{\itdefault}{$b_1$}%
}}}}
\put(14502,-18257){\makebox(0,0)[lb]{\smash{{\SetFigFont{14}{16.8}{\rmdefault}{\mddefault}{\itdefault}{$b_2$}%
}}}}
\put(12365,-18080){\makebox(0,0)[lb]{\smash{{\SetFigFont{14}{16.8}{\rmdefault}{\mddefault}{\itdefault}{$a_3$}%
}}}}
\put(11445,-17666){\makebox(0,0)[lb]{\smash{{\SetFigFont{14}{16.8}{\rmdefault}{\mddefault}{\itdefault}{$a_2$}%
}}}}
\put(9512,-18233){\rotatebox{300.0}{\makebox(0,0)[lb]{\smash{{\SetFigFont{14}{16.8}{\rmdefault}{\mddefault}{\itdefault}{$2a_1$}%
}}}}}
\put(10116,-17190){\rotatebox{90.0}{\makebox(0,0)[lb]{\smash{{\SetFigFont{14}{16.8}{\rmdefault}{\mddefault}{\itdefault}{$y+a_2+b_1-\frac{1}{2}$}%
}}}}}
\put(9912,-23165){\rotatebox{90.0}{\makebox(0,0)[lb]{\smash{{\SetFigFont{14}{16.8}{\rmdefault}{\mddefault}{\itdefault}{$y+z+a_3+b_2-frac{1}{2}$}%
}}}}}
\put(14595,-23321){\rotatebox{60.0}{\makebox(0,0)[lb]{\smash{{\SetFigFont{14}{16.8}{\rmdefault}{\mddefault}{\itdefault}{$2y+z+2a_1+2a_3+2b_2-1$}%
}}}}}
\put(11212,-25166){\makebox(0,0)[lb]{\smash{{\SetFigFont{14}{16.8}{\rmdefault}{\mddefault}{\itdefault}{$x+a_2+b_1$}%
}}}}
\put(16798,-18070){\makebox(0,0)[lb]{\smash{{\SetFigFont{14}{16.8}{\rmdefault}{\mddefault}{\itdefault}{$z$}%
}}}}
\put(410,-5670){\rotatebox{300.0}{\makebox(0,0)[lb]{\smash{{\SetFigFont{14}{16.8}{\rmdefault}{\mddefault}{\itdefault}{$2a_1$}%
}}}}}
\put(2354,-4948){\makebox(0,0)[lb]{\smash{{\SetFigFont{14}{16.8}{\rmdefault}{\mddefault}{\itdefault}{$a_2$}%
}}}}
\put(3172,-5421){\makebox(0,0)[lb]{\smash{{\SetFigFont{14}{16.8}{\rmdefault}{\mddefault}{\itdefault}{$a_3$}%
}}}}
\put(5934,-5421){\makebox(0,0)[lb]{\smash{{\SetFigFont{14}{16.8}{\rmdefault}{\mddefault}{\itdefault}{$b_2$}%
}}}}
\put(6956,-4830){\makebox(0,0)[lb]{\smash{{\SetFigFont{14}{16.8}{\rmdefault}{\mddefault}{\itdefault}{$b_1$}%
}}}}
\put(2558,-1110){\makebox(0,0)[lb]{\smash{{\SetFigFont{14}{16.8}{\rmdefault}{\mddefault}{\itdefault}{$x+a_1+a_3+b_2$}%
}}}}
\put(6138,-2127){\rotatebox{300.0}{\makebox(0,0)[lb]{\smash{{\SetFigFont{14}{16.8}{\rmdefault}{\mddefault}{\itdefault}{$2y+2a_2+b_1$}%
}}}}}
\put(8082,-5244){\makebox(0,0)[lb]{\smash{{\SetFigFont{14}{16.8}{\rmdefault}{\mddefault}{\itdefault}{$z$}%
}}}}
\put(5524,-10618){\rotatebox{60.0}{\makebox(0,0)[lb]{\smash{{\SetFigFont{14}{16.8}{\rmdefault}{\mddefault}{\itdefault}{$2y+z+2a_1+2a_3+2b_2$}%
}}}}}
\put(2047,-12685){\makebox(0,0)[lb]{\smash{{\SetFigFont{14}{16.8}{\rmdefault}{\mddefault}{\itdefault}{$x+a_2+b_1$}%
}}}}
\put(615,-9673){\rotatebox{90.0}{\makebox(0,0)[lb]{\smash{{\SetFigFont{14}{16.8}{\rmdefault}{\mddefault}{\itdefault}{$y+z+a_3+b_2$}%
}}}}}
\put(571,-4003){\rotatebox{90.0}{\makebox(0,0)[lb]{\smash{{\SetFigFont{14}{16.8}{\rmdefault}{\mddefault}{\itdefault}{$y+a_2+b_1$}%
}}}}}
\put(11359,-1536){\makebox(0,0)[lb]{\smash{{\SetFigFont{14}{16.8}{\rmdefault}{\mddefault}{\itdefault}{$x+a_1+a_3+b_2$}%
}}}}
\put(15011,-2270){\rotatebox{300.0}{\makebox(0,0)[lb]{\smash{{\SetFigFont{14}{16.8}{\rmdefault}{\mddefault}{\itdefault}{$2y+2a_2+b_1-1$}%
}}}}}
\put(15711,-5020){\makebox(0,0)[lb]{\smash{{\SetFigFont{14}{16.8}{\rmdefault}{\mddefault}{\itdefault}{$b_1$}%
}}}}
\put(14483,-5611){\makebox(0,0)[lb]{\smash{{\SetFigFont{14}{16.8}{\rmdefault}{\mddefault}{\itdefault}{$b_2$}%
}}}}
\put(12346,-5434){\makebox(0,0)[lb]{\smash{{\SetFigFont{14}{16.8}{\rmdefault}{\mddefault}{\itdefault}{$a_3$}%
}}}}
\put(11426,-5020){\makebox(0,0)[lb]{\smash{{\SetFigFont{14}{16.8}{\rmdefault}{\mddefault}{\itdefault}{$a_2$}%
}}}}
\put(9493,-5587){\rotatebox{300.0}{\makebox(0,0)[lb]{\smash{{\SetFigFont{14}{16.8}{\rmdefault}{\mddefault}{\itdefault}{$2a_1$}%
}}}}}
\put(10097,-4544){\rotatebox{90.0}{\makebox(0,0)[lb]{\smash{{\SetFigFont{14}{16.8}{\rmdefault}{\mddefault}{\itdefault}{$y+a_2+b_1-\frac{1}{2}$}%
}}}}}
\put(9893,-10519){\rotatebox{90.0}{\makebox(0,0)[lb]{\smash{{\SetFigFont{14}{16.8}{\rmdefault}{\mddefault}{\itdefault}{$y+z+a_3+b_2-\frac{1}{2}$}%
}}}}}
\put(14576,-10675){\rotatebox{60.0}{\makebox(0,0)[lb]{\smash{{\SetFigFont{14}{16.8}{\rmdefault}{\mddefault}{\itdefault}{$2y+z+2a_1+2a_3+2b_2-1$}%
}}}}}
\put(11193,-12520){\makebox(0,0)[lb]{\smash{{\SetFigFont{14}{16.8}{\rmdefault}{\mddefault}{\itdefault}{$x+a_2+b_1$}%
}}}}
\put(429,-18316){\rotatebox{300.0}{\makebox(0,0)[lb]{\smash{{\SetFigFont{14}{16.8}{\rmdefault}{\mddefault}{\itdefault}{$2a_1$}%
}}}}}
\end{picture}}
\caption{(a) The region $R^{(1)}_{3,1,2}(2,2,2;\ 2,3)$. (b) The region $R^{(2)}_{2,1,2}(2,2,2; \ 2,3)$.  (c) The weighted region $RW^{(1)}_{3,1,2}(2,2,2;\ 2,3)$. (d) The weighted region $RW^{(2)}_{2,1,2}(2,2,2; \ 2,3)$.}\label{fig:halvedhex2}
\end{figure}

Start with a halved hexagon  of side-lengths $x+\od_a+\e_b$, $2y+z+2\e_a+2\od_b$, $2y+z+2\od_a+2\e_b$, $x+\e_a+\od_b$,
 $2y+z+a+b$.
Remove the `\emph{upside down}' $a$-fern from the west side of the hexagon and remove
the normal $b$-fern from the northeast side at the level $z$ above the rightmost vertex of the halved hexagon as shown
in Figure \ref{fig:halvedhex2} (a). Let $R^{(1)}_{x,y,z}(\textbf{a};\ \textbf{b})$ denote
the resulting region.

\begin{thm}\label{mainR1} Assume that $x,y,z$ are non-negative integers and that $\textbf{a}=(a_1,a_2,\dotsc,a_m)$ and $\textbf{b}=(b_1,b_2,\dotsc,b_n)$ are two sequences of non-negative integers. Then
\begin{align}\label{mainR1eq}
\M(R^{(1)}_{x,y,z}(\textbf{a};\textbf{b}))&=\frac{\M(R^{(1)}_{x+y,0,z}(\textbf{a};\textbf{b}))
\M(R^{(1)}_{0,2y,z}(\textbf{a};\textbf{b}))}{\M(R^{(1)}_{y,0,z}(\textbf{a};\textbf{b}))}\notag\\
&\quad\times \frac{\T(x+1,2a+b+2y+z,y)\V(2x+2a+3,b+2y+z-1,y)}{\T(1,2a+b+2y+z,y)\V(2a+3,b+2y+z-1,y)}\notag\\
&=2^{-y}\Q(a_1,\dotsc,a_{2\lceil \frac{m-1}{2}\rceil}, a_{2\lceil \frac{m-1}{2}\rceil+1}+x+y+b_{2\lfloor\frac{n+1}{2}\rfloor},b_{2\lfloor\frac{n+1}{2}\rfloor-1},\dotsc,b_1)\notag\\
&\quad\times \Q(0,a_1,\dotsc,a_{2\lfloor\frac{m+1}{2}\rfloor-1},a_{2\lfloor\frac{m+1}{2}\rfloor}+x+y+b_{2\lceil \frac{n-1}{2}\rceil+1},b_{2\lceil \frac{n-1}{2}\rceil},\dotsc, b_1,z) \notag\\
&\quad\times \frac{\Hf_2(2\e_a+2\od_b+1)\Hf_2(2\od_a+2\e_b+2z+1)}{\Hf_2(2\e_a+2\od_b+2y+1)\Hf_2(2\od_a+2\e_b+2y+2z+1)} \notag\\
&\quad\times \frac{\Hf(2a+b+2y+z+1)\Hf(b+y+z)}{\Hf(2a+b+y+z+1)\Hf(b+z)}\notag\\
&\quad\times  \frac{\T(x+1,2a+b+2y+z,y)\V(2x+2a+3,b+2y+z-1,y)}{\T(1,2a+b+2y+z,y)\V(2a+3,b+2y+z-1,y)}.
\end{align}
\end{thm}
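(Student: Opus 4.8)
The plan is to mirror the proof of Theorem \ref{main1}, adjusting for the upside-down $a$-fern. Fix $z$, $\textbf{a}$, $\textbf{b}$ and write $f(x,y):=\M(R^{(1)}_{x,y,z}(\textbf{a};\textbf{b}))$; passing to the planar dual graph, tilings become perfect matchings. I would first establish the first equality in (\ref{mainR1eq}) by induction on $y$. The case $y=0$ is trivial, since $\T(\cdot,\cdot,0)=\V(\cdot,\cdot,0)=1$ makes the right-hand side collapse to $f(x,0)$. For the inductive step I would apply the version of Kuo condensation recalled in Section \ref{sec:Prelim}, choosing the four distinguished boundary unit triangles (two along the west zigzag, two along the northeast side) so that the five regions appearing in the condensation identity are all of $R^{(1)}$-type with shifted parameters. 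Matching them to $f$ evaluated at smaller arguments turns the identity into a recurrence, and the first equality is obtained by verifying that its right-hand side obeys the same recurrence; concretely this amounts to checking that the displayed quotient of $\T$- and $\V$-products telescopes correctly, a routine manipulation using only the definitions (\ref{defineT})--(\ref{defineV}) and the one-step recurrences of the Pochhammer and skipping-Pochhammer symbols.

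To pass to the second (fully explicit) equality I would evaluate the regions with middle parameter $0$. For such a region a horizontal fault line forms along the lattice line carrying the two ferns: the lozenges there are forced, and the region separates into an upper and a lower piece, each a quartered hexagon of $\mathcal{Q}$-type. This is the region-splitting lemma (Lemma \ref{RS}) underlying the identity $\M(R^{(1)}_{x+y,0,z})=\Q(\cdots)\,\Q(\cdots)$ noted after Theorem \ref{main1}. The one genuinely new feature compared with $H^{(1)}$ is that the half-triangle of side $2a_1$ straddling the fault line now points downward; bisecting it therefore allots the $a$-parameters to the upper and lower quartered hexagons in the reverse manner, which is exactly what swaps the order of the two $\Q$-factors and interchanges $\e_a\leftrightarrow\od_a$ in the $\Hf_2$-quotient of (\ref{mainR1eq}) relative to Theorem \ref{main1}. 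Evaluating these quartered hexagons through Lemma \ref{QAR} yields the closed forms of the regions with middle parameter $0$.

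Substituting these evaluations into the first equality and simplifying then gives the second equality: the $2^{-y}$ prefactor and the ratios of $\Hf$- and $\Hf_2$-values are produced by collecting the constants emitted by the region-splitting step and by the quotients of hyperfactorials in the $\Q$-formulas of Lemma \ref{QAR}, while the $\T,\V$-quotients are inherited verbatim from the first equality. I expect the main obstacle to be the Kuo condensation setup: one must select the four vertices so that every one of the five resulting regions is again an honest $R^{(1)}$ with the intended parameter shifts, and this is delicate precisely because the down-pointing half-triangle alters the local geometry along the west boundary compared with the $H^{(1)}$ case. Getting this choice right---and then confirming, in the base case, that the fault-line split produces the $\Q$-factors in the swapped order with the $\e_a\leftrightarrow\od_a$ interchange---is where the real work lies; the remaining reconciliation of hyperfactorial constants, though lengthy, is mechanical.
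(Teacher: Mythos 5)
Your plan hinges on a Kuo condensation setup that does not exist as described, and this is a genuine gap rather than a detail. You want the four distinguished unit triangles chosen so that all five regions in the condensation identity are $R^{(1)}$-type with the \emph{same} $z$, $\textbf{a}$, $\textbf{b}$ and only $x,y$ shifted, so that induction on $y$ alone closes. But when condensation is applied to these regions (the paper places $u$ at the upper-right corner, $v$ against the $b$-fern, and $w,s$ as a bowtie at the lower-right corner), the forced lozenges created by the deletions inevitably change the level parameter and the fern itself: the recurrence one actually obtains is
\begin{align}
\M(R^{(1)}_{x,y,z}(\textbf{a};\textbf{b}))\M(R^{(1)}_{x,y-1,z-1}(\textbf{a};\textbf{b}^{+1}))=&\M(R^{(1)}_{x,y-1,z}(\textbf{a};\textbf{b}^{+1}))\M(R^{(1)}_{x,y,z-1}(\textbf{a};\textbf{b}))\notag\\
&+\M(R^{(1)}_{x+1,y-1,z}(\textbf{a};\textbf{b}))\M(R^{(1)}_{x-1,y,z-1}(\textbf{a};\textbf{b}^{+1})),
\end{align}
where $\textbf{b}^{+1}$ modifies the last triangle of the $b$-fern (or appends a unit triangle). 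Because $z$ decreases and $\textbf{b}$ changes, induction on $y$ with everything else frozen cannot absorb the right-hand side; the correct induction is on $y+z+b+\overline{n}$, and it forces exactly the machinery your proposal omits: the reduction eliminating zero terms of $\textbf{b}$, the base case $b+\overline{n}=0$ (imported from earlier work), and, crucially, the case $z=0$, in which removing forced lozenges turns an $R^{(1)}$-type region into an $H^{(1)}$-type region and vice versa. This last point is why the paper proves Theorem \ref{main1} and Theorem \ref{mainR1} \emph{simultaneously}; a proof of \ref{mainR1} that never touches the $H^{(1)}$ family, as you propose, cannot complete the induction. Relatedly, the paper proves the explicit ($\Q$-type) formula first and then deduces the three-term quotient identity, whereas you try to prove the quotient identity first as a self-contained recurrence in $(x,y)$ --- that ordering only works if the pure-$(x,y)$ recurrence exists, which it does not.

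Your second stage also has a smaller omission: the quotient in the first equality involves $\M(R^{(1)}_{0,2y,z}(\textbf{a};\textbf{b}))$, which is not a ``middle parameter $0$'' region, so evaluating only the $y=0$ regions by the fault-line split is not enough; one also needs the $x=0$ splitting (forced lozenges between the two nearly touching ferns), which is carried out separately in the paper. On the positive side, your description of the $y=0$ split itself is accurate, including the key observation that the down-pointing half-triangle of side $2a_1$ distributes the $a$-parameters to the two quartered hexagons in the reverse manner, swapping the order of the $\Q$-factors and interchanging $\e_a\leftrightarrow\od_a$ relative to Theorem \ref{main1}; this matches the paper. But without a workable inductive engine, the proposal as written does not yield the theorem.
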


We consider next a variation of  the above $R^{(1)}$-type region, where the initial halved hexagon has side-lengths $x+\od_a+\e_b$,
$2y+z+2\e_a+2\od_b-1$, $2y+z+2\od_a+2\e_b-1$, $x+\e_a+\od_b$,
 $2y+z+a+b-1$, and where the $a$- and $b$-ferns are removed in the same way as in the case of $R^{(1)}$-type regions.
 We denote by $R^{(2)}_{x,y,z}(\textbf{a};\textbf{b})$ the resulting region (illustrated in Figure
 \ref{fig:halvedhex2}(b)).

\begin{thm}\label{mainR2} With the same notations in Theorem \ref{mainR1}, the number of tilings of the $R^{(2)}$-type region is given by
\begin{align}\label{mainR2eq}
\M(R^{(2)}_{x,y,z}(\textbf{a};\textbf{b}))&=\frac{\M(R^{(2)}_{x+y,0,z}(\textbf{a};\textbf{b}))
\M(R^{(2)}_{0,2y,z}(\textbf{a};\textbf{b}))}{\M(R^{(2)}_{y,0,z}(\textbf{a};\textbf{b}))}\notag\\
&\quad\times \frac{\T(x+1,2a+b+2y+z-1,y)\V(2x+2a+3,b+2y+z-2,y)}{\T(1,2a+b+2y+z-1,y)\V(2a+3,b+2y+z-2,y)}\notag\\
&=\K(a_1,\dotsc,a_{2\lceil \frac{m-1}{2}\rceil}, a_{2\lceil \frac{m-1}{2}\rceil+1}+x+y+b_{2\lfloor\frac{n+1}{2}\rfloor},b_{2\lfloor\frac{n+1}{2}\rfloor-1},\dotsc,b_1)\notag\\
&\quad\times \K(0,a_1,\dotsc,a_{2\lfloor\frac{m+1}{2}\rfloor-1},a_{2\lfloor\frac{m+1}{2}\rfloor}+x+y+b_{2\lceil \frac{n-1}{2}\rceil+1},b_{2\lceil \frac{n-1}{2}\rceil},\dotsc, b_1,z) \notag\\
&\quad\times \frac{(2a-1)!!}{(2a+2y-1)!!} \frac{\Hf_2(2\e_a+2\od_b)\Hf_2(2\od_a+2\e_b+2z)}{\Hf_2(2\e_a+2\od_b+2y)
\Hf_2(2\od_a+2\e_b+2y+2z)} \notag\\
&\quad\times \frac{\Hf(2a+b+2y+z)\Hf(b+y+z)}{\Hf(2a+b+y+z)\Hf(b+z)}\notag\\
&\quad\times  \frac{\T(x+1,2a+b+2y+z-1,y)\V(2x+2a+3,b+2y+z-2,y)}{\T(1,2a+b+2y+z-1,y)\V(2a+3,b+2y+z-2,y)}.
\end{align}
\end{thm}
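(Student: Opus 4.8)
The plan is to prove Theorem \ref{mainR2} in complete parallel with Theorem \ref{mainR1}, the systematic change being that the three shortened side-lengths of the $R^{(2)}$-hexagon replace every occurrence of $2a+b+2y+z$ and $b+2y+z-1$ by their decremented counterparts; concretely this turns the $\Q$-factors of \eqref{mainR1eq} into the $\K$-factors of \eqref{mainR2eq} and produces the double-factorial prefactor. As in the main theorem, the two engines are the quoted version of Kuo condensation \cite{Kuo} and Ciucu's factorization theorem \cite{Ciucu3}, together with the closed forms for the quartered hexagons in Lemma \ref{QAR}.

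First I would establish the factorization stated in the first two lines of \eqref{mainR2eq} by induction on $y$. When $y=0$ the factors $\T(\cdot,\cdot,0)$ and $\V(\cdot,\cdot,0)$ are empty products and the identity is vacuous, so the content lies in the inductive step. There I would apply Kuo condensation to the dual graph of $R^{(2)}_{x,y,z}(\textbf{a};\textbf{b})$, choosing four distinguished unit triangles on the boundary — two adjacent to the $b$-fern on the northeast side and two near the down-pointing $2a_1$ half-triangle on the west side — so that each of the five regions appearing in the condensation identity is again an $R^{(2)}$-region with $x,y$ shifted by small amounts. Substituting the induction hypothesis into the lower-$y$ terms converts the additive Kuo identity into the asserted multiplicative factorization, the match being guaranteed once one checks that the displayed ratio of $\T,\V$-products obeys the same three-term relation; by \eqref{defineT}--\eqref{defineV} this reduces to an identity among ratios that are linear in the parameters.

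Next I would evaluate the anchor regions carrying a zero middle parameter. For $R^{(2)}_{w,0,z}(\textbf{a};\textbf{b})$ the halved hexagon is symmetric across the horizontal lattice line through the two ferns, so Ciucu's factorization theorem splits its number of tilings into the product of the numbers of (suitably weighted) tilings of the two quartered hexagons lying above and below that line; both are $\mathcal{K}$-type and are evaluated by \eqref{QARc}. This yields $\M(R^{(2)}_{x+y,0,z})$ and $\M(R^{(2)}_{y,0,z})$ as explicit products, the two $\K$-factors of $\M(R^{(2)}_{x+y,0,z})$ being precisely those displayed in \eqref{mainR2eq}; moreover the down-pointing orientation of the $2a_1$ half-triangle is what places the distinguished entry $0$ in the \emph{second} $\K$ (rather than the first, as for the $H^{(2)}$-region of Theorem \ref{main2}) and what contributes the weight ratio $\tfrac{(2a-1)!!}{(2a+2y-1)!!}$. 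Feeding these evaluations, together with the explicit formula applied to $R^{(2)}_{0,2y,z}$, into the factorization above and then simplifying — merging the pairs of $\K$-values by the product identity of Lemma \ref{RS} and collapsing the $\Hf$- and $\Hf_2$-ratios against the $\T,\V$-factors — produces the closed product on the right-hand side of \eqref{mainR2eq}.

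The principal obstacle is the Kuo-condensation step: the four boundary triangles must be positioned so that \emph{every} one of the five minors is genuinely an $R^{(2)}$-region and not some region outside the family, and this is sensitive to the parities of $m$ and $n$ and to the half-integral west side of the $R^{(2)}$-hexagon; keeping this bookkeeping straight is exactly what forces the floor/ceiling index shifts $2\lceil\frac{m-1}{2}\rceil$ and $2\lfloor\frac{n+1}{2}\rfloor$ appearing in the $\K$-arguments. A second, purely computational hurdle is the verification that the explicit product satisfies the condensation recurrence; although lengthy, this amounts to cancelling ratios of $\Hf$, $\Hf_2$, $\T$ and $\V$ down to factors linear in the parameters, which is routine given the product structure stressed in the introduction.
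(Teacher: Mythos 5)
Your plan breaks down at the Kuo-condensation step, and the failure is structural rather than cosmetic. You want the four distinguished triangles placed so that all five companion regions in Kuo's identity are again $R^{(2)}$-regions with the same $z$, $\textbf{a}$, $\textbf{b}$ and only $x,y$ shifted, so that you can induct on $y$ alone. But two of your four triangles sit against the $b$-fern, and removing a unit triangle adjacent to a fern does not leave that fern intact: once the forced lozenges are cleared, the fern's last triangle grows by one unit (or a new unit triangle is appended to it). This is exactly what happens in the paper's combined proof of Theorems \ref{main1} and \ref{mainR1}: the triangle $v$ adjacent to the left end of the $b$-fern produces minors with $\textbf{b}$ replaced by $\textbf{b}^{+1}$, and the corner triangles $u,w,s$ decrement $z$ as well, yielding the recurrence (\ref{recurrence2}), which involves $\M(R^{(1)}_{x,y-1,z-1}(\textbf{a};\textbf{b}^{+1}))$, $\M(R^{(1)}_{x,y,z-1}(\textbf{a};\textbf{b}))$, and so on. Because $z$ drops and $\textbf{b}$ changes, the induction has to be run on the statistic $y+z+b+\overline{n}$, not on $y$; and since an $R^{(2)}$-region with $z=0$ reduces, after forced lozenges are removed, to an $H^{(2)}$-region (and vice versa, cf. Figures \ref{fig:halvedhexbase3}(c)--(d)), Theorem \ref{mainR2} cannot be proved standing alone: it must be proved jointly with Theorem \ref{main2}, which is precisely why the paper organizes everything into combined proofs. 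Your outline has no mechanism for the modified fern, for the decreasing $z$, or for the crossover to the $H^{(2)}$-family, so the induction cannot close as described.

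The second gap is the evaluation of the anchors. You claim $R^{(2)}_{w,0,z}(\textbf{a};\textbf{b})$ is symmetric about the horizontal lattice line through the ferns and apply Ciucu's factorization theorem across that line. Neither part of this is right: the region has no such symmetry (its north side has length $w+\od_a+\e_b$ while its south side has length $w+\e_a+\od_b$, and the portions above and below the fern line have different heights), and Lemma \ref{ciucufactor} as quoted applies to a graph with a \emph{vertical} symmetry axis---in this paper it is used only for the $S^{(i)}$-regions of Theorems \ref{mainthm1} and \ref{mainthm2}. What actually happens at $y=0$ (and likewise at $x=0$, which you also need for $R^{(2)}_{0,2y,z}$ but never address) is simpler: forced lozenges appear along the fern level, and the Region-splitting Lemma \ref{RS} severs the region into two independent subregions which, after the forced lozenges are discarded, are $\mathcal{K}$-type quartered hexagons evaluated by (\ref{QARc}). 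Relatedly, Lemma \ref{RS} is a statement about splitting regions, not a ``product identity'' for merging $\K$-values; the final simplification should instead cite Lemmas \ref{QAR} and \ref{QK}. With these repairs---Region-splitting at the base cases, and the $(y,z,\textbf{b})$-shifting Kuo recurrence run jointly with the $H^{(2)}$-family on $y+z+b+\overline{n}$---your outline essentially becomes the paper's argument.
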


We are also investigate the weighted versions of the above `reversing' regions with the vertical lozenges along the west side weighted by
$1/2$. Denote the weighted version of $R^{(i)}_{x,y,z}(\textbf{a};\textbf{b})$ by
$RW^{(i)}_{x,y,z}(\textbf{a};\textbf{b})$, for $i=1,2$. These newly defined regions are illustrated in
 Figures \ref{fig:halvedhex2}(c) and (d), respectively, and their tiling numbers are also given by simple product formulas as below.

\begin{thm}\label{mainRW1} For non-negative integers $x,y,z$ and sequences of non-negative integers $\textbf{a}=(a_1,a_2,\dotsc,a_m)$ and $\textbf{b}=(b_1,b_2,\dotsc,b_n)$, we have
\begin{align}\label{mainRW1eq}
\M(RW^{(1)}_{x,y,z}(\textbf{a}; \textbf{b}))&=\frac{\M(RW^{(1)}_{x+y,0,z}(\textbf{a};\textbf{b}))
\M(RW^{(1)}_{0,2y,z}(\textbf{a};\textbf{b}))}{\M(RW^{(1)}_{y,0,z}(\textbf{a};\textbf{b}))}\notag\\
&\quad\times \frac{\T(x+1,2a+b+2y+z-1,y)\V(2x+2a+1,b+2y+z,y)}{\T(1,2a+b+2y+z-1,y)\V(2a+1,b+2y+z,y)}\notag\\
&=2^{a_1-2y}\Q'(a_1,\dotsc,a_{2\lceil \frac{m-1}{2}\rceil}, a_{2\lceil \frac{m-1}{2}\rceil+1}+x+y+b_{2\lfloor\frac{n+1}{2}\rfloor},b_{2\lfloor\frac{n+1}{2}\rfloor-1},\dotsc,b_1)\notag\\
&\quad\times \Q'(0,a_1,\dotsc,a_{2\lfloor\frac{m+1}{2}\rfloor-1},a_{2\lfloor\frac{m+1}{2}\rfloor}+x+y+b_{2\lceil \frac{n-1}{2}\rceil+1},b_{2\lceil \frac{n-1}{2}\rceil},\dotsc, b_1,z) \notag\\
&\quad\times \frac{(2a+2y-1)!!}{(2a-1)!!} \frac{\Hf_2(2\e_a+2\od_b+1)\Hf_2(2\od_a+2\e_b+2z+1)}
{\Hf_2(2\e_a+2\od_b+2y+1)\Hf_2(2\od_a+2\e_b+2y+2z+1)} \notag\\
&\quad\times \frac{\Hf(2a+b+2y+z)\Hf(b+y+z)}{\Hf(2a+b+y+z)\Hf(b+z)}\notag\\
&\quad\times  \frac{\T(x+1,2a+b+2y+z-1,y)\V(2x+2a+1,b+2y+z,y)}{\T(1,2a+b+2y+z-1,y)\V(2a+1,b+2y+z,y)}.
\end{align}
\end{thm}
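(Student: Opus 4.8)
The plan is to prove Theorem \ref{mainRW1} by the same two-part mechanism that drives Theorems \ref{mainR1} and \ref{mainW1}; the region $RW^{(1)}$ combines the fern reversal of Theorem \ref{mainR1} with the $\tfrac12$-weighting of Theorem \ref{mainW1}. The two ingredients are Ciucu's factorization theorem \cite{Ciucu3}, which evaluates the regions with vanishing middle thickness, and Kuo condensation \cite{Kuo}, which propagates the answer in the parameter $y$. In the statement, the first displayed equality in (\ref{mainRW1eq}) is the structural \emph{splitting identity} that isolates the $y$-dependence, and the second is the closed product; I would obtain the former combinatorially and then deduce the latter by substituting evaluated base regions.

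For the base evaluations I would apply Ciucu's factorization theorem in the packaged form of Lemma \ref{RS}. A weighted halved hexagon with a $\tfrac12$ on each vertical lozenge along its west side carries exactly Ciucu's cut-edge weights, so $\M(RW^{(1)}_{w,0,z})$ splits as a product of two quartered hexagons of $\Q'$-type whose side data are read off from the two ferns; feeding these into Lemma \ref{QAR} gives an explicit closed form. The $\tfrac12$-weights are the source of the powers of two and of the double-factorial ratio $(2a+2y-1)!!/(2a-1)!!$, while the reversal of the $a$-fern (the down-pointing half-triangle of side $2a_1$) interchanges the roles of $\e_a,\od_a$ against $\e_b,\od_b$ in the $\Hf_2$-factors and, in particular, determines which of the two $\Q'$-regions in (\ref{mainRW1eq}) has its argument string begin with $0$. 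At $y=0$ every $\Hf$- and $\Hf_2$-ratio in (\ref{mainRW1eq}) collapses to $1$ and the double-factorial ratio is $1$, so the closed form reduces to $2^{a_1}$ times the product of the two $\Q'$-values, matching the Ciucu--Lemma \ref{QAR} output.

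Next I would establish the splitting identity by Kuo condensation. Passing to the dual graph of $RW^{(1)}_{x,y,z}$ and selecting the four distinguished vertices near the corners adjacent to the zigzag west side, the condensation identity becomes multiplicative because the vertices can be placed so that one of its two product terms counts tilings of a region with a parity obstruction and hence vanishes; this yields $\M(RW^{(1)}_{x,y,z})$ as the ratio $\M(RW^{(1)}_{x+y,0,z})\,\M(RW^{(1)}_{0,2y,z})/\M(RW^{(1)}_{y,0,z})$ up to the lozenges forced along the boundary, and these forced lozenges contribute precisely the explicit factor built from $\T(x+1,2a+b+2y+z-1,y)$ and $\V(2x+2a+1,b+2y+z,y)$. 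Since all subregions inherit the $\tfrac12$-weights unchanged, the identity holds verbatim in the weighted setting, and checking that the claimed product obeys it reduces to an elementary identity among Pochhammer-type symbols handled by the definitions (\ref{defineT}) and (\ref{defineV}) together with $(x)_n=(x)_{n-1}(x+n-1)$ and $[x]_n=[x]_{n-1}(x+2n-2)$. The second equality then follows by inserting the base evaluations of the previous paragraph into the splitting identity and simplifying; the floor and ceiling indices in the two $\Q'$-arguments are forced by the parities of $m$ and $n$ exactly as recorded after Theorem \ref{main1}, so this substitution carries a four-way parity case analysis.

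The step I expect to be the main obstacle is confirming that the closed product obeys the condensation recurrence while simultaneously accounting for the weights and the fern reversal. Two effects compound: the $\tfrac12$-weights inject powers of two (assembling into the prefactor $2^{a_1-2y}$) and, through Ciucu's theorem, the double-factorial ratio, and one must verify that these balance across every condensation step rather than merely at $y=0$; and the down-pointing half-triangle shifts which partial sums $\s_k$ enter the $\Q'$-arguments, so the bookkeeping of $\e_a,\od_a,\e_b,\od_b$ in the $\Hf_2$-factors must be tracked with care and cross-checked against the unweighted, non-reversed template of Theorem \ref{main1}. Once the recurrence, the Ciucu base evaluation, and the parity cases are reconciled, both equalities in (\ref{mainRW1eq}) follow.
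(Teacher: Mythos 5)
Your overall architecture---evaluating the degenerate regions by splitting them into $\Q'$-type quartered hexagons and feeding the pieces into Lemma \ref{QAR}, then propagating in $y$ by Kuo condensation---matches the paper's, but the central mechanism you propose for the propagation step has a genuine gap. You claim the four vertices $u,v,w,s$ can be placed so that one of the two products in Kuo's identity vanishes by a parity obstruction, so that a single condensation directly yields the multiplicative identity $\M(RW^{(1)}_{x,y,z})=\M(RW^{(1)}_{x+y,0,z})\M(RW^{(1)}_{0,2y,z})/\M(RW^{(1)}_{y,0,z})$ up to forced lozenges. This cannot work: deleting four unit triangles (together with the lozenges they force) changes each parameter by at most one unit, so the graphs $G-\{u,v\}$, $G-\{w,s\}$, $G-\{u,s\}$, $G-\{v,w\}$, $G-\{u,v,w,s\}$ are again $RW^{(1)}$-type regions with parameters such as $(x,y-1,z)$, $(x,y,z-1)$, $(x+1,y-1,z)$, $(x-1,y,z-1)$, some carrying the modified fern $\textbf{b}^{+1}$; none of them is a region with $y$ replaced by $0$ or by $2y$, and in the paper's recurrence \emph{both} terms on the right-hand side are nonzero. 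The paper's actual argument (written out for $H^{(1)}/R^{(1)}$ in the combined proof of Theorems \ref{main1} and \ref{mainR1}, and declared to carry over verbatim to $W^{(1)}/RW^{(1)}$) runs the other way around: (i) the \emph{closed product} (the second expression in (\ref{mainRW1eq})) is proved by induction on $y+z+b+\overline{n}$, where Kuo condensation (Theorem \ref{kuothm}) supplies the genuine two-term recurrence
\begin{align*}
\M(RW^{(1)}_{x,y,z}(\textbf{a};\textbf{b}))\,\M(RW^{(1)}_{x,y-1,z-1}(\textbf{a};\textbf{b}^{+1}))&=\M(RW^{(1)}_{x,y-1,z}(\textbf{a};\textbf{b}^{+1}))\,\M(RW^{(1)}_{x,y,z-1}(\textbf{a};\textbf{b}))\\
&\quad+\M(RW^{(1)}_{x+1,y-1,z}(\textbf{a};\textbf{b}))\,\M(RW^{(1)}_{x-1,y,z-1}(\textbf{a};\textbf{b}^{+1})),
\end{align*}
and one checks via Lemmas \ref{TV} and \ref{QK} that the claimed product satisfies the same recurrence (this collapses to a two-fraction identity summing to $1$); (ii) only afterwards is the first equality of (\ref{mainRW1eq}) obtained, by evaluating $\M(RW^{(1)}_{x+y,0,z})$, $\M(RW^{(1)}_{0,2y,z})$, $\M(RW^{(1)}_{y,0,z})$ through Lemma \ref{RS} and Lemma \ref{QAR} and comparing with the closed product. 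Your proposal omits the $\textbf{b}^{+1}$ bookkeeping and the induction scaffolding entirely (reduction to all $b_i>0$, reduction to $z>0$, and the base case $b+\overline{n}=0$ quoted from \cite{Halfhex2}), and these are precisely what make the recurrence close on a family of regions to which the inductive hypothesis applies.

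A smaller but real slip: Lemma \ref{RS} is the Region-splitting Lemma of \cite{Lai1q,Lai2q}, not a packaged form of Ciucu's factorization theorem (Lemma \ref{ciucufactor}, from \cite{Ciucu3}). In this paper Ciucu's theorem is invoked only to cut the symmetric three-fern hexagons $S^{(i)}$ into halved hexagons in Theorems \ref{mainthm1} and \ref{mainthm2}; the $1/2$-weights on the west side of $RW^{(1)}$ are part of the region's definition, and the $x=0$ and $y=0$ evaluations use only Lemma \ref{RS} together with Lemma \ref{QAR}. Your reading of what the base case must produce (two $\Q'$-regions with prefactor $2^{a_1}$ at $y=0$) is consistent with the paper, but the justification should be attributed to region-splitting, not to the factorization theorem.
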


\begin{thm}\label{mainRW2} With the same notations in Theorem \ref{mainRW1}, the weighted number of tilings of the $RW^{(2)}$-type region is given by
\begin{align}\label{mainRW2eq}
\M(RW^{(2)}_{x,y,z}(\textbf{a};\textbf{b}))&=\frac{\M(RW^{(2)}_{x+y,0,z}(\textbf{a};\textbf{b}))
\M(RW^{(2)}_{0,2y,z}(\textbf{a};\textbf{b}))}{\M(RW^{(2)}_{y,0,z}(\textbf{a};\textbf{b}))}\notag\\
&\quad\times \frac{\T(x+1,2a+b+2y+z-2,y)\V(2x+2a+1,b+2y+z-1,y)}{\T(1,2a+b+2y+z-2,y)\V(2a+1,b+2y+z-1,y)}\\
&=2^{a_1-y-1}\K'(a_1,\dotsc,a_{2\lceil \frac{m-1}{2}\rceil}, a_{2\lceil \frac{m-1}{2}\rceil+1}+x+y+b_{2\lfloor\frac{n+1}{2}\rfloor},b_{2\lfloor\frac{n+1}{2}\rfloor-1},\dotsc,b_1)\notag\\
&\quad\times \K'(0,a_1,\dotsc,a_{2\lfloor\frac{m+1}{2}\rfloor-1},a_{2\lfloor\frac{m+1}{2}\rfloor}+x+y+b_{2\lceil \frac{n-1}{2}\rceil+1},b_{2\lceil \frac{n-1}{2}\rceil},\dotsc, b_1,z) \notag\\
&\quad\times  \frac{\Hf_2(2\e_a+2\od_b)\Hf_2(2\od_a+2\e_b+2z)}{\Hf_2(2\e_a+2\od_b+2y)\Hf_2(2\od_a+2\e_b+2y+2z)} \notag\\
&\quad\times \frac{\Hf(2a+b+2y+z-1)\Hf(b+y+z)}{\Hf(2a+b+y+z-1)\Hf(b+z)}\notag\\
&\quad\times  \frac{\T(x+1,2a+b+2y+z-2,y)\V(2x+2a+1,b+2y+z-1,y)}{\T(1,2a+b+2y+z-2,y)\V(2a+1,b+2y+z-1,y)}.
\end{align}
\end{thm}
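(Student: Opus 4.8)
The plan is to follow the scheme used to establish the main theorem in Section~\ref{sec:Proof}, since $RW^{(2)}$ is simply the weighted, $\K$-type, reversed-fern member of the family already handled in Theorems~\ref{main1}--\ref{mainRW1}. I would first pass from lozenge tilings to perfect matchings of the planar dual graph, recording the $\tfrac12$-weights on the vertical lozenges along the west side, which are exactly the weights that arise from Ciucu's factorization theorem~\cite{Ciucu3}. The two displayed equalities in \eqref{mainRW2eq} are then treated separately: the first is a reduction obtained from Kuo condensation, and the second is the closed product, obtained by feeding the $y=0$ evaluation -- which factors into two quartered hexagons of type $\mathcal K'$ -- through that reduction and collecting the hyperfactorial factors.

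For the first equality I would apply the version of Kuo condensation quoted in Section~\ref{sec:Prelim} to the dual graph of $RW^{(2)}_{x,y,z}(\textbf a;\textbf b)$. I would place the four distinguished vertices at the two extreme corners of the zigzag west side together with two vertices flanking the aligned ferns, so that after all forced lozenges are removed the four regions produced by deleting these vertices are again $RW^{(2)}$-regions with the $x$- and $y$-parameters suitably shifted. Solving/telescoping the resulting condensation relation yields the stated reduction of the $(x,y,z)$ case to the seeds $(x+y,0,z)$, $(0,2y,z)$, $(y,0,z)$ times the explicit ratio of $\T$- and $\V$-products. The half-integer west side and the $-1$ shortenings of the northeast and southeast sides that distinguish the $\K$-type region from the $\Q$-type one enter only through the bookkeeping of which forced lozenges appear and of the parity of the zigzag.

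For the second equality I would first evaluate the seed $RW^{(2)}_{x,0,z}(\textbf a;\textbf b)$. When $y=0$ the $\T$- and $\V$-factors collapse to $1$, and Ciucu's factorization theorem, applied across the common horizontal line of the two ferns, splits the weighted region into a product of two quartered hexagons; because the leading half-triangle of the $a$-fern is \emph{down-pointing} and the side-lengths carry the $\K$-type corrections, these are $\mathcal K'$-regions rather than $\mathcal Q'$-regions, and their weighted tiling numbers are supplied by \eqref{QARd}. The down-pointing orientation is precisely what swaps the roles of the two factors relative to Theorem~\ref{mainW2}, placing the leading $0$ on the \emph{second} $\K'$-argument, and it pins the prefactor $2^{a_1-y-1}$ together with Ciucu's $2^{-y}$. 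Lemma~\ref{RS} identifies the product of the two $\K'$-counts with the seed, fixing the first two lines of \eqref{mainRW2eq}; substituting the seed evaluations into the recurrence and absorbing the remaining ratios of $\Hf$ and $\Hf_2$ then completes the closed form, which I would finally confirm by induction on $y$ against the first equality.

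The hard part will be the algebraic verification in this last step: one must check that the closed form in \eqref{mainRW2eq} satisfies the Kuo reduction identically in $x,y,z$ and in the fern data $\textbf a,\textbf b$, which amounts to a dense simplification of ratios of $\Hf$ and $\Hf_2$ together with the skipping products $\T$ and $\V$ and the power of $2$. A secondary difficulty is making the forced-lozenge analysis around the down-pointing $a_1$-half-triangle and the $\tfrac12$-weighted west boundary precise, so that the four condensation regions are genuinely of type $RW^{(2)}$ with the claimed parameters and so that the interleaving controlled by $\lfloor (m+1)/2\rfloor$ versus $\lceil (m-1)/2\rceil$ feeds the correct entries into the two $\mathcal K'$ arguments.
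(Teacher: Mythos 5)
Your proposal follows the paper's general template (Kuo condensation plus quartered-hexagon seeds plus a dense algebraic verification), but it has a genuine structural gap: you treat $RW^{(2)}$ in isolation, while the paper's induction cannot close within a single family. The actual Kuo recurrence here is not a pure $(x,y)$ shift: deleting the four vertices and clearing forced lozenges decrements $z$ as well and modifies the right fern (replacing $\textbf{b}$ by $\textbf{b}^{+1}$), so the induction must run on $y+z+b+\overline{n}$, and its $z=0$ case is handled by removing forced lozenges along the southeast side, which turns a weighted $H^{(2)}$-type region into a weighted $R^{(2)}$-type region and vice versa. This is exactly why the paper proves Theorems~\ref{mainW2} and~\ref{mainRW2} \emph{together}; your ``induction on $y$'' with seeds at $y=0$ has no way to absorb the $z$- and $\textbf{b}$-shifts produced by condensation, nor the crossover to the $W^{(2)}$ family, so the induction as you describe it would fail. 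Relatedly, your vertex placement (two corners of the west zigzag plus two vertices flanking the ferns) is not workable: the paper places $u$ at the upper-right corner, $v$ adjacent to the leftmost triangle of the $b$-fern, and $w,s$ as a bowtie at the lower-right corner, keeping the deletions away from the $\tfrac12$-weighted west edge; and the first equality in \eqref{mainRW2eq} is not obtained by ``solving/telescoping'' the condensation relation, but is verified afterwards as an identity between the closed product and the three-seed expression, using Lemma~\ref{RS} together with Lemmas~\ref{QAR} and~\ref{QK}.

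A second, smaller error: the $y=0$ seed does not split by Ciucu's factorization theorem. The region $RW^{(2)}_{x,0,z}(\textbf{a};\textbf{b})$ has no vertical symmetry axis, so Lemma~\ref{ciucufactor} does not apply; the splitting along the horizontal line of the ferns is an application of the Region-splitting Lemma~\ref{RS} (after clearing forced lozenges), with the two pieces evaluated by \eqref{QARd}. Ciucu's factorization enters this paper only for the symmetric three-fern hexagons of Theorems~\ref{mainthm1} and~\ref{mainthm2}, where it explains \emph{why} the $\tfrac12$-weights on the west side are natural, not as a tool in the proof of Theorem~\ref{mainRW2} itself. Your identification of the seeds as $\mathcal{K}'$-type regions and of the collapse of the $\T$- and $\V$-factors at $y=0$ is correct, as is your anticipation that the main labor is checking the closed form against the bilinear recurrence; but to be a proof the argument must be restructured as a joint induction with the $W^{(2)}$ family on $y+z+b+\overline{n}$.
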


In the next part of this section, we consider the situations when only half  of the lozenges
along the west side of our halved hexagons are weighted.
In particular, either the portion of above or the portion below the lattice line, on which the two ferns are lying down,
 has the adjacent  lozenges weighted by $1/2$.
 Our first `mixed-boundary'  region is defined in the next paragraph.

\begin{figure}\centering
\setlength{\unitlength}{3947sp}%
\begingroup\makeatletter\ifx\SetFigFont\undefined%
\gdef\SetFigFont#1#2#3#4#5{%
  \reset@font\fontsize{#1}{#2pt}%
  \fontfamily{#3}\fontseries{#4}\fontshape{#5}%
  \selectfont}%
\fi\endgroup%
\resizebox{13cm}{!}{
\begin{picture}(0,0)%
\includegraphics{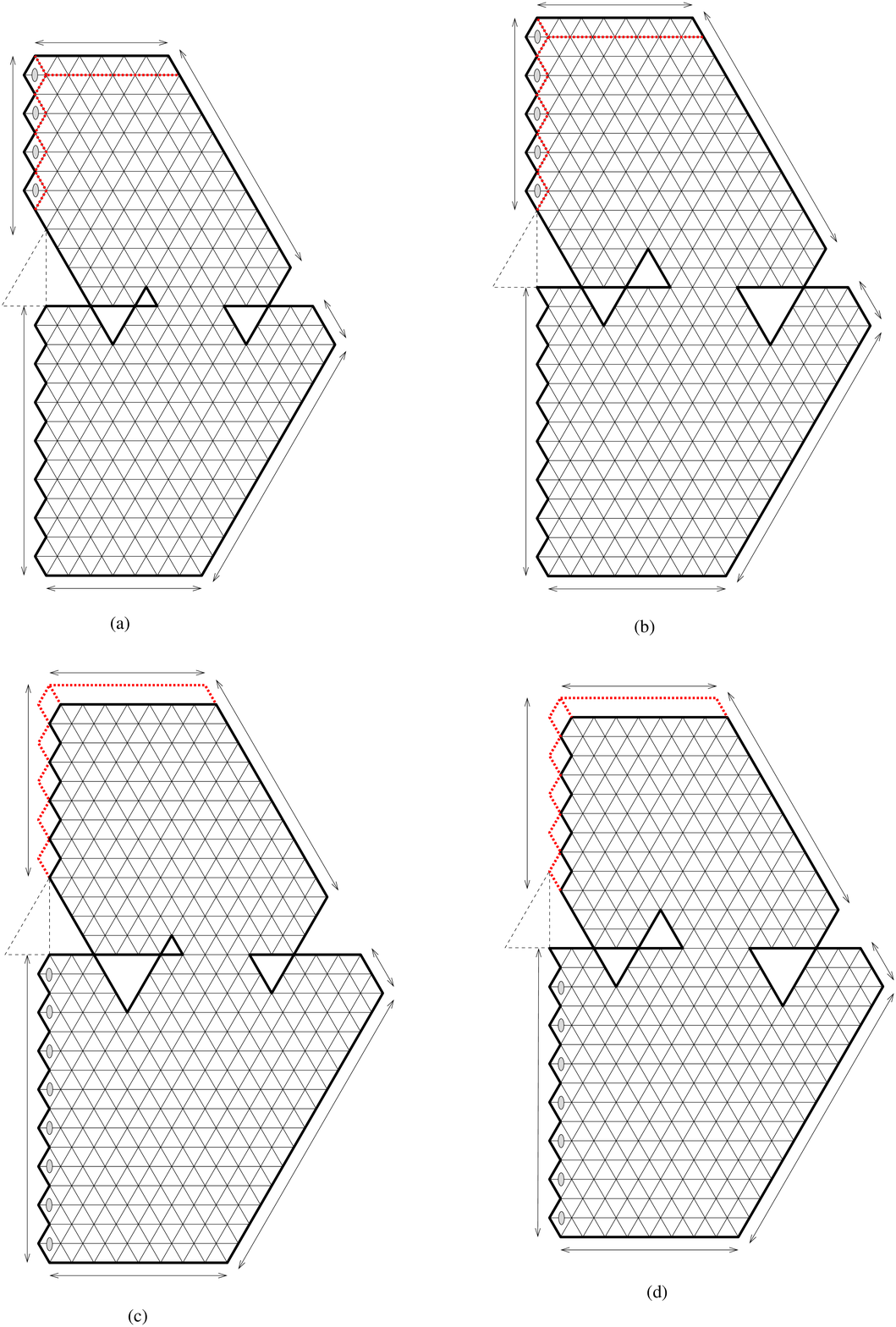}%
\end{picture}%
%
%

\begin{picture}(16686,24702)(1807,-24455)
\put(14011,-17309){\makebox(0,0)[lb]{\smash{{\SetFigFont{14}{16.8}{\rmdefault}{\mddefault}{\itdefault}{$a_3$}%
}}}}
\put(13201,-17774){\makebox(0,0)[lb]{\smash{{\SetFigFont{14}{16.8}{\rmdefault}{\mddefault}{\itdefault}{$a_2$}%
}}}}
\put(17026,-21494){\rotatebox{60.0}{\makebox(0,0)[lb]{\smash{{\SetFigFont{14}{16.8}{\rmdefault}{\mddefault}{\itdefault}{$2y+z+2a_2+2b_2-1$}%
}}}}}
\put(13445,-23451){\makebox(0,0)[lb]{\smash{{\SetFigFont{14}{16.8}{\rmdefault}{\mddefault}{\itdefault}{$x+a_1+b_1$}%
}}}}
\put(11633,-20624){\rotatebox{90.0}{\makebox(0,0)[lb]{\smash{{\SetFigFont{14}{16.8}{\rmdefault}{\mddefault}{\itdefault}{$y+z+a_2+b_2-\frac{1}{2}$}%
}}}}}
\put(16272,-17931){\makebox(0,0)[lb]{\smash{{\SetFigFont{14}{16.8}{\rmdefault}{\mddefault}{\itdefault}{$b_2$}%
}}}}
\put(17375,-17301){\makebox(0,0)[lb]{\smash{{\SetFigFont{14}{16.8}{\rmdefault}{\mddefault}{\itdefault}{$b_1$}%
}}}}
\put(18313,-17691){\makebox(0,0)[lb]{\smash{{\SetFigFont{14}{16.8}{\rmdefault}{\mddefault}{\itdefault}{$z$}%
}}}}
\put(16246,-13859){\rotatebox{300.0}{\makebox(0,0)[lb]{\smash{{\SetFigFont{14}{16.8}{\rmdefault}{\mddefault}{\itdefault}{$2y+2a_1+2a_3+b_1-1$}%
}}}}}
\put(13344,-12487){\makebox(0,0)[lb]{\smash{{\SetFigFont{14}{16.8}{\rmdefault}{\mddefault}{\itdefault}{$x+a_2+b_2$}%
}}}}
\put(11506,-15239){\rotatebox{90.0}{\makebox(0,0)[lb]{\smash{{\SetFigFont{14}{16.8}{\rmdefault}{\mddefault}{\itdefault}{$y+a_3+b_1$}%
}}}}}
\put(11445,-17009){\rotatebox{60.0}{\makebox(0,0)[lb]{\smash{{\SetFigFont{14}{16.8}{\rmdefault}{\mddefault}{\itdefault}{$2a_1$}%
}}}}}
\put(7563,-21905){\rotatebox{60.0}{\makebox(0,0)[lb]{\smash{{\SetFigFont{14}{16.8}{\rmdefault}{\mddefault}{\itdefault}{$2y+z+2a_2+2b_2$}%
}}}}}
\put(4017,-23879){\makebox(0,0)[lb]{\smash{{\SetFigFont{14}{16.8}{\rmdefault}{\mddefault}{\itdefault}{$x+a_1+b_1$}%
}}}}
\put(2296,-21509){\rotatebox{90.0}{\makebox(0,0)[lb]{\smash{{\SetFigFont{14}{16.8}{\rmdefault}{\mddefault}{\itdefault}{$y+z+a_2+b_2$}%
}}}}}
\put(6841,-13672){\rotatebox{300.0}{\makebox(0,0)[lb]{\smash{{\SetFigFont{14}{16.8}{\rmdefault}{\mddefault}{\itdefault}{$2y+2a_1+2a_3+b_1$}%
}}}}}
\put(2341,-14909){\rotatebox{90.0}{\makebox(0,0)[lb]{\smash{{\SetFigFont{14}{16.8}{\rmdefault}{\mddefault}{\itdefault}{$y+a_3+b_1$}%
}}}}}
\put(3879,-12314){\makebox(0,0)[lb]{\smash{{\SetFigFont{14}{16.8}{\rmdefault}{\mddefault}{\itdefault}{$x+a_2+b_2$}%
}}}}
\put(9125,-17744){\makebox(0,0)[lb]{\smash{{\SetFigFont{14}{16.8}{\rmdefault}{\mddefault}{\itdefault}{$z$}%
}}}}
\put(7917,-17294){\makebox(0,0)[lb]{\smash{{\SetFigFont{14}{16.8}{\rmdefault}{\mddefault}{\itdefault}{$b_1$}%
}}}}
\put(6882,-17924){\makebox(0,0)[lb]{\smash{{\SetFigFont{14}{16.8}{\rmdefault}{\mddefault}{\itdefault}{$b_2$}%
}}}}
\put(5026,-17841){\makebox(0,0)[lb]{\smash{{\SetFigFont{14}{16.8}{\rmdefault}{\mddefault}{\itdefault}{$a_3$}%
}}}}
\put(4209,-18044){\makebox(0,0)[lb]{\smash{{\SetFigFont{14}{16.8}{\rmdefault}{\mddefault}{\itdefault}{$a_2$}%
}}}}
\put(2257,-17129){\rotatebox{60.0}{\makebox(0,0)[lb]{\smash{{\SetFigFont{14}{16.8}{\rmdefault}{\mddefault}{\itdefault}{$2a_1$}%
}}}}}
\put(16674,-9421){\rotatebox{60.0}{\makebox(0,0)[lb]{\smash{{\SetFigFont{14}{16.8}{\rmdefault}{\mddefault}{\itdefault}{$2y+z+2a_2+2b_2-1$}%
}}}}}
\put(13182,-11198){\makebox(0,0)[lb]{\smash{{\SetFigFont{14}{16.8}{\rmdefault}{\mddefault}{\itdefault}{$x+a_1+b_1$}%
}}}}
\put(11423,-8506){\rotatebox{90.0}{\makebox(0,0)[lb]{\smash{{\SetFigFont{14}{16.8}{\rmdefault}{\mddefault}{\itdefault}{$y+z+a_2+b_2-\frac{1}{2}$}%
}}}}}
\put(11311,-2874){\rotatebox{90.0}{\makebox(0,0)[lb]{\smash{{\SetFigFont{14}{16.8}{\rmdefault}{\mddefault}{\itdefault}{$y+a_3+b_1$}%
}}}}}
\put(12867,-31){\makebox(0,0)[lb]{\smash{{\SetFigFont{14}{16.8}{\rmdefault}{\mddefault}{\itdefault}{$x+a_2+b_2$}%
}}}}
\put(8303,-5880){\makebox(0,0)[lb]{\smash{{\SetFigFont{14}{16.8}{\rmdefault}{\mddefault}{\itdefault}{$z$}%
}}}}
\put(2198,-8895){\rotatebox{90.0}{\makebox(0,0)[lb]{\smash{{\SetFigFont{14}{16.8}{\rmdefault}{\mddefault}{\itdefault}{$y+z+a_2+b_2$}%
}}}}}
\put(7343,-5415){\makebox(0,0)[lb]{\smash{{\SetFigFont{14}{16.8}{\rmdefault}{\mddefault}{\itdefault}{$b_1$}%
}}}}
\put(6398,-6000){\makebox(0,0)[lb]{\smash{{\SetFigFont{14}{16.8}{\rmdefault}{\mddefault}{\itdefault}{$b_2$}%
}}}}
\put(2273,-5085){\rotatebox{60.0}{\makebox(0,0)[lb]{\smash{{\SetFigFont{14}{16.8}{\rmdefault}{\mddefault}{\itdefault}{$2a_1$}%
}}}}}
\put(3953,-5977){\makebox(0,0)[lb]{\smash{{\SetFigFont{14}{16.8}{\rmdefault}{\mddefault}{\itdefault}{$a_2$}%
}}}}
\put(4560,-5887){\makebox(0,0)[lb]{\smash{{\SetFigFont{14}{16.8}{\rmdefault}{\mddefault}{\itdefault}{$a_3$}%
}}}}
\put(2033,-3540){\rotatebox{90.0}{\makebox(0,0)[lb]{\smash{{\SetFigFont{14}{16.8}{\rmdefault}{\mddefault}{\itdefault}{$y+a_3+b_1+\frac{1}{2}$}%
}}}}}
\put(3375,-727){\makebox(0,0)[lb]{\smash{{\SetFigFont{14}{16.8}{\rmdefault}{\mddefault}{\itdefault}{$x+a_2+b_2$}%
}}}}
\put(6045,-1950){\rotatebox{300.0}{\makebox(0,0)[lb]{\smash{{\SetFigFont{14}{16.8}{\rmdefault}{\mddefault}{\itdefault}{$2y+2a_1+2a_3+b_1+1$}%
}}}}}
\put(6863,-9637){\rotatebox{60.0}{\makebox(0,0)[lb]{\smash{{\SetFigFont{14}{16.8}{\rmdefault}{\mddefault}{\itdefault}{$2y+z+2a_2+2b_2$}%
}}}}}
\put(3653,-11130){\makebox(0,0)[lb]{\smash{{\SetFigFont{14}{16.8}{\rmdefault}{\mddefault}{\itdefault}{$x+a_1+b_1$}%
}}}}
\put(11311,-4771){\rotatebox{60.0}{\makebox(0,0)[lb]{\smash{{\SetFigFont{14}{16.8}{\rmdefault}{\mddefault}{\itdefault}{$2a_1$}%
}}}}}
\put(12976,-5656){\makebox(0,0)[lb]{\smash{{\SetFigFont{14}{16.8}{\rmdefault}{\mddefault}{\itdefault}{$a_2$}%
}}}}
\put(13786,-5198){\makebox(0,0)[lb]{\smash{{\SetFigFont{14}{16.8}{\rmdefault}{\mddefault}{\itdefault}{$a_3$}%
}}}}
\put(17180,-5146){\makebox(0,0)[lb]{\smash{{\SetFigFont{14}{16.8}{\rmdefault}{\mddefault}{\itdefault}{$b_1$}%
}}}}
\put(16077,-5791){\makebox(0,0)[lb]{\smash{{\SetFigFont{14}{16.8}{\rmdefault}{\mddefault}{\itdefault}{$b_2$}%
}}}}
\put(18110,-5536){\makebox(0,0)[lb]{\smash{{\SetFigFont{14}{16.8}{\rmdefault}{\mddefault}{\itdefault}{$z$}%
}}}}
\put(15856,-1486){\rotatebox{300.0}{\makebox(0,0)[lb]{\smash{{\SetFigFont{14}{16.8}{\rmdefault}{\mddefault}{\itdefault}{$2y+2a_1+2a_3+b_1$}%
}}}}}
\end{picture}%
}
\caption{The four mixed-boundary regions: (a) The region $N^{(1)}_{2,1,2}(2,2,1;\ 2,2)$. (b) The region $N^{(2)}_{2,2,2}(2,2,2;\ 2,3)$.  (c) The region $N^{(3)}_{2,1,2}(2,3,1;\ 3,2)$.
(d) The region $N^{(4)}_{2,1,2}(2,2,2;\ 2,3)$.}\label{fig:halvedhex3}
\end{figure}

The first mixed-boundary region $N^{(1)}_{x,y,z}(\textbf{a};\ \textbf{b})$ is obtained
 from the region $H^{(1)}_{x,y,z}(\textbf{a};\ \textbf{b})$
  by adding a layer of unit triangles running along the north side, as well as a layer running along the portion of the west side
 above the two ferns as shown in Figure \ref{fig:halvedhex3}(a);
  the added unit triangles are restricted between the bold and the dotted contours. In the resulting region, we assign to each vertical lozenge
  above the ferns and running along the west side a weight $1/2$.
  The second region with mixed boundary is defined similarly, the only difference is that it is now obtained
  by applying the same lozenge-adding procedure to the region $H^{(2)}_{x,y,z}(\textbf{a};\ \textbf{b})$
   (see Figure \ref{fig:halvedhex3}(b)).

\begin{thm}\label{mainM1} Assume that $x,y,z$ are non-negative integers and that $\textbf{a}=(a_1,a_2,\dotsc,a_m)$ and $\textbf{b}=(b_1,b_2,\dotsc,b_n)$ are two sequences of non-negative integers. Then
\begin{align}\label{mainM1eq}
\M(N^{(1)}_{x,y,z}(\textbf{a}; \textbf{b}))&=\frac{\M(N^{(1)}_{x+y,0,z}(\textbf{a}; \textbf{b}))
\M(N^{(1)}_{0,2y,z}(\textbf{a};\textbf{b}))}{\M(N^{(1)}_{y,0,z}(\textbf{a};\textbf{b}))}\notag\\
&\quad\times \frac{\T(x+1,2a+b+2y+z,y)\T(x+a+1,b+2y+z,y)}{\T(1,2a+b+2y+z,y)\T(a+1,b+2y+z,y)}\notag\\
&=2^{a_1-y}\K'(0,a_1+1,a_2,\dotsc,a_{2\lfloor\frac{m+1}{2}\rfloor-1},a_{2\lfloor\frac{m+1}{2}\rfloor}+x+y+b_{2\lfloor\frac{n+1}{2}\rfloor},b_{2\lfloor\frac{n+1}{2}\rfloor-1},\dotsc,b_1)\notag\\
&\quad\times \Q(a_1,\dotsc,a_{\lceil \frac{m-1}{2}\rceil}, a_{\lceil \frac{m-1}{2}\rceil+1}+x+y+b_{\lceil \frac{n-1}{2}\rceil+1},b_{\lceil \frac{n-1}{2}\rceil},\dotsc, b_1,z) \notag\\
&\quad\times \frac{(a+y)!}{a!} \frac{\Hf_2(2\od_a+2\od_b+2)\Hf_2(2\e_a+2\e_b+2z+1)}{\Hf_2(2\od_a+2\od_b+2y+2)
\Hf_2(2\e_a+2\e_b+2y+2z+1)} \notag\\
&\quad\times \frac{\Hf(2a+b+2y+z+1)\Hf(b+y+z)}{\Hf(2a+b+y+z+1)\Hf(b+z)}\notag\\
&\quad\times   \frac{\T(x+1,2a+b+2y+z,y)\T(x+a+1,b+2y+z,y)}{\T(1,2a+b+2y+z,y)\T(a+1,b+2y+z,y)}.
\end{align}
\end{thm}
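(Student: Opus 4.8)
For the region $N^{(1)}_{x,y,z}(\textbf{a};\textbf{b})$ the plan is to follow the strategy that will be used for Theorem \ref{main1}, since $N^{(1)}_{x,y,z}(\textbf{a};\textbf{b})$ differs from $H^{(1)}_{x,y,z}(\textbf{a};\textbf{b})$ only by the extra layer of unit triangles along the north and upper--west boundary together with the $\tfrac12$--weighting of the vertical lozenges lying above the fern line. Accordingly I would treat the two equalities in (\ref{mainM1eq}) separately: the first equality is a structural reduction of the level--$y$ count to its values at the two extreme levels, and the second is obtained by evaluating those boundary counts and simplifying.

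To establish the first equality I would induct on $y$. The base case $y=0$ is the trivial identity, since the $\T$--ratio then degenerates to an empty product equal to $1$. For the inductive step I apply Kuo's graphical condensation \cite{Kuo} to the dual graph of $N^{(1)}_{x,y,z}(\textbf{a};\textbf{b})$, choosing the four distinguished boundary unit triangles so that deleting them in pairs produces $N^{(1)}$--type regions whose first two parameters are shifted by $\pm1$ (and no region outside the family appears). This gives a bilinear relation among such counts; one then verifies that both $\M(N^{(1)}_{x,y,z}(\textbf{a};\textbf{b}))$ and the right--hand side of the first equality satisfy it with matching initial data, the ratio $\T(x+1,2a+b+2y+z,y)\,\T(x+a+1,b+2y+z,y)/\bigl(\T(1,2a+b+2y+z,y)\,\T(a+1,b+2y+z,y)\bigr)$ supplying exactly the factor needed for the closed form to remain compatible with the recurrence as $y$ increases.

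The second equality then reduces to computing the three boundary regions $N^{(1)}_{x+y,0,z}$, $N^{(1)}_{0,2y,z}$ and $N^{(1)}_{y,0,z}$. The two regions with middle parameter $0$ carry a horizontal symmetry axis along the fern line, so by Ciucu's factorization theorem \cite{Ciucu3}, in the form of Lemma \ref{RS}, each factors as a product of two quartered hexagons. Here the upper half --- carrying the added triangle layer and the $\tfrac12$--weights --- becomes a $\K'$--type region, while the unweighted lower half becomes a $\Q$--type region; this is precisely where the asymmetry between the two quartered--hexagon factors in (\ref{mainM1eq}) comes from, and it accounts for the shift $a_1\mapsto a_1+1$ and the prepended $0$ in the $\K'$--argument as well as for the prefactors $2^{a_1-y}$ and $(a+y)!/a!$. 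Writing the $\K'$-- and $\Q$--counts explicitly by Lemma \ref{QAR}, substituting into the first equality, and collapsing the resulting products of hyperfactorials through the standard $\Hf$ and $\Hf_2$ identities yields the stated closed form.

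The main obstacle I anticipate is twofold. First, the condensation must be set up so that the four deleted triangles interact correctly with the augmented, weighted boundary; an inattentive choice produces regions outside the $N^{(1)}$ family and breaks the induction. Second, the final simplification is heavy bookkeeping: one must correctly match the parity--dependent indices $2\lfloor\frac{m+1}{2}\rfloor$, $\lceil\frac{m-1}{2}\rceil$, and their counterparts arising from the placement of the fern triangles, and check that the hyperfactorial and double--factorial prefactors combine exactly as written. Relative to Theorem \ref{main1}, the genuinely new content is confined to the base--case factorization, where tracking the $\tfrac12$--weights and the extra triangle layer is what converts the first $\Q$--factor of Theorem \ref{main1} into the $\K'$--factor appearing here.
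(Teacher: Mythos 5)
Your overall strategy (Kuo condensation plus splitting the degenerate regions into quartered hexagons) is the right family of ideas, but the induction you describe cannot close as stated. The crux is your claim that the four Kuo points can be chosen so that all companion regions are $N^{(1)}$-type regions ``whose first two parameters are shifted by $\pm1$''. No such choice is exhibited, and the choice that actually works for this family (the same one used for the $H^{(1)}$-regions, cf.\ Figure \ref{KuoH1}: one unit triangle at the upper-right corner, one at the tip of the $b$-fern, and a bowtie at the lower-right corner) yields the recurrence
\begin{align*}
\M(N^{(1)}_{x,y,z}(\textbf{a};\textbf{b}))\,\M(N^{(1)}_{x,y-2,z-1}(\textbf{a};\textbf{b}^{+1}))
&=\M(N^{(1)}_{x,y-2,z}(\textbf{a};\textbf{b}^{+1}))\,\M(N^{(1)}_{x,y,z-1}(\textbf{a};\textbf{b}))\\
&\quad+\M(N^{(1)}_{x+1,y-2,z}(\textbf{a};\textbf{b}))\,\M(N^{(1)}_{x-1,y,z-1}(\textbf{a};\textbf{b}^{+1})),
\end{align*}
in which $z$ drops, the fern $\textbf{b}$ is altered to $\textbf{b}^{+1}$, and the last factor has the \emph{same} $y$ as the region being computed. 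Induction on $y$ alone therefore fails; one must induct on the compound measure $y+z+b+\overline{n}$, which forces three base cases $x=0$, $y=0$, and $b+\overline{n}=0$ (the fern-free case is not trivial and is imported from \cite{Halfhex2}), together with preliminary reductions guaranteeing that all $b_i>0$ and that $z>0$. The $z>0$ reduction is precisely why the paper proves Theorems \ref{mainM1} and \ref{mainMR1} jointly: when $z=0$, removing forced lozenges converts an $N^{(1)}$-type region into an $NR^{(1)}$-type region and vice versa, so an induction confined to the $N^{(1)}$ family --- as in your proposal --- has no way to handle that case.

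Two further points. First, your order of argument is circular: to verify that the right-hand side of the first equality satisfies the bilinear recurrence, you must know how $\M(N^{(1)}_{x+y,0,z})$, $\M(N^{(1)}_{0,2y,z})$, and $\M(N^{(1)}_{y,0,z})$ vary with their parameters, i.e., you already need their closed evaluations; so the boundary regions must be computed first. (The paper verifies the recurrence directly against the fully explicit product, and then recovers the first equality from Lemma \ref{RS} together with Lemmas \ref{QAR} and \ref{QK}.) Second, those degenerate evaluations rest on the Region-splitting Lemma \ref{RS} --- forced lozenges along the fern level split the region into a weighted $\K'$-type top and a $\Q$-type bottom --- not on Ciucu's factorization theorem: the regions $N^{(1)}_{\ast,0,z}$ have no horizontal symmetry axis, Lemma \ref{RS} is not a form of Lemma \ref{ciucufactor}, and Ciucu's theorem (which requires a reflective symmetry and a vertex cut set on the axis) enters this paper only for the symmetric hexagons $S^{(i)}$ of Theorems \ref{mainthm1} and \ref{mainthm2}. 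You would also need to say how $N^{(1)}_{0,2y,z}$ (where it is $x$, not $y$, that vanishes) is evaluated; it too splits along the fern line, but only after the forced lozenges between the two ferns are removed.
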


\begin{thm}\label{mainM2} With the same notations in Theorem \ref{mainM1}, the weighted number of tilings of the $N^{(2)}$-type region is given by
\begin{align}\label{mainM2eq}
\M(N^{(2)}_{x,y,z}(\textbf{a}; \textbf{b}))&=\frac{\M(N^{(2)}_{x+y,0,z}(\textbf{a};\textbf{b}))
\M(N^{(2)}_{0,2y,z}(\textbf{a};\textbf{b}))}{\M(N^{(2)}_{y,0,z}(\textbf{a};\textbf{b}))}\notag\\
&\quad\times \frac{\T(x+1,2a+b+2y+z-1,y)\T(x+a+1,b+2y+z-1,y)}{\T(1,2a+b+2y+z-1,y)\T(a+1,b+2y+z-1,y)}\notag\\
&=2^{a_1-y}\Q'(0,a_1,\dotsc,a_{2\lfloor\frac{m+1}{2}\rfloor-1},a_{2\lfloor\frac{m+1}{2}\rfloor}+x+y+b_{2\lfloor\frac{n+1}{2}\rfloor},b_{2\lfloor\frac{n+1}{2}\rfloor-1},\dotsc,b_1)\notag\\
&\quad\times \K(a_1,\dotsc,a_{\lceil \frac{m-1}{2}\rceil}, a_{\lceil \frac{m-1}{2}\rceil+1}+x+y+b_{\lceil \frac{n-1}{2}\rceil+1},b_{\lceil \frac{n-1}{2}\rceil},\dotsc, b_1,z) \notag\\
&\quad\times  \frac{\Hf_2(2\od_a+2\od_b+1)\Hf_2(2\e_a+2\e_b+2z)}{\Hf_2(2\od_a+2\od_b+2y+1)
\Hf_2(2\e_a+2\e_b+2y+2z)} \notag\\
&\quad\times \frac{\Hf(2a+b+2y+z)\Hf(b+y+z)}{\Hf(2a+b+y+z)\Hf(b+z)}\notag\\
&\quad\times    \frac{\T(x+1,2a+b+2y+z-1,y)\T(x+a+1,b+2y+z-1,y)}{\T(1,2a+b+2y+z-1,y)\T(a+1,b+2y+z-1,y)}.
\end{align}
\end{thm}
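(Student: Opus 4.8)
The plan is to establish the two equalities in (\ref{mainM2eq}) in sequence, exactly as in the proofs of Theorems \ref{main2} and \ref{mainM1}, to which the present region $N^{(2)}_{x,y,z}(\textbf{a};\textbf{b})$ is a close sibling. The first equality is a closed multiplicative relation expressing the general region in terms of its three $y$-degenerations, while the second is the explicit product obtained by evaluating those degenerations through the quartered-hexagon formulas of Lemma \ref{QAR}. I would therefore first reduce the general case to the case $y=0$, and then evaluate the $y=0$ regions.

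To prove the first equality I would use the version of Kuo's graphical condensation quoted in Section \ref{sec:Prelim}, applied to the bipartite graph dual to $N^{(2)}_{x,y,z}(\textbf{a};\textbf{b})$. I would select the four distinguished vertices along the north and upper-west boundary so that the five regions entering the condensation identity are again of $N^{(2)}$-type with the parameters $x$ and $y$ shifted by one, the $\tfrac12$-weighting of the vertical lozenges above the ferns being inherited unchanged by each piece. The layers of unit triangles forced off the north and upper-west boundary contribute the displayed ratio of $\T$-functions; combining the resulting recurrence with an induction on $y$ (whose base case $y=0$ is treated next) then yields the multiplicative relation $\M(N^{(2)}_{x,y,z})=\M(N^{(2)}_{x+y,0,z})\,\M(N^{(2)}_{0,2y,z})/\M(N^{(2)}_{y,0,z})$ times that ratio.

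For the base case $y=0$ I would evaluate each of the three regions $N^{(2)}_{x+y,0,z}$, $N^{(2)}_{0,2y,z}$, and $N^{(2)}_{y,0,z}$ by cutting along the horizontal line through the two ferns. Along this line the tilings separate, and applying Ciucu's factorization theorem (Section \ref{sec:Prelim}) splits each region into the product of its part above the ferns and its part below. The upper part, carrying the added triangle layer and the $\tfrac12$-weighted west boundary, is exactly a weighted quartered hexagon of $\mathcal{Q}'$-type, whereas the lower part is an unweighted $\mathcal{K}$-type region; the factorization produces the prefactor $2^{a_1-y}$ and identifies the combined fern data with the two argument lists in (\ref{mainM2eq}). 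Lemma \ref{QAR} then gives closed hyperfactorial expressions for all three $y=0$ quantities.

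Assembling the pieces, I would substitute these evaluations into the recurrence and simplify the resulting quotient of $\Hf$-, $\Hf_2$-, and $\T$-factors, repeatedly invoking the skipping-hyperfactorial and Pochhammer identities together with the definitions (\ref{defineT})--(\ref{defineV}); this reproduces the product on the right-hand side of (\ref{mainM2eq}). The main obstacle lies in the condensation step: because only the upper half of the west side is weighted and the side lengths inherited from $H^{(2)}$ are of the ``$-1$'' half-integer type, the four distinguished vertices must be positioned so that the mixed boundary is preserved and every one of the five sub-regions is again genuinely an $N^{(2)}$-region. Verifying this compatibility, and tracking the forced lozenges that generate the $\T$-ratio, is the delicate part; the concluding hypergeometric simplification is lengthy but routine.
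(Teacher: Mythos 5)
Your architecture (condensation recurrence, explicit evaluation of degenerate regions, hyperfactorial simplification) is the right family of ideas, but two of your key steps are wrong as stated. First, Ciucu's factorization theorem (Lemma \ref{ciucufactor}) cannot be used to split $N^{(2)}_{x,0,z}(\textbf{a};\textbf{b})$ along the fern line: that theorem applies to a graph with a \emph{vertical symmetry axis} whose axis vertices form a cut set, and the halved hexagons here have no reflective symmetry at all; in this paper Ciucu's theorem is reserved for the genuinely symmetric regions $S^{(i)}_{x,y,z}$ of Theorems \ref{mainthm1} and \ref{mainthm2}. The splitting you want is accomplished by the Region-splitting Lemma \ref{RS}: when $x=0$ or $y=0$, forced lozenges appear, and after removing them the region decomposes into an upper $\mathcal{Q}'$-type and a lower $\mathcal{K}$-type quartered hexagon, which Lemma \ref{QAR} then evaluates. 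Consequently the prefactor $2^{a_1-y}$ is not a factorization-theorem count of axis vertices; it falls out of the $2^{-\e_t}$ in the $\Q'$ formula of Lemma \ref{QAR} and the $\frac{1}{2}$-weights during the simplification with Lemma \ref{QK}.

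Second, the condensation step cannot be arranged as you describe. There is no placement of $u,v,w,s$ making all five graphs in (\ref{Kuoeq}) into $N^{(2)}$-regions with only $x$ and $y$ shifted and with $z$ and the ferns untouched. With the choice that actually works (the $u$-triangle at the upper-right corner, $v$ at the left end of the $b$-fern, and the $w,s$ bowtie at the lower-right corner, as in Figure \ref{KuoH1}), the forced lozenges also decrease $z$ and lengthen the last triangle of the $b$-fern, giving the analogue of (\ref{recurrenceM1}):
\begin{align*}
\M(N^{(2)}_{x,y,z}(\textbf{a};\textbf{b}))\,\M(N^{(2)}_{x,y-2,z-1}(\textbf{a};\textbf{b}^{+1}))
&=\M(N^{(2)}_{x,y-2,z}(\textbf{a};\textbf{b}^{+1}))\,\M(N^{(2)}_{x,y,z-1}(\textbf{a};\textbf{b}))\\
&\quad+\M(N^{(2)}_{x+1,y-2,z}(\textbf{a};\textbf{b}))\,\M(N^{(2)}_{x-1,y,z-1}(\textbf{a};\textbf{b}^{+1})).
\end{align*}
Because $z$ drops and $\textbf{b}$ is replaced by $\textbf{b}^{+1}$, induction on $y$ alone cannot close. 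The induction must run on $y+z+b+\overline{n}$, with preliminary reductions ensuring all $b_i>0$ and $z>0$, with the extra base case $b+\overline{n}=0$ quoted from \cite{Halfhex2}, and — crucially — it must be run \emph{jointly} with the $NR^{(2)}$-regions of Theorem \ref{mainMR2}, because the $z=0$ reduction turns an $N^{(2)}$-region into an $NR^{(2)}$-region; your proposal treats $N^{(2)}$ in isolation and so has no way to handle $z=0$. Also, the $\T$-ratio is not created by forced boundary lozenges in the condensation: it is part of the closed-form product, and the proof consists in verifying (via Lemmas \ref{TV} and \ref{QK}) that this product satisfies the displayed recurrence. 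Finally, note the clean logical order: prove region $=$ explicit product by this induction, prove the middle expression $=$ explicit product by Region-splitting plus Lemmas \ref{QAR} and \ref{QK}, and deduce the first equality of (\ref{mainM2eq}); proving the first equality directly, as you propose, would in any case require the very explicit evaluations you defer, so your organization collapses back to this one.
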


In the above $N^{(1)}$- and $N^{(2)}$-type regions, only the portion above the ferns of the west side is weighted.
In contrast, our next two regions have the portion \emph{below} the ferns weighted.
In particular, we remove from the weighted region $W^{(i)}_{x,y,z}(\textbf{a}; \textbf{b})$, $i=1,2$, all unit triangles running along its north side and the ones running along
the portion of the west side above the fern (indicated by the part between the dotted lines and the bold contour in Figures \ref{fig:halvedhex3}(c) and (d)).
The removal of the unit triangles from the region $W^{(1)}_{x,y,z}(\textbf{a}; \textbf{b})$ gives the
new region $N^{(3)}_{x,y,z}(\textbf{a}; \textbf{b})$ (shown in Figure \ref{fig:halvedhex3}(c)),
 and the removal from the region $W^{(2)}_{x,y,z}(\textbf{a}; \textbf{b})$ gives the region $N^{(4)}_{x,y,z}(\textbf{a}; \textbf{b})$   (illustrated in Figure \ref{fig:halvedhex3}(d)).

\begin{thm}\label{mainM3} For non-negative integers $x,y,z$ and sequences of non-negative integers $\textbf{a}=(a_1,a_2,\dotsc,a_m)$ and $\textbf{b}=(b_1,b_2,\dotsc,b_n)$, we have
\begin{align}\label{mainM3eq}
\M(N^{(3)}_{x,y,z}(\textbf{a};\textbf{b}))&=\frac{\M(N^{(3)}_{x+y,0,z}(\textbf{a}; \textbf{b}))
\M(N^{(3)}_{0,2y,z}(\textbf{a};\textbf{b}))}{\M(N^{(3)}_{y,0,z}(\textbf{a};\textbf{b}))}\notag\\
&\quad\times \frac{\T(x+1,2a+b+2y+z-1,y)\T(x+a+1,b+2y+z-1,y)}{\T(1,2a+b+2y+z-1,y)\T(a+1,b+2y+z-1,y)}\notag\\
&=2^{-y}\K(0,a_1,\dotsc,a_{2\lfloor\frac{m+1}{2}\rfloor-1},a_{2\lfloor\frac{m+1}{2}\rfloor}+x+y+b_{2\lfloor\frac{n+1}{2}\rfloor},b_{2\lfloor\frac{n+1}{2}\rfloor-1},\dotsc,b_1)\notag\\
&\quad\times \Q'(a_1,\dotsc,a_{\lceil \frac{m-1}{2}\rceil}, a_{\lceil \frac{m-1}{2}\rceil+1}+x+y+b_{\lceil \frac{n-1}{2}\rceil+1},b_{\lceil \frac{n-1}{2}\rceil},\dotsc, b_1,z) \notag\\
&\quad\times  \frac{\Hf_2(2\od_a+2\od_b)\Hf_2(2\e_a+2\e_b+2z+1)}{\Hf_2(2\od_a+2\od_b+2y)
\Hf_2(2\e_a+2\e_b+2y+2z+1)} \notag\\
&\quad\times \frac{\Hf(2a+b+2y+z)\Hf(b+y+z)}{\Hf(2a+b+y+z)\Hf(b+z)}\notag\\
&\quad\times    \frac{\T(x+1,2a+b+2y+z-1,y)\T(x+a+1,b+2y+z-1,y)}{\T(1,2a+b+2y+z-1,y)\T(a+1,b+2y+z-1,y)}.
\end{align}
\end{thm}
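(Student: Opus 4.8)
The plan is to follow the two-step template that proves Theorems \ref{main1}--\ref{mainM2}, handling the two equalities in \eqref{mainM3eq} in turn. I read the first equality as a reduction that strips the parameter $y$ away from $N^{(3)}_{x,y,z}$, re-expressing its weighted tiling number through the three degenerate regions $N^{(3)}_{x+y,0,z}$, $N^{(3)}_{0,2y,z}$, and $N^{(3)}_{y,0,z}$ together with the explicit correction factor built from $\T$. I would prove this identity by induction on $y$, keeping $x,z,\textbf{a},\textbf{b}$ fixed. The base case $y=0$ is immediate: since $\T(\cdot,\cdot,0)=1$ the correction factor equals $1$ and both sides collapse to $\M(N^{(3)}_{x,0,z})$. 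For the inductive step I would pass to the dual bipartite graph of $N^{(3)}_{x,y,z}$ and apply the version of Kuo condensation recorded in Section \ref{sec:Prelim}, placing the four distinguished vertices along the north and west boundaries so that the five regions occurring in the condensation identity are again of $N^{(3)}$-type with $x$ and $y$ each shifted by a bounded amount. This produces a three-term recurrence, and it then remains to check that the product of boundary terms times the $\T$-quotient obeys the same recurrence; that check reduces to an elementary identity among Pochhammer symbols and the function $\T$.

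For the second equality I would evaluate the three degenerate regions and substitute. The two regions with middle parameter $0$, namely $N^{(3)}_{x+y,0,z}$ and $N^{(3)}_{y,0,z}$, pinch along the fern line, so their dual graphs disconnect; by the region-splitting Lemma \ref{RS} (exactly as in the remark after Theorem \ref{main1}) each factors as a product of two quartered hexagons, which Lemma \ref{QAR} evaluates in closed form. The region $N^{(3)}_{0,2y,z}$ is the genuinely two-dimensional boundary datum; I would obtain its closed form through Ciucu's factorization theorem \cite{Ciucu3} applied to the associated symmetric configuration, again feeding the resulting halves into Lemma \ref{QAR}. Substituting the three evaluations into the first equality and simplifying, the quartered-hexagon factors indexed by $y$ cancel against those indexed by $x+y$, and the surviving terms reorganize into the claimed $2^{-y}\K(\cdots)\Q'(\cdots,z)$ together with the $\Hf_2$-ratios and the $\T$-quotient of \eqref{mainM3eq}. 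Here the specific weighting of $N^{(3)}$ does the bookkeeping: because $N^{(3)}$ arises from $W^{(1)}$ by deleting the north layer and the upper-west layer of unit triangles while keeping the $\frac{1}{2}$-weights below the fern, the ``above the fern'' factor degenerates from the $\Q'$ of Theorem \ref{mainW1} into a $\K$, while the ``below the fern'' factor stays a $\Q'$; this is precisely what turns the two $\Q'$-factors of \eqref{mainW1eq} into the $\K$ and $\Q'$ of \eqref{mainM3eq}.

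I expect the main obstacle to be the inductive step for the first equality, specifically the simultaneous demands that the four condensation vertices be positioned so that all five sub-regions remain inside the $N^{(3)}$ family after the unit-triangle layers have been removed, and that the $\frac{1}{2}$-weights on the vertical lozenges below the fern be tracked correctly through the condensation: a miscounted weight shifts the power of $2$ and breaks the final match with \eqref{mainM3eq}. Once the recurrence is correctly set up, verifying that the $\T$-quotient satisfies it and carrying out the final cancellation are lengthy but routine; I would organize the latter by pairing the $\Hf$ and $\Hf_2$ outputs of Ciucu's theorem and of Lemma \ref{RS} with the corresponding factors in \eqref{mainM3eq} before simplifying.
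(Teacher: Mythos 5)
Your overall architecture inverts the paper's, and the inversion creates a real gap. The paper proves the \emph{second} equality of \eqref{mainM3eq} --- the fully explicit product formula --- by induction, and only afterwards obtains the first equality by evaluating the three degenerate regions $N^{(3)}_{x+y,0,z}$, $N^{(3)}_{0,2y,z}$, $N^{(3)}_{y,0,z}$ through the Region-splitting Lemma \ref{RS} and Lemma \ref{QAR}. You propose to prove the first equality by induction and postpone the explicit evaluations to a second step. But then your inductive step cannot close: when the right-hand side of the first equality is substituted into the condensation recurrence, the quantities $\M(N^{(3)}_{x+y,0,z}(\textbf{a};\textbf{b}))$, $\M(N^{(3)}_{0,2y,z}(\textbf{a};\textbf{b}))$, $\M(N^{(3)}_{y,0,z}(\textbf{a};\textbf{b}))$ occur at several shifted parameter values (and with $\textbf{b}$ replaced by $\textbf{b}^{+1}$), and these are unevaluated tiling numbers with no a priori algebraic relations; the check therefore does not ``reduce to an elementary identity among Pochhammer symbols and $\T$'' unless the closed forms --- i.e.\ the second equality --- are already in hand. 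A second, independent problem is your claim that the four Kuo vertices can be placed so that all companion regions are $N^{(3)}$-type with only $x$ and $y$ shifted. The condensation actually available here (cf.\ \eqref{recurrence1} and \eqref{recurrenceM1}) shifts $z$ to $z-1$ and replaces $\textbf{b}$ by $\textbf{b}^{+1}$; this is exactly why the paper inducts on $y+z+b+\overline{n}$ rather than on $y$, needs the base cases $x=0$ and $b+\overline{n}=0$ in addition to $y=0$, and must dispose of the case $z=0$ by a forced-lozenge reduction that turns an $N^{(3)}$-type region into an $NR^{(3)}$-type one --- which is the reason Theorem \ref{mainM3} is proved \emph{jointly} with Theorem \ref{mainMR3}. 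Your single-parameter, single-family induction has no mechanism for any of this.

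There is also a concrete error in your second step. You treat $N^{(3)}_{0,2y,z}$ as ``genuinely two-dimensional'' and propose to evaluate it with Ciucu's factorization theorem (Lemma \ref{ciucufactor}) applied to an associated symmetric configuration. That theorem requires a graph with reflective symmetry, and $N^{(3)}$ is one of the eight families that the paper explicitly does \emph{not} realize as halves of symmetric hexagons --- Section \ref{sec:Question} poses precisely this as an open problem; moreover the paper's logic runs the other way (the symmetric-hexagon formulas of Theorems \ref{mainthm1} and \ref{mainthm2} are \emph{deduced from} the halved-hexagon formulas, not conversely). In fact $x=0$ is just another splitting base case: forced lozenges between the two ferns pinch the region along the fern line, so Lemma \ref{RS} splits $N^{(3)}_{0,2y,z}$ into a $\mathcal{K}$-type and a $\mathcal{Q}'$-type quartered hexagon, and Lemma \ref{QAR} gives its closed form; no symmetry argument is needed or available. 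Your observation about how the mixed weighting turns the two $\Q'$-factors of \eqref{mainW1eq} into the $\K$- and $\Q'$-factors of \eqref{mainM3eq} is correct, but the skeleton around it must be reorganized along the paper's lines: evaluate all degenerate cases by splitting, prove the explicit formula by induction on $y+z+b+\overline{n}$ jointly for the $N^{(3)}$ and $NR^{(3)}$ families, and only then read off the first equality.
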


\begin{thm}\label{mainM4} Assume that $x,y,z$ are non-negative integers and that $\textbf{a}=(a_1,a_2,\dotsc,a_m)$ and $\textbf{b}=(b_1,b_2,\dotsc,b_n)$ are two sequences of non-negative integers. Then
\begin{align}\label{mainM4eq}
\M(N^{(4)}_{x,y,z}(\textbf{a};\textbf{b}))&=\frac{\M(N^{(4)}_{x+y,0,z}(\textbf{a};\textbf{b}))
\M(N^{(4)}_{0,2y,z}(\textbf{a};\textbf{b}))}{\M(N^{(4)}_{y,0,z}(\textbf{a};\textbf{b}))}\notag\\
&\quad\times \frac{\T(x+1,2a+b+2y+z-2,y)\T(x+a+1,b+2y+z-2,y)}{\T(1,2a+b+2y+z-2,y)\T(a+1,b+2y+z-2,y)}\notag\\
&=2^{-y}\Q(0,a_1-1, a_2,\dotsc,a_{2\lfloor\frac{m+1}{2}\rfloor-1},a_{2\lfloor\frac{m+1}{2}\rfloor}+x+y+b_{2\lfloor\frac{n+1}{2}\rfloor},b_{2\lfloor\frac{n+1}{2}\rfloor-1},\dotsc,b_1)\notag\\
&\quad\times \K'(a_1,\dotsc,a_{\lceil \frac{m-1}{2}\rceil}, a_{\lceil \frac{m-1}{2}\rceil+1}+x+y+b_{\lceil \frac{n-1}{2}\rceil+1},b_{\lceil \frac{n-1}{2}\rceil},\dotsc, b_1,z) \notag\\
&\quad\times \frac{(a-1)!}{(a+y-1)!}  \frac{\Hf_2(2\od_a+2\od_b-1)\Hf_2(2\e_a+2\e_b+2z)}{\Hf_2(2\od_a+2\od_b+2y-1)
\Hf_2(2\e_a+2\e_b+2y+2z)} \notag\\
&\quad\times \frac{\Hf(2a+b+2y+z-1)\Hf(b+y+z)}{\Hf(2a+b+y+z-1)\Hf(b+z)}\notag\\
&\quad\times    \frac{\T(x+1,2a+b+2y+z-2,y)\T(x+a+1,b+2y+z-2,y)}{\T(1,2a+b+2y+z-2,y)\T(a+1,b+2y+z-2,y)}.
\end{align}
\end{thm}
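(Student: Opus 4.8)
The target is the explicit product formula — the final displayed equality for $\M(N^{(4)}_{x,y,z}(\textbf{a};\textbf{b}))$ — with the structural first equality obtained afterwards as a reorganization. The plan is to prove it by an induction built on Kuo's graphical condensation \cite{Kuo}, reducing all boundary data to the quartered-hexagon enumerations of Lemma \ref{QAR}. I first pass to the dual graph, so that a weighted tiling becomes a weighted perfect matching with the vertical lozenges below the fern line carrying weight $\tfrac12$; recall that $N^{(4)}$ is the region obtained from $W^{(2)}$ by stripping the layer of unit triangles along the north side and along the west side above the fern, which is exactly what converts the $\V$-products of Theorem \ref{mainW2} into the $\T$-products appearing here and replaces the weighted half-boundary above the fern by an unweighted one.

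The inductive engine is a single Kuo condensation identity. Choosing four boundary unit triangles — two on the north/northeast and two straddling the common fern line near the west corner — Kuo's theorem produces a bilinear recurrence among $N^{(4)}$-regions whose parameters differ from $(x,y,z)$ by unit shifts in $x$ and $y$. I would then verify, inducting on $x+y$, that the claimed product satisfies this recurrence: each condensation step should contribute precisely the incremental linear factors of the four $\T$-products and of the hyperfactorial and factorial ratios, so the recurrence closes once the anchoring slices are known.

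The anchoring slices are where region-splitting and Ciucu's factorization theorem enter. When $y=0$ the two ferns meet along their common horizontal line and the region separates into an unweighted quartered hexagon above (a $\Q$-region) glued to a weighted quartered hexagon below (a $\K'$-region); this decomposition is the region-splitting lemma (Lemma \ref{RS}), and it identifies $\M(N^{(4)}_{\bullet,0,z})$ with an explicit product of $\Q$- and $\K'$-values, which Lemma \ref{QAR} (equations \eqref{QARa} and \eqref{QARd}) then renders as hyperfactorials. The down-pointing orientation of the half-triangle of side $2a_1$, together with the stripped layer above the fern, is what forces the shift $a_1\mapsto a_1-1$ in the first argument of the $\Q$-factor and the normalization $\tfrac{(a-1)!}{(a+y-1)!}$, distinguishing $N^{(4)}$ from the $\Q/\K'$ and $\Q'/\K$ pairings of the neighbouring theorems. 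The remaining $x=0$ slice needed to start the induction I would compute by the same splitting together with Ciucu's factorization theorem \cite{Ciucu3}, again reducing it to $\Q$- and $\K'$-values.

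I expect the algebraic reconciliation, rather than any single geometric step, to be the main obstacle: after inserting the closed forms from Lemma \ref{QAR} one must collect the two ``skipping'' hyperfactorials $\Hf_2$ coming from $\Q$ and $\K'$ against the global ratios such as $\frac{\Hf_2(2\od_a+2\od_b-1)\Hf_2(2\e_a+2\e_b+2z)}{\Hf_2(2\od_a+2\od_b+2y-1)\Hf_2(2\e_a+2\e_b+2y+2z)}$, track the floor/ceiling-indexed fern arguments that split $\textbf{a}$ and $\textbf{b}$ into their even- and odd-indexed subsequences, and confirm that the prefactor $2^{-y}\tfrac{(a-1)!}{(a+y-1)!}$ and the ordinary-hyperfactorial ratio emerge with the correct exponents. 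Additional care is needed to keep the condensation recurrence inside the $N^{(4)}$ family for all admissible parameters and to treat the degenerate cases (empty ferns, $z=0$, and $a=0$, where $(a-1)!$ is read through the stated conventions on $\s_k$ and empty products). Once these are in place, rearranging the verified product into the quotient $\frac{\M(N^{(4)}_{x+y,0,z})\,\M(N^{(4)}_{0,2y,z})}{\M(N^{(4)}_{y,0,z})}$ times the $\T$-ratio yields the structural first equality.
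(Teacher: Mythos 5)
Your proposal follows the paper's general skeleton (a Kuo-condensation-driven induction, with boundary slices evaluated by splitting the region along the fern line into a $\mathcal{Q}$-type region above and a $\mathcal{K}'$-type region below, then invoking Lemma \ref{QAR}), and your reading of the $y=0$ slice --- including the source of the shift $a_1\mapsto a_1-1$ and of the factor $(a-1)!/(a+y-1)!$ --- matches the paper. But the inductive engine you describe has a genuine gap. You assert that Kuo condensation yields ``a bilinear recurrence among $N^{(4)}$-regions whose parameters differ from $(x,y,z)$ by unit shifts in $x$ and $y$,'' with $z$ and the fern $\textbf{b}$ held fixed, and you induct on $x+y$ anchored only at $x=0$ and $y=0$. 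No such recurrence is constructed, and the geometry works against it: the northeast, southeast, and west sides of these regions have lengths of the form $2y+z+\mathrm{const}$, and each removed unit triangle, together with its forced lozenges, shaves a \emph{single} unit off such a side, so the six graphs in Theorem \ref{kuothm} cannot all stay in the family with $z$ and $\textbf{b}$ unchanged. What the working choice of $u,v,w,s$ actually produces is the recurrence pattern of \eqref{recurrence1} and \eqref{recurrenceM1}: terms with $z-1$ and with the modified fern $\textbf{b}^{+1}$, plus a cross term of the shape $(x+1,y-1,z)$ whose value of $x+y$ is \emph{unchanged}, so an induction on $x+y$ cannot close even granting a recurrence of your shape. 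This altered recurrence has structural consequences your plan does not account for: the induction must be run on a quantity like $y+z+b+\overline{n}$; the case of an empty right fern ($b+\overline{n}=0$) becomes a genuine base case (the paper delegates it to \cite{Halfhex2}); and, most importantly, the case $z=0$ must be handled by removing forced lozenges, which converts an $N^{(4)}$-type region into an $NR^{(4)}$-type region and vice versa (as in Figures \ref{fig:halvedhexbase3}(c)--(d) for the $H^{(1)}/R^{(1)}$ pair). This is precisely why the paper proves Theorem \ref{mainM4} \emph{jointly} with Theorem \ref{mainMR4}: neither induction closes without the companion family's formula. In your proposal the $z=0$ situation is relegated to ``degenerate cases needing additional care,'' but it is not a technicality --- it is a missing ingredient without which the induction is circularly anchored.

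A secondary error: Ciucu's factorization theorem (Lemma \ref{ciucufactor}) cannot be used for the $x=0$ slice, since it requires a graph with a reflective symmetry axis and $N^{(4)}_{0,y,z}(\textbf{a};\textbf{b})$ has none; in the paper the $x=0$ case is handled purely by forced lozenges between the two ferns plus the Region-splitting Lemma \ref{RS}, and Ciucu's theorem enters only for the symmetric regions $S^{(i)}$ of Theorems \ref{mainthm1} and \ref{mainthm2}. Your final step --- deriving the first equality in \eqref{mainM4eq} afterwards by comparing closed forms via Lemmas \ref{RS}, \ref{QAR} and \ref{QK} --- is consistent with what the paper does.
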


\begin{figure}\centering
%
%
\setlength{\unitlength}{3947sp}%
\begingroup\makeatletter\ifx\SetFigFont\undefined%
\gdef\SetFigFont#1#2#3#4#5{%
  \reset@font\fontsize{#1}{#2pt}%
  \fontfamily{#3}\fontseries{#4}\fontshape{#5}%
  \selectfont}%
\fi\endgroup%
\resizebox{13cm}{!}{
\begin{picture}(0,0)%
\includegraphics{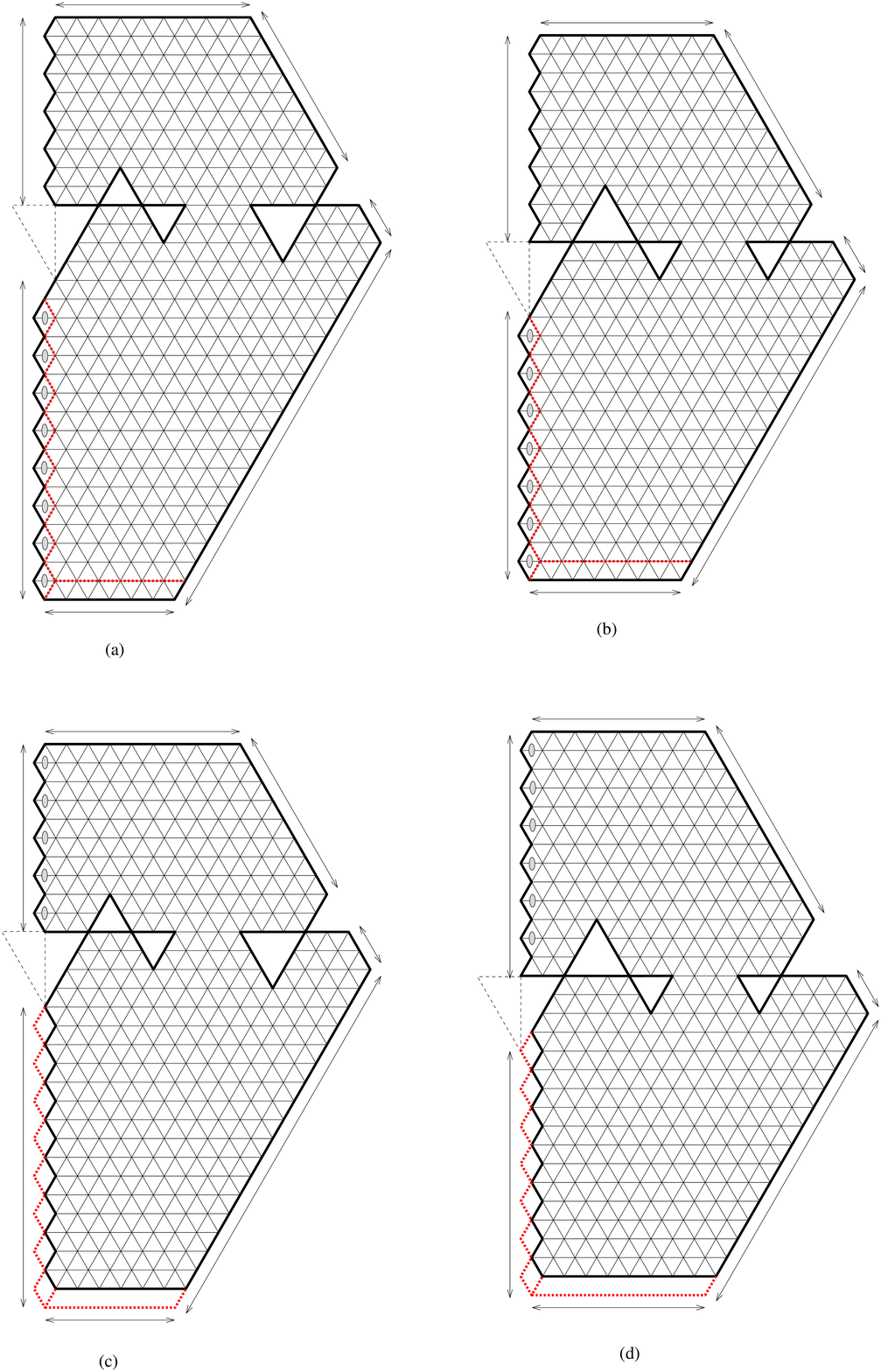}%
\end{picture}%

\begin{picture}(16711,26234)(371,-30281)
\put(15142,-27500){\rotatebox{60.0}{\makebox(0,0)[lb]{\smash{{\SetFigFont{14}{16.8}{\rmdefault}{\mddefault}{\itdefault}{$2y+z+2a_1+2a_3+2b_2$}%
}}}}}
\put(5359,-27412){\rotatebox{60.0}{\makebox(0,0)[lb]{\smash{{\SetFigFont{14}{16.8}{\rmdefault}{\mddefault}{\itdefault}{$2y+z+2a_1+2a_3+2b_2+1$}%
}}}}}
\put(9819,-27446){\rotatebox{90.0}{\makebox(0,0)[lb]{\smash{{\SetFigFont{14}{16.8}{\rmdefault}{\mddefault}{\itdefault}{$y+z+a_3+b_2-\frac{1}{2}$}%
}}}}}
\put(606,-26886){\rotatebox{90.0}{\makebox(0,0)[lb]{\smash{{\SetFigFont{14}{16.8}{\rmdefault}{\mddefault}{\itdefault}{$y+z+a_3+b_2$}%
}}}}}
\put(14648,-18858){\rotatebox{300.0}{\makebox(0,0)[lb]{\smash{{\SetFigFont{14}{16.8}{\rmdefault}{\mddefault}{\itdefault}{$2y+2a_2+b_1-1$}%
}}}}}
\put(5870,-19132){\rotatebox{300.0}{\makebox(0,0)[lb]{\smash{{\SetFigFont{14}{16.8}{\rmdefault}{\mddefault}{\itdefault}{$2y+2a_2+b_1$}%
}}}}}
\put(9759,-21116){\rotatebox{90.0}{\makebox(0,0)[lb]{\smash{{\SetFigFont{14}{16.8}{\rmdefault}{\mddefault}{\itdefault}{$y+a_2+b_1-\frac{1}{2}$}%
}}}}}
\put(606,-20743){\rotatebox{90.0}{\makebox(0,0)[lb]{\smash{{\SetFigFont{14}{16.8}{\rmdefault}{\mddefault}{\itdefault}{$y+a_2+b_1$}%
}}}}}
\put(11244,-17786){\makebox(0,0)[lb]{\smash{{\SetFigFont{14}{16.8}{\rmdefault}{\mddefault}{\itdefault}{$x+a_1+a_3+b_2$}%
}}}}
\put(2316,-18013){\makebox(0,0)[lb]{\smash{{\SetFigFont{14}{16.8}{\rmdefault}{\mddefault}{\itdefault}{$x+a_1+a_3+b_2$}%
}}}}
\put(7491,-22168){\makebox(0,0)[lb]{\smash{{\SetFigFont{14}{16.8}{\rmdefault}{\mddefault}{\itdefault}{$z$}%
}}}}
\put(12564,-23171){\makebox(0,0)[lb]{\smash{{\SetFigFont{14}{16.8}{\rmdefault}{\mddefault}{\itdefault}{$a_3$}%
}}}}
\put(9398,-23645){\makebox(0,0)[lb]{\smash{{\SetFigFont{14}{16.8}{\rmdefault}{\mddefault}{\itdefault}{$2a_1$}%
}}}}
\put(425,-22792){\makebox(0,0)[lb]{\smash{{\SetFigFont{14}{16.8}{\rmdefault}{\mddefault}{\itdefault}{$2a_1$}%
}}}}
\put(5421,-22438){\makebox(0,0)[lb]{\smash{{\SetFigFont{14}{16.8}{\rmdefault}{\mddefault}{\itdefault}{$b_2$}%
}}}}
\put(14634,-23186){\makebox(0,0)[lb]{\smash{{\SetFigFont{14}{16.8}{\rmdefault}{\mddefault}{\itdefault}{$b_2$}%
}}}}
\put(6546,-21778){\makebox(0,0)[lb]{\smash{{\SetFigFont{14}{16.8}{\rmdefault}{\mddefault}{\itdefault}{$b_1$}%
}}}}
\put(15735,-22573){\makebox(0,0)[lb]{\smash{{\SetFigFont{14}{16.8}{\rmdefault}{\mddefault}{\itdefault}{$b_1$}%
}}}}
\put(11554,-29600){\makebox(0,0)[lb]{\smash{{\SetFigFont{14}{16.8}{\rmdefault}{\mddefault}{\itdefault}{$x+a_2+b_1$}%
}}}}
\put(1760,-29716){\makebox(0,0)[lb]{\smash{{\SetFigFont{14}{16.8}{\rmdefault}{\mddefault}{\itdefault}{$x+a_2+b_1$}%
}}}}
\put(9816,-7770){\rotatebox{90.0}{\makebox(0,0)[lb]{\smash{{\SetFigFont{14}{16.8}{\rmdefault}{\mddefault}{\itdefault}{$y+a_2+b_1-\frac{1}{2}$}%
}}}}}
\put(11298,-15952){\makebox(0,0)[lb]{\smash{{\SetFigFont{14}{16.8}{\rmdefault}{\mddefault}{\itdefault}{$x+a_2+b_1$}%
}}}}
\put(9771,-14195){\rotatebox{90.0}{\makebox(0,0)[lb]{\smash{{\SetFigFont{14}{16.8}{\rmdefault}{\mddefault}{\itdefault}{$y+z+a_3+b_2$}%
}}}}}
\put(14656,-14185){\rotatebox{60.0}{\makebox(0,0)[lb]{\smash{{\SetFigFont{14}{16.8}{\rmdefault}{\mddefault}{\itdefault}{$2y+z+2a_1+2a_3+2b_2$}%
}}}}}
\put(9438,-9802){\makebox(0,0)[lb]{\smash{{\SetFigFont{14}{16.8}{\rmdefault}{\mddefault}{\itdefault}{$2a_1$}%
}}}}
\put(12734,-9300){\makebox(0,0)[lb]{\smash{{\SetFigFont{14}{16.8}{\rmdefault}{\mddefault}{\itdefault}{$a_3$}%
}}}}
\put(11632,-8700){\makebox(0,0)[lb]{\smash{{\SetFigFont{14}{16.8}{\rmdefault}{\mddefault}{\itdefault}{$a_2$}%
}}}}
\put(14800,-9307){\makebox(0,0)[lb]{\smash{{\SetFigFont{14}{16.8}{\rmdefault}{\mddefault}{\itdefault}{$b_2$}%
}}}}
\put(15694,-8830){\makebox(0,0)[lb]{\smash{{\SetFigFont{14}{16.8}{\rmdefault}{\mddefault}{\itdefault}{$b_1$}%
}}}}
\put(16646,-9148){\makebox(0,0)[lb]{\smash{{\SetFigFont{14}{16.8}{\rmdefault}{\mddefault}{\itdefault}{$z$}%
}}}}
\put(11448,-4778){\makebox(0,0)[lb]{\smash{{\SetFigFont{14}{16.8}{\rmdefault}{\mddefault}{\itdefault}{$x+a_1+a_3+b_2$}%
}}}}
\put(14723,-5776){\rotatebox{300.0}{\makebox(0,0)[lb]{\smash{{\SetFigFont{14}{16.8}{\rmdefault}{\mddefault}{\itdefault}{$2y+2a_2+b_1-1$}%
}}}}}
\put(1854,-16299){\makebox(0,0)[lb]{\smash{{\SetFigFont{14}{16.8}{\rmdefault}{\mddefault}{\itdefault}{$x+a_2+b_1$}%
}}}}
\put(654,-13546){\rotatebox{90.0}{\makebox(0,0)[lb]{\smash{{\SetFigFont{14}{16.8}{\rmdefault}{\mddefault}{\itdefault}{$y+z+a_3+b_2+\frac{1}{2}$}%
}}}}}
\put(676,-6856){\rotatebox{90.0}{\makebox(0,0)[lb]{\smash{{\SetFigFont{14}{16.8}{\rmdefault}{\mddefault}{\itdefault}{$y+a_2+b_1$}%
}}}}}
\put(5896,-12969){\rotatebox{60.0}{\makebox(0,0)[lb]{\smash{{\SetFigFont{14}{16.8}{\rmdefault}{\mddefault}{\itdefault}{$2y+z+2a_1+2a_3+2b_2+1$}%
}}}}}
\put(7741,-8499){\makebox(0,0)[lb]{\smash{{\SetFigFont{14}{16.8}{\rmdefault}{\mddefault}{\itdefault}{$z$}%
}}}}
\put(5866,-5311){\rotatebox{300.0}{\makebox(0,0)[lb]{\smash{{\SetFigFont{14}{16.8}{\rmdefault}{\mddefault}{\itdefault}{$2y+2a_2+b_1$}%
}}}}}
\put(2701,-4336){\makebox(0,0)[lb]{\smash{{\SetFigFont{14}{16.8}{\rmdefault}{\mddefault}{\itdefault}{$x+a_1+a_3+b_2$}%
}}}}
\put(5649,-8806){\makebox(0,0)[lb]{\smash{{\SetFigFont{14}{16.8}{\rmdefault}{\mddefault}{\itdefault}{$b_2$}%
}}}}
\put(6691,-8139){\makebox(0,0)[lb]{\smash{{\SetFigFont{14}{16.8}{\rmdefault}{\mddefault}{\itdefault}{$b_1$}%
}}}}
\put(3384,-8604){\makebox(0,0)[lb]{\smash{{\SetFigFont{14}{16.8}{\rmdefault}{\mddefault}{\itdefault}{$a_3$}%
}}}}
\put(2574,-8131){\makebox(0,0)[lb]{\smash{{\SetFigFont{14}{16.8}{\rmdefault}{\mddefault}{\itdefault}{$a_2$}%
}}}}
\put(616,-9091){\makebox(0,0)[lb]{\smash{{\SetFigFont{14}{16.8}{\rmdefault}{\mddefault}{\itdefault}{$2a_1$}%
}}}}
\put(2331,-21853){\makebox(0,0)[lb]{\smash{{\SetFigFont{14}{16.8}{\rmdefault}{\mddefault}{\itdefault}{$a_2$}%
}}}}
\put(11514,-22601){\makebox(0,0)[lb]{\smash{{\SetFigFont{14}{16.8}{\rmdefault}{\mddefault}{\itdefault}{$a_2$}%
}}}}
\put(16914,-23006){\makebox(0,0)[lb]{\smash{{\SetFigFont{14}{16.8}{\rmdefault}{\mddefault}{\itdefault}{$z$}%
}}}}
\put(3156,-22318){\makebox(0,0)[lb]{\smash{{\SetFigFont{14}{16.8}{\rmdefault}{\mddefault}{\itdefault}{$a_3$}%
}}}}
\end{picture}%
}
\caption{The four mixed-boundary regions:
(a) The region $NR^{(1)}_{2,1,2}(2,2,2;\ 2,3)$. (b) The region $NR^{(2)}_{2,1,2}(2,3,2;\ 2,2)$.
 (c) The region $NR^{(3)}_{2,1,2}(2,2,2;\ 2,3)$. (d) The region $NR^{(4)}_{2,1,2}(2,3,2;\ 3,2)$.}\label{fig:halvedhex4}
\end{figure}

The final quadruple of halved hexagons considered in this section   are  reversing versions of the above mixed-boundary regions.
The region $NR^{(1)}_{x,y,z}(\textbf{a};\  \textbf{b})$ is obtained from the region $R^{(1)}_{x,y,z}(\textbf{a};\ \textbf{b})$
by adding a layer of unit triangles running along the south side and a layer along the portion below the two ferns of the west side.
 We next assign a weight $1/2$ to
each newly added vertical lozenges along the west side of the region (see Figure \ref{fig:halvedhex4}(a)).
The region $NR^{(2)}_{x,y,z}(\textbf{a}; \textbf{b})$ is obtained similarly from  the region
$R^{(2)}_{x,y,z}(\textbf{a}; \textbf{b})$ as shown in Figure \ref{fig:halvedhex4}(b).
 If we \emph{remove} the unit triangles running along the south side and the portion below the ferns of the west side from the regions
 $RW^{(1)}_{x,y,z}(\textbf{a}; \textbf{b})$ and $RW^{(2)}_{x,y,z}(\textbf{a}; \textbf{b})$,
 we get respectively the new regions $NR^{(3)}_{x,y,z}(\textbf{a}; \textbf{b})$ and
 $NR^{(4)}_{x,y,z}(\textbf{a}; \textbf{b})$ (see examples in Figures \ref{fig:halvedhex4}(c) and (d), respectively).
 The tilings of these four new regions are also enumerated by closed-form product formulas.

\begin{thm}\label{mainMR1} Assume that $x,y,z$ are non-negative integers and that $\textbf{a}=(a_1,a_2,\dotsc,a_m)$ and $\textbf{b}=(b_1,b_2,\dotsc,b_n)$ are two sequences of non-negative integers. Then
\begin{align}\label{mainMR1eq}
\M(NR^{(1)}_{x,y,z}(\textbf{a};\textbf{b}))&=\frac{\M(NR^{(1)}_{x+y,0,z}(\textbf{a};\textbf{b}))
\M(NR^{(1)}_{0,2y,z}(\textbf{a};\textbf{b}))}{\M(NR^{(1)}_{y,0,z}(\textbf{a};\textbf{b}))}\notag\\
&\quad\times \frac{\T(x+1,2a+b+2y+z,y)\T(x+a+1,b+2y+z,y)}{\T(1,2a+b+2y+z,y)\T(a+1,\Su_b+2y+z,y)}\notag\\
&=2^{a_1-y}\Q(a_1,\dotsc,a_{2\lceil \frac{m-1}{2}\rceil}, a_{2\lceil \frac{m-1}{2}\rceil+1}+x+y+b_{2\lfloor\frac{n+1}{2}\rfloor},b_{2\lfloor\frac{n+1}{2}\rfloor-1},\dotsc,b_1)\notag\\
&\quad\times \K'(0,a_1+1,a_2,\dotsc,a_{2\lfloor\frac{m+1}{2}\rfloor-1},a_{2\lfloor\frac{m+1}{2}\rfloor}+x+y+b_{2\lceil \frac{n-1}{2}\rceil+1},b_{2\lceil \frac{n-1}{2}\rceil},\dotsc, b_1,z) \notag\\
&\quad\times\frac{(a+y)!}{a!} \frac{\Hf_2(2\e_a+2\od_b+1)\Hf_2(2\od_a+2\e_b+2z+2)}{\Hf_2(2\e_a+2\od_b+2y+1)
\Hf_2(2\od_a+2\e_b+2y+2z+2)} \notag\\
&\quad\times \frac{\Hf(2a+b+2y+z+1)\Hf(b+y+z)}{\Hf(2a+b+y+z+1)\Hf(b+z)}\notag\\
&\quad\times   \frac{\T(x+1,2a+b+2y+z,y)\T(x+a+1,b+2y+z,y)}{\T(1,2a+b+2y+z,y)\T(a+1,b+2y+z,y)}.
\end{align}
\end{thm}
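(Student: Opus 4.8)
The plan is to prove the two displayed equalities in Theorem~\ref{mainMR1} in turn, using the scheme common to this whole family: Kuo's graphical condensation for the first (reduction) equality, and a fern-line factorization together with Lemma~\ref{QAR} for the second (closed-form) equality. Since $NR^{(1)}_{x,y,z}(\textbf{a};\textbf{b})$ is obtained from $R^{(1)}_{x,y,z}(\textbf{a};\textbf{b})$ only by adjoining a layer of unit triangles along the south side and below the ferns and then weighting the new west-side vertical lozenges by $\tfrac12$, I expect to transport the condensation argument already used for Theorem~\ref{mainR1}, altering only the boundary weights, rather than build it from scratch.

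For the first equality I would pass to the planar bipartite dual graph of $NR^{(1)}_{x,y,z}$ and apply the version of Kuo condensation quoted in Section~\ref{sec:Prelim}, placing the four distinguished vertices along the west zigzag and the north and south sides so that each of the five regions in Kuo's identity is again of $NR^{(1)}$-type, with $x$ and $y$ shifted by controlled amounts. This yields a three-term recurrence for $\M(NR^{(1)}_{x,y,z})$. I would then check that the proposed right-hand side---the quotient $\M(NR^{(1)}_{x+y,0,z})\M(NR^{(1)}_{0,2y,z})/\M(NR^{(1)}_{y,0,z})$ times the displayed ratio of $\T$-functions---obeys the same recurrence, the $\T$-ratio being exactly the factor that makes the two sides agree term by term. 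At $y=0$ the $\T$-ratio is the empty product $1$ and the quotient collapses to $\M(NR^{(1)}_{x,0,z})$, so the base case is a tautology and the induction on $y$ closes.

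For the second equality I would evaluate the three factors just produced. The two with middle index $0$, namely $\M(NR^{(1)}_{x+y,0,z})$ and $\M(NR^{(1)}_{y,0,z})$, pinch along the horizontal fern line into products of two quartered hexagons (Lemma~\ref{RS}); because the $a$-fern is down-pointing and the adjoined south and below-fern vertical lozenges carry weight $\tfrac12$, the upper piece is a $\Q$-region and the lower piece is a weighted $\K'$-region. The half-triangle tip of the down-pointing fern is what shifts $a_1\mapsto a_1+1$ inside the $\K'$-argument and produces the weight $2^{a_1}$, while the trailing entry $z$ records the depth of the lower piece; the remaining factor $\M(NR^{(1)}_{0,2y,z})$ is handled by the corresponding $x=0$ evaluation. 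Substituting the explicit quartered-hexagon values from Lemma~\ref{QAR} and collecting terms, the surviving $\Q$- and $\K'$-regions are precisely those displayed, and the leftover pieces reorganize---through routine $\Hf$, $\Hf_2$, Pochhammer, and double-factorial identities---into the factor $(a+y)!/a!$, the two $\Hf_2$-ratios, the two $\Hf$-ratios, the prefactor $2^{a_1-y}$, and the final $\T$-ratio.

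The main obstacle is the bookkeeping in the condensation step: one must exhibit a single placement of the four vertices that makes all five sub-regions genuinely $NR^{(1)}$-type---respecting both the mixed $\tfrac12$-weighting along the west side and the adjoined unit-triangle layer---and then confirm that the multiplicative weights introduced by condensation telescope into exactly the claimed $\T$-ratio and not some neighboring Pochhammer product. A secondary difficulty lies in the fern-line factorization for the closed form: identifying the lower half as a \emph{weighted} $\K'$-region (rather than an ordinary $\K$-region) with the correct half-integer ``bump'' count on the west side is what pins down the $a_1+1$ shift and the $2^{a_1-y}$ prefactor, and getting this identification wrong would throw off the final hyperfactorial simplification.
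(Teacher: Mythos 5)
Your skeleton---Kuo condensation for a recurrence, fern-line splitting plus Lemma \ref{QAR} for the closed form---is the same as the paper's, but the induction you describe has a genuine gap. You assert that the four Kuo vertices can be placed so that all five graphs in Theorem \ref{kuothm} correspond to $NR^{(1)}$-type regions ``with $x$ and $y$ shifted by controlled amounts,'' and you then induct on $y$ with base case $y=0$. In the paper's condensation (with $u$ at the upper-right corner, $v$ next to the $b$-fern, and the bowtie $w,s$ at the lower-right corner---not on the west zigzag), removing the marked triangles forces lozenges that necessarily change the \emph{other} parameters as well: the resulting recurrence expresses $\M(NR^{(1)}_{x,y,z}(\textbf{a};\textbf{b}))$ in terms of regions in which $z$ drops to $z-1$ and/or the fern $\textbf{b}$ is replaced by $\textbf{b}^{+1}$ (last entry augmented by $1$, or a new entry $1$ appended), and two of those regions carry the \emph{same} value of $y$ as the left-hand side. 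Hence an induction on $y$ alone never terminates. The paper instead inducts on a combined quantity involving $y$, $z$, $b$ and the number $\overline{n}$ of positive entries of $\textbf{b}$, which is why it needs preliminary reductions eliminating zero entries of $\textbf{b}$ and the standing hypothesis $z>0$.

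Those reductions expose a second omission: when $z=0$, stripping the forced lozenges does not stay inside the $NR^{(1)}$ family---it converts an $NR^{(1)}$-type region into an $N^{(1)}$-type region and vice versa. This is precisely why the paper proves Theorems \ref{mainM1} and \ref{mainMR1} \emph{simultaneously}, carrying both formulas through a single induction; your plan to treat $NR^{(1)}$ in isolation by ``transporting'' the $R^{(1)}$ argument cannot be executed as stated. Likewise, the base case of an empty $b$-fern ($b+\overline{n}=0$) is not a tautology: the paper settles it by invoking the earlier results of \cite{Halfhex2}. By contrast, your handling of the second equality---splitting the $y=0$ and $x=0$ regions along the fern line into a $\Q$-piece above and a weighted $\K'$-piece below (whence the $a_1\mapsto a_1+1$ shift and the power of $2$), then simplifying with Lemmas \ref{QAR} and \ref{QK}---is exactly the paper's route and is sound, with the caveat that verifying your Kuo recurrence against the quotient expression would in any case require these same explicit evaluations, so in practice your argument collapses into the paper's organization: prove the closed form by induction, then identify the quotient expression with it.
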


\begin{thm}\label{mainMR2} With the same notations in Theorem \ref{mainMR1}, the number of tilings of the $NR^{(2)}$-type region is given by
\begin{align}\label{mainMR2eq}
\M(NR^{(2)}_{x,y,z}(\textbf{a};\textbf{b}))&=\frac{\M(NR^{(2)}_{x+y,0,z}(\textbf{a};\textbf{b}))
\M(NR^{(2)}_{0,2y,z}(\textbf{a};\textbf{b}))}{\M(NR^{(2)}_{y,0,z}(\textbf{a};\textbf{b}))}\notag\\
&\quad\times \frac{\T(x+1,2a+b+2y+z-1,y)\T(x+a+1,b+2y+z-1,y)}{\T(1,2a+b+2y+z-1,y)\T(a+1,b+2y+z-1,y)}\\
&=2^{a_1-y}\K(a_1,\dotsc,a_{2\lceil \frac{m-1}{2}\rceil}, a_{2\lceil \frac{m-1}{2}\rceil+1}+x+y+b_{2\lfloor\frac{n+1}{2}\rfloor},b_{2\lfloor\frac{n+1}{2}\rfloor-1},\dotsc,b_1)\notag\\
&\quad\times \Q'(0,a_1,\dotsc,a_{2\lfloor\frac{m+1}{2}\rfloor-1},a_{2\lfloor\frac{m+1}{2}\rfloor}+x+y+b_{2\lceil \frac{n-1}{2}\rceil+1},b_{2\lceil \frac{n-1}{2}\rceil},\dotsc, b_1,z) \notag\\
&\quad\times \frac{\Hf_2(2\e_a+2\od_b)\Hf_2(2\od_a+2\e_b+2z+1)}{\Hf_2(2\e_a+2\od_b+2y)
\Hf_2(2\od_a+2\e_b+2y+2z+1)} \notag\\
&\quad\times \frac{\Hf(2a+b+2y+z)\Hf(b+y+z)}{\Hf(2a+b+y+z)\Hf(b+z)}\notag\\
&\quad\times   \frac{\T(x+1,2a+b+2y+z-1,y)\T(x+a+1,b+2y+z-1,y)}{\T(1,2a+b+2y+z-1,y)\T(a+1,b+2y+z-1,y)}.
\end{align}
\end{thm}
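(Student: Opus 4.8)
The plan is to establish the two displayed equalities separately, following the template already carried out for the reversing region in Theorem~\ref{mainR2} and for the first mixed-boundary reversing region in Theorem~\ref{mainMR1}, since $NR^{(2)}_{x,y,z}(\textbf{a};\textbf{b})$ is by definition obtained from $R^{(2)}_{x,y,z}(\textbf{a};\textbf{b})$ by adjoining a weighted layer along the south side and the portion of the west side below the two ferns. The first equality, which expresses $\M(NR^{(2)}_{x,y,z}(\textbf{a};\textbf{b}))$ through the three degenerate regions $NR^{(2)}_{x+y,0,z}$, $NR^{(2)}_{0,2y,z}$, $NR^{(2)}_{y,0,z}$ and an explicit $\T$-quotient, is the combinatorial core; the second equality is the subsequent evaluation of those three degenerate regions in closed form.

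For the first equality I would proceed by induction on the middle-band parameter $y$, with the inductive step furnished by the version of Kuo's graphical condensation quoted in Section~\ref{sec:Prelim}. Applying it to the planar dual graph of $NR^{(2)}_{x,y,z}(\textbf{a};\textbf{b})$, with the four distinguished vertices placed at the extremities of the horizontal band that carries the two ferns, produces a bilinear identity relating the tiling number to those of nearby regions; the four vertices should be chosen so that the deletion of the boundary cells attached to one of the two products forces a maximal staircase of lozenges that disconnects the region and annihilates that product, leaving a clean multiplicative relation rather than a sum. The forced lozenges account for the quotient $\T(x+1,\dots)\T(x+a+1,\dots)\big/\big(\T(1,\dots)\T(a+1,\dots)\big)$, and iterating the relation down to the base case $y=0$---where every $\T$ with last argument $0$ is the empty product and the three degenerate regions collapse to a single copy of $NR^{(2)}_{x,0,z}$---yields the stated factorization.

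For the second equality I would evaluate the three degenerate tiling numbers. The region $NR^{(2)}_{x+y,0,z}$ carries no middle band, so the splitting Lemma~\ref{RS} factors it into the product of the two quartered hexagons $\K(\dots)$ and $\Q'(\dots)$ displayed in the statement, exactly as in the remark following Theorem~\ref{main1}; each factor is then read off from Lemma~\ref{QAR}. The region $NR^{(2)}_{0,2y,z}$, having $x=0$, becomes symmetric with respect to the horizontal line carrying the two ferns---this is precisely the symmetry that the added weighted layer is designed to restore---so Ciucu's factorization theorem (Section~\ref{sec:Prelim}) expresses it as a power of two times a product of two halves of band-height $y$; dividing by the likewise split $\M(NR^{(2)}_{y,0,z})$ collapses these contributions and leaves precisely the hyperfactorial ratios $\Hf_2(\cdot)$ and $\Hf(\cdot)$ together with the prefactor $2^{a_1-y}$.

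The main obstacle lies not in the strategy but in two points of detail, as in the companion theorems. First, in the induction one must check that the selected four vertices genuinely force the claimed staircase and that the surviving Kuo product reproduces the degenerate regions with \emph{exactly} the stated parameter shifts $x\mapsto x+y$ and $y\mapsto 2y$; an off-by-one in the alignment of the $a$- and $b$-ferns, or in the extra weighted layer that distinguishes $NR^{(2)}$ from $R^{(2)}$, would corrupt the $\T$-quotient. Second, the final consolidation demands matching the Pochhammer- and hyperfactorial-type factors produced by Lemma~\ref{QAR} against the compact closed form, a long but routine computation carried out with the skipping identities for $\Hf_2$ and $[x]_n$, the double-factorial conventions, and the standing convention that $a_i=0$ for $i>m$ and $b_j=0$ for $j>n$.
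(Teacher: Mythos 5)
Your overall architecture (prove one equality by Kuo condensation plus induction, prove the other by evaluating the degenerate regions) matches the paper's template, but three of your concrete steps would fail as stated. First, and most importantly, the paper's use of Kuo's Theorem \ref{kuothm} for this family of regions produces a genuine \emph{two-term} recurrence of the form
\begin{align}
\M(NR^{(2)}_{x,y,z}(\textbf{a};\textbf{b}))\M(NR^{(2)}_{x,y-1,z-1}(\textbf{a};\textbf{b}^{+1}))&=\M(NR^{(2)}_{x,y-1,z}(\textbf{a};\textbf{b}^{+1}))\M(NR^{(2)}_{x,y,z-1}(\textbf{a};\textbf{b}))\notag\\
&\quad+\M(NR^{(2)}_{x+1,y-1,z}(\textbf{a};\textbf{b}))\M(NR^{(2)}_{x-1,y,z-1}(\textbf{a};\textbf{b}^{+1})),
\end{align}
in which neither product vanishes; your claim that the four vertices can be placed so that forced lozenges ``annihilate'' one product and leave a clean multiplicative relation is unsupported and does not reflect the geometry of these regions. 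The actual heart of the proof --- verifying that the conjectured product formula satisfies this sum recurrence, which reduces via Lemmas \ref{TV} and \ref{QK} to an identity such as $\frac{(2x+2a+b+2y+z)(a+b+2y+z)}{(x+2a+b+2y+z)(x+a+b+2y+z)}+\frac{x(x+a)}{(x+2a+b+2y+z)(x+a+b+2y+z)}=1$ --- is entirely absent from your plan. Relatedly, induction on $y$ alone cannot close: the recurrence decrements $z$ and alters $\textbf{b}$ (to $\textbf{b}^{+1}$), so the induction must run on $y+z+b+\overline{n}$, with base cases $x=0$, $y=0$, and $b+\overline{n}=0$ (the last quoted from \cite{Halfhex2}), plus the preliminary reductions to all $b_i>0$ and $z>0$.

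Second, your evaluation of $\M(NR^{(2)}_{0,2y,z}(\textbf{a};\textbf{b}))$ via Ciucu's factorization theorem (Lemma \ref{ciucufactor}) is the wrong tool: that lemma requires a \emph{vertical} symmetry axis whose vertices form a cut set, and $NR^{(2)}_{0,2y,z}$ has no such axis. In the paper, all three degenerate regions ($x=0$ and $y=0$ alike) are handled by removing forced lozenges along the horizontal fern line and invoking the Region-splitting Lemma \ref{RS}, after which each factor is a $\K$- or $\Q'$-type quartered hexagon evaluated by Lemma \ref{QAR}; Ciucu's theorem is reserved for the genuinely symmetric $S$-type hexagons of Theorems \ref{mainthm1} and \ref{mainthm2}. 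Third, a logical-order issue: the paper proves that $\M(NR^{(2)}_{x,y,z}(\textbf{a};\textbf{b}))$ equals the \emph{closed-form} (third) expression by induction, and only afterwards identifies the ratio (first) expression with it; your plan to establish the ratio form directly by induction is circular in practice, since checking that the ratio expression satisfies the Kuo recurrence already requires the closed-form values of the degenerate regions. Finally, a small point: the $\T$-quotient is not produced by forced lozenges in the condensation step; it emerges from the algebraic simplification of the closed-form ratios.
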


\begin{thm}\label{mainMR3} With the same notations in Theorem \ref{mainMR1}, the number of tilings of the $NR^{(3)}$-type region is given by
\begin{align}\label{mainMR3eq}
\M(MR^{(3)}_{x,y,z}(\textbf{a};\textbf{b}))&=\frac{\M(NR^{(3)}_{x+y,0,z}(\textbf{a};\textbf{b}))
\M(NR^{(3)}_{0,2y,z}(\textbf{a};\textbf{b}))}{\M(NR^{(3)}_{y,0,z}(\textbf{a};\textbf{b}))}\notag\\
&\quad\times \frac{\T(x+1,2a+b+2y+z-1,y)\T(x+a+1,b+2y+z-1,y)}{\T(1,2a+b+2y+z-1,y)\T(a+1,b+2y+z-1,y)}\notag\\
&=2^{-y}\Q'(a_1,\dotsc,a_{2\lceil \frac{m-1}{2}\rceil}, a_{2\lceil \frac{m-1}{2}\rceil+1}+x+y+b_{2\lfloor\frac{n+1}{2}\rfloor},b_{2\lfloor\frac{n+1}{2}\rfloor-1},\dotsc,b_1)\notag\\
&\quad\times \K(0,a_1,\dotsc,a_{2\lfloor\frac{m+1}{2}\rfloor-1},a_{2\lfloor\frac{m+1}{2}\rfloor}+x+y+b_{2\lceil \frac{n-1}{2}\rceil+1},b_{2\lceil \frac{n-1}{2}\rceil},\dotsc, b_1,z) \notag\\
&\quad\times \frac{\Hf_2(2\e_a+2\od_b+1)\Hf_2(2\od_a+2\e_b+2z)}{\Hf_2(2\e_a+2\od_b+2y+1)
\Hf_2(2\od_a+2\e_b+2y+2z)} \notag\\
&\quad\times \frac{\Hf(2a+b+2y+z)\Hf(b+y+z)}{\Hf(2a+b+y+z)\Hf(b+z)}\notag\\
&\quad\times   \frac{\T(x+1,2a+b+2y+z-1,y)\T(x+a+1,b+2y+z-1,y)}{\T(1,2a+b+2y+z-1,y)\T(a+1,b+2y+z-1,y)}.
\end{align}
\end{thm}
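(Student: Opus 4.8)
The plan is to establish the two equalities in \eqref{mainMR3eq} in turn: first the reduction of the three-parameter count to the three auxiliary slices appearing in its numerator and denominator, and then the evaluation of those slices as an explicit product involving the quartered hexagons $\Q'$ and $\K$. I work throughout in the planar dual graph, so that the weighted count $\M(NR^{(3)}_{x,y,z}(\textbf{a};\textbf{b}))$ is the generating function of weighted perfect matchings, the vertical lozenges along the west boundary above the ferns carrying weight $\tfrac12$. The two engines are Kuo's graphical condensation \cite{Kuo}, which supplies the recurrence driving the first equality, and the region-splitting mechanism used in the remark following Theorem~\ref{main1} (via Lemma~\ref{RS}) together with Lemma~\ref{QAR}, which evaluates the degenerate slices; Ciucu's factorization theorem \cite{Ciucu3} governs how the $\tfrac12$-weighted west boundary is inherited by the resulting pieces. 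It is worth noting at the outset that $NR^{(3)}$ is obtained from $RW^{(1)}$ (Theorem~\ref{mainRW1}) by deleting the layer of unit triangles along the south side and along the portion of the west side below the ferns, and that, comparing \eqref{mainRW1eq} with \eqref{mainMR3eq}, the only structural change is that the lower quartered hexagon passes from $\Q'(0,\dots,z)$ to $\K(0,\dots,z)$. This geometric localization of the difference is the organizing principle of the argument.

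For the first equality I would induct on $y$. Applying the version of Kuo condensation quoted in Section~\ref{sec:Prelim}, with the four distinguished boundary triangles placed at the top and bottom of the west side and at the two ends of the north side, yields a quadratic recurrence expressing $\M(NR^{(3)}_{x,y,z})$ in terms of counts with $x$ and $y$ individually shifted. The essential point, to be read off from Figure~\ref{fig:halvedhex4}, is that each of the four subregions produced by the condensation is again of $NR^{(3)}$-type with the same fern data $\textbf{a},\textbf{b}$ and the same $z$, the mixed $\tfrac12$-weighting being preserved by this choice of vertices; this is exactly the condensation already used for the companion mixed-boundary region $NR^{(1)}$ in Theorem~\ref{mainMR1}. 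One then verifies that the proposed expression, namely $\frac{\M(NR^{(3)}_{x+y,0,z})\,\M(NR^{(3)}_{0,2y,z})}{\M(NR^{(3)}_{y,0,z})}$ times the displayed ratio of $\T$-factors, satisfies the same recurrence. Since those factors are assembled from the functions $\T(x,n,m)$ of \eqref{defineT}, this verification collapses to an elementary identity among Pochhammer products, and matching the values at the base of the induction closes it.

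It remains to evaluate the three auxiliary counts. When the middle parameter is $0$ the halved hexagon is vertically as short as possible, and the two aligned ferns sever it along their common horizontal line: by the splitting argument behind Lemma~\ref{RS}, the slices $NR^{(3)}_{x+y,0,z}$ and $NR^{(3)}_{y,0,z}$ each decompose, up to a band of forced lozenges, into an upper and a lower region. Since the $\tfrac12$-weighting survives along the west boundary above the ferns while the boundary below has been reshaped by the deleted layer, the upper region is a weighted quartered hexagon of $\Q'$-type and the lower region an unweighted quartered hexagon of $\K$-type, with fern entries reindexed by the partial sums $\s_k$; applied with $w=x+y$ this produces precisely the factors $\Q'(a_1,\dots,b_1)$ and $\K(0,a_1,\dots,b_1,z)$ of \eqref{mainMR3eq}, and Lemma~\ref{QAR} makes both explicit in terms of $\Hf$ and $\Hf_2$. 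The remaining count $\M(NR^{(3)}_{0,2y,z})$, in which the defect $x$ vanishes, does not split along the fern line; it is instead handled by Ciucu's factorization theorem applied to its vertically symmetric completion, which again reduces it to a product of hyperfactorials.

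Finally I would insert these evaluations into the first equality and simplify. The slice $NR^{(3)}_{x+y,0,z}$ contributes the two explicit quartered-hexagon factors carrying the shift $x+y$, while the ratio $\M(NR^{(3)}_{0,2y,z})/\M(NR^{(3)}_{y,0,z})$ must collapse to the hyperfactorial quotients $\frac{\Hf_2(2\e_a+2\od_b+1)}{\Hf_2(2\e_a+2\od_b+2y+1)}$, its $\Hf_2(2\od_a+2\e_b+\cdots)$ companion, the ratio of $\Hf$-values, and the prefactor $2^{-y}$. I expect this last consolidation to be the main obstacle: one must reconcile three separate $\Q'/\K$-evaluations, each a long ratio of $\Hf$ and $\Hf_2$ values indexed by partial sums of the shifted fern sequences, against a single product, correctly tracking the floor/ceiling reindexing $2\lceil (m-1)/2\rceil$ and $2\lfloor (m+1)/2\rfloor$ (with the companions for $n$), the telescoping of the $\Hf_2$ quotients, and the accumulation of the powers of two. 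All of this is purely algebraic: once the condensation recurrence and the $y=0$ split are in place, no further combinatorial input is needed, and the remaining difficulty lies entirely in the bookkeeping.
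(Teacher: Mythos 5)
Your proposal has two genuine gaps, both at load-bearing points. First, the Kuo condensation you invoke does not do what you claim. You assert that, with your choice of the four boundary triangles, ``each of the four subregions produced by the condensation is again of $NR^{(3)}$-type with the same fern data $\textbf{a},\textbf{b}$ and the same $z$,'' and that this is ``exactly the condensation already used for $NR^{(1)}$.'' It is not: the condensation used for the $N^{(1)}$/$NR^{(1)}$ pair (and for $H^{(1)}$/$R^{(1)}$) places one removed triangle adjacent to the leftmost triangle of the $b$-fern and a bowtie at the lower-right corner, and the resulting recurrence necessarily decreases $z$ and replaces $\textbf{b}$ by $\textbf{b}^{+1}$ (the $b$-fern extended or incremented by $1$); see recurrences (\ref{recurrenceM1})--(\ref{recurrenceMR1}). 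No placement of four triangles on these regions yields a recurrence that shifts only $x$ and $y$, so your induction on $y$ alone cannot close. The correct induction is on $y+z+b+\overline{n}$, and --- crucially --- it must be run \emph{jointly} with the companion region $N^{(3)}$ of Theorem \ref{mainM3}: when $z=0$ the removal of forced lozenges turns an $NR^{(3)}$-type region into an $N^{(3)}$-type region and vice versa, which is the only mechanism that handles the base of the $z$-reduction. Your proposal contains no analogue of this pairing. Relatedly, your architecture is backwards: the method proves the \emph{explicit} product (the second equality) by induction, and then obtains the first equality from it by region-splitting; the first equality is not amenable to a self-contained condensation argument because the subregions appearing in the recurrence have modified $z$ and $\textbf{b}$.

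Second, your treatment of the slice $\M(NR^{(3)}_{0,2y,z})$ is wrong. You claim this region ``does not split along the fern line'' and propose to evaluate it by Ciucu's factorization theorem applied to a ``vertically symmetric completion.'' In fact, when $x=0$ the gap between the two ferns closes, forced lozenges fill the strip between them, and the region \emph{does} split along the fern line into two quartered hexagons (this is precisely how the $x=0$ base case is handled, cf.\ Figure \ref{halvedhexbase2}); Lemma \ref{RS} plus Lemma \ref{QAR} then finish this case with no new input. Your alternative is doubly unavailable: $NR^{(3)}$ is one of the eight types that are \emph{not} halves of symmetric three-fern hexagons (the paper raises this as an open question in Section \ref{sec:Question}), and even if such a completion existed, Lemma \ref{ciucufactor} runs in the wrong direction --- it expresses the count of the symmetric region as a product over its two halves, so using it to extract the half would require independently knowing the symmetric count, which in this paper is itself \emph{derived from} the halved-hexagon formulas. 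So both pillars of your argument --- the recurrence and the slice evaluations --- need to be replaced by the induction on $y+z+b+\overline{n}$ paired with $N^{(3)}$, the reductions eliminating zero terms of $\textbf{b}$ and the case $z=0$, and the fern-line splittings at $x=0$ and $y=0$.
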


\begin{thm}\label{mainMR4} With the same notations in Theorem \ref{mainMR1}, the number of tilings of the $NR^{(4)}$-type region is given by
\begin{align}\label{mainMR4eq}
\M(NR^{(4)}_{x,y,z}(\textbf{a};\textbf{b}))&=\frac{\M(NR^{(4)}_{x+y,0,z}(\textbf{a};\textbf{b}))
\M(NR^{(4)}_{0,2y,z}(\textbf{a};\textbf{b}))}{\M(MR^{(4)}_{y,0,z}(\textbf{a};\textbf{b}))}\notag\\
&\quad\times \frac{\T(x+1,2a+b+2y+z-2,y)\T(x+a+1,b+2y+z-2,y)}{\T(1,2a+b+2y+z-2,y)\T(a+1,b+2y+z-2,y)}\notag\\
&=2^{-y}\K'(a_1,\dotsc,a_{2\lceil \frac{m-1}{2}\rceil}, a_{2\lceil \frac{m-1}{2}\rceil+1}+x+y+b_{2\lfloor\frac{n+1}{2}\rfloor},b_{2\lfloor\frac{n+1}{2}\rfloor-1},\dotsc,b_1)\notag\\
&\quad\times \Q(0,a_1-1,a_2,\dotsc,a_{2\lfloor\frac{m+1}{2}\rfloor-1},a_{2\lfloor\frac{m+1}{2}\rfloor}+x+y+b_{2\lceil \frac{n-1}{2}\rceil+1},b_{2\lceil \frac{n-1}{2}\rceil},\dotsc, b_1,z) \notag\\
&\quad\times \frac{(a-1)!}{(a+y-1)!} \frac{\Hf_2(2\e_a+2\od_b)\Hf_2(2\od_a+2\e_b+2z-1)}{\Hf_2(2\e_a+2\od_b+2y)
\Hf_2(2\od_a+2\e_b+2y+2z-1)} \notag\\
&\quad\times \frac{\Hf(2a+b+2y+z-1)\Hf(b+y+z)}{\Hf(2a+b+y+z-1)\Hf(b+z)}\notag\\
&\quad\times   \frac{\T(x+1,2a+b+2y+z-2,y)\T(x+a+1,b+2y+z-2,y)}{\T(1,2a+b+2y+z-2,y)\T(a+1,b+2y+z-2,y)}.
\end{align}
\end{thm}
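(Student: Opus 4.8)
The plan is to establish the two equalities in \eqref{mainMR4eq} separately, following the same two-step template used for Theorems \ref{mainMR1}--\ref{mainMR3}. The first equality is a reduction identity that expresses the tiling number of the general region in terms of its three specializations on the wall $y=0$; the second equality then follows by evaluating those $y=0$ regions explicitly via the quartered-hexagon formulas of Lemma \ref{QAR} and simplifying. Throughout I would treat $z$, $\textbf{a}$, and $\textbf{b}$ as fixed and think of the tiling number as a function of $x$ and $y$.

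First I would prove the factorization
\[
\M(NR^{(4)}_{x,y,z})=\frac{\M(NR^{(4)}_{x+y,0,z})\,\M(NR^{(4)}_{0,2y,z})}{\M(NR^{(4)}_{y,0,z})}\cdot\frac{\T(x+1,2a+b+2y+z-2,y)\,\T(x+a+1,b+2y+z-2,y)}{\T(1,2a+b+2y+z-2,y)\,\T(a+1,b+2y+z-2,y)}
\]
using the version of Kuo's graphical condensation quoted in Section \ref{sec:Prelim}. I would apply it to the planar dual graph of $NR^{(4)}_{x,y,z}$ with the four distinguished vertices placed at the extreme lattice points of the north and south sides, chosen so that the four regions produced by condensation are again $NR^{(4)}$-regions whose parameters differ from $(x,y,z)$ only by unit shifts in $x$ and $y$; this is possible because condensation peels unit strips off the top and bottom of the halved hexagon while leaving the two ferns, the removed south/west triangle layers, and the half-weighted portion of the west side intact. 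The resulting three-term relation gives a recurrence, and the displayed product is then shown to be its solution subject to the boundary data on the wall $y=0$, the verification reducing (after the common $y=0$ tiling numbers cancel) to an elementary identity among the $\T$-products that follows directly from the definition \eqref{defineT}.

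Next I would evaluate the three $y=0$ regions. When $y=0$ the two aligned ferns meet along their common horizontal lattice line and disconnect $NR^{(4)}_{w,0,z}$ into an upper and a lower piece, each a quartered hexagon of the type enumerated in Lemma \ref{QAR}; the half-weights surviving on the portion of the west side above the ferns make the upper piece a $\K'$-region while the lower piece is an (unweighted) $\Q$-region, and the removal of the bottom triangle layer against the down-pointing half-triangle of side $2a_1$ accounts for both the power-of-two prefactor and the $a_1-1$ shift in the $\Q$-argument. Reading the two counts off Lemma \ref{QAR}, substituting the resulting products into the fraction $\M(\cdot_{x+y,0,z})\M(\cdot_{0,2y,z})/\M(\cdot_{y,0,z})$, and collapsing the many quartered-hexagon factors by means of Lemma \ref{RS} together with standard hyperfactorial and double-factorial identities, I would arrive at the single $\K'\cdot\Q$ product and the $\Hf$-, $\Hf_2$-ratios displayed in the second equality, which completes the proof.

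The main obstacle is the Kuo-condensation step: one must place the four vertices and account for the $1/2$-weights so that all four condensation regions are genuinely of type $NR^{(4)}$ with the correct shifted parameters and the correct power-of-two prefactor, and in the degenerate situations (for instance $a_1=0$, empty $\textbf{a}$ or $\textbf{b}$, or small $z$) one must verify that the bookkeeping of the removed south/west triangle layers and of the $a_1\pm 1$ shifts remains valid. Once the recurrence is correctly set up, both the verification that the product solves it and the $y=0$ evaluation are routine manipulations of the $\T$-, $\V$-, $\Hf$-, and $\Hf_2$-functions of exactly the kind already performed for the earlier theorems.
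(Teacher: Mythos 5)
You have the right toolkit (Kuo condensation plus explicit evaluation of the degenerate regions via Lemma \ref{QAR}), but the step your whole plan hinges on does not exist. You claim the four Kuo vertices can be placed at the extreme lattice points of the north and south sides so that all regions produced by condensation are again $NR^{(4)}$-type with parameters differing from $(x,y,z)$ ``only by unit shifts in $x$ and $y$,'' the ferns being left intact. For these regions that is geometrically impossible: the $b$-fern is attached to the northeast side at level $z$, so any forced strip of lozenges that shortens the northeast side (which is what a unit change in $y$ requires) terminates exactly at the fern and alters it. This is visible in the paper's own condensation (Figures \ref{KuoH1} and \ref{KuoH1b}), where $u$ sits at the top-right corner, $v$ at the inner end of the $b$-fern, and the bowtie $w,s$ at the bottom-right corner: the resulting regions carry $z-1$ in place of $z$ and $\textbf{b}^{+1}$ in place of $\textbf{b}$ in various combinations, never pure $x,y$ shifts. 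Consequently the induction cannot be run at fixed $z$ with a fixed fern: the paper inducts on $y+z+b+\overline{n}$, needs the preliminary reductions to $n=\overline{n}$ and $z>0$, and---crucially---proves Theorem \ref{mainMR4} jointly with Theorem \ref{mainM4}, because when $z=0$ an $NR^{(4)}$-type region turns (after removing forced lozenges) into an $N^{(4)}$-type region and vice versa. Your standalone treatment of $NR^{(4)}$ has no way to close that loop.

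There is a second gap in how you anchor the recurrence. You read all three special regions in the first equality as lying on the wall $y=0$, but $NR^{(4)}_{0,2y,z}(\textbf{a};\textbf{b})$ lies on the wall $x=0$ with doubled $y$-parameter; indeed, setting $x=0$ in the first equality (where the $\T$-ratio becomes $1$) reduces it to the statement $\M(NR^{(4)}_{0,y,z})=\M(NR^{(4)}_{0,2y,z})$, which is true but is itself a nontrivial consequence of the explicit product formula, not something $y=0$ data can supply. More generally, verifying that the boundary-ratio expression satisfies the true recurrence---which shifts $z$ and $\textbf{b}$---requires knowing how $\M(NR^{(4)}_{y,0,z})$ and $\M(NR^{(4)}_{0,2y,z})$ change under $\textbf{b}\mapsto\textbf{b}^{+1}$ and $z\mapsto z-1$, i.e., it requires their explicit evaluations anyway. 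That is why the paper argues in the opposite order: it first proves the explicit formula (the second equality) by the Kuo induction, with base cases $x=0$ and $y=0$ handled by Lemma \ref{RS} together with Lemma \ref{QAR}, and $b+\overline{n}=0$ quoted from \cite{Halfhex2}, and only then obtains the first equality by evaluating the three special regions and simplifying with Lemmas \ref{QAR} and \ref{QK}. Your description of the $y=0$ splitting (a weighted $\K'$-piece above the fern line, an unweighted $\Q$-piece below, with the $a_1-1$ shift) is essentially correct and matches the paper's base case, but it cannot carry the theorem without this corrected scaffolding.
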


We conclude this section by giving exact tiling formulas for the symmetric hexagons in which three aligned ferns have been removed
 (denoted by $S^{(i)}_{x,y,z} (\textbf{a}; \textbf{b})$, for $i=1,2$; illustrated in Figure \ref{fig:threearray}).
We have a small notice that 
the $x$- and $y$-parameters ofn the region $S^{(i)}_{x,y,z} (\textbf{a}; \textbf{b})$ always have the same parity.
\begin{thm}\label{mainthm1}
Assume that $x,y,z$ are non-negative integers and that
$\textbf{a}=(a_1,\dotsc,a_m)$ and $\textbf{b}=(b_1,\dotsc,b_n)$ are two sequences of non-negative integers as usual.
The number of lozenge tilings of the symmetric hexagon with three ferns removed
 $S^{(1)}_{x,y,z} (\textbf{a}; \textbf{b})$ is always given by a simple product formula as follows.

(a) If $a_1$ is even, then
\begin{align}\label{eqm1}
\M(S^{(1)}_{x,y,z} (\textbf{a}; \textbf{b}))=2^{y+z+a+b-a_1}&\M\left(H^{(2)}_{\left\lfloor\frac{x}{2}\right\rfloor,\left\lceil\frac{y}{2}\right\rceil,z}\left(\frac{a_1}{2},a_2,\dotsc,a_m; \textbf{b}\right)\right)\notag\\
&\times\M\left(W^{(1)}_{\left\lceil\frac{x}{2}\right\rceil,\left\lfloor\frac{y}{2}\right\rfloor,z}\left(\frac{a_1}{2},a_2,\dotsc,a_m; \textbf{b}\right)\right).
\end{align}
(d) If $a_1$ is odd, then
\begin{align}\label{eqm4}
\M(S^{(1)}_{x,y,z} (\textbf{a}; \textbf{b}))=2^{y+z+a+b-a_1}&\M\left(N^{(4)}_{\left\lfloor\frac{x}{2}\right\rfloor,\left\lceil\frac{y}{2}\right\rceil,z}\left(\frac{a_1+1}{2},a_2,\dotsc,a_m; \textbf{b}\right)\right)\notag\\
&\times\M\left(N^{(1)}_{\left\lceil\frac{x}{2}\right\rceil,\left\lfloor\frac{y}{2}\right\rfloor,z}\left(\frac{a_1-1}{2},a_2,\dotsc,a_m; \textbf{b}\right)\right).
\end{align}
\end{thm}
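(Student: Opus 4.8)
The plan is to derive Theorem \ref{mainthm1} directly from Ciucu's factorization theorem \cite{Ciucu3}, exploiting the fact that $S^{(1)}_{x,y,z}(\textbf{a};\textbf{b})$ is symmetric about the vertical line $\ell$ through the midpoints of its (horizontal) north and south sides. First I would verify this symmetry from the defining data: the two pairs of slanted sides have equal lengths $y+z+2\od_b+2\od_a-a_1$ (so the northeast side matches the northwest, and the southeast matches the southwest), the two $b$-ferns branch off the northeast and northwest sides as mirror images at the same level $z$, and the central $a$-array is placed symmetrically about $\ell$ with its $a_1$-triangle straddling the axis. Hence $\ell$ is a genuine axis of reflective symmetry of the region.

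Next I would apply Ciucu's factorization theorem to cut $S^{(1)}_{x,y,z}(\textbf{a};\textbf{b})$ along $\ell$ and write
\begin{equation*}
\M\left(S^{(1)}_{x,y,z}(\textbf{a};\textbf{b})\right)=2^{c}\,\M(R^{+})\,\M(R^{-}),
\end{equation*}
where $R^{+}$ and $R^{-}$ are the two half-regions produced by the reflection, with the vertical lozenges straddling $\ell$ weighted by $\tfrac12$ according to the theorem, and $c$ is the exponent it prescribes. The geometry of this cut is governed entirely by the parity of the central parameter $a_1$: when $a_1$ is even, $\ell$ passes through lattice vertices and bisects the up-pointing $a_1$-triangle into two half-triangles of side $a_1$ (that is, half-triangles of the type "side $2\cdot(a_1/2)$" appearing in the $H$- and $W$-families), so each half inherits a leading half-triangle with halved-hexagon parameter $a_1/2$; when $a_1$ is odd, $\ell$ instead meets the interiors of the lattice cells along the axis rather than only lattice vertices, forcing the asymmetric split into leading triangles of sides $(a_1+1)/2$ and $(a_1-1)/2$ on the two halves.

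The heart of the argument---and the step I expect to be the main obstacle---is the precise identification of $R^{+}$ and $R^{-}$ with the named regions of Section \ref{sec:Statement}. This is pure boundary bookkeeping, but it is delicate: one must track, on each side of $\ell$, how the north and south sides split (giving the floor/ceiling dissection $\lfloor x/2\rfloor,\lceil x/2\rceil$ of the $x$-parameter and $\lceil y/2\rceil,\lfloor y/2\rfloor$ of the $y$-parameter, which is exactly why the hypothesis that $x$ and $y$ share a parity is imposed), which half inherits the extra half-bump along its zigzag west side (thereby distinguishing an $H^{(2)}$-type half from a $W^{(1)}$-type half, or an $N^{(4)}$-type half from an $N^{(1)}$-type half), and how the $\tfrac12$-weights produced by Ciucu's theorem align with the weight conventions built into the definitions of $W^{(1)}$, $N^{(1)}$, and $N^{(4)}$. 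Carrying this out for $a_1$ even yields
\begin{equation*}
R^{-}=H^{(2)}_{\lfloor x/2\rfloor,\lceil y/2\rceil,z}\left(\tfrac{a_1}{2},a_2,\dotsc,a_m;\textbf{b}\right),\qquad R^{+}=W^{(1)}_{\lceil x/2\rceil,\lfloor y/2\rfloor,z}\left(\tfrac{a_1}{2},a_2,\dotsc,a_m;\textbf{b}\right),
\end{equation*}
and for $a_1$ odd the analogous pair $N^{(4)}_{\lfloor x/2\rfloor,\lceil y/2\rceil,z}(\tfrac{a_1+1}{2},a_2,\dotsc,a_m;\textbf{b})$ and $N^{(1)}_{\lceil x/2\rceil,\lfloor y/2\rfloor,z}(\tfrac{a_1-1}{2},a_2,\dotsc,a_m;\textbf{b})$, where the shift by $\pm\tfrac12$ in the leading triangle size is exactly the manifestation of the uneven cut described above.

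Finally I would pin down the exponent of $2$. Once the combinatorics of the cut is settled, the exponent prescribed by Ciucu's theorem is a routine tally along the vertical extent of the region met by $\ell$, and I expect it to come out to $c=y+z+a+b-a_1$, matching the prefactor in both \eqref{eqm1} and \eqref{eqm4}. Note that Theorem \ref{mainthm1} as stated is merely this factorization identity, so the product-formula claim of the abstract follows afterwards by substituting the closed forms of Theorems \ref{main2} and \ref{mainW1} (resp.\ \ref{mainM1} and \ref{mainM4}); no expansion is needed for the present statement. The remaining cases (b) and (c) are handled by the identical argument, the only change being the local picture of the cut dictated by the secondary parity condition, so no new ideas are required beyond those above.
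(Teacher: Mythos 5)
Your proposal is correct and takes essentially the same route as the paper's own proof: the paper likewise applies Ciucu's factorization theorem to the dual graph of $S^{(1)}_{x,y,z}(\textbf{a};\textbf{b})$, obtains the prefactor $2^{y+z+a+b-a_1}$ by counting the $2y+2z+2a+2b-2a_1$ vertices on the symmetry axis, and identifies the two component graphs (after removing forced lozenges) with the $H^{(2)}$/$W$-type pair when $a_1$ is even and the $N^{(4)}$/$N^{(1)}$ pair when $a_1$ is odd, splitting into subcases by the common parity of $x$ and $y$ exactly as you describe.
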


\begin{thm}\label{mainthm2}
Assume that $x,y,z$ are non-negative integers and that $\textbf{a}=(a_1,\dotsc,a_m)$ and $\textbf{b}=(b_1,\dotsc,b_n)$ are two sequences of non-negative integers. The number of lozenge tilings of the symmetric hexagon with three ferns removed $S^{(2)}_{x,y,z} (\textbf{a}; \textbf{b})$ is always given by a simple product formula as follows.

(a) If $a_1$ is even, then
\begin{align}\label{eqm1b}
\M(S^{(2)}_{x,y,z} (\textbf{a}; \textbf{b}))=2^{y+z+a+b-a_1}&\M\left(R^{(2)}_{\left\lfloor\frac{x}{2}\right\rfloor,\left\lceil\frac{y}{2}\right\rceil,z}\left(\frac{a_1}{2},a_2,\dotsc,a_m; \textbf{b}\right)\right)\notag\\
&\times\M\left(RW^{(1)}_{\left\lceil\frac{x}{2}\right\rceil,\left\lfloor\frac{y}{2}\right\rfloor,z}\left(\frac{a_1}{2},a_2,\dotsc,a_m; \textbf{b}\right)\right).
\end{align}
(b) If $a_1$ is odd, then
\begin{align}\label{eqm3b}
\M(S^{(2)}_{x,y,z} (\textbf{a}; \textbf{b}))=2^{y+z+a+b-a_1}&\M\left(NR^{(1)}_{\left\lceil\frac{x}{2}\right\rceil,\left\lfloor\frac{y}{2}\right\rfloor,z}\left(\frac{a_1-1}{2},a_2,\dotsc,a_m; \textbf{b}\right)\right)\notag\\
&\times\M\left(NR^{(4)}_{\left\lfloor\frac{x}{2}\right\rfloor,\left\lceil\frac{y}{2}\right\rceil,z}\left(\frac{a_1+1}{2},a_2,\dotsc,a_m; \textbf{b}\right)\right).
\end{align}
\end{thm}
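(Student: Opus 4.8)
The plan is to prove Theorem~\ref{mainthm2} by applying Ciucu's factorization theorem (quoted in Section~\ref{sec:Prelim}) to the region $S^{(2)}_{x,y,z}(\textbf{a};\textbf{b})$, which is symmetric about the vertical line $\ell$ through the midpoints of its two horizontal sides. First I would verify this symmetry directly from the defining data: the four slanted sides of the underlying hexagon all share the common length $y+z+2\od_b+2\od_a-a_1$, the two $b$-ferns branch as mirror images to the left and to the right of $\ell$ from the two upper slanted sides at the common level $z$, and the central $a$-array is symmetric about $\ell$ with its down-pointing $a_1$-triangle (recall $S^{(2)}$ is of the second type) straddling $\ell$. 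Hence reflection in $\ell$ carries $S^{(2)}_{x,y,z}(\textbf{a};\textbf{b})$ to itself.

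Next I would cut $S^{(2)}_{x,y,z}(\textbf{a};\textbf{b})$ along $\ell$ and invoke the factorization theorem, which writes
\[
\M\bigl(S^{(2)}_{x,y,z}(\textbf{a};\textbf{b})\bigr)=2^{\,e}\,\M(R^{+})\,\M(R^{-}),
\]
where $R^{+}$ and $R^{-}$ are the two halves produced by the cut. In this factorization the vertical lozenges straddling $\ell$ are converted into boundary vertical lozenges of weight $\tfrac12$ along the cut, which is precisely the weight convention entering the $RW$- and $NR$-type regions. The exponent $e$ is read off from the theorem as the number of admissible vertical-lozenge positions along $\ell$, i.e.\ those interior positions not blocked by the removed central $a_1$-triangle; a direct count yields $e=y+z+a+b-a_1$, independently of the parity of $a_1$, matching the common prefactor in \eqref{eqm1b} and \eqref{eqm3b}. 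Since $x$ and $y$ are constrained to have the same parity, the two halves have vertical extents differing only by the distribution recorded by the floor and ceiling symbols, so that one half carries the parameter pair $\bigl(\lceil x/2\rceil,\lfloor y/2\rfloor\bigr)$ while the other carries $\bigl(\lfloor x/2\rfloor,\lceil y/2\rceil\bigr)$.

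The case analysis is dictated by how $\ell$ meets the central down-pointing $a_1$-triangle. When $a_1$ is even, $\ell$ runs along lattice edges and bisects this triangle into two congruent halves of side $a_1$; each half $R^{\pm}$ is then a reversed halved hexagon whose leading $a$-parameter is $\tfrac{a_1}{2}$, the unweighted half being an $R^{(2)}$-type region and the $\tfrac12$-weighted half an $RW^{(1)}$-type region, which gives \eqref{eqm1b}. When $a_1$ is odd, $\ell$ instead crosses the interior of a unit triangle on the top edge of the $a_1$-triangle; this forces the mixed boundary—a layer of unit triangles appended on one side of $\ell$ and deleted on the other—that defines the $NR$-type regions, and the odd split sends the leading $a$-parameter to $\tfrac{a_1+1}{2}$ on one half and to $\tfrac{a_1-1}{2}$ on the other, producing the $NR^{(1)}$- and $NR^{(4)}$-type regions of \eqref{eqm3b}. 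In both cases the two normal $b$-ferns pass intact to the two halves, each becoming the northeast $b$-fern of one halved hexagon.

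The main obstacle is the exact identification, in each parity case, of the two geometric halves with the rigidly defined regions $R^{(2)}, RW^{(1)}$ (resp.\ $NR^{(1)}, NR^{(4)}$). One must check that every side-length of each half agrees with the defining side-lengths of the target region after the substitutions $x\mapsto\lfloor x/2\rfloor$ or $\lceil x/2\rceil$, $y\mapsto\lceil y/2\rceil$ or $\lfloor y/2\rfloor$, and $a_1\mapsto\tfrac{a_1}{2}$ (resp.\ $\tfrac{a_1\pm1}{2}$); that the zigzag west boundary created by the cut coincides with the prescribed zigzag of the target, in particular that the half-integer step count distinguishing the type-$(1)$ from the type-$(2)$ families lands on the correct half; and that the weight-$\tfrac12$ lozenges produced by the cut occupy exactly the positions specified in the target region's definition. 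The most delicate point is the odd case: reconciling the added and deleted triangle layers of the $NR$ regions with the asymmetric split of the odd central triangle, together with the off-by-one matching of the $\lfloor x/2\rfloor,\lceil x/2\rceil$ and $\lceil y/2\rceil,\lfloor y/2\rfloor$ assignments. Once these identifications are settled, the two displayed formulas of Theorem~\ref{mainthm2} follow at once from the factorization.
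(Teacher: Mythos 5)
Your proposal is correct and follows essentially the same route as the paper: the paper proves the companion result (Theorem \ref{mainthm1}) by applying Ciucu's Factorization Theorem (Lemma \ref{ciucufactor}) to the dual graph of the symmetric region, obtaining the prefactor $2^{y+z+a+b-a_1}$ from the $2(y+z+a+b-a_1)$ vertices on the symmetry axis, and then identifying the two component graphs with the appropriate halved hexagons (with the parity of $a_1$, and the common parity of $x$ and $y$, dictating which pair of regions and which floor/ceiling assignments occur), and it states that the proof of Theorem \ref{mainthm2} is the same with the $R^{(2)}/RW^{(1)}$ and $NR^{(1)}/NR^{(4)}$ regions playing the roles you assign them. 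The identification step you flag as the ``main obstacle'' is exactly what the paper settles by inspection of its figures, so your plan matches the published argument.
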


\section{Preliminaries}\label{sec:Prelim}

Let $G$ be a finite graph with no loop, however, multiple edges are allowed. A \emph{perfect matching} of $G$ (or simply \emph{matching} in this paper) is a
collection of vertex-disjoint edges that covers all vertices of the graph. Lozenge tilings of a region on the triangular
 lattice can be identified with matchings of its \emph{(planar) dual graph}
 (the graph whose vertices are the unit triangles of the regions and whose edges connect precisely two unit triangles sharing an edge).
  In this point of view, we let $\M(G)$ denote the sum of weights of all matchings of $G$, where the \emph{weight} of a matching is the product
  of weights of its constituent edges. In the unweighted case, $\M(G)$ counts the matchings of the graph $G$.

A \emph{forced lozenge} is a lozenge contained in any tilings of the region. By removing a forced lozenge,
the weighted tiling number of a region is reduced by a factor equal to the weight of the removed lozenge. More generally, we have the following lemma that first appeared in \cite{Lai1q, Lai2q}.

\begin{lem}[Region-splitting Lemma]\label{RS}
Let $R$ be a \emph{balanced} region on the triangular lattice (i.e. $R$ has the same number of up-pointing and down-pointing unit triangles). Assume that $S$ is a subregion of $R$ satisfying the following conditions:

(a) There is exactly one type of unit triangles (up-pointing or down-pointing) running along each side of the border separating $S$ and its complement $R-S$.

(b) $S$ is balanced.

Then $\M(R)= \M(S) \cdot \M(R-S)$.
\end{lem}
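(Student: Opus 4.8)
The plan is to prove the identity by setting up a weight-preserving bijection between the tilings of $R$ and the pairs consisting of a tiling of $S$ together with a tiling of $R-S$. The entire statement reduces to one geometric claim: in \emph{every} tiling of $R$, no lozenge straddles the border between $S$ and $R-S$, i.e.\ every lozenge lies entirely inside $S$ or entirely inside $R-S$. Once this is shown, restricting a tiling $T$ of $R$ to the two pieces yields the map $T \mapsto (T\cap S,\ T\cap(R-S))$; conversely, a tiling of $S$ and a tiling of $R-S$ glue to a tiling of $R$, since their union covers every unit triangle exactly once. Because the weight of a tiling is the product of the weights of its lozenges, this bijection multiplies weights, giving $\M(R)=\M(S)\cdot\M(R-S)$.

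First I would work in the dual picture, classifying the unit triangles of $R$ by orientation so that each lozenge pairs exactly one up-pointing triangle with one down-pointing triangle. A straddling lozenge is then precisely a lozenge whose two triangles sit on opposite sides of the border and share a border edge. The content I extract from hypothesis (a) is that the triangles of $S$ meeting the border form a single orientation class; say they are all up-pointing (the down-pointing case being symmetric), so that no down-pointing triangle of $S$ has an edge on the border. I would then count, inside a fixed tiling $T$: letting $w$ be the number of lozenges of $T$ contained in $S$, and $a,b$ the numbers of straddling lozenges covering, respectively, an up- and a down-pointing triangle of $S$, each up-triangle (resp.\ down-triangle) of $S$ is covered exactly once, so $\#\{\text{up in }S\}=w+a$ and $\#\{\text{down in }S\}=w+b$. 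Hypothesis (b) forces $w+a=w+b$, hence $a=b$; and hypothesis (a) forces $b=0$, because no down-pointing triangle of $S$ touches the border and so none can be covered by a straddling lozenge. Therefore $a=b=0$, no lozenge straddles the border, and the claim follows.

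The hard part will be pinning down the correct reading of condition (a) and deducing $b=0$ from it. The argument fails under the weaker demand that merely each straight segment of the border be monochromatic: a single rhombus would satisfy such a condition yet need not be forced, so one cannot conclude $\M(R)=\M(R-S)$ in that case. The hypothesis must be understood as saying that the $S$-side of the \emph{entire} border carries one orientation, which is exactly what annihilates one of $a,b$ and lets balance annihilate the other. I would also record the bookkeeping that makes the bijection well defined: since $R$ and $S$ are balanced, so is $R-S$, whence both $\M(S)$ and $\M(R-S)$ are genuine (possibly zero) tiling sums; and the gluing map is inverse to restriction precisely because any lozenge of $R$ lying in neither piece would straddle the border, which the claim has ruled out. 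The residual verifications — that restriction yields valid tilings of the two balanced pieces and that weights multiply — are then routine.
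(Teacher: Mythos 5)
Your proof is correct. Note that the paper itself does not prove Lemma \ref{RS}; it only quotes it from \cite{Lai1q, Lai2q}, and the argument given there is essentially the one you propose: the border condition prevents any straddling lozenge from covering a down-pointing triangle of $S$, balancedness of $S$ then forces every up-pointing triangle of $S$ to be matched inside $S$ as well (your $a=b$ and $b=0$ count), so no lozenge crosses the cut, and restriction/gluing gives a weight-preserving bijection between tilings of $R$ and pairs of tilings of $S$ and $R-S$. Your remark about the reading of condition (a) is also apt: it must be taken to say that \emph{all} unit triangles of $S$ adjacent to the border have one common orientation (not merely one orientation per straight segment, which is automatic and, as your rhombus example shows, insufficient), and this global reading is exactly what holds in the paper's applications, where the regions are split along the horizontal lattice line containing the ferns.
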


One of the main ingredients of our proofs is the following powerful theorem by Kuo \cite{Kuo}, that is usually mentioned as \emph{Kuo condensation}.
\begin{thm}[Theorem 5.1 in \cite{Kuo}]\label{kuothm}
Assume that $G=(V_1,V_2,E)$ is a weighted bipartite planar graph with the two vertex classes $V_1$ and $V_2$ of the same cardinality. Assume in addition that $u,v,w,s$ are four vertices appearing on a cyclic order on a face of $G$, such that $u,w \in V_1$ and $v,s\in V_2$. Then
\begin{align}\label{Kuoeq}
\M(G)\M(G-\{u,v,w,s\})=\M(G-\{u,v\})\M(G-\{w,s\})+\M(G-\{u,s\})\M(G-\{v,w\}).
\end{align}
\end{thm}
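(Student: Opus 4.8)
The plan is to prove the identity (\ref{Kuoeq}) by a single weight-preserving bijection obtained from superimposing pairs of matchings; no determinants or algebra are needed, only the planarity and bipartiteness built into the hypotheses. Writing $\mathcal{M}(H)$ for the set of perfect matchings of a graph $H$, so that $\M(H)=\sum_{M\in\mathcal{M}(H)}\wt(M)$, I read the three products in (\ref{Kuoeq}) as weighted enumerations of ordered pairs of matchings: the left-hand side counts pairs $(M,N)$ with $M\in\mathcal{M}(G)$ and $N\in\mathcal{M}(G-\{u,v,w,s\})$; the first term on the right counts pairs with $M\in\mathcal{M}(G-\{u,v\})$ and $N\in\mathcal{M}(G-\{w,s\})$; and the second term counts pairs with $M\in\mathcal{M}(G-\{u,s\})$ and $N\in\mathcal{M}(G-\{v,w\})$. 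In every family the weight of a pair is the product of the weights of all edges in the multiset $M\cup N$, so any bijection that preserves this edge-multiset is automatically weight-preserving.

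First I would describe the superposition $M\cup N$. Outside the four marked vertices every vertex is covered exactly twice, while each of $u,v,w,s$ is covered exactly once; hence $M\cup N$ is a disjoint union of doubled edges, alternating even cycles, and exactly two vertex-disjoint alternating paths whose four endpoints are $u,v,w,s$. All the combinatorial content lies in how these two paths pair the four marked vertices. Since $G$ is bipartite, the two marked endpoints of such a path are covered by the same matching if and only if they lie in opposite color classes. On the left-hand side both endpoints of each path are covered by $M$, hence lie in opposite classes, which already forbids the ``crossing'' pairing $\{u,w\},\{v,s\}$ (as $u,w\in V_1$ and $v,s\in V_2$). For the two right-hand families the same rule, applied to the matchings that now cover $u,v,w,s$, leaves besides the crossing pairing exactly one non-crossing pairing admissible: $\{u,v\},\{w,s\}$ for the first family and $\{u,s\},\{v,w\}$ for the second.

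Here planarity enters decisively: because $u,v,w,s$ occur in this cyclic order on a single face of $G$, two vertex-disjoint paths cannot realize the crossing pairing $\{u,w\},\{v,s\}$ (a Jordan-curve argument). Discarding that case yields the dichotomy that drives the proof: the left-hand pairs split into those whose paths pair $\{u,v\},\{w,s\}$ and those whose paths pair $\{u,s\},\{v,w\}$, while the first right-hand family realizes only $\{u,v\},\{w,s\}$ and the second only $\{u,s\},\{v,w\}$. The bijection then matches a left-hand pair of the first kind with a first-family pair, and one of the second kind with a second-family pair: I keep the doubled edges and cycles distributed as before and, along each of the two paths, interchange the two matchings (the matching that used the odd-indexed edges now uses the even-indexed ones). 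This relocates the uncovered endpoints to exactly the marked vertices required by the target family, and since $M\cup N$ is unchanged the weight is preserved; the same interchange inverts the map.

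The hard part will be the second step, the parity-and-planarity case analysis that pins down which pairings occur in each of the three families. One must verify that the color-class versus covering-matching rule is applied correctly throughout, that planarity rules out precisely the crossing pairing and nothing more, and that consequently no left-hand configuration is missed or double-counted while every right-hand configuration is attained. Once this dichotomy of path-pairings is established, the edge-interchange bijection and its weight-preservation are routine, and summing the weights over all configurations yields (\ref{Kuoeq}).
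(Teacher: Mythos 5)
The paper itself gives no proof of this statement---it is quoted from Kuo's paper as a known tool---so your proposal can only be compared with Kuo's original argument, and in its overall architecture it coincides with it: superpose the two matchings, observe that the four marked vertices are the endpoints of exactly two alternating paths, use bipartiteness to constrain which pairings of $\{u,v,w,s\}$ can occur in each of the three families, use planarity (a Jordan-curve argument, valid because $u,v,w,s$ lie in this cyclic order on one face) to exclude the crossing pairing $\{u,w\},\{v,s\}$, and then transfer pairs between families by swapping matchings along paths. Your parity analysis and your planarity step are correct as stated.

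The step that fails is the bijection itself: you swap the two matchings \emph{along each of the two paths}. Track a left-hand pair $(M,N)$ with $M\in\mathcal{M}(G)$, $N\in\mathcal{M}(G-\{u,v,w,s\})$ of the first kind, with path $P_1$ joining $u,v$ and path $P_2$ joining $w,s$. Swapping along both paths gives $M''=M\,\triangle\,(E(P_1)\cup E(P_2))$, which misses all four marked vertices, and $N''=N\,\triangle\,(E(P_1)\cup E(P_2))$, which covers all four; so the result (in either order) is again a left-hand pair with the roles of the components exchanged, not an element of $\mathcal{M}(G-\{u,v\})\times\mathcal{M}(G-\{w,s\})$. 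The smallest example already exhibits the failure: for $G$ a $4$-cycle $u,v,w,s$, your map sends the left-hand pair $(\{uv,ws\},\emptyset)$ to $(\emptyset,\{uv,ws\})$, and $\emptyset$ is not a perfect matching of $G-\{u,v\}$, since it leaves $w$ and $s$ exposed. The repair is to swap along exactly \emph{one} path: replacing $(M,N)$ by $(M\,\triangle\,E(P_1),\,N\,\triangle\,E(P_1))$ makes the first component miss precisely $u,v$ and the second miss precisely $w,s$, landing in the first right-hand family; the same single-path swap is its own inverse, and pairs of the second kind are handled by swapping along the $u$--$s$ path. Since the edge multiset $M\cup N$ is unchanged, weights are preserved, and with this correction your argument is complete and is essentially Kuo's proof.
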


Next, we quote here a factorization theorem by Ciucu (Theorem 1.2 in \cite{Ciucu3}), that allows us write the number of matchings of  a symmetric graph as the product of the matching numbers of two disjoint subgraphs.

\begin{lem}[Ciucu's Factorization Theorem]\label{ciucufactor}
Let $G=(V_1,V_2,E)$ be a weighted bipartite planar graph with a vertical symmetry axis $\ell$. Assume that $a_1,b_1,a_2,b_2,\dots,a_k,b_k$ are all the vertices of $G$ on $\ell$ appearing in this order from top to bottom\footnote{It is easy to see that if $G$ admits a perfect matching, then $G$ has an even number of vertices on $\ell$.}. Assume in addition that the vertices of $G$ on $\ell$ form a cut set of $G$ (i.e. the removal of those vertices separates $G$ into two vertex-disjoint graphs). We reduce the weights of all edges of $G$ lying on $\ell$ by half and keep the other edge-weights unchanged. Next, we color the two vertex classes $V_1$ and $V_2$ of $G$ by black and white, without loss of generality, assume that $a_1$ is black. Finally, we remove all edges on the left of $\ell$ which are adjacent to a black $a_i$ or a white $b_j$; we also remove the edges  on the right of $\ell$ which are adjacent to a white $a_i$ or a black $b_j$. This way, $G$ is divided into two disjoint weighted graphs $G^+$ and $G^-$ (on the left and on the right of $\ell$, respectively).  See Figure \ref{Figurefactor} for an example.  Then
\begin{equation}
\M(G)=2^{k}\M(G^+)\M(G^-).
\end{equation}
\end{lem}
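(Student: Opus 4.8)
The plan is to prove the factorization by passing from matchings to a determinant, exploiting the planarity, and then block-diagonalizing that determinant along the axis $\ell$. First I would record the consequence of the hypothesis that the vertices on $\ell$ form a cut set: ordering the white vertices as $(W_L,W_0,W_R)$ (strictly left of $\ell$, on $\ell$, strictly right) and the black vertices as $(B_L,B_0,B_R)$ in the same way, there are no edges joining $W_L$ to $B_R$ or $W_R$ to $B_L$. Hence the weighted biadjacency matrix $A$ (rows indexed by white vertices, columns by black vertices, entries the edge weights) has the block form
\[
A=\begin{pmatrix} A_{LL} & A_{L0} & 0 \\ A_{0L} & A_{00} & A_{0R}\\ 0 & A_{R0} & A_{RR}\end{pmatrix},
\qquad \M(G)=\operatorname{per}(A).
\]

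Since $G$ is planar and bipartite, I would next invoke Kasteleyn's theorem to replace the permanent by a determinant: there is a signing of the edges making $\M(G)=\lvert\det\widetilde A\rvert$, where $\widetilde A$ is $A$ with Kasteleyn signs. The reflection $\sigma$ preserves the two color classes — the relevant case here, and the one for which the stated removal rule is designed (a vertical reflection sends up-triangles to up-triangles in the dual graphs arising in this paper). Thus $\sigma$ acts within the white space and within the black space, fixing $W_0,B_0$ and pairing $W_L\leftrightarrow W_R$, $B_L\leftrightarrow B_R$, and I would choose the Kasteleyn signing to be $\sigma$-compatible so that $\widetilde A$ inherits the symmetry relating its $L$- and $R$-blocks.

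The heart of the argument is the change of basis that symmetrizes and antisymmetrizes the off-axis pairs: replace each pair $e_w,e_{\sigma w}$ by $\tfrac1{\sqrt2}(e_w\pm e_{\sigma w})$, and likewise on the black side, keeping the axis basis vectors fixed. This orthogonal transformation has determinant $\pm1$, and because the corner blocks of $\widetilde A$ vanish and the remaining blocks obey the $\sigma$-symmetry, it carries $\widetilde A$ into block-diagonal form with a symmetric block $A^{+}$ and an antisymmetric block $A^{-}$. I would then identify $A^{+}$ and $A^{-}$ with the signed biadjacency matrices of exactly the graphs $G^{+}$ and $G^{-}$ produced by the edge-removal recipe in the statement: the color-and-label removal rule is precisely what decides, via the Kasteleyn signs on the axis-incident edges, whether a given axis vertex couples into the symmetric or the antisymmetric sector, and the halving of the edges on $\ell$ is exactly the factor $\tfrac1{\sqrt2}\cdot\tfrac1{\sqrt2}$ left on the axis entries after the transformation. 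Multiplicativity of the determinant then gives $\det\widetilde A=\pm\det A^{+}\det A^{-}$, whence $\M(G)=2^{k}\,\M(G^{+})\M(G^{-})$, the $2^{k}$ collecting the normalization constants attached to the $k$ black (equivalently $k$ white) vertices on $\ell$.

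The main obstacle is the bookkeeping at the axis. One must verify (i) that a $\sigma$-compatible Kasteleyn signing exists, with consistent signs on the edges lying on $\ell$; (ii) that the symmetric and antisymmetric blocks really are the signed biadjacency matrices of $G^{+}$ and $G^{-}$ with the stated halved weights — in particular that, once the signs are included, the clean combination $\tfrac1{\sqrt2}(A_{0L}\pm A_{0R})$ splits the axis vertices between the two sectors exactly as the color-and-label removal rule prescribes; and (iii) that the accumulated powers of $\tfrac1{\sqrt2}$ together with the halving collapse to the single factor $2^{k}$. A sanity check I would run first is the degenerate case of a single edge on $\ell$ (here $k=1$, $\M(G)=1$, $G^{+}$ is that edge with weight $\tfrac12$ and $G^{-}$ is empty), which already forces the $2^{k}$ normalization and pins down how axis edges are distributed. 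As an alternative that avoids signs, the same identity can be approached combinatorially, by a weight-preserving correspondence between matchings of $G$ and pairs of matchings of $G^{+}$ and $G^{-}$: sort the edges of a matching by their position relative to $\ell$ and use the cut-set property to glue the two sides independently, at the cost of a more delicate gluing argument across the axis.
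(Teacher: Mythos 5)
A preliminary remark on the comparison itself: the paper contains no proof of this lemma. It is quoted as Theorem 1.2 of Ciucu \cite{Ciucu3}, and Ciucu's published proof is elementary and combinatorial, working directly with perfect matchings of the symmetric graph rather than with Kasteleyn determinants. Your proposal is therefore necessarily a different route, in the spirit of the permanent--determinant method; the question is whether it closes, and it does not yet.

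The genuine gap is at your step (ii), which you flag as ``bookkeeping'' but which is in fact the whole content of the theorem in this approach --- and, as set up, the step fails. If the Kasteleyn signing is chosen $\sigma$-compatible, as you stipulate, then the axis basis vectors are fixed by $\sigma$ and can only lie in the symmetric sector: for an axis vertex $w_0$ and an off-axis pair $b,\sigma b$ one has $\widetilde A\bigl(w_0,\tfrac{1}{\sqrt2}(e_b-e_{\sigma b})\bigr)=\tfrac{1}{\sqrt2}\bigl(\widetilde A(w_0,b)-\widetilde A(w_0,\sigma b)\bigr)=0$, and likewise the axis edges couple symmetric to symmetric only. Hence your change of basis block-diagonalizes $\widetilde A$ into a symmetric block containing \emph{all} $2k$ axis vertices (with $\sqrt2$-scaled axis-incident entries and unhalved axis edges) and an antisymmetric block containing \emph{no} axis vertices. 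That is a correct factorization of $\det\widetilde A$, but it is a different identity from the one claimed: Ciucu's $G^+$ and $G^-$ each receive part of the axis --- the white $a_i$'s and black $b_j$'s go to one side, the black $a_i$'s and white $b_j$'s to the other --- and the axis edges are halved, not $\sqrt2$-scaled. Your proposed mechanism for routing an axis vertex into the antisymmetric sector ``via the Kasteleyn signs on the axis-incident edges'' requires $\widetilde A(w_0,b)=-\widetilde A(w_0,\sigma b)$ at precisely those axis vertices destined for $G^-$, i.e.\ a signing that is deliberately \emph{not} $\sigma$-compatible along the axis, in a pattern dictated by the alternating labels $a_1,b_1,\dots,a_k,b_k$; this directly contradicts the $\sigma$-compatibility you assumed two paragraphs earlier.

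What is missing, concretely, is the construction of a Kasteleyn signing with this prescribed mixed behaviour --- symmetric at the $G^+$-axis vertices, antisymmetric at the $G^-$-axis vertices, consistent on the edges lying on $\ell$ --- together with a verification that the two resulting blocks are again valid Kasteleyn matrices for the half-graphs $G^{\pm}$ (the faces cut open along $\ell$ are new boundary faces, so this is not automatic), that both blocks are square (this needs $k$ white and $k$ black vertices on the axis, which the statement does not assert explicitly), and that the accumulated normalizations yield exactly $2^k$ against the halved axis weights. None of this is routine, and it is exactly where the alternating order of the labels $a_i,b_j$ must enter; as written, the argument proves a true but different factorization. Your closing alternative --- superposing a matching with data on the two sides and gluing across the axis --- is much closer to Ciucu's actual argument, but there too the ``delicate gluing'' you defer is the entire proof.
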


\begin{figure}\centering
\setlength{\unitlength}{3947sp}%
\begingroup\makeatletter\ifx\SetFigFont\undefined%
\gdef\SetFigFont#1#2#3#4#5{%
  \reset@font\fontsize{#1}{#2pt}%
  \fontfamily{#3}\fontseries{#4}\fontshape{#5}%
  \selectfont}%
\fi\endgroup%
\resizebox{12cm}{!}{
\begin{picture}(0,0)%
\includegraphics{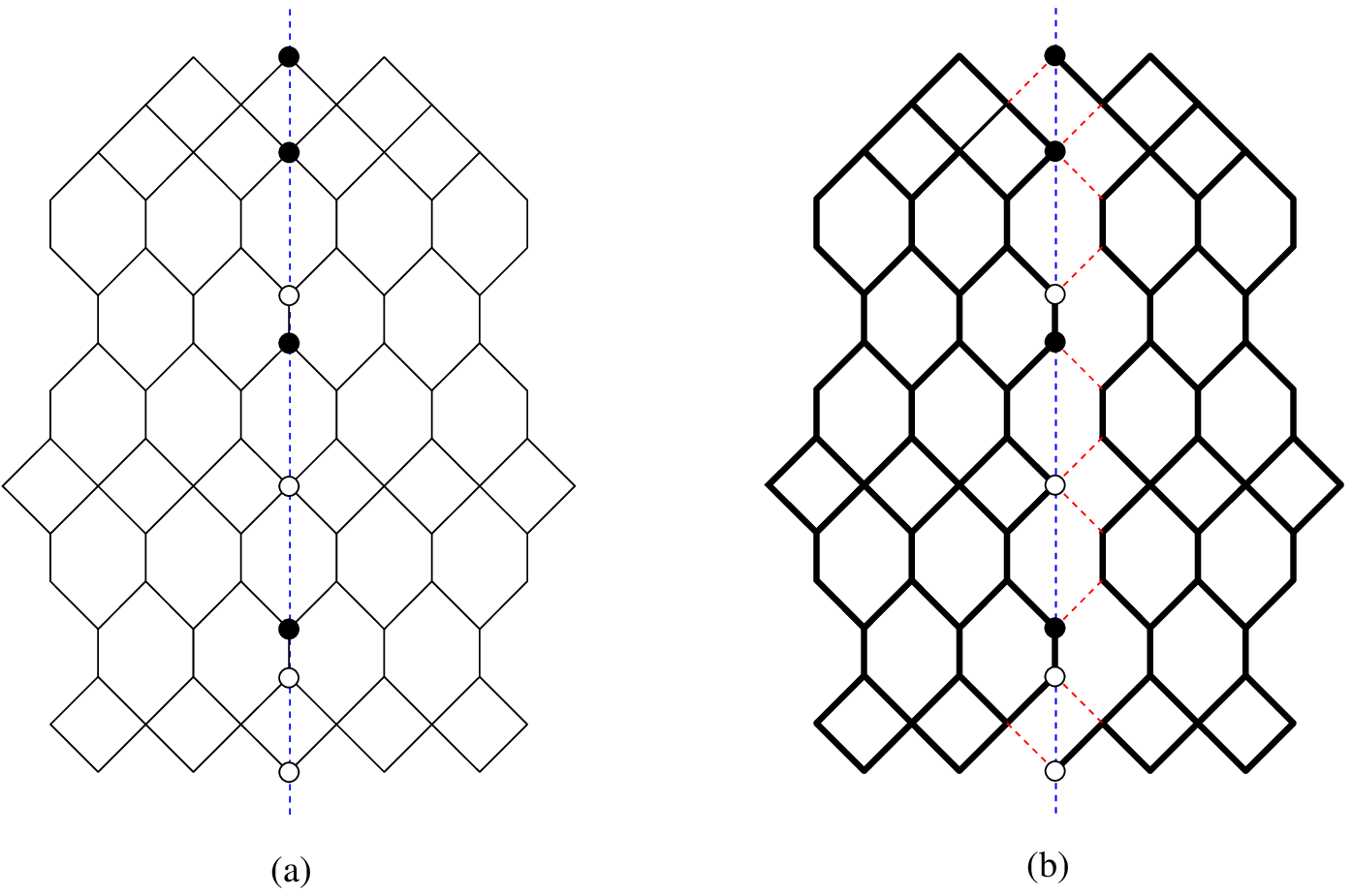}%
\end{picture}%
%
%

\begin{picture}(6803,4626)(1415,-4264)
\put(2042,-61){\makebox(0,0)[rb]{\smash{{\SetFigFont{11}{13.2}{\familydefault}{\mddefault}{\updefault}{$G$}%
}}}}
\put(6992,-1478){\makebox(0,0)[rb]{\smash{{\SetFigFont{11}{13.2}{\familydefault}{\mddefault}{\updefault}{$1/2$}%
}}}}
\put(6593,162){\makebox(0,0)[rb]{\smash{{\SetFigFont{11}{13.2}{\familydefault}{\mddefault}{\updefault}{$\ell$}%
}}}}
\put(6992,-75){\makebox(0,0)[rb]{\smash{{\SetFigFont{11}{13.2}{\familydefault}{\mddefault}{\updefault}{$a_1$}%
}}}}
\put(6581,-618){\makebox(0,0)[rb]{\smash{{\SetFigFont{11}{13.2}{\familydefault}{\mddefault}{\updefault}{$b_1$}%
}}}}
\put(6551,-1309){\makebox(0,0)[rb]{\smash{{\SetFigFont{11}{13.2}{\familydefault}{\mddefault}{\updefault}{$a_2$}%
}}}}
\put(6581,-1571){\makebox(0,0)[rb]{\smash{{\SetFigFont{11}{13.2}{\familydefault}{\mddefault}{\updefault}{$b_2$}%
}}}}
\put(7004,-2241){\makebox(0,0)[rb]{\smash{{\SetFigFont{11}{13.2}{\familydefault}{\mddefault}{\updefault}{$a_3$}%
}}}}
\put(7038,-2970){\makebox(0,0)[rb]{\smash{{\SetFigFont{11}{13.2}{\familydefault}{\mddefault}{\updefault}{$b_3$}%
}}}}
\put(7025,-3207){\makebox(0,0)[rb]{\smash{{\SetFigFont{11}{13.2}{\familydefault}{\mddefault}{\updefault}{$a_4$}%
}}}}
\put(7013,-3792){\makebox(0,0)[rb]{\smash{{\SetFigFont{11}{13.2}{\familydefault}{\mddefault}{\updefault}{$b_4$}%
}}}}
\put(5106,-1474){\makebox(0,0)[rb]{\smash{{\SetFigFont{11}{13.2}{\familydefault}{\mddefault}{\updefault}{$G^+$}%
}}}}
\put(8203,-1440){\makebox(0,0)[rb]{\smash{{\SetFigFont{11}{13.2}{\familydefault}{\mddefault}{\updefault}{$G^-$}%
}}}}
\put(6597,-3131){\makebox(0,0)[rb]{\smash{{\SetFigFont{11}{13.2}{\familydefault}{\mddefault}{\updefault}{$1/2$}%
}}}}
\put(2799,156){\makebox(0,0)[rb]{\smash{{\SetFigFont{11}{13.2}{\familydefault}{\mddefault}{\updefault}{$\ell$}%
}}}}
\put(3198,-81){\makebox(0,0)[rb]{\smash{{\SetFigFont{11}{13.2}{\familydefault}{\mddefault}{\updefault}{$a_1$}%
}}}}
\put(2787,-624){\makebox(0,0)[rb]{\smash{{\SetFigFont{11}{13.2}{\familydefault}{\mddefault}{\updefault}{$b_1$}%
}}}}
\put(2757,-1315){\makebox(0,0)[rb]{\smash{{\SetFigFont{11}{13.2}{\familydefault}{\mddefault}{\updefault}{$a_2$}%
}}}}
\put(2787,-1577){\makebox(0,0)[rb]{\smash{{\SetFigFont{11}{13.2}{\familydefault}{\mddefault}{\updefault}{$b_2$}%
}}}}
\put(3210,-2247){\makebox(0,0)[rb]{\smash{{\SetFigFont{11}{13.2}{\familydefault}{\mddefault}{\updefault}{$a_3$}%
}}}}
\put(3244,-2976){\makebox(0,0)[rb]{\smash{{\SetFigFont{11}{13.2}{\familydefault}{\mddefault}{\updefault}{$b_3$}%
}}}}
\put(3231,-3213){\makebox(0,0)[rb]{\smash{{\SetFigFont{11}{13.2}{\familydefault}{\mddefault}{\updefault}{$a_4$}%
}}}}
\put(3219,-3798){\makebox(0,0)[rb]{\smash{{\SetFigFont{11}{13.2}{\familydefault}{\mddefault}{\updefault}{$b_4$}%
}}}}
\end{picture}}
\caption{Ciucu's Factorization Theorem. The edges cut off are illustrated by dotted edges.}\label{Figurefactor}
\end{figure}

\medskip

We have the several identities related to the  products  $\T$ and $\V$ as follows:
\begin{lem}\label{TV}
\begin{align}
\frac{\T(x,n,m)}{\T(x-1,n,m)}=\frac{(x+n-m)_m}{(x-1)_m},
\end{align}
\begin{equation}
\frac{\T(x,n,m)}{\T(x+1,n-2,m-1)}=(x)_{n},
\end{equation}
\begin{equation}
\frac{\V(x,n,m)}{\V(x-2,n,m))}=\frac{[x+2n-2m]_{m}}{[x-2]_m}.
\end{equation}
\begin{equation}
\frac{\V(x,n,m)}{\V(x+2,n-2,m-1)}=[x]_{n},
\end{equation}
\end{lem}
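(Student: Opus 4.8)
The plan is to derive all four identities directly from the product definitions \eqref{defineT} and \eqref{defineV}; each is a routine telescoping computation once the individual factors are written out. Throughout I assume $n,m>0$, the remaining cases being handled by the empty-product and negative-index conventions already built into $(x)_n$ and $[x]_n$.

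The two ``peeling'' identities come first. For the second identity, $\T(x,n,m)/\T(x+1,n-2,m-1)=(x)_n$, I isolate the $i=0$ factor of $\T(x,n,m)=\prod_{i=0}^{m-1}(x+i)_{n-2i}$, which is exactly $(x)_n$. Shifting the index by $j=i-1$ in the remaining product $\prod_{i=1}^{m-1}(x+i)_{n-2i}$ turns it into $\prod_{j=0}^{m-2}\bigl((x+1)+j\bigr)_{(n-2)-2j}=\T(x+1,n-2,m-1)$, which gives the claim. The fourth identity, $\V(x,n,m)/\V(x+2,n-2,m-1)=[x]_n$, is proved identically: the $i=0$ factor of $\V(x,n,m)$ is $[x]_n$, and the same reindexing, now with steps of $2$, identifies the tail with $\V(x+2,n-2,m-1)$.

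For the first identity I form the ratio factor by factor,
\[
\frac{\T(x,n,m)}{\T(x-1,n,m)}=\prod_{i=0}^{m-1}\frac{(x+i)_{n-2i}}{(x-1+i)_{n-2i}}.
\]
Writing each Pochhammer as an explicit run of $n-2i$ consecutive integers, the numerator runs from $x+i$ up to $x+n-i-1$ while the denominator runs from $x-1+i$ up to $x+n-i-2$; all but the extreme terms cancel, leaving the single ratio $(x+n-1-i)/(x-1+i)$. Taking the product over $i$, the numerators $\prod_{i=0}^{m-1}(x+n-1-i)$ reassemble into $(x+n-m)_m$ and the denominators $\prod_{i=0}^{m-1}(x-1+i)$ into $(x-1)_m$, which is the asserted value. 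The third identity follows by the same argument applied to the skipping products: the per-factor ratio $[x+2i]_{n-2i}/[x-2+2i]_{n-2i}$ telescopes to $(x+2n-2i-2)/(x-2+2i)$, and the two resulting products repackage into $[x+2n-2m]_m$ and $[x-2]_m$, respectively.

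There is no genuine obstacle here; the only point demanding care is the bookkeeping of the endpoints of the consecutive-integer (respectively arithmetic-progression) runs in the telescoping step, so that the surviving boundary terms are identified correctly and the leftover products are repackaged into the correct Pochhammer (respectively skipping-Pochhammer) symbols. Verifying that the stated formulas remain valid at the boundary cases $m=0$ or small $n$ under the conventions of \eqref{defineT} and \eqref{defineV} is a quick separate check.
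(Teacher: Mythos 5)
Your proof is correct. The paper states Lemma \ref{TV} without any proof, treating these identities as routine consequences of the definitions \eqref{defineT} and \eqref{defineV}, and your peeling/telescoping verification is precisely that intended routine argument; moreover, the degenerate cases you defer do go through, since the per-factor cancellation $\frac{(x+i)_{n-2i}}{(x-1+i)_{n-2i}}=\frac{x+n-1-i}{x-1+i}$ (and its skipping analogue) remains valid under the negative-index conventions for $(x)_n$ and $[x]_n$.
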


We also have the following immediate  consequence of  Lemma \ref{QAR}:
\begin{lem}\label{QK}
For any sequence $\textbf{t}=(t_1,t_2,\dotsc,t_{2l})$, we have
\begin{align}
\frac{\Q(t_1,\dots,t_{2l}+1)}{\Q(t_1,\dots,t_{2l})}=&\frac{(\s_{2l}(\textbf{t})+1)(2\s_{2l}(\textbf{t})+1)!}{(2\e_t+1)!}\notag\\
&\times\prod_{i=1}^{l}\frac{(\s_{2l}(\textbf{t})-\s_{2i-1}(\textbf{t}))!}{(\s_{2l}(\textbf{t})+\s_{2i-1}(\textbf{t})+1)!}\prod_{i=1}^{l-1}\frac{(\s_{2l}(\textbf{t})+\s_{2i}(\textbf{t})+1)!}{(\s_{2l}(\textbf{t})-\s_{2i}(\textbf{t}))!}
\end{align}
and
\begin{align}
\frac{\K'(t_1,\dots,t_{2l}+1)}{\K'(t_1,\dots,t_{2l})}=&\frac{(2\s_{2l}(\textbf{t})-1)!}{(2\e_t)!}\notag\\
&\times\prod_{i=1}^{l}\frac{(\s_{2l}(\textbf{t})-\s_{2i-1}(\textbf{t}))!}{(\s_{2l}(\textbf{t})+\s_{2i-1}(\textbf{t})-1)!}\prod_{i=1}^{l-1}\frac{(\s_{2l}(\textbf{t})+\s_{2i}(\textbf{t})-1)!}{(\s_{2l}(\textbf{t})-\s_{2i}(\textbf{t}))!}.
\end{align}
\end{lem}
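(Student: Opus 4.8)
The plan is to derive both identities directly from the closed forms in Lemma~\ref{QAR} by forming the quotient of each formula evaluated at $(t_1,\dots,t_{2l}+1)$ and at $(t_1,\dots,t_{2l})$, then cancelling every factor left unchanged by the increment. The crucial observation is that raising the final entry $t_{2l}$ by one alters exactly two of the quantities occurring in \eqref{QARa} and \eqref{QARd}: the top partial sum $\s_{2l}(\textbf{t})$ increases by $1$, and—since $2l$ is even—the even-index sum $\e_t$ increases by $1$; every other partial sum $\s_1(\textbf{t}),\dots,\s_{2l-1}(\textbf{t})$ and the odd-index sum $\od_t$ are untouched. Hence in each single product only the factor indexed by $i=l$ survives the cancellation, and in each double product only the factors indexed by $j=2l$ survive.

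First I would record two elementary ratio identities. From $\Hf(n)=0!\,1!\cdots(n-1)!$ one gets $\Hf(n+1)/\Hf(n)=n!$, and from $\Hf_2(n)=\prod_{i=1}^{\lfloor n/2\rfloor}(n-2i)!$ one checks directly (splitting into the cases $n$ even and $n$ odd) that $\Hf_2(n+2)/\Hf_2(n)=n!$. These are the only tools needed to evaluate each surviving factor.

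Applying these to the pieces of \eqref{QARa} one at a time: the prefactor $\prod_{i=1}^{l}\frac{(\s_{2i}(\textbf{t}))!}{(\s_{2i-1}(\textbf{t}))!}$ contributes $\frac{(\s_{2l}(\textbf{t})+1)!}{(\s_{2l}(\textbf{t}))!}=\s_{2l}(\textbf{t})+1$ at $i=l$; the denominator $\Hf_2(2\e_t+1)$ contributes $1/(2\e_t+1)!$; the factor $\Hf_2(2\s_{2l}(\textbf{t})+1)$ contributes $(2\s_{2l}(\textbf{t})+1)!$, while $\Hf_2(2\s_{2i-1}(\textbf{t})+2)$ is unchanged. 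In the two double products, the terms with $j=2l$ and $j-i$ odd (so $i=2k-1$, $k=1,\dots,l$) each give $\frac{(\s_{2l}(\textbf{t})-\s_{2k-1}(\textbf{t}))!}{(\s_{2l}(\textbf{t})+\s_{2k-1}(\textbf{t})+1)!}$, and those with $j=2l$ and $j-i$ even (so $i=2k$, $k=1,\dots,l-1$) each give $\frac{(\s_{2l}(\textbf{t})+\s_{2k}(\textbf{t})+1)!}{(\s_{2l}(\textbf{t})-\s_{2k}(\textbf{t}))!}$. Multiplying these together reproduces the first claimed identity. The computation for $\K'$ is identical in structure, now read off \eqref{QARd}: the prefactor $1/\Hf_2(2\e_t)$ gives $1/(2\e_t)!$, the term $\Hf_2(2\s_{2l}(\textbf{t})-1)$ gives $(2\s_{2l}(\textbf{t})-1)!$, and the two double products (whose arguments are shifted down by $1$ relative to the $\Q$ case) yield $\prod_{i=1}^{l}\frac{(\s_{2l}(\textbf{t})-\s_{2i-1}(\textbf{t}))!}{(\s_{2l}(\textbf{t})+\s_{2i-1}(\textbf{t})-1)!}$ and $\prod_{i=1}^{l-1}\frac{(\s_{2l}(\textbf{t})+\s_{2i}(\textbf{t})-1)!}{(\s_{2l}(\textbf{t})-\s_{2i}(\textbf{t}))!}$, giving the second identity.

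The one genuine bookkeeping hazard—and thus the step to carry out most carefully—is the parity analysis inside the two double products: with $j=2l$ fixed, the condition \emph{$j-i$ odd} selects the odd $i$ and \emph{$j-i$ even} the even $i$, and these must be matched to the correct ranges ($i=2k-1$ running up to $l$ against $i=2k$ running only to $l-1$, since $i<j=2l$ forces $i\le 2l-2$ in the even case). Once the surviving factors are correctly catalogued, the simplification is routine via the two ratio identities above.
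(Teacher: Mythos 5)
Your proof is correct and is exactly the argument the paper intends: the paper states Lemma \ref{QK} as an ``immediate consequence'' of Lemma \ref{QAR}, i.e.\ precisely the ratio computation you carry out, with the surviving factors (the $i=l$ term of the prefactor, the $\Hf_2$ terms in $2\e_t$ and $2\s_{2l}(\textbf{t})$, and the $j=2l$ factors of the two double products) simplified via $\Hf(n+1)/\Hf(n)=n!$ and $\Hf_2(n+2)/\Hf_2(n)=n!$. Your parity bookkeeping for the ranges $i=2k-1\le 2l-1$ versus $i=2k\le 2l-2$ is also correct, so nothing is missing.
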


\section{Proofs of the main results}\label{sec:Proof}

\begin{figure}\centering
\setlength{\unitlength}{3947sp}%
\begingroup\makeatletter\ifx\SetFigFont\undefined%
\gdef\SetFigFont#1#2#3#4#5{%
  \reset@font\fontsize{#1}{#2pt}%
  \fontfamily{#3}\fontseries{#4}\fontshape{#5}%
  \selectfont}%
\fi\endgroup%
\resizebox{12cm}{!}{
\begin{picture}(0,0)%
\includegraphics{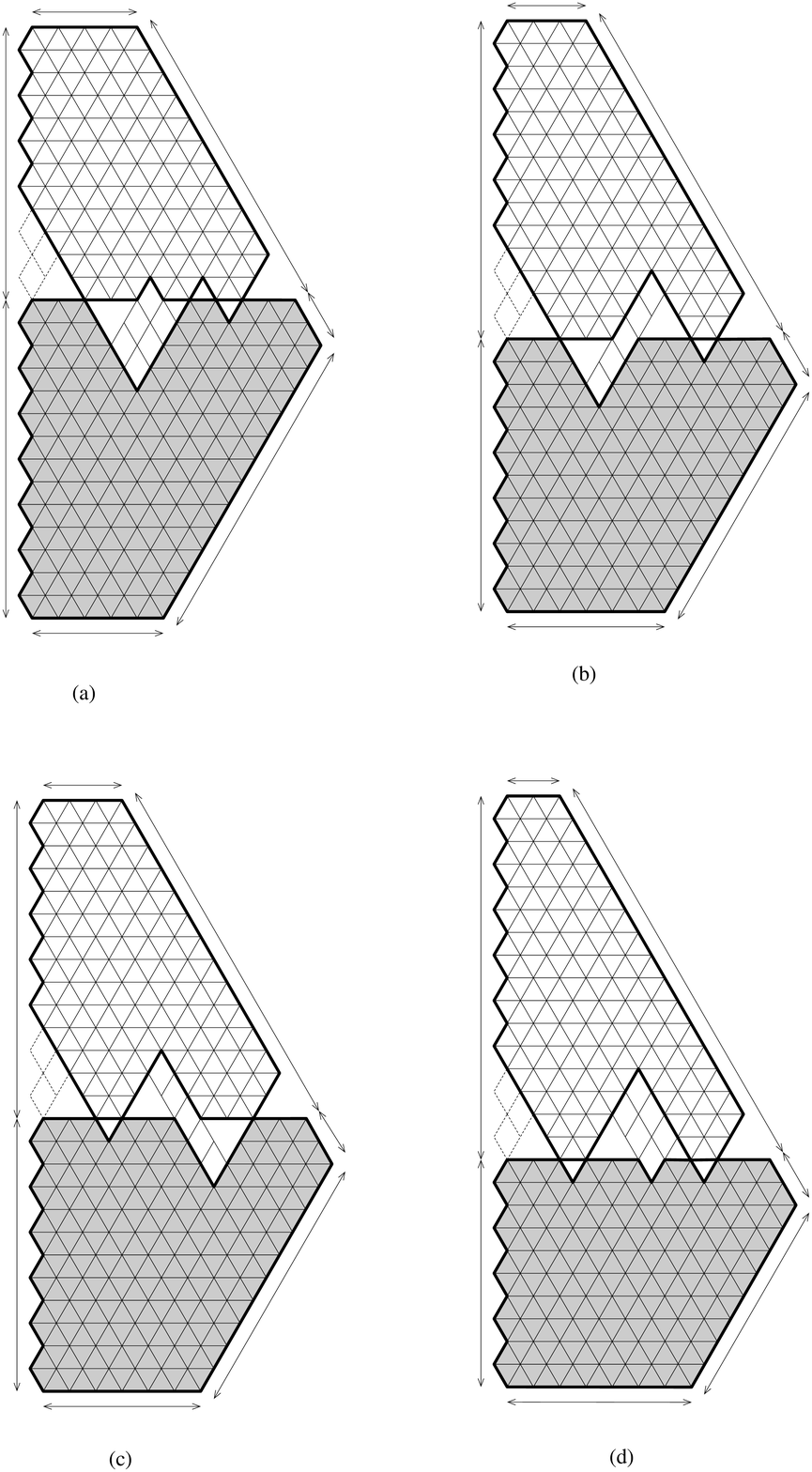}%
\end{picture}%
%
%

\begin{picture}(13184,23204)(946,-23801)
\put(12181,-18886){\makebox(0,0)[lb]{\smash{{\SetFigFont{14}{16.8}{\rmdefault}{\mddefault}{\itdefault}{$b_2$}%
}}}}
\put(5656,-18121){\makebox(0,0)[lb]{\smash{{\SetFigFont{14}{16.8}{\rmdefault}{\mddefault}{\itdefault}{$b_1$}%
}}}}
\put(12151,-6076){\makebox(0,0)[lb]{\smash{{\SetFigFont{14}{16.8}{\rmdefault}{\mddefault}{\itdefault}{$b_2$}%
}}}}
\put(11551,-6016){\makebox(0,0)[lb]{\smash{{\SetFigFont{14}{16.8}{\rmdefault}{\mddefault}{\itdefault}{$b_3$}%
}}}}
\put(3946,-5476){\makebox(0,0)[lb]{\smash{{\SetFigFont{14}{16.8}{\rmdefault}{\mddefault}{\itdefault}{$b_4$}%
}}}}
\put(4336,-5761){\makebox(0,0)[lb]{\smash{{\SetFigFont{14}{16.8}{\rmdefault}{\mddefault}{\itdefault}{$b_3$}%
}}}}
\put(4786,-5476){\makebox(0,0)[lb]{\smash{{\SetFigFont{14}{16.8}{\rmdefault}{\mddefault}{\itdefault}{$b_2$}%
}}}}
\put(12781,-6001){\makebox(0,0)[lb]{\smash{{\SetFigFont{14}{16.8}{\rmdefault}{\mddefault}{\itdefault}{$b_1$}%
}}}}
\put(5446,-5401){\makebox(0,0)[lb]{\smash{{\SetFigFont{14}{16.8}{\rmdefault}{\mddefault}{\itdefault}{$b_1$}%
}}}}
\put(10711,-18811){\makebox(0,0)[lb]{\smash{{\SetFigFont{14}{16.8}{\rmdefault}{\mddefault}{\itdefault}{$a_3$}%
}}}}
\put(3481,-18166){\makebox(0,0)[lb]{\smash{{\SetFigFont{14}{16.8}{\rmdefault}{\mddefault}{\itdefault}{$a_3$}%
}}}}
\put(10111,-18871){\makebox(0,0)[lb]{\smash{{\SetFigFont{14}{16.8}{\rmdefault}{\mddefault}{\itdefault}{$a_2$}%
}}}}
\put(2896,-18256){\makebox(0,0)[lb]{\smash{{\SetFigFont{14}{16.8}{\rmdefault}{\mddefault}{\itdefault}{$a_2$}%
}}}}
\put(10321,-6481){\makebox(0,0)[lb]{\smash{{\SetFigFont{14}{16.8}{\rmdefault}{\mddefault}{\itdefault}{$a_2$}%
}}}}
\put(2941,-5881){\makebox(0,0)[lb]{\smash{{\SetFigFont{14}{16.8}{\rmdefault}{\mddefault}{\itdefault}{$a_2$}%
}}}}
\put(9511,-18811){\makebox(0,0)[lb]{\smash{{\SetFigFont{14}{16.8}{\rmdefault}{\mddefault}{\itdefault}{$a_1$}%
}}}}
\put(2191,-18181){\makebox(0,0)[lb]{\smash{{\SetFigFont{14}{16.8}{\rmdefault}{\mddefault}{\itdefault}{$a_1$}%
}}}}
\put(9481,-6001){\makebox(0,0)[lb]{\smash{{\SetFigFont{14}{16.8}{\rmdefault}{\mddefault}{\itdefault}{$a_1$}%
}}}}
\put(2101,-5401){\makebox(0,0)[lb]{\smash{{\SetFigFont{14}{16.8}{\rmdefault}{\mddefault}{\itdefault}{$a_1$}%
}}}}
\put(9920,-23188){\makebox(0,0)[lb]{\smash{{\SetFigFont{14}{16.8}{\rmdefault}{\mddefault}{\itdefault}{$\od_a+\od_b$}%
}}}}
\put(2486,-23255){\makebox(0,0)[lb]{\smash{{\SetFigFont{14}{16.8}{\rmdefault}{\mddefault}{\itdefault}{$\od_a+\od_b$}%
}}}}
\put(8556,-17145){\rotatebox{90.0}{\makebox(0,0)[lb]{\smash{{\SetFigFont{14}{16.8}{\rmdefault}{\mddefault}{\itdefault}{$y+\od_a+\od_b$}%
}}}}}
\put(1327,-16622){\rotatebox{90.0}{\makebox(0,0)[lb]{\smash{{\SetFigFont{14}{16.8}{\rmdefault}{\mddefault}{\itdefault}{$y+\od_a+\od_b$}%
}}}}}
\put(1327,-21349){\rotatebox{90.0}{\makebox(0,0)[lb]{\smash{{\SetFigFont{14}{16.8}{\rmdefault}{\mddefault}{\itdefault}{$y+z+\e_a+\e_b$}%
}}}}}
\put(8556,-21517){\rotatebox{90.0}{\makebox(0,0)[lb]{\smash{{\SetFigFont{14}{16.8}{\rmdefault}{\mddefault}{\itdefault}{$y+z+\e_a+\e_b$}%
}}}}}
\put(8557,-8593){\rotatebox{90.0}{\makebox(0,0)[lb]{\smash{{\SetFigFont{14}{16.8}{\rmdefault}{\mddefault}{\itdefault}{$y+z+\e_a+\e_b$}%
}}}}}
\put(9716,-10972){\makebox(0,0)[lb]{\smash{{\SetFigFont{14}{16.8}{\rmdefault}{\mddefault}{\itdefault}{$\od_a+\od_b$}%
}}}}
\put(8556,-4350){\rotatebox{90.0}{\makebox(0,0)[lb]{\smash{{\SetFigFont{14}{16.8}{\rmdefault}{\mddefault}{\itdefault}{$y+\od_a+\od_b$}%
}}}}}
\put(13729,-19179){\makebox(0,0)[lb]{\smash{{\SetFigFont{14}{16.8}{\rmdefault}{\mddefault}{\itdefault}{$z$}%
}}}}
\put(6602,-18479){\makebox(0,0)[lb]{\smash{{\SetFigFont{14}{16.8}{\rmdefault}{\mddefault}{\itdefault}{$z$}%
}}}}
\put(13729,-6266){\makebox(0,0)[lb]{\smash{{\SetFigFont{14}{16.8}{\rmdefault}{\mddefault}{\itdefault}{$z$}%
}}}}
\put(5374,-21993){\rotatebox{60.0}{\makebox(0,0)[lb]{\smash{{\SetFigFont{14}{16.8}{\rmdefault}{\mddefault}{\itdefault}{$2y+z+2\e_a+2\e_b$}%
}}}}}
\put(12706,-22339){\rotatebox{60.0}{\makebox(0,0)[lb]{\smash{{\SetFigFont{14}{16.8}{\rmdefault}{\mddefault}{\itdefault}{$2y+z+2\e_a+2\e_b$}%
}}}}}
\put(12706,-9721){\rotatebox{60.0}{\makebox(0,0)[lb]{\smash{{\SetFigFont{14}{16.8}{\rmdefault}{\mddefault}{\itdefault}{$2y+z+2\e_a+2\e_b$}%
}}}}}
\put(11608,-15109){\rotatebox{300.0}{\makebox(0,0)[lb]{\smash{{\SetFigFont{14}{16.8}{\rmdefault}{\mddefault}{\itdefault}{$2y+2\e_a+2\e_b$}%
}}}}}
\put(4583,-14941){\rotatebox{300.0}{\makebox(0,0)[lb]{\smash{{\SetFigFont{14}{16.8}{\rmdefault}{\mddefault}{\itdefault}{$2y+2\od_a+2\od_b$}%
}}}}}
\put(11710,-2609){\rotatebox{300.0}{\makebox(0,0)[lb]{\smash{{\SetFigFont{14}{16.8}{\rmdefault}{\mddefault}{\itdefault}{$2y+2\od_a+2\od_b$}%
}}}}}
\put(2002,-12987){\makebox(0,0)[lb]{\smash{{\SetFigFont{14}{16.8}{\rmdefault}{\mddefault}{\itdefault}{$\e_a+\e_b$}%
}}}}
\put(9095,-12778){\makebox(0,0)[lb]{\smash{{\SetFigFont{14}{16.8}{\rmdefault}{\mddefault}{\itdefault}{$\e_a+\e_b$}%
}}}}
\put(9231,-832){\makebox(0,0)[lb]{\smash{{\SetFigFont{14}{16.8}{\rmdefault}{\mddefault}{\itdefault}{$\e_a+\e_b$}%
}}}}
\put(1181,-3981){\rotatebox{90.0}{\makebox(0,0)[lb]{\smash{{\SetFigFont{14}{16.8}{\rmdefault}{\mddefault}{\itdefault}{$y+\od_a+\od_b$}%
}}}}}
\put(1181,-8469){\rotatebox{90.0}{\makebox(0,0)[lb]{\smash{{\SetFigFont{14}{16.8}{\rmdefault}{\mddefault}{\itdefault}{$y+z+\e_a+\e_b$}%
}}}}}
\put(2251,-11208){\makebox(0,0)[lb]{\smash{{\SetFigFont{14}{16.8}{\rmdefault}{\mddefault}{\itdefault}{$\od_a+\od_b$}%
}}}}
\put(5097,-9296){\rotatebox{60.0}{\makebox(0,0)[lb]{\smash{{\SetFigFont{14}{16.8}{\rmdefault}{\mddefault}{\itdefault}{$2y+z+2\e_a+2\e_b$}%
}}}}}
\put(6397,-5575){\makebox(0,0)[lb]{\smash{{\SetFigFont{14}{16.8}{\rmdefault}{\mddefault}{\itdefault}{$z$}%
}}}}
\put(4659,-2463){\rotatebox{300.0}{\makebox(0,0)[lb]{\smash{{\SetFigFont{14}{16.8}{\rmdefault}{\mddefault}{\itdefault}{$2y+2\od_a+2\od_b$}%
}}}}}
\put(2101,-910){\makebox(0,0)[lb]{\smash{{\SetFigFont{14}{16.8}{\rmdefault}{\mddefault}{\itdefault}{$\e_a+\e_b$}%
}}}}
\put(4726,-18616){\makebox(0,0)[lb]{\smash{{\SetFigFont{14}{16.8}{\rmdefault}{\mddefault}{\itdefault}{$b_2$}%
}}}}
\put(12811,-18781){\makebox(0,0)[lb]{\smash{{\SetFigFont{14}{16.8}{\rmdefault}{\mddefault}{\itdefault}{$b_1$}%
}}}}
\put(11761,-19186){\makebox(0,0)[lb]{\smash{{\SetFigFont{14}{16.8}{\rmdefault}{\mddefault}{\itdefault}{$b_3$}%
}}}}
\end{picture}%
}
\caption{Splitting an $H^{(1)}$-type region into two $Q$-type regions when $x=0$.}\label{halvedhexbase2}
\end{figure}

\begin{figure}\centering
\setlength{\unitlength}{3947sp}%
\begingroup\makeatletter\ifx\SetFigFont\undefined%
\gdef\SetFigFont#1#2#3#4#5{%
  \reset@font\fontsize{#1}{#2pt}%
  \fontfamily{#3}\fontseries{#4}\fontshape{#5}%
  \selectfont}%
\fi\endgroup%
\resizebox{14cm}{!}{
\begin{picture}(0,0)%
\includegraphics{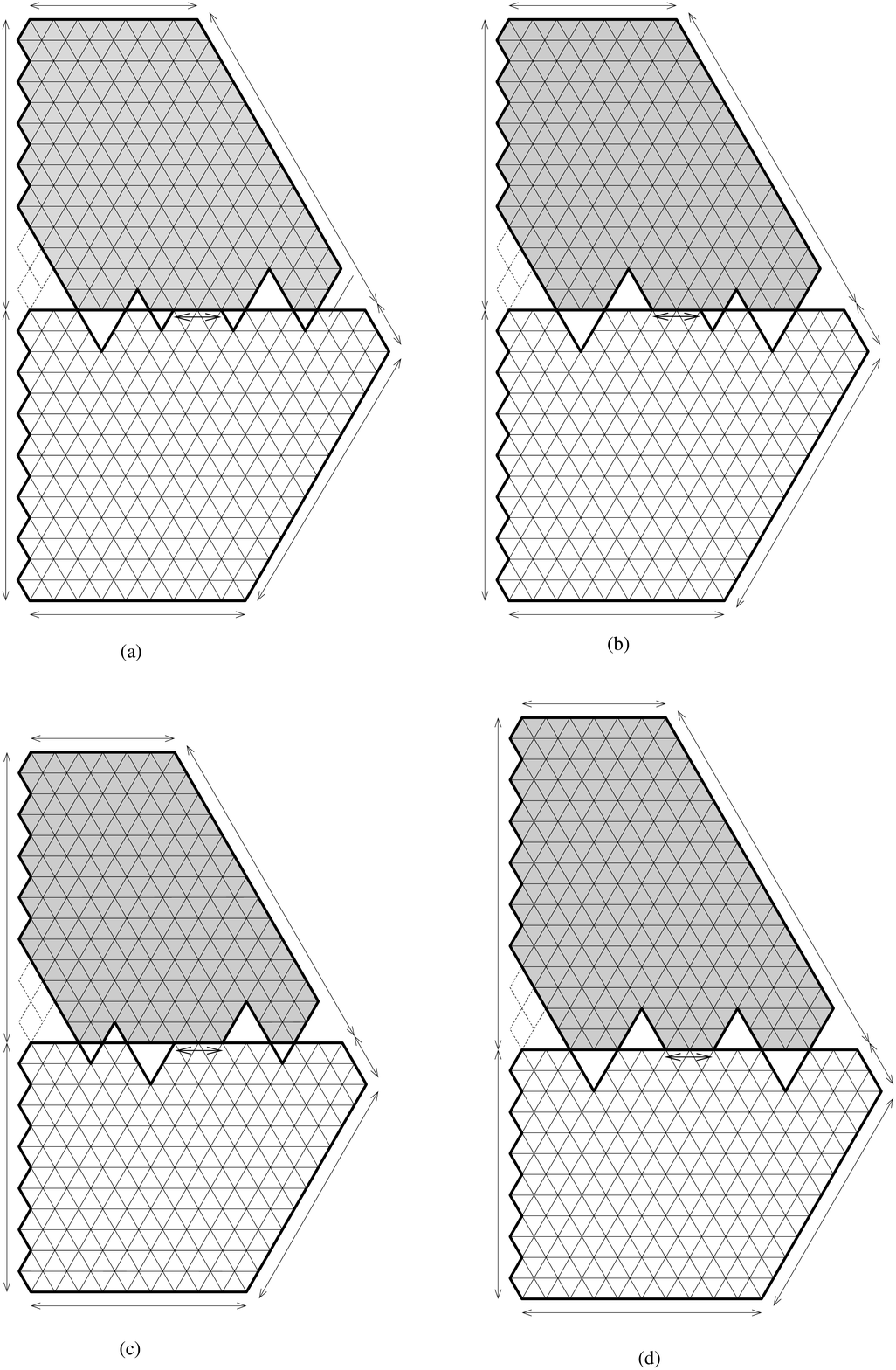}%
\end{picture}%
%
%

\begin{picture}(15593,23552)(1072,-22817)
\put(6046,-17437){\makebox(0,0)[lb]{\smash{{\SetFigFont{14}{16.8}{\rmdefault}{\mddefault}{\itdefault}{$b_2$}%
}}}}
\put(5436,-17147){\makebox(0,0)[lb]{\smash{{\SetFigFont{14}{16.8}{\rmdefault}{\mddefault}{\itdefault}{$b_3$}%
}}}}
\put(13826,-17237){\makebox(0,0)[lb]{\smash{{\SetFigFont{14}{16.8}{\rmdefault}{\mddefault}{\itdefault}{$b_3$}%
}}}}
\put(14656,-17677){\makebox(0,0)[lb]{\smash{{\SetFigFont{14}{16.8}{\rmdefault}{\mddefault}{\itdefault}{$b_2$}%
}}}}
\put(10561,-17177){\makebox(0,0)[lb]{\smash{{\SetFigFont{14}{16.8}{\rmdefault}{\mddefault}{\itdefault}{$a_1$}%
}}}}
\put(12181,-17187){\makebox(0,0)[lb]{\smash{{\SetFigFont{14}{16.8}{\rmdefault}{\mddefault}{\itdefault}{$a_3$}%
}}}}
\put(11371,-17667){\makebox(0,0)[lb]{\smash{{\SetFigFont{14}{16.8}{\rmdefault}{\mddefault}{\itdefault}{$a_2$}%
}}}}
\put(15516,-17187){\makebox(0,0)[lb]{\smash{{\SetFigFont{14}{16.8}{\rmdefault}{\mddefault}{\itdefault}{$b_1$}%
}}}}
\put(6686,-17087){\makebox(0,0)[lb]{\smash{{\SetFigFont{14}{16.8}{\rmdefault}{\mddefault}{\itdefault}{$b_1$}%
}}}}
\put(3821,-17552){\makebox(0,0)[lb]{\smash{{\SetFigFont{14}{16.8}{\rmdefault}{\mddefault}{\itdefault}{$a_4$}%
}}}}
\put(3171,-17197){\makebox(0,0)[lb]{\smash{{\SetFigFont{14}{16.8}{\rmdefault}{\mddefault}{\itdefault}{$a_3$}%
}}}}
\put(2771,-17417){\makebox(0,0)[lb]{\smash{{\SetFigFont{14}{16.8}{\rmdefault}{\mddefault}{\itdefault}{$a_2$}%
}}}}
\put(2141,-17107){\makebox(0,0)[lb]{\smash{{\SetFigFont{14}{16.8}{\rmdefault}{\mddefault}{\itdefault}{$a_1$}%
}}}}
\put(13396,-4926){\makebox(0,0)[lb]{\smash{{\SetFigFont{14}{16.8}{\rmdefault}{\mddefault}{\itdefault}{$b_4$}%
}}}}
\put(13811,-4691){\makebox(0,0)[lb]{\smash{{\SetFigFont{14}{16.8}{\rmdefault}{\mddefault}{\itdefault}{$b_3$}%
}}}}
\put(14426,-5056){\makebox(0,0)[lb]{\smash{{\SetFigFont{14}{16.8}{\rmdefault}{\mddefault}{\itdefault}{$b_2$}%
}}}}
\put(15256,-4566){\makebox(0,0)[lb]{\smash{{\SetFigFont{14}{16.8}{\rmdefault}{\mddefault}{\itdefault}{$b_1$}%
}}}}
\put(11941,-4576){\makebox(0,0)[lb]{\smash{{\SetFigFont{14}{16.8}{\rmdefault}{\mddefault}{\itdefault}{$a_3$}%
}}}}
\put(11131,-5036){\makebox(0,0)[lb]{\smash{{\SetFigFont{14}{16.8}{\rmdefault}{\mddefault}{\itdefault}{$a_2$}%
}}}}
\put(10301,-4576){\makebox(0,0)[lb]{\smash{{\SetFigFont{14}{16.8}{\rmdefault}{\mddefault}{\itdefault}{$a_1$}%
}}}}
\put(5211,-4931){\makebox(0,0)[lb]{\smash{{\SetFigFont{14}{16.8}{\rmdefault}{\mddefault}{\itdefault}{$b_4$}%
}}}}
\put(5831,-4621){\makebox(0,0)[lb]{\smash{{\SetFigFont{14}{16.8}{\rmdefault}{\mddefault}{\itdefault}{$b_3$}%
}}}}
\put(6441,-4941){\makebox(0,0)[lb]{\smash{{\SetFigFont{14}{16.8}{\rmdefault}{\mddefault}{\itdefault}{$b_2$}%
}}}}
\put(7131,-4591){\makebox(0,0)[lb]{\smash{{\SetFigFont{14}{16.8}{\rmdefault}{\mddefault}{\itdefault}{$b_1$}%
}}}}
\put(3981,-4911){\makebox(0,0)[lb]{\smash{{\SetFigFont{14}{16.8}{\rmdefault}{\mddefault}{\itdefault}{$a_4$}%
}}}}
\put(3561,-4691){\makebox(0,0)[lb]{\smash{{\SetFigFont{14}{16.8}{\rmdefault}{\mddefault}{\itdefault}{$a_3$}%
}}}}
\put(4651,-5061){\makebox(0,0)[lb]{\smash{{\SetFigFont{14}{16.8}{\rmdefault}{\mddefault}{\itdefault}{$x$}%
}}}}
\put(2971,-5041){\makebox(0,0)[lb]{\smash{{\SetFigFont{14}{16.8}{\rmdefault}{\mddefault}{\itdefault}{$a_2$}%
}}}}
\put(2141,-4621){\makebox(0,0)[lb]{\smash{{\SetFigFont{14}{16.8}{\rmdefault}{\mddefault}{\itdefault}{$a_1$}%
}}}}
\put(9761,-19952){\rotatebox{90.0}{\makebox(0,0)[lb]{\smash{{\SetFigFont{12}{14.4}{\rmdefault}{\mddefault}{\itdefault}{$z+\e_a+\e_b$}%
}}}}}
\put(1361,-19672){\rotatebox{90.0}{\makebox(0,0)[lb]{\smash{{\SetFigFont{12}{14.4}{\rmdefault}{\mddefault}{\itdefault}{$z+\e_a+\e_b$}%
}}}}}
\put(9521,-7941){\rotatebox{90.0}{\makebox(0,0)[lb]{\smash{{\SetFigFont{12}{14.4}{\rmdefault}{\mddefault}{\itdefault}{$z+\e_a+\e_b$}%
}}}}}
\put(1321,-7861){\rotatebox{90.0}{\makebox(0,0)[lb]{\smash{{\SetFigFont{12}{14.4}{\rmdefault}{\mddefault}{\itdefault}{$z+\e_a+\e_b$}%
}}}}}
\put(1351,-15657){\rotatebox{90.0}{\makebox(0,0)[lb]{\smash{{\SetFigFont{12}{14.4}{\rmdefault}{\mddefault}{\itdefault}{$\od_a+\od_b$}%
}}}}}
\put(9751,-15167){\rotatebox{90.0}{\makebox(0,0)[lb]{\smash{{\SetFigFont{12}{14.4}{\rmdefault}{\mddefault}{\itdefault}{$\od_a+\od_b$}%
}}}}}
\put(9531,-3096){\rotatebox{90.0}{\makebox(0,0)[lb]{\smash{{\SetFigFont{12}{14.4}{\rmdefault}{\mddefault}{\itdefault}{$\od_a+\od_b$}%
}}}}}
\put(1251,-3341){\rotatebox{90.0}{\makebox(0,0)[lb]{\smash{{\SetFigFont{12}{14.4}{\rmdefault}{\mddefault}{\itdefault}{$\od_a+\od_b$}%
}}}}}
\put(11968,-22254){\rotatebox{1.0}{\makebox(0,0)[lb]{\smash{{\SetFigFont{12}{14.4}{\rmdefault}{\mddefault}{\itdefault}{$x+\od_a+\od_b$}%
}}}}}
\put(3078,-22124){\rotatebox{1.0}{\makebox(0,0)[lb]{\smash{{\SetFigFont{12}{14.4}{\rmdefault}{\mddefault}{\itdefault}{$x+\od_a+\od_b$}%
}}}}}
\put(11138,-10203){\rotatebox{1.0}{\makebox(0,0)[lb]{\smash{{\SetFigFont{12}{14.4}{\rmdefault}{\mddefault}{\itdefault}{$x+\od_a+\od_b$}%
}}}}}
\put(3011,-10241){\rotatebox{1.0}{\makebox(0,0)[lb]{\smash{{\SetFigFont{12}{14.4}{\rmdefault}{\mddefault}{\itdefault}{$x+\od_a+\od_b$}%
}}}}}
\put(16527,-17553){\rotatebox{1.0}{\makebox(0,0)[lb]{\smash{{\SetFigFont{12}{14.4}{\rmdefault}{\mddefault}{\itdefault}{$z$}%
}}}}}
\put(7697,-17353){\rotatebox{1.0}{\makebox(0,0)[lb]{\smash{{\SetFigFont{12}{14.4}{\rmdefault}{\mddefault}{\itdefault}{$z$}%
}}}}}
\put(6668,-20659){\rotatebox{60.0}{\makebox(0,0)[lb]{\smash{{\SetFigFont{12}{14.4}{\rmdefault}{\mddefault}{\itdefault}{$z+2\e_a+2\e_b$}%
}}}}}
\put(15428,-20839){\rotatebox{60.0}{\makebox(0,0)[lb]{\smash{{\SetFigFont{12}{14.4}{\rmdefault}{\mddefault}{\itdefault}{$z+2\e_a+2\e_b$}%
}}}}}
\put(15068,-8458){\rotatebox{60.0}{\makebox(0,0)[lb]{\smash{{\SetFigFont{12}{14.4}{\rmdefault}{\mddefault}{\itdefault}{$z+2\e_a+2\e_b$}%
}}}}}
\put(6741,-8611){\rotatebox{60.0}{\makebox(0,0)[lb]{\smash{{\SetFigFont{12}{14.4}{\rmdefault}{\mddefault}{\itdefault}{$z+2\e_a+2\e_b$}%
}}}}}
\put(16327,-4842){\rotatebox{1.0}{\makebox(0,0)[lb]{\smash{{\SetFigFont{12}{14.4}{\rmdefault}{\mddefault}{\itdefault}{$z$}%
}}}}}
\put(8171,-4821){\rotatebox{1.0}{\makebox(0,0)[lb]{\smash{{\SetFigFont{12}{14.4}{\rmdefault}{\mddefault}{\itdefault}{$z$}%
}}}}}
\put(14407,-14026){\rotatebox{300.0}{\makebox(0,0)[lb]{\smash{{\SetFigFont{12}{14.4}{\rmdefault}{\mddefault}{\itdefault}{$2\od_a+2\od_b$}%
}}}}}
\put(5877,-14246){\rotatebox{300.0}{\makebox(0,0)[lb]{\smash{{\SetFigFont{12}{14.4}{\rmdefault}{\mddefault}{\itdefault}{$2\od_a+2\od_b$}%
}}}}}
\put(14367,-1635){\rotatebox{300.0}{\makebox(0,0)[lb]{\smash{{\SetFigFont{12}{14.4}{\rmdefault}{\mddefault}{\itdefault}{$2\od_a+2\od_b$}%
}}}}}
\put(6138,-1623){\rotatebox{300.0}{\makebox(0,0)[lb]{\smash{{\SetFigFont{12}{14.4}{\rmdefault}{\mddefault}{\itdefault}{$2\od_a+2\od_b$}%
}}}}}
\put(10946,-11332){\makebox(0,0)[lb]{\smash{{\SetFigFont{12}{14.4}{\rmdefault}{\mddefault}{\itdefault}{$x+\e_a+\e_b$}%
}}}}
\put(2496,-11952){\makebox(0,0)[lb]{\smash{{\SetFigFont{12}{14.4}{\rmdefault}{\mddefault}{\itdefault}{$x+\e_a+\e_b$}%
}}}}
\put(10926,529){\makebox(0,0)[lb]{\smash{{\SetFigFont{12}{14.4}{\rmdefault}{\mddefault}{\itdefault}{$x+\e_a+\e_b$}%
}}}}
\put(2761,499){\makebox(0,0)[lb]{\smash{{\SetFigFont{14}{16.8}{\rmdefault}{\mddefault}{\itdefault}{$x+\e_a+\e_b$}%
}}}}
\put(4661,-17552){\makebox(0,0)[lb]{\smash{{\SetFigFont{14}{16.8}{\rmdefault}{\mddefault}{\itdefault}{$x$}%
}}}}
\put(13031,-17672){\makebox(0,0)[lb]{\smash{{\SetFigFont{14}{16.8}{\rmdefault}{\mddefault}{\itdefault}{$x$}%
}}}}
\put(12821,-5051){\makebox(0,0)[lb]{\smash{{\SetFigFont{14}{16.8}{\rmdefault}{\mddefault}{\itdefault}{$x$}%
}}}}
\end{picture}%
}
\caption{Splitting an $H^{(1)}$-type region into two $Q$-type regions when $y=0$.}\label{halvedhexbase}
\end{figure}

\pagebreak

\begin{proof}[Combined Proof of Theorems \ref{main1} and \ref{mainR1}]

We first prove that
\begin{align}\label{main1eqn}
\M(H^{(1)}_{x,y,z}(\textbf{a},\textbf{b}))&=2^{-y}\Q(0,a_1,\dotsc,a_{2\lfloor\frac{m+1}{2}\rfloor-1},a_{2\lfloor\frac{m+1}{2}\rfloor}+x+y+b_{2\lfloor\frac{n+1}{2}\rfloor},b_{2\lfloor\frac{n+1}{2}\rfloor-1},\dotsc,b_1)\notag\\
&\quad\times \Q(a_1,\dotsc,a_{\lceil \frac{m-1}{2}\rceil}, a_{\lceil \frac{m-1}{2}\rceil+1}+x+y+b_{\lceil \frac{n-1}{2}\rceil+1},b_{\lceil \frac{n-1}{2}\rceil},\dotsc, b_1,z) \notag\\
&\quad\times \frac{\Hf_2(2\od_a+2\od_b+1)\Hf_2(2\e_a+2\e_b+2z+1)}{\Hf_2(2\od_a+2\od_b+2y+1)\Hf_2(2\e_a+2\e_b+2y+2z+1)} \notag\\
&\quad\times \frac{\Hf(2a+b+2y+z+1)\Hf(b+y+z)}{\Hf(2a+b+y+z+1)\Hf(b+z)}\notag\\
&\quad\times  \frac{\T(x+1,2a+b+2y+z,y)\V(2x+2a+3,b+2y+z-1,y)}{\T(1,2a+b+2y+z,y)\V(2a+3,b+2y+z-1,y)},
\end{align}

and that
\begin{align}\label{mainR1eqn}
\M(R^{(1)}_{x,y,z}(\textbf{a},\textbf{b}))&=2^{-y}\Q(a_1,\dotsc,a_{2\lceil \frac{m-1}{2}\rceil}, a_{2\lceil \frac{m-1}{2}\rceil+1}+x+y+b_{2\lfloor\frac{n+1}{2}\rfloor},b_{2\lfloor\frac{n+1}{2}\rfloor-1},\dotsc,b_1)\notag\\
&\quad\times \Q(0,a_1,\dotsc,a_{2\lfloor\frac{m+1}{2}\rfloor-1},a_{2\lfloor\frac{m+1}{2}\rfloor}+x+y+b_{2\lceil \frac{n-1}{2}\rceil+1},b_{2\lceil \frac{n-1}{2}\rceil},\dotsc, b_1,z) \notag\\
&\quad\times \frac{\Hf_2(2\e_a+2\od_b+1)\Hf_2(2\od_a+2\e_b+2z+1)}{\Hf_2(2\e_a+2\od_b+2y+1)\Hf_2(2\od_a+2\e_b+2y+2z+1)} \notag\\
&\quad\times \frac{\Hf(2a+b+2y+z+1)\Hf(b+y+z)}{\Hf(2a+b+y+z+1)\Hf(b+z)}\notag\\
&\quad\times  \frac{\T(x+1,2a+b+2y+z,y)\V(2x+2a+3,b+2y+z-1,y)}{\T(1,2a+b+2y+z,y)\V(2a+3,b+2y+z-1,y)}
\end{align}
by induction on  $y+z+b+\overline{n}$, where $\overline{n}$ is the number of positive terms in the sequence $\textbf{b}=(b_1,b_2,\dotsc,b_n)$.
The base cases are the situations when at least one of the parameters $x,y,$ and the sum $b+\overline{n}$ is equal to $0$.

We consider first the case when $x=0$. After removing several forced lozenges in the middle of the two fern,  we split the regions into two subregions along the line that the ferns are lying on (see Figure \ref{halvedhexbase2}).
The upper subregion, after recovering several forced vertical lozenges, is the region:

\begin{enumerate}
\item $\mathcal{Q}(0,a_1,\dotsc, a_m,y,b_n,\dots,b_1)$ if $m,n$ are even,
\item  $\mathcal{Q}(0,a_1,\dotsc, a_m,y+b_n,\dots,b_1)$ if $m$ is even and $n$ is odd,
\item $\mathcal{Q}(0,a_1,\dotsc, a_m+y,b_n,\dots,b_1)$ if $m$ is odd and $n$ is even,
\item $\mathcal{Q}(0,a_1,\dotsc, a_{m-1}, a_m+y+b_n,b_{n-1},\dots,b_1)$ if $m,n$ are odd.
   \end{enumerate}
       The lower subregion is a horizontal reflection of the  regions:
       \begin{enumerate}
\item $\mathcal{Q}(a_1,\dotsc, a_{m-1},a_m+y+b_n,b_{n-1},\dots,b_1,z )$ if $m,n$ are even,
\item  $\mathcal{Q}(a_1,\dotsc, a_m+y,b_n,\dots,b_1,z)$ if $m$ is even and $n$ is odd,
\item
$\mathcal{Q}(a_1,\dotsc, a_m,y+b_n,\dots,b_1,z)$ if $m$ is odd and $n$ is even,
\item $\mathcal{Q}(a_1,\dotsc, a_{m-1}, a_m,y,b_n,b_{n-1},\dots,b_1,z)$ if $m,n$ are odd.
 \end{enumerate}
 By Region-splitting Lemma \ref{RS},
the number of tilings of our region is given by the product of the tiling numbers of the two $\mathcal{Q}$-type regions corresponding to the upper and lower subregions. It means that (\ref{main1eqn}) follows from Lemma \ref{QAR}.
Similarly, we can also write the number of tilings of the region $R^{(1)}_{x,y,z}(\textbf{a},\textbf{b})$ as the product of that of the two $\mathcal{Q}$-type regions, and (\ref{mainR1eqn}) follows again  from Lemma \ref{QAR}.

If $y=0$, we also divide the region along the ferns into two parts corresponding to two $\mathcal{Q}$-type region as in the Figure \ref{halvedhexbase}. In particular, the two $\mathcal{Q}$-type regions (corresponding to the upper and lower parts respectively) are:
\begin{enumerate}
\item  $\mathcal{Q}(0,a_1,\dots,a_m+x+b_n,\dots,b_1)$ and $\mathcal{Q}(a_1,\dots,a_m,x,b_n,\dots,b_1,z)$ if $m,n$ are even,
\item  $\mathcal{Q}(0,a_1,\dots,a_m,x+b_n,\dots,b_1)$ and $\mathcal{Q}(a_1,\dots,a_m+x,b_n,\dots,b_1,z)$ if $m$ is odd and $n$ is even,
\item $\mathcal{Q}(0,a_1,\dots,a_m+x,b_n,\dots,b_1)$ and $\mathcal{Q}(a_1,\dots,a_m,x+b_n,\dots,b_1,z)$ if $m$ is even and $n$ is odd,
\item  $\mathcal{Q}(0,a_1,\dots,a_m,x,b_n,\dots,b_1)$ and $\mathcal{Q}(a_1,\dots,a_m+x+b_n,\dots,b_1,z)$ if $m,n$ are odd.
\end{enumerate}
This means that  (\ref{main1eqn}) is implied by Lemma \ref{QAR}. The verification for (\ref{mainR1eqn}) in this case can be treated in the same manner.

If $b+ \overline{n}=0$, then we have the right fern empty. This case was already considered in \cite{Halfhex2}.

\begin{figure}\centering
\setlength{\unitlength}{3947sp}%
\begingroup\makeatletter\ifx\SetFigFont\undefined%
\gdef\SetFigFont#1#2#3#4#5{%
  \reset@font\fontsize{#1}{#2pt}%
  \fontfamily{#3}\fontseries{#4}\fontshape{#5}%
  \selectfont}%
\fi\endgroup%
\resizebox{12cm}{!}{
\begin{picture}(0,0)%
\includegraphics{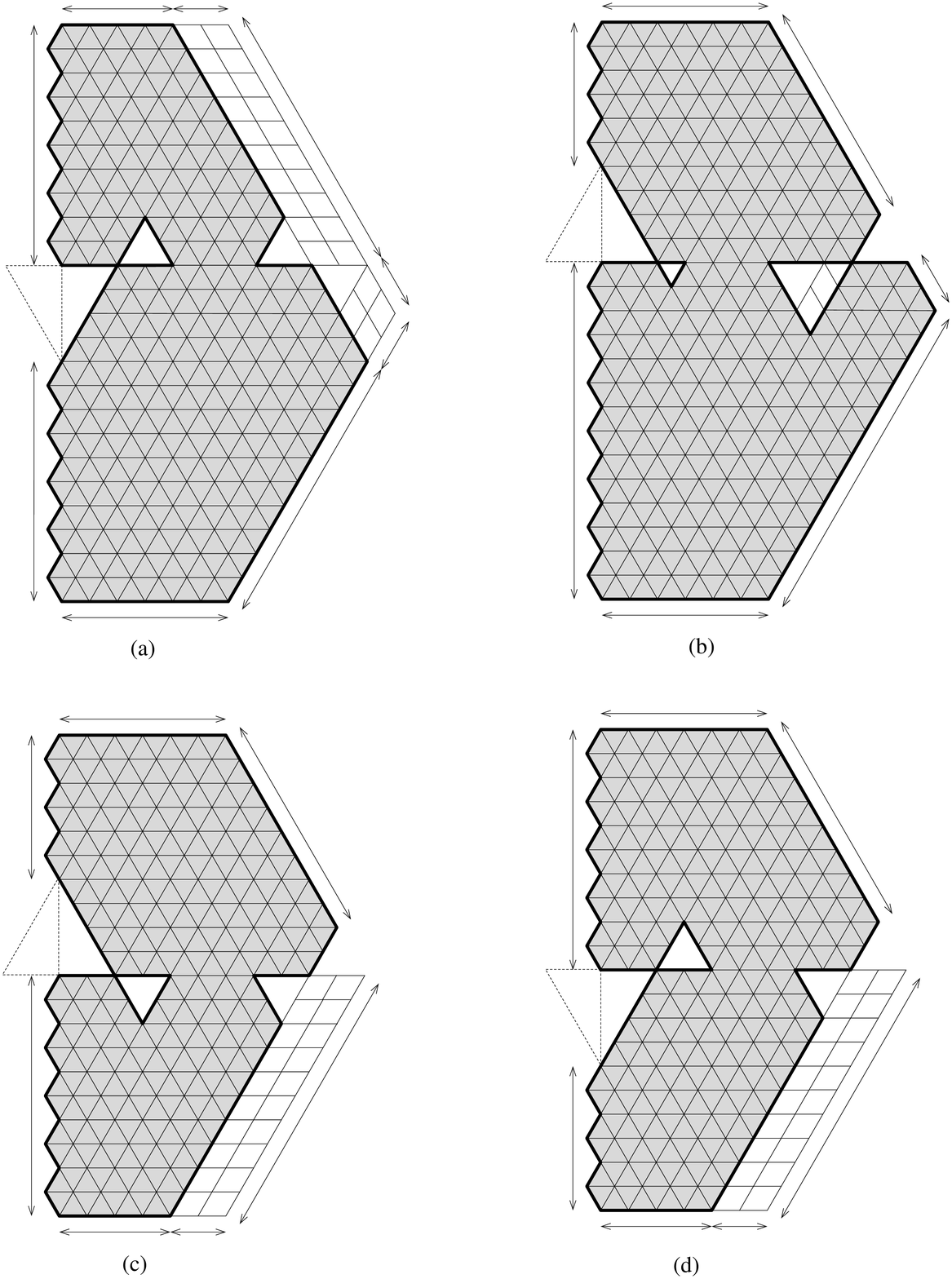}%
\end{picture}%
%
%

\begin{picture}(14502,19222)(1633,-18641)
\put(1877,-4501){\makebox(0,0)[lb]{\smash{{\SetFigFont{14}{16.8}{\rmdefault}{\mddefault}{\itdefault}{$2a_1$}%
}}}}
\put(3812,-3541){\makebox(0,0)[lb]{\smash{{\SetFigFont{14}{16.8}{\rmdefault}{\mddefault}{\itdefault}{$a_2$}%
}}}}
\put(5889,-3533){\makebox(0,0)[lb]{\smash{{\SetFigFont{14}{16.8}{\rmdefault}{\mddefault}{\itdefault}{$b_3$}%
}}}}
\put(6699,-3976){\makebox(0,0)[lb]{\smash{{\SetFigFont{14}{16.8}{\rmdefault}{\mddefault}{\itdefault}{$b_2$}%
}}}}
\put(7711,-5161){\makebox(0,0)[lb]{\smash{{\SetFigFont{14}{16.8}{\rmdefault}{\mddefault}{\itdefault}{$b_2$}%
}}}}
\put(7764,-3841){\makebox(0,0)[lb]{\smash{{\SetFigFont{14}{16.8}{\rmdefault}{\mddefault}{\itdefault}{$z$}%
}}}}
\put(3182,247){\makebox(0,0)[lb]{\smash{{\SetFigFont{14}{16.8}{\rmdefault}{\mddefault}{\itdefault}{$x+a_1$}%
}}}}
\put(4517,292){\makebox(0,0)[lb]{\smash{{\SetFigFont{14}{16.8}{\rmdefault}{\mddefault}{\itdefault}{$b_2$}%
}}}}
\put(6234,-991){\rotatebox{300.0}{\makebox(0,0)[lb]{\smash{{\SetFigFont{14}{16.8}{\rmdefault}{\mddefault}{\itdefault}{$2y+2a_2+2b_3$}%
}}}}}
\put(2065,-2267){\rotatebox{90.0}{\makebox(0,0)[lb]{\smash{{\SetFigFont{14}{16.8}{\rmdefault}{\mddefault}{\itdefault}{$y+a_2+b_3$}%
}}}}}
\put(1982,-6548){\rotatebox{90.0}{\makebox(0,0)[lb]{\smash{{\SetFigFont{14}{16.8}{\rmdefault}{\mddefault}{\itdefault}{$y+z+b_2$}%
}}}}}
\put(3292,-9118){\makebox(0,0)[lb]{\smash{{\SetFigFont{14}{16.8}{\rmdefault}{\mddefault}{\itdefault}{$x+a_2+b_3$}%
}}}}
\put(6301,-7816){\rotatebox{60.0}{\makebox(0,0)[lb]{\smash{{\SetFigFont{14}{16.8}{\rmdefault}{\mddefault}{\itdefault}{$2y+z+2a_1+b_2$}%
}}}}}
\put(11551,-3601){\makebox(0,0)[lb]{\smash{{\SetFigFont{14}{16.8}{\rmdefault}{\mddefault}{\itdefault}{$a_2$}%
}}}}
\put(9755,-3046){\makebox(0,0)[lb]{\smash{{\SetFigFont{14}{16.8}{\rmdefault}{\mddefault}{\itdefault}{$2a_1$}%
}}}}
\put(14011,-3856){\makebox(0,0)[lb]{\smash{{\SetFigFont{14}{16.8}{\rmdefault}{\mddefault}{\itdefault}{$b_2$}%
}}}}
\put(13434,-3984){\makebox(0,0)[lb]{\smash{{\SetFigFont{14}{16.8}{\rmdefault}{\mddefault}{\itdefault}{$b_4$}%
}}}}
\put(14649,-3466){\makebox(0,0)[lb]{\smash{{\SetFigFont{14}{16.8}{\rmdefault}{\mddefault}{\itdefault}{$b_1$}%
}}}}
\put(11311,171){\makebox(0,0)[lb]{\smash{{\SetFigFont{14}{16.8}{\rmdefault}{\mddefault}{\itdefault}{$x+a_2+b_2+b_4$}%
}}}}
\put(13891,-691){\rotatebox{300.0}{\makebox(0,0)[lb]{\smash{{\SetFigFont{14}{16.8}{\rmdefault}{\mddefault}{\itdefault}{$2y+2a_1+b_1$}%
}}}}}
\put(15609,-3804){\makebox(0,0)[lb]{\smash{{\SetFigFont{14}{16.8}{\rmdefault}{\mddefault}{\itdefault}{$z$}%
}}}}
\put(14334,-7509){\rotatebox{60.0}{\makebox(0,0)[lb]{\smash{{\SetFigFont{14}{16.8}{\rmdefault}{\mddefault}{\itdefault}{$2y+z+2a_2+2b_2+2b_4$}%
}}}}}
\put(11409,-9129){\makebox(0,0)[lb]{\smash{{\SetFigFont{14}{16.8}{\rmdefault}{\mddefault}{\itdefault}{$x+a_1+b_1$}%
}}}}
\put(10074,-6789){\rotatebox{90.0}{\makebox(0,0)[lb]{\smash{{\SetFigFont{14}{16.8}{\rmdefault}{\mddefault}{\itdefault}{$y+z+a_2+b_2+b_4$}%
}}}}}
\put(10119,-1711){\rotatebox{90.0}{\makebox(0,0)[lb]{\smash{{\SetFigFont{14}{16.8}{\rmdefault}{\mddefault}{\itdefault}{$y+b_1$}%
}}}}}
\put(1648,-13553){\makebox(0,0)[lb]{\smash{{\SetFigFont{14}{16.8}{\rmdefault}{\mddefault}{\itdefault}{$2a_1$}%
}}}}
\put(3751,-14476){\makebox(0,0)[lb]{\smash{{\SetFigFont{14}{16.8}{\rmdefault}{\mddefault}{\itdefault}{$a_2$}%
}}}}
\put(11731,-13956){\makebox(0,0)[lb]{\smash{{\SetFigFont{14}{16.8}{\rmdefault}{\mddefault}{\itdefault}{$a_2$}%
}}}}
\put(9808,-14931){\makebox(0,0)[lb]{\smash{{\SetFigFont{14}{16.8}{\rmdefault}{\mddefault}{\itdefault}{$2a_1$}%
}}}}
\put(14622,-13911){\makebox(0,0)[lb]{\smash{{\SetFigFont{14}{16.8}{\rmdefault}{\mddefault}{\itdefault}{$b_1$}%
}}}}
\put(6725,-14041){\makebox(0,0)[lb]{\smash{{\SetFigFont{14}{16.8}{\rmdefault}{\mddefault}{\itdefault}{$b_1$}%
}}}}
\put(13767,-14391){\makebox(0,0)[lb]{\smash{{\SetFigFont{14}{16.8}{\rmdefault}{\mddefault}{\itdefault}{$b_2$}%
}}}}
\put(5787,-14446){\makebox(0,0)[lb]{\smash{{\SetFigFont{14}{16.8}{\rmdefault}{\mddefault}{\itdefault}{$b_2$}%
}}}}
\put(4550,-18196){\makebox(0,0)[lb]{\smash{{\SetFigFont{14}{16.8}{\rmdefault}{\mddefault}{\itdefault}{$b_1$}%
}}}}
\put(12507,-18133){\makebox(0,0)[lb]{\smash{{\SetFigFont{14}{16.8}{\rmdefault}{\mddefault}{\itdefault}{$b_1$}%
}}}}
\put(3421,-10291){\makebox(0,0)[lb]{\smash{{\SetFigFont{14}{16.8}{\rmdefault}{\mddefault}{\itdefault}{$x+a_2+b_2$}%
}}}}
\put(5904,-11169){\rotatebox{300.0}{\makebox(0,0)[lb]{\smash{{\SetFigFont{14}{16.8}{\rmdefault}{\mddefault}{\itdefault}{$2y+2a_1+b_1$}%
}}}}}
\put(6204,-16794){\rotatebox{60.0}{\makebox(0,0)[lb]{\smash{{\SetFigFont{14}{16.8}{\rmdefault}{\mddefault}{\itdefault}{$2y+2a_2+2b_2$}%
}}}}}
\put(3406,-18174){\makebox(0,0)[lb]{\smash{{\SetFigFont{14}{16.8}{\rmdefault}{\mddefault}{\itdefault}{$x+a_1$}%
}}}}
\put(2086,-16599){\rotatebox{90.0}{\makebox(0,0)[lb]{\smash{{\SetFigFont{14}{16.8}{\rmdefault}{\mddefault}{\itdefault}{$y+b_2+a_2$}%
}}}}}
\put(2086,-11881){\rotatebox{90.0}{\makebox(0,0)[lb]{\smash{{\SetFigFont{14}{16.8}{\rmdefault}{\mddefault}{\itdefault}{$y+b_1$}%
}}}}}
\put(10103,-16923){\rotatebox{90.0}{\makebox(0,0)[lb]{\smash{{\SetFigFont{14}{16.8}{\rmdefault}{\mddefault}{\itdefault}{$y+z+b_2$}%
}}}}}
\put(10006,-12776){\rotatebox{90.0}{\makebox(0,0)[lb]{\smash{{\SetFigFont{14}{16.8}{\rmdefault}{\mddefault}{\itdefault}{$y+a_2+b_1$}%
}}}}}
\put(11480,-10221){\makebox(0,0)[lb]{\smash{{\SetFigFont{14}{16.8}{\rmdefault}{\mddefault}{\itdefault}{$x+a_1+b_2$}%
}}}}
\put(13968,-11091){\rotatebox{300.0}{\makebox(0,0)[lb]{\smash{{\SetFigFont{14}{16.8}{\rmdefault}{\mddefault}{\itdefault}{$2y+2a_2+b_1$}%
}}}}}
\put(14064,-17009){\rotatebox{60.0}{\makebox(0,0)[lb]{\smash{{\SetFigFont{14}{16.8}{\rmdefault}{\mddefault}{\itdefault}{$2y+2a_1+2b_2$}%
}}}}}
\put(11131,-18156){\makebox(0,0)[lb]{\smash{{\SetFigFont{14}{16.8}{\rmdefault}{\mddefault}{\itdefault}{$x+a_2$}%
}}}}
\end{picture}%
}
\caption{(a)--(b): Eliminating triangles of side length $0$ from the $b$-fern. (c)--(d): Region reduction when $z=0$.}\label{fig:halvedhexbase3}
\end{figure}

\begin{figure}\centering
\setlength{\unitlength}{3947sp}%
\begingroup\makeatletter\ifx\SetFigFont\undefined%
\gdef\SetFigFont#1#2#3#4#5{%
  \reset@font\fontsize{#1}{#2pt}%
  \fontfamily{#3}\fontseries{#4}\fontshape{#5}%
  \selectfont}%
\fi\endgroup%
\resizebox{13cm}{!}{
\begin{picture}(0,0)%
\includegraphics{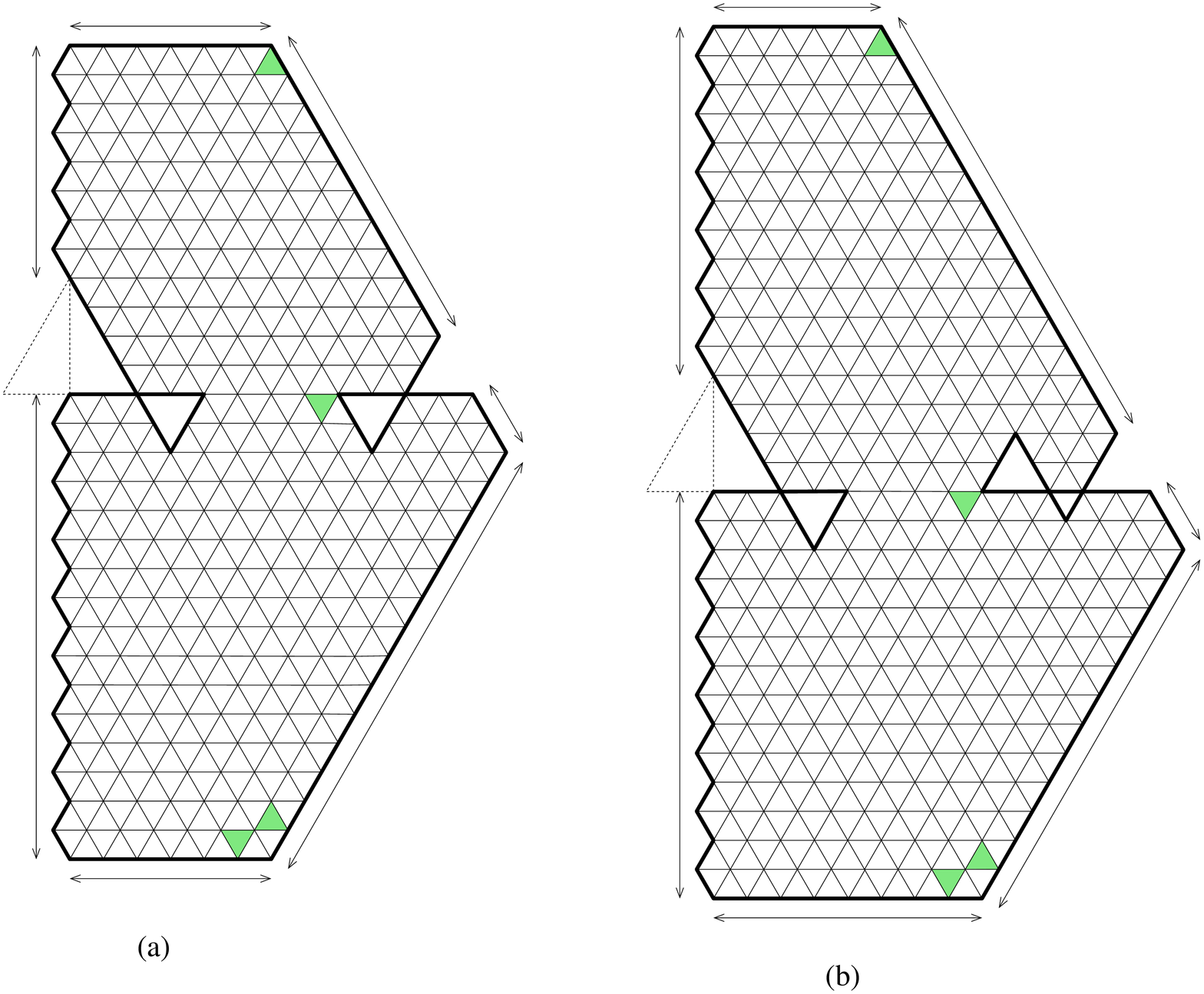}%
\end{picture}%
%
%

\begin{picture}(14672,12426)(1403,-14167)
\put(11229,-8378){\makebox(0,0)[lb]{\smash{{\SetFigFont{14}{16.8}{\rmdefault}{\mddefault}{\itdefault}{$a_2$}%
}}}}
\put(9477,-5038){\rotatebox{90.0}{\makebox(0,0)[lb]{\smash{{\SetFigFont{14}{16.8}{\rmdefault}{\mddefault}{\itdefault}{$y+b_1$}%
}}}}}
\put(1692,-6156){\rotatebox{60.0}{\makebox(0,0)[lb]{\smash{{\SetFigFont{14}{16.8}{\rmdefault}{\mddefault}{\itdefault}{$2a_1$}%
}}}}}
\put(3403,-7166){\makebox(0,0)[lb]{\smash{{\SetFigFont{14}{16.8}{\rmdefault}{\mddefault}{\itdefault}{$a_2$}%
}}}}
\put(6696,-6702){\makebox(0,0)[lb]{\smash{{\SetFigFont{14}{16.8}{\rmdefault}{\mddefault}{\itdefault}{$b_1$}%
}}}}
\put(5851,-7234){\makebox(0,0)[lb]{\smash{{\SetFigFont{14}{16.8}{\rmdefault}{\mddefault}{\itdefault}{$b_2$}%
}}}}
\put(13708,-7911){\makebox(0,0)[lb]{\smash{{\SetFigFont{14}{16.8}{\rmdefault}{\mddefault}{\itdefault}{$b_3$}%
}}}}
\put(2994,-2270){\makebox(0,0)[lb]{\smash{{\SetFigFont{14}{16.8}{\rmdefault}{\mddefault}{\itdefault}{$x+a_2+b_2$}%
}}}}
\put(7775,-6956){\makebox(0,0)[lb]{\smash{{\SetFigFont{14}{16.8}{\rmdefault}{\mddefault}{\itdefault}{$z$}%
}}}}
\put(5817,-3647){\rotatebox{300.0}{\makebox(0,0)[lb]{\smash{{\SetFigFont{14}{16.8}{\rmdefault}{\mddefault}{\itdefault}{$2y+2a_1+b_1$}%
}}}}}
\put(6138,-10972){\rotatebox{60.0}{\makebox(0,0)[lb]{\smash{{\SetFigFont{14}{16.8}{\rmdefault}{\mddefault}{\itdefault}{$2y+z+2a_2+2b_2$}%
}}}}}
\put(1692,-10343){\rotatebox{90.0}{\makebox(0,0)[lb]{\smash{{\SetFigFont{14}{16.8}{\rmdefault}{\mddefault}{\itdefault}{$y+z+a_2+b_2$}%
}}}}}
\put(1638,-4358){\rotatebox{90.0}{\makebox(0,0)[lb]{\smash{{\SetFigFont{14}{16.8}{\rmdefault}{\mddefault}{\itdefault}{$y+b_1$}%
}}}}}
\put(2974,-13036){\makebox(0,0)[lb]{\smash{{\SetFigFont{14}{16.8}{\rmdefault}{\mddefault}{\itdefault}{$x+a_1+b_1$}%
}}}}
\put(9477,-7490){\rotatebox{60.0}{\makebox(0,0)[lb]{\smash{{\SetFigFont{14}{16.8}{\rmdefault}{\mddefault}{\itdefault}{$2a_1$}%
}}}}}
\put(9518,-11345){\rotatebox{90.0}{\makebox(0,0)[lb]{\smash{{\SetFigFont{14}{16.8}{\rmdefault}{\mddefault}{\itdefault}{$y+z+a_2+b_2$}%
}}}}}
\put(11131,-13574){\makebox(0,0)[lb]{\smash{{\SetFigFont{14}{16.8}{\rmdefault}{\mddefault}{\itdefault}{$x+a_1+b_1$}%
}}}}
\put(14720,-11679){\rotatebox{60.0}{\makebox(0,0)[lb]{\smash{{\SetFigFont{14}{16.8}{\rmdefault}{\mddefault}{\itdefault}{$2y+z+2a_2+2b_2$}%
}}}}}
\put(15907,-8221){\makebox(0,0)[lb]{\smash{{\SetFigFont{14}{16.8}{\rmdefault}{\mddefault}{\itdefault}{$z$}%
}}}}
\put(14932,-7914){\makebox(0,0)[lb]{\smash{{\SetFigFont{14}{16.8}{\rmdefault}{\mddefault}{\itdefault}{$b_1$}%
}}}}
\put(14278,-8283){\makebox(0,0)[lb]{\smash{{\SetFigFont{14}{16.8}{\rmdefault}{\mddefault}{\itdefault}{$b_2$}%
}}}}
\put(13614,-4106){\rotatebox{300.0}{\makebox(0,0)[lb]{\smash{{\SetFigFont{14}{16.8}{\rmdefault}{\mddefault}{\itdefault}{$2y+2a_1+b_1$}%
}}}}}
\put(10633,-2030){\makebox(0,0)[lb]{\smash{{\SetFigFont{14}{16.8}{\rmdefault}{\mddefault}{\itdefault}{$x+a_2+b_2$}%
}}}}
\put(13224,-12620){\makebox(0,0)[lb]{\smash{{\SetFigFont{20}{24.0}{\rmdefault}{\mddefault}{\itdefault}{$w$}%
}}}}
\put(4583,-2904){\makebox(0,0)[lb]{\smash{{\SetFigFont{20}{24.0}{\rmdefault}{\mddefault}{\itdefault}{$u$}%
}}}}
\put(12021,-2685){\makebox(0,0)[lb]{\smash{{\SetFigFont{20}{24.0}{\rmdefault}{\mddefault}{\itdefault}{$u$}%
}}}}
\put(5217,-7063){\makebox(0,0)[lb]{\smash{{\SetFigFont{20}{24.0}{\rmdefault}{\mddefault}{\itdefault}{$v$}%
}}}}
\put(13054,-8242){\makebox(0,0)[lb]{\smash{{\SetFigFont{20}{24.0}{\rmdefault}{\mddefault}{\itdefault}{$v$}%
}}}}
\put(4562,-12122){\makebox(0,0)[lb]{\smash{{\SetFigFont{20}{24.0}{\rmdefault}{\mddefault}{\itdefault}{$w$}%
}}}}
\put(4194,-12368){\makebox(0,0)[lb]{\smash{{\SetFigFont{20}{24.0}{\rmdefault}{\mddefault}{\itdefault}{$s$}%
}}}}
\put(12866,-12845){\makebox(0,0)[lb]{\smash{{\SetFigFont{20}{24.0}{\rmdefault}{\mddefault}{\itdefault}{$s$}%
}}}}
\end{picture}%
}
\caption{How to apply Kuo condensation to a $H^{(1)}$-type region for the case $n$ is even (left) and for the case $n$ is odd (right).}\label{KuoH1}
\end{figure}

\begin{figure}\centering
\setlength{\unitlength}{3947sp}%
\begingroup\makeatletter\ifx\SetFigFont\undefined%
\gdef\SetFigFont#1#2#3#4#5{%
  \reset@font\fontsize{#1}{#2pt}%
  \fontfamily{#3}\fontseries{#4}\fontshape{#5}%
  \selectfont}%
\fi\endgroup%
\resizebox{10cm}{!}{
\begin{picture}(0,0)%
\includegraphics{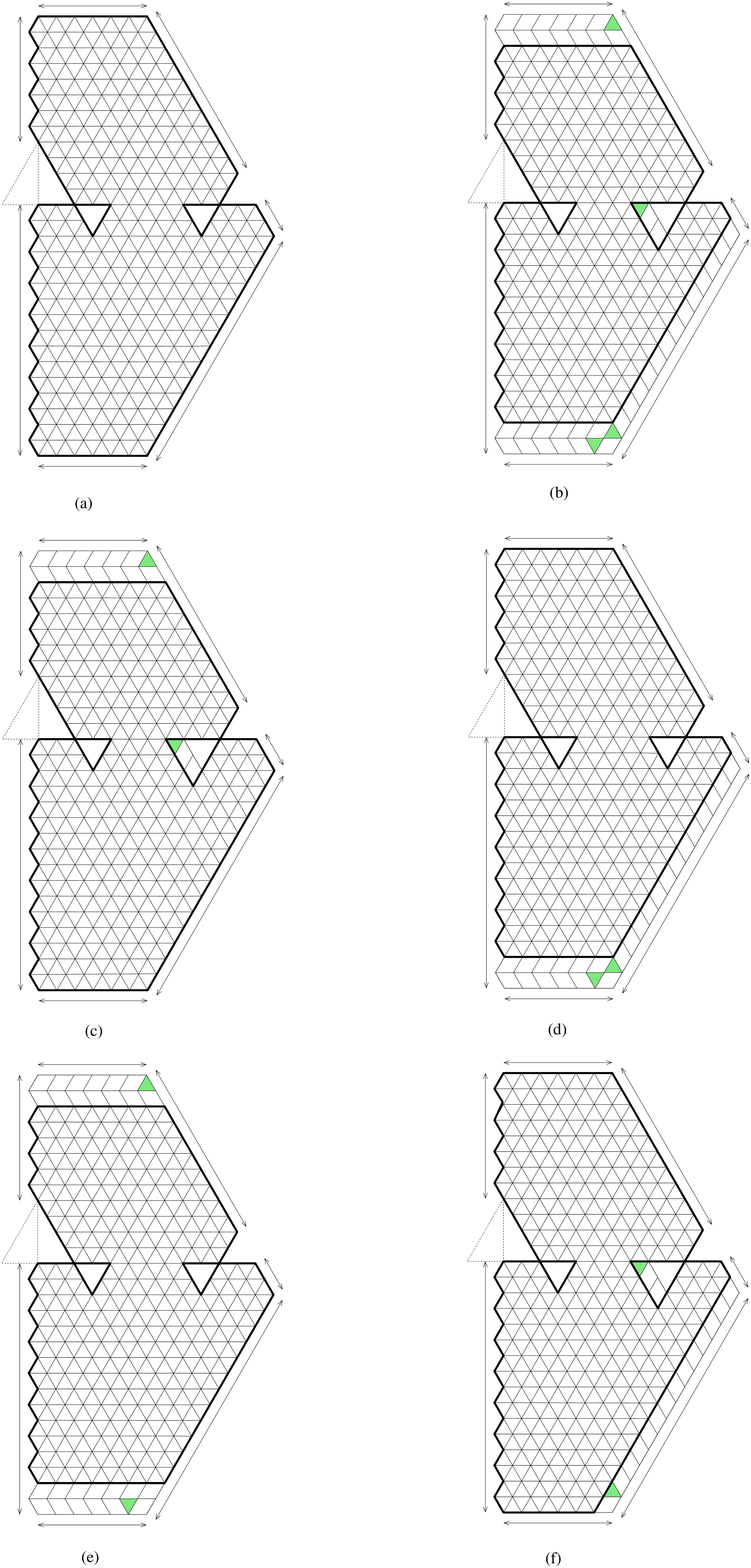}%
\end{picture}%
%
%

\begin{picture}(17101,35694)(1394,-37633)
\put(13473,-36892){\makebox(0,0)[lb]{\smash{{\SetFigFont{18}{16.8}{\rmdefault}{\mddefault}{\itdefault}{$x+a_1+b_1$}%
}}}}
\put(12137,-28214){\rotatebox{90.0}{\makebox(0,0)[lb]{\smash{{\SetFigFont{18}{16.8}{\rmdefault}{\mddefault}{\itdefault}{$y+b_1$}%
}}}}}
\put(12191,-34199){\rotatebox{90.0}{\makebox(0,0)[lb]{\smash{{\SetFigFont{18}{16.8}{\rmdefault}{\mddefault}{\itdefault}{$y+z+a_2+b_2$}%
}}}}}
\put(16637,-34828){\rotatebox{60.0}{\makebox(0,0)[lb]{\smash{{\SetFigFont{18}{16.8}{\rmdefault}{\mddefault}{\itdefault}{$2y+z+2a_2+2b_2$}%
}}}}}
\put(16316,-27503){\rotatebox{300.0}{\makebox(0,0)[lb]{\smash{{\SetFigFont{18}{16.8}{\rmdefault}{\mddefault}{\itdefault}{$2y+2a_1+b_1$}%
}}}}}
\put(18274,-30812){\makebox(0,0)[lb]{\smash{{\SetFigFont{18}{16.8}{\rmdefault}{\mddefault}{\itdefault}{$z$}%
}}}}
\put(13493,-26126){\makebox(0,0)[lb]{\smash{{\SetFigFont{18}{16.8}{\rmdefault}{\mddefault}{\itdefault}{$x+a_2+b_2$}%
}}}}
\put(16350,-31090){\makebox(0,0)[lb]{\smash{{\SetFigFont{18}{16.8}{\rmdefault}{\mddefault}{\itdefault}{$b_2$}%
}}}}
\put(17195,-30558){\makebox(0,0)[lb]{\smash{{\SetFigFont{18}{16.8}{\rmdefault}{\mddefault}{\itdefault}{$b_1$}%
}}}}
\put(13902,-31022){\makebox(0,0)[lb]{\smash{{\SetFigFont{18}{16.8}{\rmdefault}{\mddefault}{\itdefault}{$a_2$}%
}}}}
\put(12191,-30012){\rotatebox{60.0}{\makebox(0,0)[lb]{\smash{{\SetFigFont{18}{16.8}{\rmdefault}{\mddefault}{\itdefault}{$2a_1$}%
}}}}}
\put(2963,-36936){\makebox(0,0)[lb]{\smash{{\SetFigFont{18}{16.8}{\rmdefault}{\mddefault}{\itdefault}{$x+a_1+b_1$}%
}}}}
\put(1627,-28258){\rotatebox{90.0}{\makebox(0,0)[lb]{\smash{{\SetFigFont{18}{16.8}{\rmdefault}{\mddefault}{\itdefault}{$y+b_1$}%
}}}}}
\put(1692,-6156){\rotatebox{60.0}{\makebox(0,0)[lb]{\smash{{\SetFigFont{18}{16.8}{\rmdefault}{\mddefault}{\itdefault}{$2a_1$}%
}}}}}
\put(3403,-7166){\makebox(0,0)[lb]{\smash{{\SetFigFont{18}{16.8}{\rmdefault}{\mddefault}{\itdefault}{$a_2$}%
}}}}
\put(6696,-6702){\makebox(0,0)[lb]{\smash{{\SetFigFont{18}{16.8}{\rmdefault}{\mddefault}{\itdefault}{$b_1$}%
}}}}
\put(5851,-7234){\makebox(0,0)[lb]{\smash{{\SetFigFont{18}{16.8}{\rmdefault}{\mddefault}{\itdefault}{$b_2$}%
}}}}
\put(2994,-2270){\makebox(0,0)[lb]{\smash{{\SetFigFont{18}{16.8}{\rmdefault}{\mddefault}{\itdefault}{$x+a_2+b_2$}%
}}}}
\put(7775,-6956){\makebox(0,0)[lb]{\smash{{\SetFigFont{18}{16.8}{\rmdefault}{\mddefault}{\itdefault}{$z$}%
}}}}
\put(5817,-3647){\rotatebox{300.0}{\makebox(0,0)[lb]{\smash{{\SetFigFont{18}{16.8}{\rmdefault}{\mddefault}{\itdefault}{$2y+2a_1+b_1$}%
}}}}}
\put(6138,-10972){\rotatebox{60.0}{\makebox(0,0)[lb]{\smash{{\SetFigFont{18}{16.8}{\rmdefault}{\mddefault}{\itdefault}{$2y+z+2a_2+2b_2$}%
}}}}}
\put(1692,-10343){\rotatebox{90.0}{\makebox(0,0)[lb]{\smash{{\SetFigFont{18}{16.8}{\rmdefault}{\mddefault}{\itdefault}{$y+z+a_2+b_2$}%
}}}}}
\put(1638,-4358){\rotatebox{90.0}{\makebox(0,0)[lb]{\smash{{\SetFigFont{18}{16.8}{\rmdefault}{\mddefault}{\itdefault}{$y+b_1$}%
}}}}}
\put(2974,-13036){\makebox(0,0)[lb]{\smash{{\SetFigFont{18}{16.8}{\rmdefault}{\mddefault}{\itdefault}{$x+a_1+b_1$}%
}}}}
\put(1681,-34243){\rotatebox{90.0}{\makebox(0,0)[lb]{\smash{{\SetFigFont{18}{16.8}{\rmdefault}{\mddefault}{\itdefault}{$y+z+a_2+b_2$}%
}}}}}
\put(12202,-6112){\rotatebox{60.0}{\makebox(0,0)[lb]{\smash{{\SetFigFont{18}{16.8}{\rmdefault}{\mddefault}{\itdefault}{$2a_1$}%
}}}}}
\put(13913,-7122){\makebox(0,0)[lb]{\smash{{\SetFigFont{18}{16.8}{\rmdefault}{\mddefault}{\itdefault}{$a_2$}%
}}}}
\put(17206,-6658){\makebox(0,0)[lb]{\smash{{\SetFigFont{18}{16.8}{\rmdefault}{\mddefault}{\itdefault}{$b_1$}%
}}}}
\put(16361,-7190){\makebox(0,0)[lb]{\smash{{\SetFigFont{18}{16.8}{\rmdefault}{\mddefault}{\itdefault}{$b_2$}%
}}}}
\put(13504,-2226){\makebox(0,0)[lb]{\smash{{\SetFigFont{18}{16.8}{\rmdefault}{\mddefault}{\itdefault}{$x+a_2+b_2$}%
}}}}
\put(18285,-6912){\makebox(0,0)[lb]{\smash{{\SetFigFont{18}{16.8}{\rmdefault}{\mddefault}{\itdefault}{$z$}%
}}}}
\put(16327,-3603){\rotatebox{300.0}{\makebox(0,0)[lb]{\smash{{\SetFigFont{18}{16.8}{\rmdefault}{\mddefault}{\itdefault}{$2y+2a_1+b_1$}%
}}}}}
\put(16648,-10928){\rotatebox{60.0}{\makebox(0,0)[lb]{\smash{{\SetFigFont{18}{16.8}{\rmdefault}{\mddefault}{\itdefault}{$2y+z+2a_2+2b_2$}%
}}}}}
\put(12202,-10299){\rotatebox{90.0}{\makebox(0,0)[lb]{\smash{{\SetFigFont{18}{16.8}{\rmdefault}{\mddefault}{\itdefault}{$y+z+a_2+b_2$}%
}}}}}
\put(12148,-4314){\rotatebox{90.0}{\makebox(0,0)[lb]{\smash{{\SetFigFont{18}{16.8}{\rmdefault}{\mddefault}{\itdefault}{$y+b_1$}%
}}}}}
\put(13484,-12992){\makebox(0,0)[lb]{\smash{{\SetFigFont{18}{16.8}{\rmdefault}{\mddefault}{\itdefault}{$x+a_1+b_1$}%
}}}}
\put(6127,-34872){\rotatebox{60.0}{\makebox(0,0)[lb]{\smash{{\SetFigFont{18}{16.8}{\rmdefault}{\mddefault}{\itdefault}{$2y+z+2a_2+2b_2$}%
}}}}}
\put(5806,-27547){\rotatebox{300.0}{\makebox(0,0)[lb]{\smash{{\SetFigFont{18}{16.8}{\rmdefault}{\mddefault}{\itdefault}{$2y+2a_1+b_1$}%
}}}}}
\put(7764,-30856){\makebox(0,0)[lb]{\smash{{\SetFigFont{18}{16.8}{\rmdefault}{\mddefault}{\itdefault}{$z$}%
}}}}
\put(2983,-26170){\makebox(0,0)[lb]{\smash{{\SetFigFont{18}{16.8}{\rmdefault}{\mddefault}{\itdefault}{$x+a_2+b_2$}%
}}}}
\put(1703,-18221){\rotatebox{60.0}{\makebox(0,0)[lb]{\smash{{\SetFigFont{18}{16.8}{\rmdefault}{\mddefault}{\itdefault}{$2a_1$}%
}}}}}
\put(3414,-19231){\makebox(0,0)[lb]{\smash{{\SetFigFont{18}{16.8}{\rmdefault}{\mddefault}{\itdefault}{$a_2$}%
}}}}
\put(6707,-18767){\makebox(0,0)[lb]{\smash{{\SetFigFont{18}{16.8}{\rmdefault}{\mddefault}{\itdefault}{$b_1$}%
}}}}
\put(5862,-19299){\makebox(0,0)[lb]{\smash{{\SetFigFont{18}{16.8}{\rmdefault}{\mddefault}{\itdefault}{$b_2$}%
}}}}
\put(3005,-14335){\makebox(0,0)[lb]{\smash{{\SetFigFont{18}{16.8}{\rmdefault}{\mddefault}{\itdefault}{$x+a_2+b_2$}%
}}}}
\put(7786,-19021){\makebox(0,0)[lb]{\smash{{\SetFigFont{18}{16.8}{\rmdefault}{\mddefault}{\itdefault}{$z$}%
}}}}
\put(5828,-15712){\rotatebox{300.0}{\makebox(0,0)[lb]{\smash{{\SetFigFont{18}{16.8}{\rmdefault}{\mddefault}{\itdefault}{$2y+2a_1+b_1$}%
}}}}}
\put(6149,-23037){\rotatebox{60.0}{\makebox(0,0)[lb]{\smash{{\SetFigFont{18}{16.8}{\rmdefault}{\mddefault}{\itdefault}{$2y+z+2a_2+2b_2$}%
}}}}}
\put(1703,-22408){\rotatebox{90.0}{\makebox(0,0)[lb]{\smash{{\SetFigFont{18}{16.8}{\rmdefault}{\mddefault}{\itdefault}{$y+z+a_2+b_2$}%
}}}}}
\put(1649,-16423){\rotatebox{90.0}{\makebox(0,0)[lb]{\smash{{\SetFigFont{18}{16.8}{\rmdefault}{\mddefault}{\itdefault}{$y+b_1$}%
}}}}}
\put(2985,-25101){\makebox(0,0)[lb]{\smash{{\SetFigFont{18}{16.8}{\rmdefault}{\mddefault}{\itdefault}{$x+a_1+b_1$}%
}}}}
\put(5840,-31134){\makebox(0,0)[lb]{\smash{{\SetFigFont{18}{16.8}{\rmdefault}{\mddefault}{\itdefault}{$b_2$}%
}}}}
\put(6685,-30602){\makebox(0,0)[lb]{\smash{{\SetFigFont{18}{16.8}{\rmdefault}{\mddefault}{\itdefault}{$b_1$}%
}}}}
\put(12213,-18177){\rotatebox{60.0}{\makebox(0,0)[lb]{\smash{{\SetFigFont{18}{16.8}{\rmdefault}{\mddefault}{\itdefault}{$2a_1$}%
}}}}}
\put(13924,-19187){\makebox(0,0)[lb]{\smash{{\SetFigFont{18}{16.8}{\rmdefault}{\mddefault}{\itdefault}{$a_2$}%
}}}}
\put(17217,-18723){\makebox(0,0)[lb]{\smash{{\SetFigFont{18}{16.8}{\rmdefault}{\mddefault}{\itdefault}{$b_1$}%
}}}}
\put(16372,-19255){\makebox(0,0)[lb]{\smash{{\SetFigFont{18}{16.8}{\rmdefault}{\mddefault}{\itdefault}{$b_2$}%
}}}}
\put(13515,-14291){\makebox(0,0)[lb]{\smash{{\SetFigFont{18}{16.8}{\rmdefault}{\mddefault}{\itdefault}{$x+a_2+b_2$}%
}}}}
\put(18296,-18977){\makebox(0,0)[lb]{\smash{{\SetFigFont{18}{16.8}{\rmdefault}{\mddefault}{\itdefault}{$z$}%
}}}}
\put(16338,-15668){\rotatebox{300.0}{\makebox(0,0)[lb]{\smash{{\SetFigFont{18}{16.8}{\rmdefault}{\mddefault}{\itdefault}{$2y+2a_1+b_1$}%
}}}}}
\put(16659,-22993){\rotatebox{60.0}{\makebox(0,0)[lb]{\smash{{\SetFigFont{18}{16.8}{\rmdefault}{\mddefault}{\itdefault}{$2y+z+2a_2+2b_2$}%
}}}}}
\put(12213,-22364){\rotatebox{90.0}{\makebox(0,0)[lb]{\smash{{\SetFigFont{18}{16.8}{\rmdefault}{\mddefault}{\itdefault}{$y+z+a_2+b_2$}%
}}}}}
\put(12159,-16379){\rotatebox{90.0}{\makebox(0,0)[lb]{\smash{{\SetFigFont{18}{16.8}{\rmdefault}{\mddefault}{\itdefault}{$y+b_1$}%
}}}}}
\put(13495,-25057){\makebox(0,0)[lb]{\smash{{\SetFigFont{18}{16.8}{\rmdefault}{\mddefault}{\itdefault}{$x+a_1+b_1$}%
}}}}
\put(3392,-31066){\makebox(0,0)[lb]{\smash{{\SetFigFont{14}{16.8}{\rmdefault}{\mddefault}{\itdefault}{$a_2$}%
}}}}
\put(1681,-30056){\rotatebox{60.0}{\makebox(0,0)[lb]{\smash{{\SetFigFont{18}{16.8}{\rmdefault}{\mddefault}{\itdefault}{$2a_1$}%
}}}}}
\put(14715,-24389){\makebox(0,0)[lb]{\smash{{\SetFigFont{20}{24.0}{\rmdefault}{\mddefault}{\itdefault}{$s$}%
}}}}
\put(15083,-24143){\makebox(0,0)[lb]{\smash{{\SetFigFont{20}{24.0}{\rmdefault}{\mddefault}{\itdefault}{$w$}%
}}}}
\put(5228,-19128){\makebox(0,0)[lb]{\smash{{\SetFigFont{20}{24.0}{\rmdefault}{\mddefault}{\itdefault}{$v$}%
}}}}
\put(4594,-14969){\makebox(0,0)[lb]{\smash{{\SetFigFont{20}{24.0}{\rmdefault}{\mddefault}{\itdefault}{$u$}%
}}}}
\put(14704,-12324){\makebox(0,0)[lb]{\smash{{\SetFigFont{20}{24.0}{\rmdefault}{\mddefault}{\itdefault}{$s$}%
}}}}
\put(15072,-12078){\makebox(0,0)[lb]{\smash{{\SetFigFont{20}{24.0}{\rmdefault}{\mddefault}{\itdefault}{$w$}%
}}}}
\put(15727,-7019){\makebox(0,0)[lb]{\smash{{\SetFigFont{20}{24.0}{\rmdefault}{\mddefault}{\itdefault}{$v$}%
}}}}
\put(15093,-2860){\makebox(0,0)[lb]{\smash{{\SetFigFont{20}{24.0}{\rmdefault}{\mddefault}{\itdefault}{$u$}%
}}}}
\put(15061,-35978){\makebox(0,0)[lb]{\smash{{\SetFigFont{20}{24.0}{\rmdefault}{\mddefault}{\itdefault}{$w$}%
}}}}
\put(15716,-30919){\makebox(0,0)[lb]{\smash{{\SetFigFont{20}{24.0}{\rmdefault}{\mddefault}{\itdefault}{$v$}%
}}}}
\put(4572,-26804){\makebox(0,0)[lb]{\smash{{\SetFigFont{20}{24.0}{\rmdefault}{\mddefault}{\itdefault}{$u$}%
}}}}
\put(4183,-36268){\makebox(0,0)[lb]{\smash{{\SetFigFont{20}{24.0}{\rmdefault}{\mddefault}{\itdefault}{$s$}%
}}}}
\end{picture}%
}
\caption{Obtaining a recurrence for the number of tilings of the $H^{(1)}$-type regions.}\label{KuoH1b}
\end{figure}

\medskip

For induction step, we assume that $x,y,b+\overline{n}$ are all positive and that the our tiling formulas (\ref{main1eqn}) and (\ref{mainR1eqn}) hold respectively for any $H^{(1)}$-type and $R^{(1)}$-type regions
with the sum of the $y$-, $z$-, $b$- and $\overline{n}$-parameters strictly less than $y+z+b+\overline{n}$.
Before obtaining the recurrences for the $H^{(1)}$-type and $R^{(1)}$-type regions, we have two important notices as follows.

We can assume that all the terms in the sequence $\textbf{b}$ are positive. Indeed, if an even number of initial terms in the sequence $\textbf{b}$ are equal to $0$, say $b_1=b_2=\dotsc=b_{2l}$,
 then can simply eliminate this $0$ terms from the sequence $\textbf{b}$.  If  $b_1=0$ and $b_{2}>0$,
 by removing forced lozenges along the northeast side, we get a new $H^{(1)}$-type (resp., $R^{(1)}$-type) region with fewer holes
(see Figure \ref{fig:halvedhexbase3}(a) for the case of $H^{(1)}$-type regions, the case of $R^{(1)}$-type regions can be treated similarly).
 Finally, if we have $b_1>0$, and some middle terms
equal to $0$, say $b_{i}=\dots=b_{i+l}=0$ and $b_{i-1},b_{i+l+1}>0$, then we can remove several forced lozenges and combine the $b_{i-1}$- and the $b_{i+l+1}$-triangles in the $b$-fern into a triangle of side length $b_{i-1}+b_{i+l+1}$ as in Figure \ref{fig:halvedhexbase3}(b) (for a $H^{(1)}$-type region, the case of $R^{(1)}$-type regions can be treated in the same manner)
 to get a new region with the sum of the four parameter strictly less than $ y+z+b+\overline{n}$. In the rest of the proof, we assume that all $b_i>0$, i.e. $n=\overline{n}$.

We can assume further that $z>0$. Indeed, if $z=0$ (and $b_1>0$ by the above arguments), we remove forced lozenges along the southeast side of an $H^{(1)}$-type region
 to get a new $R^{(1)}$-type  region with the sum of the four parameters smaller (see Figure \ref{fig:halvedhexbase3}(c)). Similarly, when $z=0$, we can obtain a `smaller' $H^{(1)}$-type
region from the $R^{(1)}$-type region by removing forced lozenges as in Figure \ref{fig:halvedhexbase3}(d).

With the above two assumptions, we apply Kuo's  Condensation Theorem \ref{kuothm} to the dual graph $G$ of the region $H^{(1)}_{x,y,z}(\textbf{a};\textbf{b})$,
with the four vertices $u,v,w,s$ chosen as in Figure \ref{KuoH1}. In particular, the unit triangles corresponding the vertices $u,v,w,s$ are the shaded unit triangles with the same label.
The $u$-triangle is the up-pointing shaded unit triangle on  the upper-right corner of the region, and the $v$-triangle is the down-pointing shaded unit triangle adjacent to the left most of the $b$fern. The $w$- and $s$-triangles form a shaded bowtie on the lower-right corner.

First, we consider the region corresponding to the graph $G-\{u,v,w,s\}$. It is the region $H^{(1)}_{x,y,z}(a,b)$ with all the four $u$-, $v$-, $w$-, $s$-triangles removed.
The removal of the unit triangles yields several forced lozenges. By removing these forced lozenges, we get back the region $H^{(1)}_{x,y-1,z-1}(\textbf{a};\textbf{b}^{+1})$, where $\textbf{b}^{+1}$
denotes the sequence obtained from the sequence $\textbf{b}$ by adding $1$ to the last term if $\textbf{b}$ has an even number of terms, and by including a new term $1$ if $\textbf{b}$ has an odd number of terms
 (see the region restricted by the bold contour in Figure \ref{KuoH1b}(b) for the case when $\textbf{b}$ has an even number of terms; in the case $\textbf{b}$ has an odd number of terms the removal of the $v$-triangle forms a new down-pointing triangle of side-length $1$ at the end of the $b$-fern). Since the removal of these forced lozenges (with all weights 1) does not change the number of tilings, we have
\begin{equation}\label{proof1eq1}
\M(G-\{u,v,w,s\})=\M(H^{(1)}_{x,y-1,z-1}(\textbf{a};\textbf{b}^{+1})).
\end{equation}
Similarly, by considering forced lozenges yielded by the removal of the back unit triangles as in Figures \ref{KuoH1b}(c)--(f), respectively, we obtain:
\begin{equation}\label{proof1eq2}
\M(G-\{u,v\})=\M(H^{(1)}_{x,y-1,z}(\textbf{a};\textbf{b}^{+1})),
\end{equation}
\begin{equation}\label{proof1eq3}
\M(G-\{w,s\})=\M(H^{(1)}_{x,y,z-1}(\textbf{a};\textbf{b})),
\end{equation}
\begin{equation}\label{proof1eq4}
\M(G-\{u,v,w,s\})=\M(H^{(1)}_{x+1,y-1,z}(\textbf{a};\textbf{b})),
\end{equation}
and
\begin{equation}\label{proof1eq5}
\M(G-\{u,v,w,s\})=\M(H^{(1)}_{x-1,y,z-1}(\textbf{a};\textbf{b}^{+1})
\end{equation}
Plugging (\ref{proof1eq1})--(\ref{proof1eq5}) into the equation in Kuo's Theorem \ref{kuothm}, we have the following recurrence:
\begin{align}\label{recurrence1}
\M(H^{(1)}_{x,y,z}(\textbf{a};\textbf{b}))\M(H^{(1)}_{x,y-1,z-1}(\textbf{a};\textbf{b}^{+1}))=&\M(H^{(1)}_{x,y-1,z}(\textbf{a};\textbf{b}^{+1}))\M(H^{(1)}_{x,y,z-1}(\textbf{a};\textbf{b}))\notag\\
&+\M(H^{(1)}_{x+1,y-1,z}(\textbf{a};\textbf{b}))\M(H^{(1)}_{x-1,y,z-1}(\textbf{a};\textbf{b}^{+1})).
\end{align}
Working similarly on the region $R^{(1)}_{x,y,z}(\textbf{a}; \textbf{b})$, we get the same recurrence for $R^{(1)}$-type regions:
\begin{align}\label{recurrence2}
\M(R^{(1)}_{x,y,z}(\textbf{a};\textbf{b}))\M(R^{(1)}_{x,y-1,z-1}(\textbf{a};\textbf{b}^{+1}))=&\M(R^{(1)}_{x,y-1,z}(\textbf{a};\textbf{b}^{+1}))\M(R^{(1)}_{x,y,z-1}(\textbf{a};\textbf{b}))\notag\\
&+\M(R^{(1)}_{x+1,y-1,z}(\textbf{a};\textbf{b}))\M(R^{(1)}_{x-1,y,z-1}(\textbf{a};\textbf{b}^{+1})).
\end{align}

Next, we show that the formulas on the right-hand sides of  (\ref{main1eqn}) and  (\ref{mainR1eqn}), denoted by  $\phi_{x,y,z}(\textbf{a};\textbf{b})$ and $\psi_{x,y,z}(\textbf{a};\textbf{b})$, respectively,
satisfy the same recurrence above.
Equivalently, we would like to verify that
\begin{align}\label{recurrence1refined}
\frac{\phi_{x,y,z-1}(\textbf{a};\textbf{b})}{\phi_{x,y,z}(\textbf{a};\textbf{b})}\cdot \frac{\phi_{x,y-2,z}(\textbf{a};\textbf{b}^{+1})}{\phi_{x,y-2,z-1}(\textbf{a};\textbf{b}^{+1})}
+\frac{\phi_{x+1,y-2,z}(\textbf{a};\textbf{b})}{\phi_{x,y,z}(\textbf{a};\textbf{b})}\cdot \frac{\phi_{x-1,y,z-1}(\textbf{a};\textbf{b}^{+1})}{\phi_{x,y-2,z-1}(\textbf{a};\textbf{b}^{+1})}=1.
\end{align}
and that
 \begin{align}\label{recurrenceR1refined}
\frac{\psi_{x,y,z-1}(\textbf{a};\textbf{b})}{\psi_{x,y,z}(\textbf{a};\textbf{b})}\cdot \frac{\psi_{x,y-2,z}(\textbf{a};\textbf{b}^{+1})}{\psi_{x,y-2,z-1}(\textbf{a};\textbf{b}^{+1})}
+\frac{\psi_{x+1,y-2,z}(\textbf{a};\textbf{b})}{\psi_{x,y,z}(\textbf{a};\textbf{b})}\cdot \frac{\psi_{x-1,y,z-1}(\textbf{a};\textbf{b}^{+1})}{\psi_{x,y-2,z-1}(\textbf{a};\textbf{b}^{+1})}=1.
\end{align}
We only present here the verification for the case when $m$ and $n$ are both even, as the other cases can be handled in a completely analogous manner.

By Lemma \ref{TV}, we can simplify the first fraction on the left-hand side of (\ref{recurrence1refined}) as
\begin{align}
\frac{\phi_{x,y,z}(\textbf{a};\textbf{b})}{\phi_{x,y,z-1}(\textbf{a};\textbf{b})}&
=\frac{\Q(a_1,\dotsc,a_{m},x+y,b_{n},\dotsc,b_1,z)}{\Q(a_1,\dotsc,a_{m},x+y,b_{n},\dotsc,b_1,z-1)}\notag\\
&\times\frac{(2\e_a+2\e_b+2y+2z-1)!}{(2\e_a+2\e_b+2z-1)!}\frac{(2a+b+y+z)!}{(2a+b+2y+z)!}\frac{(b+z-1)!}{(b+y+z-1)!}\notag\\
&\times \frac{(2a+b+y+z+1)_{y}}{(x+2a+b+y+z+1)_{y}}
 \frac{[2a+2b+2y+2z+1]_{y}}{[2x+2a+2b+2y+2z+1]_{y}}.
\end{align}
Similarly, we get for the second fraction simplified as
\begin{align}
\frac{\phi_{x,y-2,z}(\textbf{a};\textbf{b}^{+1})}{\phi_{x,y-2,z-1}(\textbf{a};\textbf{b}^{+1})}&
=\frac{\Q(a_1,\dotsc,a_{m},x+y-1,b_{n}+1,\dotsc,b_1,z)}{\Q(a_1,\dotsc,a_{m},x+y-1,b_{n}+1,\dotsc,b_1,z-1)}\notag\\
&\times \frac{(2\e_a+2\e_b+2z+1)!}{(2\e_a+2\e_b+2y+2z-1)!}\frac{(2a+b+2y+z-1)!}{(2a+b+y+z)!}\frac{(b+y+z-1)!}{(b+z)!}\notag\\
&\times \frac{(x+2a+b+y+z+1)_{y-1}}{(2a+b+y+z+1)_{y-1}} \frac{[2x+2a+2b+2y+2z+1]_{y-1}}{[2a+2b+2y+2z+1]_{y-1}}.
\end{align}
From the above simplification and Lemma \ref{QK}, we have the first term on the left-hand side written by:

\begin{align}
&\frac{\phi_{x,y,z-1}(\textbf{a};\textbf{b})}{\phi_{x,y,z}(\textbf{a};\textbf{b})}\frac{\phi_{x,y-2,z}(\textbf{a};\textbf{b}^{+1})}{\phi_{x,y-2,z-1}(\textbf{a};\textbf{b}^{+1})}
=\frac{(2x+2a+b+2y+z)(2a+2b+4y+2z-1)}{(x+2a+b+2y+z)(2x+2a+2b+4y+2z-1)}.
\end{align}

Next, we consider the third and the fourth fractions on the left-hand side of (\ref{recurrence1refined}). We note that the $\Q$-factors cancel out here. By Lemma \ref{TV}, we can simplify
%
%
\begin{align}
&\frac{\phi_{x+1,y-2,z}(\textbf{a};\textbf{b})}{\phi_{x,y,z}(\textbf{a};\textbf{b})}\frac{\phi_{x-1,y,z-1}(\textbf{a};\textbf{b}^{+1})}{\phi_{x,y-2,z-1}(\textbf{a};\textbf{b}^{+1})}
=\frac{x(2x+2a+1)}{(x+2a+b+2y+z)(2x+2a+2b+4y+2z-1)}.
\end{align}
Therefore, (\ref{recurrence1refined}) is now equivalent to
\begin{align}
&\frac{(2x+2a+b+2y+z)(2a+2b+4y+2z-1)}{(x+2a+b+2y+z)(2x+2a+2b+4y+2z-1)}\notag\\&\qquad\qquad\qquad+\frac{x(2x+2a+1)}{(x+2a+b+2y+z)(2x+2a+2b+4y+2z-1)}=1,
\end{align}
which is a true identity.

Similarly, by using Lemmas \ref{TV} and \ref{QK}, we can simplify the terms on the left-hand side of (\ref{recurrenceR1refined}) as:
\begin{align}
\frac{\psi_{x,y,z-1} (\textbf{a};\textbf{b})}{\psi_{x,y,z}(\textbf{a};\textbf{b})}&\cdot \frac{\psi_{x,y-2,z}(\textbf{a};\textbf{b}^{+1})}{\psi_{x,y-2,z-1}(\textbf{a};\textbf{b}^{+1})}
=\frac{(2x+2a+b+2y+z)(2a+2b+4y+2z-1)}{(x+2a+b+2y+z)(2x+2a+2b+4y+2z-1)}
\end{align}
and
\begin{align}
\frac{\psi_{x+1,y-2,z}(\textbf{a};\textbf{b})}{\psi_{x,y,z}(\textbf{a};\textbf{b})}\cdot&\frac{\psi_{x-1,y,z-1}(\textbf{a};\textbf{b}^{+1})}{\psi_{x,y-2,z-1}(\textbf{a};\textbf{b}^{+1})}
=\frac{x(2x+2a+1)}{(x+2a+b+2y+z)(2x+2a+2b+4y+2z-1)}.
\end{align}
This means that (\ref{recurrenceR1refined}) now becomes the true identity:
\begin{align}
&\frac{(2x+2a+b+2y+z)(2a+2b+4y+2z-1)}{(x+2a+b+2y+z)(2x+2a+2b+4y+2z-1)}\notag\\ &\qquad\qquad+\frac{x(2x+2a+1)}{(x+2a+b+2y+z)(2x+2a+2b+4y+2z-1)}=1.
\end{align}

\medskip

To complete our proof we need to show that the second and the third expressions in each of (\ref{main1eq}) and (\ref{mainR1eq}) are equal. In particular, we need to show that
\begin{align}\label{main1eqx}
&\frac{\M(H^{(1)}_{x+y,0,z}(\textbf{a},\textbf{b}))\M(H^{(1)}_{0,2y,z}(\textbf{a},\textbf{b}))}{\M(H^{(1)}_{y,0,z}(\textbf{a},\textbf{b}))}\notag\\
&=2^{-y}\Q(0,a_1,\dotsc,a_{2\lfloor\frac{m+1}{2}\rfloor-1},a_{2\lfloor\frac{m+1}{2}\rfloor}+x+y+b_{2\lfloor\frac{n+1}{2}\rfloor},b_{2\lfloor\frac{n+1}{2}\rfloor-1},\dotsc,b_1)\notag\\
&\quad\times \Q(a_1,\dotsc,a_{\lceil \frac{m-1}{2}\rceil}, a_{\lceil \frac{m-1}{2}\rceil+1}+x+y+b_{\lceil \frac{n-1}{2}\rceil+1},b_{\lceil \frac{n-1}{2}\rceil},\dotsc, b_1,z) \notag\\
&\quad\times \frac{\Hf_2(2\od_a+2\od_b+1)\Hf_2(2\e_a+2\e_b+2z+1)}{\Hf_2(2\od_a+2\od_b+2y+1)\Hf_2(2\e_a+2\e_b+2y+2z+1)} \notag\\
&\quad\times \frac{\Hf(2a+b+2y+z+1)\Hf(b+y+z)}{\Hf(2a+b+y+z+1)\Hf(b+z)}.
\end{align}
and that
\begin{align}\label{mainR1eqx}
&\frac{\M(R^{(1)}_{x+y,0,z}(\textbf{a},\textbf{b}))\M(R^{(1)}_{0,2y,z}(\textbf{a},\textbf{b}))}{\M(R^{(1)}_{y,0,z}(\textbf{a},\textbf{b}))}\notag\\
&=2^{-y}\Q(a_1,\dotsc,a_{2\lceil \frac{m-1}{2}\rceil}, a_{2\lceil \frac{m-1}{2}\rceil+1}+x+y+b_{2\lfloor\frac{n+1}{2}\rfloor},b_{2\lfloor\frac{n+1}{2}\rfloor-1},\dotsc,b_1)\notag\\
&\quad\times \Q(0,a_1,\dotsc,a_{2\lfloor\frac{m+1}{2}\rfloor-1},a_{2\lfloor\frac{m+1}{2}\rfloor}+x+y+b_{2\lceil \frac{n-1}{2}\rceil+1},b_{2\lceil \frac{n-1}{2}\rceil},\dotsc, b_1,z) \notag\\
&\quad\times \frac{\Hf_2(2\e_a+2\od_b+1)\Hf_2(2\od_a+2\e_b+2z+1)}{\Hf_2(2\e_a+2\od_b+2y+1)\Hf_2(2\od_a+2\e_b+2y+2z+1)} \notag\\
&\quad\times \frac{\Hf(2a+b+2y+z+1)\Hf(b+y+z)}{\Hf(2a+b+y+z+1)\Hf(b+z)}.
\end{align}

Let us consider only (\ref{main1eqx}), as (\ref{mainR1eqx}) can be treated in the same way.
Similar to the base case $y=0$ treated above, by Region-splitting Lemma \ref{RS}, we have the product of the two $\Q$-factors on the
right-hand side is exactly $\M(H^{(1)}_{x+y,0,z}(\textbf{a},\textbf{b}))$. Moreover,
by  Region-splitting Lemma again, we can write each of $\M(H^{(1)}_{0,2y,z}(\textbf{a},\textbf{b}))$ and $\M(H^{(1)}_{y,0,z}(\textbf{a},\textbf{b}))$ as a product of the numbers of tilings of the two
$\mathcal{Q}$-type regions, and (\ref{main1eqx}) follows by performing a straightforward simplification using Lemmas \ref{QAR} and \ref{QK}.
\end{proof}

\begin{proof}[Combined proof of Theorems \ref{mainM1} and \ref{mainMR1}]
Similar to the combined proof of Theorems \ref{main1} and \ref{mainR1}, the second and the third expressions in (\ref{mainM1eq}) and  in  (\ref{mainMR1eq}) are equal by  Region-splitting Lemma \ref{RS} and by performing a straightforward simplification
using Lemmas \ref{QAR} and \ref{QK}.

We only need to show that
\begin{align}\label{mainM1eqn}
\M(N^{(1)}_{x,y,z}(\textbf{a},\textbf{b}))
&=2^{a_1-y}\K'(0,a_1+1,a_2,\dotsc,a_{2\lfloor\frac{m+1}{2}\rfloor-1},a_{2\lfloor\frac{m+1}{2}\rfloor}+x+y+b_{2\lfloor\frac{n+1}{2}\rfloor},b_{2\lfloor\frac{n+1}{2}\rfloor-1},\dotsc,b_1)\notag\\
&\quad\times \Q(a_1,\dotsc,a_{\lceil \frac{m-1}{2}\rceil}, a_{\lceil \frac{m-1}{2}\rceil+1}+x+y+b_{\lceil \frac{n-1}{2}\rceil+1},b_{\lceil \frac{n-1}{2}\rceil},\dotsc, b_1,z) \notag\\
&\quad\times \frac{(a+y)!}{a!} \frac{\Hf_2(2\od_a+2\od_b+2)\Hf_2(2\e_a+2\e_b+2z+1)}{\Hf_2(2\od_a+2\od_b+2y+2)
\Hf_2(2\e_a+2\e_b+2y+2z+1)} \notag\\
&\quad\times \frac{\Hf(2a+b+2y+z+1)\Hf(b+y+z)}{\Hf(2a+b+y+z+1)\Hf(b+z)}\notag\\
&\quad\times   \frac{\T(x+1,2a+b+2y+z,y)\T(x+a+1,b+2y+z,y)}{\T(1,2a+b+2y+z,y)\T(a+1,b+2y+z,y)}.
\end{align}
and that
\begin{align}\label{mainMR1eqn}
\M(NR^{(1)}_{x,y,z}(\textbf{a},\textbf{b}))
&=2^{a_1-y}\Q(a_1,\dotsc,a_{2\lceil \frac{m-1}{2}\rceil}, a_{2\lceil \frac{m-1}{2}\rceil+1}+x+y+b_{2\lfloor\frac{n+1}{2}\rfloor},b_{2\lfloor\frac{n+1}{2}\rfloor-1},\dotsc,b_1)\notag\\
&\quad\times \K'(0,a_1+1,a_2,\dotsc,a_{2\lfloor\frac{m+1}{2}\rfloor-1},a_{2\lfloor\frac{m+1}{2}\rfloor}+x+y+b_{2\lceil \frac{n-1}{2}\rceil+1},b_{2\lceil \frac{n-1}{2}\rceil},\dotsc, b_1,z) \notag\\
&\quad\times\frac{(a+y)!}{a!} \frac{\Hf_2(2\e_a+2\od_b+1)\Hf_2(2\od_a+2\e_b+2z+2)}{\Hf_2(2\e_a+2\od_b+2y+1)
\Hf_2(2\od_a+2\e_b+2y+2z+2)} \notag\\
&\quad\times \frac{\Hf(2a+b+2y+z+1)\Hf(b+y+z)}{\Hf(2a+b+y+z+1)\Hf(b+z)}\notag\\
&\quad\times   \frac{\T(x+1,2a+b+2y+z,y)\T(x+a+1,b+2y+z,y)}{\T(1,2a+b+2y+z,y)\T(a+1,b+2y+z,y)}.
\end{align}
by induction on $y+z+b+\overline{n}$.

The base cases are still the cases $x=0$, $y=0$, and $b+\overline{n}=0$. While cases $x=0$ and $y=0$ follow from Lemmas \ref{RS} and \ref{QAR}, the case $b+\overline{n}=0$ was again already treated in \cite{Halfhex2}.

The induction step follows the lines in the proof of Theorems \ref{main1} and \ref{mainR1}. Without loss of generality, we can assume that $n=\overline{n}$ and that $z>0$.
Next, we apply Kuo Condensation with the four unit triangles corresponding the four vertices $u,v,w,s$  chosen similarly to that in Figure \ref{KuoH1} for the $H^{(1)}$-type region.
By considering forced lozenges yielded from the removal of these unit triangles, we have the following recurrences for the $N^{(1)}$-type and the $RN^{(1)}$-type regions:
\begin{align}\label{recurrenceM1}
\M(N^{1}_{x,y,z}(\textbf{a},\textbf{b}))\M(N^{1}_{x,y-2,z-1}(\textbf{a},\textbf{b}^{+1}))=&\M(N^{1}_{x,y-2,z}(\textbf{a},\textbf{b}^{+1}))\M(N^{1}_{x,y,z-1}(\textbf{a},\textbf{b}))\notag\\
&+\M(N^{1}_{x+1,y-2,z}(\textbf{a},\textbf{b}))\M(N^{1}_{x-1,y,z-1}(\textbf{a},\textbf{b}^{+1})),
\end{align}
\begin{align}\label{recurrenceMR1}
\M(RN^{1}_{x,y,z}(\textbf{a},\textbf{b}))\M(RN^{1}_{x,y-2,z-}(\textbf{a},\textbf{b}^{+1}))=&\M(RN^{1}_{x,y-2,z}(\textbf{a},\textbf{b}^{+1}))\M(RN^{1}_{x,y,z-1}(\textbf{a},\textbf{b}))\notag\\
&+\M(RN^{1}_{x+1,y-2,z}(\textbf{a},\textbf{b}))\M(RN^{1}_{x-1,y,z-1}(\textbf{a},\textbf{b}^{+1})).
\end{align}
We now only need to show that the expressions on the right-hand sides of (\ref{mainM1eqn}) and (\ref{mainMR1eqn}), denoted by $f_{x,y,z}(\textbf{a},\textbf{b})$ and $g_{x,y,z}(\textbf{a},\textbf{b})$, respectively, satisfy the same recurrence.
It is equivalent to show that
\begin{align}\label{recurrenceM1refined}
\frac{f_{x,y,z-1}(\textbf{a},\textbf{b})}{f_{x,y,z}(\textbf{a},\textbf{b})}\cdot \frac{f_{x,y-2,z}(\textbf{a},\textbf{b}^{+1})}{f_{x,y-2,z-1}(\textbf{a},\textbf{b}^{+1})}+
\frac{f_{x+1,y-2,z}(\textbf{a},\textbf{b})}{f_{x,y,z}(\textbf{a},\textbf{b})}\cdot \frac{f_{x-1,y,z-1}(\textbf{a},\textbf{b}^{+1})}{f_{x,y-2,z-1}(\textbf{a},\textbf{b}^{+1})}=1.
\end{align}
and that
\begin{align}\label{recurrenceMR1refined}
\frac{g_{x,y,z-1}(\textbf{a},\textbf{b})}{g_{x,y,z}(\textbf{a},\textbf{b})}\cdot \frac{g_{x,y-2,z}(\textbf{a},\textbf{b}^{+1})}{g_{x,y-2,z-1}(\textbf{a},\textbf{b}^{+1})}+
\frac{g_{x+1,y-2,z}(\textbf{a},\textbf{b})}{g_{x,y,z}(\textbf{a},\textbf{b})}\cdot \frac{g_{x-1,y,z-1}(\textbf{a},\textbf{b}^{+1})}{g_{x,y-2,z-1}(\textbf{a},\textbf{b}^{+1})}=1.
\end{align}

Performing a simplification using Lemmas \ref{QK} and \ref{TV},  (\ref{recurrenceM1refined}) and (\ref{recurrenceM1refined}) are both reduced to the following true identity
\begin{align}
\frac{(2x+2a+b+2y+z)(a+b+2y+z)}{(x+2a+b+2y+z)(x+a+b+2y+z)}+\frac{x(x+a)}{(x+2a+b+2y+z)(x+a+b+2y+z)}=1.
\end{align}
This finishes the proof.
\end{proof}

The combined proofs of Theorems \ref{main2} and \ref{mainR2}, Theorems \ref{mainW1} and \ref{mainRW1}, Theorems \ref{mainW2} and \ref{mainRW2}, Theorems \ref{mainM2} and \ref{mainMR2},
Theorems \ref{mainM3} and \ref{mainMR3}, and Theorems \ref{mainM4} and \ref{mainMR4} are similar and omitted.

\begin{figure}\centering
\setlength{\unitlength}{3947sp}%
\begingroup\makeatletter\ifx\SetFigFont\undefined%
\gdef\SetFigFont#1#2#3#4#5{%
  \reset@font\fontsize{#1}{#2pt}%
  \fontfamily{#3}\fontseries{#4}\fontshape{#5}%
  \selectfont}%
\fi\endgroup%
\resizebox{15cm}{!}{
\begin{picture}(0,0)%
\includegraphics{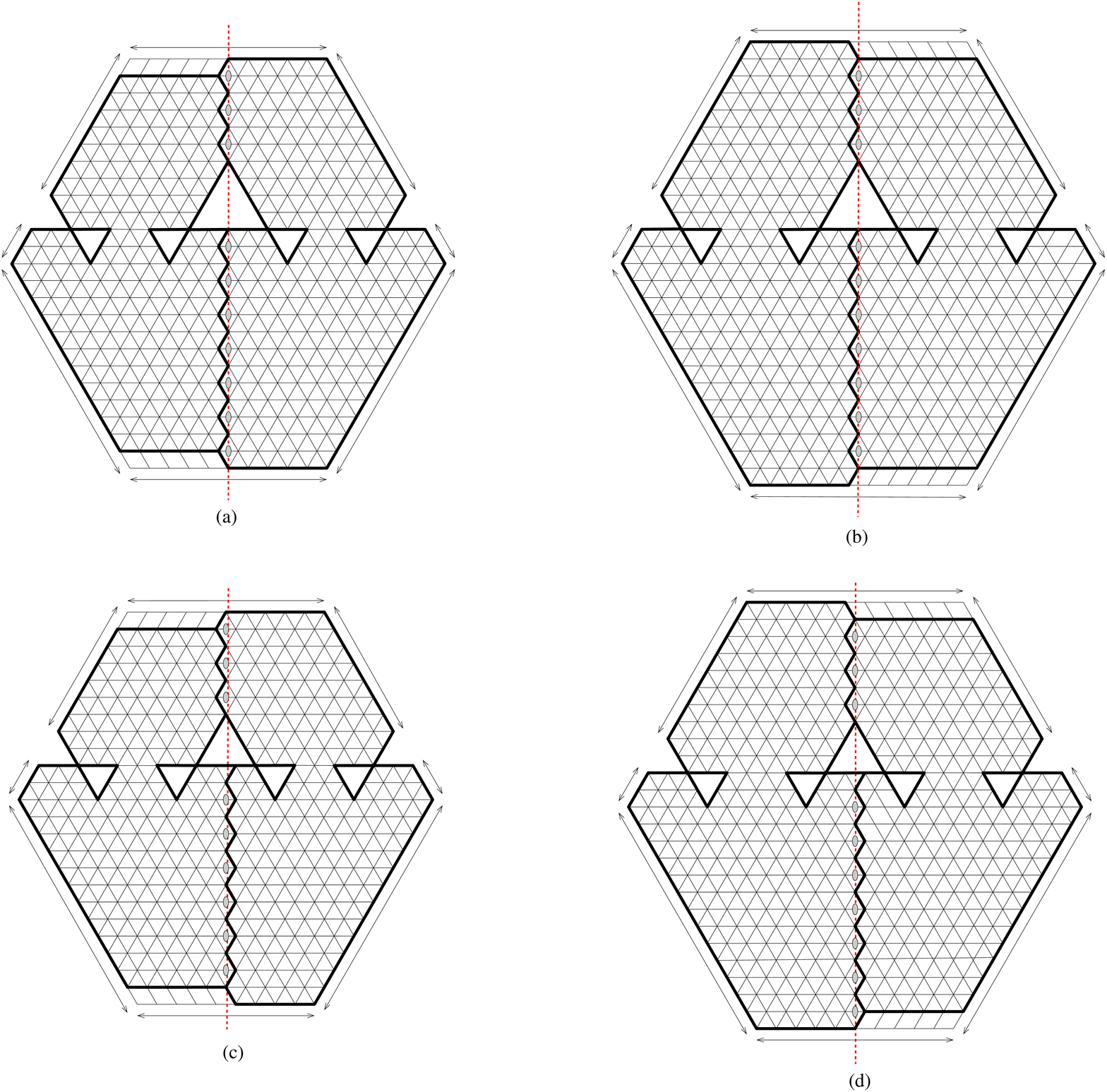}%
\end{picture}%
%
%

\begin{picture}(23176,22553)(2614,-24673)
\put(16607,-4828){\rotatebox{60.0}{\makebox(0,0)[lb]{\smash{{\SetFigFont{17}{20.4}{\rmdefault}{\mddefault}{\updefault}{$y+a_1+b_1$}%
}}}}}
\put(23809,-3897){\rotatebox{300.0}{\makebox(0,0)[lb]{\smash{{\SetFigFont{17}{20.4}{\rmdefault}{\mddefault}{\updefault}{$y+a_1+b_1$}%
}}}}}
\put(17087,-7066){\makebox(0,0)[lb]{\smash{{\SetFigFont{17}{20.4}{\rmdefault}{\mddefault}{\updefault}{$b_2$}%
}}}}
\put(23627,-7059){\makebox(0,0)[lb]{\smash{{\SetFigFont{17}{20.4}{\rmdefault}{\mddefault}{\updefault}{$b_2$}%
}}}}
\put(23348,-18378){\makebox(0,0)[lb]{\smash{{\SetFigFont{17}{20.4}{\rmdefault}{\mddefault}{\updefault}{$b_2$}%
}}}}
\put(17213,-18378){\makebox(0,0)[lb]{\smash{{\SetFigFont{17}{20.4}{\rmdefault}{\mddefault}{\updefault}{$b_2$}%
}}}}
\put(9883,-18232){\makebox(0,0)[lb]{\smash{{\SetFigFont{17}{20.4}{\rmdefault}{\mddefault}{\updefault}{$b_2$}%
}}}}
\put(4543,-18210){\makebox(0,0)[lb]{\smash{{\SetFigFont{17}{20.4}{\rmdefault}{\mddefault}{\updefault}{$b_2$}%
}}}}
\put(3568,-17692){\makebox(0,0)[lb]{\smash{{\SetFigFont{17}{20.4}{\rmdefault}{\mddefault}{\updefault}{$b_1$}%
}}}}
\put(10738,-17700){\makebox(0,0)[lb]{\smash{{\SetFigFont{17}{20.4}{\rmdefault}{\mddefault}{\updefault}{$b_1$}%
}}}}
\put(16276,-17883){\makebox(0,0)[lb]{\smash{{\SetFigFont{17}{20.4}{\rmdefault}{\mddefault}{\updefault}{$b_1$}%
}}}}
\put(24233,-17853){\makebox(0,0)[lb]{\smash{{\SetFigFont{17}{20.4}{\rmdefault}{\mddefault}{\updefault}{$b_1$}%
}}}}
\put(24534,-6564){\makebox(0,0)[lb]{\smash{{\SetFigFont{17}{20.4}{\rmdefault}{\mddefault}{\updefault}{$b_1$}%
}}}}
\put(16202,-6564){\makebox(0,0)[lb]{\smash{{\SetFigFont{17}{20.4}{\rmdefault}{\mddefault}{\updefault}{$b_1$}%
}}}}
\put(6189,-18195){\makebox(0,0)[lb]{\smash{{\SetFigFont{17}{20.4}{\rmdefault}{\mddefault}{\updefault}{$a_2$}%
}}}}
\put(8229,-18210){\makebox(0,0)[lb]{\smash{{\SetFigFont{17}{20.4}{\rmdefault}{\mddefault}{\updefault}{$a_2$}%
}}}}
\put(19249,-18348){\makebox(0,0)[lb]{\smash{{\SetFigFont{17}{20.4}{\rmdefault}{\mddefault}{\updefault}{$a_2$}%
}}}}
\put(21334,-18370){\makebox(0,0)[lb]{\smash{{\SetFigFont{17}{20.4}{\rmdefault}{\mddefault}{\updefault}{$a_2$}%
}}}}
\put(21560,-7066){\makebox(0,0)[lb]{\smash{{\SetFigFont{17}{20.4}{\rmdefault}{\mddefault}{\updefault}{$a_2$}%
}}}}
\put(19115,-7059){\makebox(0,0)[lb]{\smash{{\SetFigFont{17}{20.4}{\rmdefault}{\mddefault}{\updefault}{$a_2$}%
}}}}
\put(7171,-17685){\makebox(0,0)[lb]{\smash{{\SetFigFont{17}{20.4}{\rmdefault}{\mddefault}{\updefault}{$a_1$}%
}}}}
\put(20231,-17823){\makebox(0,0)[lb]{\smash{{\SetFigFont{17}{20.4}{\rmdefault}{\mddefault}{\updefault}{$a_1$}%
}}}}
\put(20337,-6399){\makebox(0,0)[lb]{\smash{{\SetFigFont{17}{20.4}{\rmdefault}{\mddefault}{\updefault}{$a_1$}%
}}}}
\put(23583,-15104){\rotatebox{300.0}{\makebox(0,0)[lb]{\smash{{\SetFigFont{17}{20.4}{\rmdefault}{\mddefault}{\updefault}{$y+a_1+b_1$}%
}}}}}
\put(16599,-16214){\rotatebox{60.0}{\makebox(0,0)[lb]{\smash{{\SetFigFont{17}{20.4}{\rmdefault}{\mddefault}{\updefault}{$y+a_1+b_1$}%
}}}}}
\put(3846,-16361){\rotatebox{60.0}{\makebox(0,0)[lb]{\smash{{\SetFigFont{17}{20.4}{\rmdefault}{\mddefault}{\updefault}{$y+a_1+b_1$}%
}}}}}
\put(3183,-20093){\rotatebox{300.0}{\makebox(0,0)[lb]{\smash{{\SetFigFont{17}{20.4}{\rmdefault}{\mddefault}{\updefault}{$y+z+2a_2+2b_2$}%
}}}}}
\put(16025,-20471){\rotatebox{300.0}{\makebox(0,0)[lb]{\smash{{\SetFigFont{17}{20.4}{\rmdefault}{\mddefault}{\updefault}{$y+z+2a_2+2b_2$}%
}}}}}
\put(15921,-9205){\rotatebox{300.0}{\makebox(0,0)[lb]{\smash{{\SetFigFont{17}{20.4}{\rmdefault}{\mddefault}{\updefault}{$y+z+2a_2+2b_2$}%
}}}}}
\put(2743,-18172){\makebox(0,0)[lb]{\smash{{\SetFigFont{17}{20.4}{\rmdefault}{\mddefault}{\updefault}{$z$}%
}}}}
\put(11766,-18090){\makebox(0,0)[lb]{\smash{{\SetFigFont{17}{20.4}{\rmdefault}{\mddefault}{\updefault}{$z$}%
}}}}
\put(15391,-18190){\makebox(0,0)[lb]{\smash{{\SetFigFont{17}{20.4}{\rmdefault}{\mddefault}{\updefault}{$z$}%
}}}}
\put(25336,-18220){\makebox(0,0)[lb]{\smash{{\SetFigFont{17}{20.4}{\rmdefault}{\mddefault}{\updefault}{$z$}%
}}}}
\put(25622,-6901){\makebox(0,0)[lb]{\smash{{\SetFigFont{17}{20.4}{\rmdefault}{\mddefault}{\updefault}{$z$}%
}}}}
\put(15197,-6924){\makebox(0,0)[lb]{\smash{{\SetFigFont{17}{20.4}{\rmdefault}{\mddefault}{\updefault}{$z$}%
}}}}
\put(10336,-21787){\rotatebox{60.0}{\makebox(0,0)[lb]{\smash{{\SetFigFont{17}{20.4}{\rmdefault}{\mddefault}{\updefault}{$y+z+2a_2+2b_2$}%
}}}}}
\put(23674,-22240){\rotatebox{60.0}{\makebox(0,0)[lb]{\smash{{\SetFigFont{17}{20.4}{\rmdefault}{\mddefault}{\updefault}{$y+z+2a_2+2b_2$}%
}}}}}
\put(10335,-15416){\rotatebox{300.0}{\makebox(0,0)[lb]{\smash{{\SetFigFont{17}{20.4}{\rmdefault}{\mddefault}{\updefault}{$y+a_1+b_1$}%
}}}}}
\put(24057,-10846){\rotatebox{60.0}{\makebox(0,0)[lb]{\smash{{\SetFigFont{17}{20.4}{\rmdefault}{\mddefault}{\updefault}{$y+z+2a_2+2b_2$}%
}}}}}
\put(6617,-23520){\makebox(0,0)[lb]{\smash{{\SetFigFont{17}{20.4}{\rmdefault}{\mddefault}{\updefault}{$x+a_1+2b_1$}%
}}}}
\put(19838,-23973){\makebox(0,0)[lb]{\smash{{\SetFigFont{17}{20.4}{\rmdefault}{\mddefault}{\updefault}{$x+a_1+2b_1$}%
}}}}
\put(19828,-12691){\makebox(0,0)[lb]{\smash{{\SetFigFont{17}{20.4}{\rmdefault}{\mddefault}{\updefault}{$x+a_1+2b_1$}%
}}}}
\put(6504,-14242){\makebox(0,0)[lb]{\smash{{\SetFigFont{17}{20.4}{\rmdefault}{\mddefault}{\updefault}{$x+2a_2+2b_2$}%
}}}}
\put(19706,-13998){\makebox(0,0)[lb]{\smash{{\SetFigFont{17}{20.4}{\rmdefault}{\mddefault}{\updefault}{$x+2a_2+2b_2$}%
}}}}
\put(7263,-6366){\makebox(0,0)[lb]{\smash{{\SetFigFont{17}{20.4}{\rmdefault}{\mddefault}{\updefault}{$a_1$}%
}}}}
\put(8501,-7086){\makebox(0,0)[lb]{\smash{{\SetFigFont{17}{20.4}{\rmdefault}{\mddefault}{\updefault}{$a_2$}%
}}}}
\put(6041,-7091){\makebox(0,0)[lb]{\smash{{\SetFigFont{17}{20.4}{\rmdefault}{\mddefault}{\updefault}{$a_2$}%
}}}}
\put(11036,-6546){\makebox(0,0)[lb]{\smash{{\SetFigFont{17}{20.4}{\rmdefault}{\mddefault}{\updefault}{$b_1$}%
}}}}
\put(10111,-7106){\makebox(0,0)[lb]{\smash{{\SetFigFont{17}{20.4}{\rmdefault}{\mddefault}{\updefault}{$b_2$}%
}}}}
\put(4374,-7061){\makebox(0,0)[lb]{\smash{{\SetFigFont{17}{20.4}{\rmdefault}{\mddefault}{\updefault}{$b_2$}%
}}}}
\put(3479,-6556){\makebox(0,0)[lb]{\smash{{\SetFigFont{17}{20.4}{\rmdefault}{\mddefault}{\updefault}{$b_1$}%
}}}}
\put(6696,-2876){\makebox(0,0)[lb]{\smash{{\SetFigFont{17}{20.4}{\rmdefault}{\mddefault}{\updefault}{$x+2a_2+2b_2$}%
}}}}
\put(10306,-3876){\rotatebox{300.0}{\makebox(0,0)[lb]{\smash{{\SetFigFont{17}{20.4}{\rmdefault}{\mddefault}{\updefault}{$y+a_1+b_1$}%
}}}}}
\put(12091,-6886){\makebox(0,0)[lb]{\smash{{\SetFigFont{17}{20.4}{\rmdefault}{\mddefault}{\updefault}{$z$}%
}}}}
\put(10576,-10716){\rotatebox{60.0}{\makebox(0,0)[lb]{\smash{{\SetFigFont{17}{20.4}{\rmdefault}{\mddefault}{\updefault}{$y+z+2a_2+2b_2$}%
}}}}}
\put(6691,-12256){\makebox(0,0)[lb]{\smash{{\SetFigFont{17}{20.4}{\rmdefault}{\mddefault}{\updefault}{$x+a_1+2b_1$}%
}}}}
\put(3062,-9006){\rotatebox{300.0}{\makebox(0,0)[lb]{\smash{{\SetFigFont{17}{20.4}{\rmdefault}{\mddefault}{\updefault}{$y+z+2a_2+2b_2$}%
}}}}}
\put(2629,-6941){\makebox(0,0)[lb]{\smash{{\SetFigFont{17}{20.4}{\rmdefault}{\mddefault}{\updefault}{$z$}%
}}}}
\put(3849,-4929){\rotatebox{60.0}{\makebox(0,0)[lb]{\smash{{\SetFigFont{17}{20.4}{\rmdefault}{\mddefault}{\updefault}{$y+a_1+b_1$}%
}}}}}
\put(19887,-2439){\makebox(0,0)[lb]{\smash{{\SetFigFont{17}{20.4}{\rmdefault}{\mddefault}{\updefault}{$x+2a_2+2b_2$}%
}}}}
\end{picture}%
}
\caption{Dividing the symmetric hexagon with three ferns removed $S^{(1)}_{x,y,z}(\textbf{a},\textbf{b})$ into two halved hexagons.}\label{fig:halvedhex5}
\end{figure}

We conclude this section by presenting the proof of Theorem \ref{mainthm1}. The proof of Theorem \ref{mainthm2} is essentially the same and is also  omitted.
\begin{proof}[Proof of Theorem \ref{mainthm1}]
Apply the cutting procedure in Ciucu's Factorization Theorem (Lemma \ref{ciucufactor}) to the dual graph $G$ of the symmetric hexagon with three ferns removed
$S^{(1)}_{x,y,z} (\textbf{a}; \textbf{b})$, we have
\begin{equation}
\M(S^{(1)}_{x,y,z} (\textbf{a}; \textbf{b}))=2^{y+z+a+b-a_1}\M(G^{+}) \M(G^{-}),
\end{equation}
as there are exactly $2y+2z+2a+2b-2a_1$ vertices of $G$ lying on the vertical symmetry axis.

We first consider the case when $a_1,x,y$ are all even. It is easy to see that the right component graph $G^-$ is congruent to the dual graph of the left subregion restricted by the bold contour in Figure \ref{fig:halvedhex5}(a)
(for the case $x=y=z=2$, $a_1=4, a_2=2$, $b_1=b_2=2$; the lozenges with shaded cores are weighted by $1/2$).  This region is exactly the weighted region $W^{(2)}_{\frac{x}{2},\frac{y}{2},z}(\frac{a_1}{2},a_2,\dots,a_m;\ \textbf{b})$.
The left component graph $G^+$ in turn corresponds to the dual graph of the right subregion. By removing several forced lozenges on the top and the bottom of this region, we get the region
 $H^{(2)}_{\frac{x}{2},\frac{y}{2},z}(\frac{a_1}{2},a_2,\dots,a_m;\ \textbf{b})$. This means that we get
 \begin{align}
\M(S^{(1)}_{x,y,z} (\textbf{a}; \textbf{b}))=2^{y+z+a+b-a_1}&\M\left(H^{(2)}_{\frac{x}{2},\frac{y}{2},z}\left(\frac{a_1}{2},a_2,\dots,a_m;\ \textbf{b}\right)\right)\notag\\
 &\times\M\left(W^{(2)}_{\frac{x}{2},\frac{y}{2},z}\left(\frac{a_1}{2},a_2,\dots,a_m;\ \textbf{b}\right)\right).
\end{align}
If $a_1$ is even, but $x$ and $y$ are odd, then as shown in Figure \ref{fig:halvedhex5}(b), we get
 \begin{align}
\M(S^{(1)}_{x,y,z} (\textbf{a}; \textbf{b}))=2^{y+z+a+b-a_1}&\M\left(H^{(2)}_{\frac{x-1}{2},\frac{y+1}{2},z}\left(\frac{a_1}{2},a_2,\dots,a_m;\ \textbf{b}\right)\right)\notag\\
 &\times\M\left(W^{(2)}_{\frac{x-1}{2},\frac{y+1}{2},z}\left(\frac{a_1}{2},a_2,\dots,a_m;\ \textbf{b}\right)\right).
\end{align}

Similarly, when $a_1$ is odd and $x,y$ are even, we get from Figure \ref{fig:halvedhex5}(c)
 \begin{align}
\M(S^{(1)}_{x,y,z} (\textbf{a}; \textbf{b}))=2^{y+z+a+b-a_1}&\M\left(N^{(4)}_{\frac{x}{2},\frac{y}{2},z}\left(\frac{a_1+1}{2},a_2,\dots,a_m;\ \textbf{b}\right)\right)\notag\\
& \times \M\left(N^{(1)}_{\frac{x}{2},\frac{y}{2},z}\left(\frac{a_1-1}{2},a_2,\dots,a_m;\ \textbf{b}\right)\right).
\end{align}
Finally, if $a_1,x,y$ are all odd, then, as shown in Figure \ref{fig:halvedhex5}(d), we obtain
 \begin{align}
\M(S^{(1)}_{x,y,z} (\textbf{a}; \textbf{b}))=2^{y+z+a+b-a_1}&\M\left(N^{(4)}_{\frac{x-1}{2},\frac{y+1}{2},z}(\frac{a_1+1}{2},a_2,\dots,a_m;\ \textbf{b})\right)\notag\\
 &\times \M\left(N^{(1)}_{\frac{x+1}{2},\frac{y-1}{2},z}\left(\frac{a_1-1}{2},a_2,\dots,a_m;\ \textbf{b}\right)\right).
\end{align}
\end{proof}

\section{Several open questions}\label{sec:Question}

The first equality signs in the formulas of Theorems \ref{main1}--\ref{mainMR4} show some factorizations for the numbers tilings of halved hexagons.
It is interesting to find a \emph{direct bijective} explanation for these factorizations.

As shown in Theorems \ref{mainthm1} and \ref{mainthm2}, only eight over sixteen halved hexagons
 (in particular, the $H^{(2)}$-, $W^{(1)}$-,  $R^{(2)}$, $RW^{(1)}$-, $N^{(1)}$-, $N^{(4)}$-, $NR^{(1)}$-, $NR^{(4)}$-types regions)
 are really halves of some symmetric hexagons with three ferns removed.
  How about the other eight?
Are there any regions whose halves are corresponding to these remaining regions?

\end{document}